\documentclass[12pt,a4paper]{article}
\usepackage[UTF8]{ctex} % 中文字体支持
\usepackage{mathrsfs}
\usepackage{amssymb}
\usepackage{graphicx}
\usepackage{amsmath}
\let\oldsum\sum   % 备份默认的 \sum
\let\oldint\int   % 备份默认的 \int
\renewcommand{\sum}{\oldsum\limits}  % 强制所有 \sum 下标在正下方
\renewcommand{\int}{\oldint\limits}  % 强制所有 \int 下标在正下方
\usepackage[usenames]{color}
\usepackage{extarrows}%箭头宏包
\usepackage{titlesec}
\titleformat{\section}{\centering\Large\bfseries}{\thesection}{1em}{}
\numberwithin{equation}{section}%???????
\usepackage{tikz}
\usetikzlibrary{decorations.pathreplacing} % 大括号库
\usepackage{caption} % 提供更好的标题控制
\usepackage{float} 
\usepackage{enumitem}
\usepackage{cases} % 提供 numcases 和 subnumcases 环境
\usepackage{bm} % 在导言区加载

\newtheorem{theorem}{Theorem}[section]
\newtheorem{definition}[theorem]{Definition}
\newtheorem{prop}[theorem]{Proposition}
\newtheorem{coro}[theorem]{Corollary}
\newtheorem{lemma}[theorem]{Lemma}
\newtheorem{remark}{Remark}
\newtheorem{example}[theorem]{Example}
\newenvironment{pf}{\vspace{3mm}\indent{\bf  Proof. }}{\hfill $\Box$ \vspace{3mm}}
\usepackage{indentfirst}%章节后第一段不缩进，使用这个宏包
\newcommand{\bigast}{%大写的*
	\mathop{\scalebox{1.3}{\raisebox{-0.2ex}{$\ast$}}}%
}

\begin{document}	
\allowdisplaybreaks[4]
\title{Spectrality of Moran-Type Measures Generated by Two-Element and Three-Element Digit Sets
	\thanks{2000 Mathematics Subject Classification: Primary 28A80; Secondary 28A78}\thanks{The research is supported by NNSF of China (no. 11971109)}}

\author{Yong-Shen Cao, Qi-Rong Deng, Ming-Tian Li}

\maketitle

\begin{abstract}
The necessary and sufficient conditions for the Moran-type measures generated by two-element and three-element digit sets to be spectral have been given. The main result indicates that the spectrality of such a Moran-type measure is completely determined by the number of 2-factors and 3-factors in the zero set of the Fourier transform of each term in the convolution. The proof shows that, without the existence of Hadamard triples, the problem becomes much more complicated and difficult than the case with the existence of Hadamard triples.

\medskip

{\bf Keywords}: Iterated function system, Moran measure, Spectrality.	
\end{abstract}	
	
\tableofcontents

\newpage

\section{Introduction}

A Borel probability measure $\mu$ on $\mathbb{R}^{d}$ is called a spectral measure if there exists a countable set $\Lambda\subset \mathbb{R}^{d}$ such that the family of exponential functions
\begin{equation*}
	E(\Lambda)=\{e^{2\pi i\langle\lambda,x\rangle}:\lambda\in\Lambda\}
\end{equation*}
forms an orthonormal basis for $L^{2}(\mu)$. Such a set $\Lambda$ is called a spectrum of $\mu$. Spectral measures arise naturally in harmonic analysis, fractal geometry, wavelet theory and signal analysis, and their characterization has become one of the central topics in the study of Fourier analysis on fractals.

The study of spectral measures goes back to Fuglede \cite{Fuglede1974}, who conjectured that a measurable set of positive Lebesgue measure is spectral if and only if it tiles the Euclidean space by translations. In the same paper, he established the conjecture in the lattice setting by proving that a measurable set is spectral with respect to a lattice spectrum precisely when it tiles $\mathbb{R}^{d}$ by the dual lattice. Tao \cite{Tao2004} and, Kolountzakis and Matolcsi \cite{KolountzakisMatolcsi2006-1,KolountzakisMatolcsi2006-2} showed that Fuglede's conjecture fails in sufficiently high dimensions. Although Fuglede's conjecture is false in general, it has stimulated extensive research on the spectrality of fractal measures, particularly self-affine measures.

Let $p\geqslant 2$ be an integer and let $C,\ D \subset \mathbb{Z}$ be two finite sets with the same cardinality.
If
\begin{equation*}
	\frac{1}{\sqrt{\#D}}
	\left[e^{2\pi i dc/p}\right]_{d\in D,\,c\in C}
\end{equation*}
is a unitary matrix, then the triple $(p,\,D,\,C)$ is called a Hadamard triple.

The spectrality of self-similar measures has been extensively investigated during the past three decades. Jorgensen and Pedersen \cite{Jorgensen1998} proved that the middle-fourth Cantor measure is a spectral measure, providing the first example of a singular spectral measure. Łaba and Wang \cite{Laba2002} proved that, if  the triple
$(p,\,D,\,C)$ is a Hadamard triple, then the self-similar measure $\mu_{p,\,D}$ is a spectral measure. This result was later generalized to arbitrary dimensions by Dutkay, Haussermann and Lai \cite{Dutkay2018}, who proved that every self-affine measure generated by a Hadamard triple is spectral. The spectrality of Bernoulli convolutions and their generalizations has also been extensively investigated. Dai, He and Lai \cite{Dai2013}, Dai, He and Lau \cite{Dai2014} established a complete characterization of self-similar measures generated by consecutive digit sets, showing that such measures are spectral if and only if the reciprocal of the contraction ratio is an integer multiple of the cardinality of the digit set. There have been extensive studies on the spectrality of self-affine measures, see, for example, \cite{An2015,Cao2022,Dai2016,Dutkay2009,Fu2017,Fu2020}.

\medskip

Compared with self-affine measures, the study of Moran-type measures is relatively recent. Since the contraction ratios and digit sets may vary from level to level, Moran-type measures exhibit richer geometric and arithmetic structures, making the characterization of their spectrality considerably more challenging.

Let $\mathbb{P}:=\{p_n\}_{n=1}^{\infty}$ be a sequence of integers and
$\mathbb{D}:=\{D_n\}_{n=1}^{\infty}$ be a sequence of finite subsets of
$\mathbb{Z}$ satisfying $|p_n|\geqslant2$ for all $n\geqslant1$ and
\begin{equation}\label{eq1.1-1}
	\sup\Big\{\sum_{n=1}^{\infty}\Big|p_1^{-1}p_2^{-1}\cdots p_n^{-1}a_n\Big|:\,a_n\in D_n,\,n\geqslant1\Big\}<\infty.
\end{equation}
By \cite{Strichartz2006}, the sequence of probability measures associated with
\begin{equation*}
	\mu_n:=\delta_{p_1^{-1}D_1}\bigast\delta_{p_1^{-1}p_2^{-1}D_2}\bigast\cdots\bigast\delta_{p_1^{-1}\cdots p_{n}^{-1}D_n}
\end{equation*}
converges weakly to a Borel probability measure whose support is the compact set
\begin{equation*}
	T(\mathbb{P},\,\mathbb{D}):=\sum_{n=1}^{\infty}p_1^{-1}p_2^{-1}\cdots p_n^{-1}D_n,
\end{equation*}
where $\delta_{A}:=\frac{1}{\#A}\sum_{a\in A}\delta_a$ and $\delta_a$ denotes the Dirac measure at the point $a$. The limiting measure is usually denoted by
\begin{equation}\label{eq1.1}
	\mu_{\mathbb{P},\,\mathbb{D}}:=\delta_{p_1^{-1}D_1}\bigast\delta_{p_1^{-1}p_2^{-1}D_2}\bigast\cdots\bigast\delta_{p_1^{-1}\cdots p_{n}^{-1}D_n}\bigast\cdots,
\end{equation}
and is called a Moran-type measure (for the general definition of Moran-type measures, see \cite{Cao2026}).

The spectrality of Moran-type measures has attracted increasing attention in recent years. An and He \cite{AH2014} established a sufficient condition for the spectrality of Moran-type measures generated by consecutive digit sets. Later, Deng and Li \cite{Deng2022} proved that this condition is also necessary, thereby obtaining a complete characterization for Moran-type measures generated by consecutive digit sets.

\begin{theorem}\textup{\cite{AH2014,Deng2022}}
	Let $\mu_{\mathbb{P},\,\mathbb{D}}$ be the Moran-type self-similar measure associated to integer sequence $\mathbb{P}=\{p_n\}_{n=1}^{\infty}$ with $|p_n|\geqslant2$ and a sequence of consecutive digit sets $\mathbb{D}=\{D_n\}_{n=1}^{\infty}$, where $D_n=\{0,~1,~\cdots,~\allowbreak b_n-1\}$ with $b_n\geqslant2$ for $n\geqslant1$. Assume that the sequence $\{b_n\}_{n=1}^{\infty}$ is bounded. Then $\mu_{\mathbb{P},\,\mathbb{D}}$ is a spectral measure if and only if $b_n\mid p_n$ for all $n\geqslant2$.
\end{theorem}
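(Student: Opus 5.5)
The plan is to prove the two directions separately, working with the Fourier transform
\[
\hat\mu_{\mathbb{P},\mathbb{D}}(\xi)=\prod_{n=1}^{\infty} m_{D_n}\big((p_1\cdots p_n)^{-1}\xi\big),\qquad m_{D_n}(x)=\frac{1}{b_n}\sum_{j=0}^{b_n-1}e^{2\pi i jx},
\]
whose zero set is $\mathcal{Z}(\hat\mu)=\bigcup_{n\ge1} p_1\cdots p_n\,b_n^{-1}(\mathbb{Z}\setminus b_n\mathbb{Z})$.

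\textbf{Sufficiency.} Assume $b_n\mid p_n$ for all $n\ge2$, and write $p_n=b_nt_n$ with $t_n\in\mathbb{Z}\setminus\{0\}$. For every $n$ the triple $(p_n,D_n,C_n)$ with $C_n=t_n\{0,1,\dots,b_n-1\}$ is a Hadamard triple, and $C_n\subset[0,p_n)$ up to sign. I would set
\[
\Lambda=C_1'+p_1C_2+p_1p_2C_3+\cdots \quad(\text{finite sums}),
\]
where $C_1'$ is any spectrum of the finite measure $\delta_{p_1^{-1}D_1}$ rescaled suitably; the first level imposes no constraint because $\delta_{p_1^{-1}D_1}$ is always spectral. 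Orthogonality follows at once: differences of distinct elements of $\Lambda$ lie in $\mathcal{Z}(\hat\mu)$ by the Hadamard property at the first level where they differ. For completeness I would follow An--He. Let $\nu_{>n}$ denote the tail measure. Boundedness of $b_n$ gives only finitely many digit types, and one needs $|\hat\nu_{>n}|$ to be uniformly bounded below on the relevant bounded set of $\xi$, along a subsequence of $n$. The standard argument is that $\sum_{\lambda\in\Lambda}|\hat\mu(\xi+\lambda)|^2=1$ is obtained as a limit of the finite-level identities $\sum_{\lambda\in\Lambda_n}|\hat\mu_n(\xi+\lambda)|^2=1$, multiplied by tail factors $|\hat\nu_{>n}((p_1\cdots p_n)^{-1}(\xi+\lambda))|^2$. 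These factors are bounded away from zero by equicontinuity once the points $(p_1\cdots p_n)^{-1}(\xi+\lambda)$ stay in a compact region avoiding zeros. Since the $p_n$ can be large relative to $b_n$, the contraction is strong, and I would choose the digit sets $C_n$ symmetric around $0$ so that these points stay in a bounded interval. This is the Łaba--Wang / Strichartz type argument.

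\textbf{Necessity.} Assume $\mu$ is spectral with spectrum $\Lambda\ni0$, and suppose $b_N\nmid p_N$ for some $N\ge2$. Then $\Lambda-\Lambda\subset\mathcal{Z}(\hat\mu)\cup\{0\}$. The idea is to exploit the unique decomposition of each $\lambda$ according to the level $n$ at which $\lambda\in p_1\cdots p_n b_n^{-1}(\mathbb{Z}\setminus b_n\mathbb{Z})$, and to track the $b_n$-adic valuations. Following Deng--Li, I would first show that $\Lambda\subset p_1b_1^{-1}\mathbb{Z}$, and then pass to a rescaled spectrum of the tail measure $\mu_{>1}$. This requires showing that a spectrum of a convolution $\delta_{D}\ast\nu$ decomposes into $b$ classes, each a translate of a spectrum of $\nu$ (or at least a mutually orthogonal set of $\nu$ with the right completeness). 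Iterating down to level $N-1$ reduces matters to the case where the measure $\delta_{p^{-1}D_{N-1}}\ast\delta_{(pp_N)^{-1}D_N}\ast\cdots$ must have a spectrum. Here $b_N\nmid p_N$ means that the zero sets at consecutive levels $N-1$ and $N$ fail to separate properly. Counting elements of $\Lambda$ in residue classes, as in the argument that $\#(\Lambda\bmod$ something$)\le$ the maximal size of a mutually orthogonal set, then yields a set that is maximal orthogonal but incomplete. Completeness would be violated via the test function $Q(\xi)=\sum|\hat\mu(\xi+\lambda)|^2<1$ at a suitable $\xi$, using boundedness of $b_n$ for uniform tail bounds.

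\textbf{Main obstacle.} The main obstacle is the reduction step in necessity, namely showing that spectrality of $\mu$ passes to spectrality of the tail measures, or extracting the contradiction directly. Since the $b_n$ vary, the residue analysis must handle the case where $\gcd(b_N,p_N)$ is a nontrivial proper divisor. I would attempt this first by analysing the local orthogonal sets modulo $b_{N-1}b_N$, in the manner of Deng--Li.
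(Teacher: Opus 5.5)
The paper only quotes this theorem from An--He and Deng--Li and gives no proof of it. Its own machinery in Sections~\ref{sec3} and~\ref{sec7} nevertheless shows what each half requires, and in both halves your proposal is missing the decisive step.

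For necessity, the claim $\Lambda\subset\frac{p_1}{b_1}\mathbb{Z}$ is asserted without argument and is not automatic. The level-$n$ zeros $\frac{p_1\cdots p_n}{b_n}(\mathbb{Z}\setminus b_n\mathbb{Z})$ need not lie in that lattice when divisibility fails, which is exactly the situation you want to exclude. The endgame (residue counting, ``maximal orthogonal but incomplete'', ``$Q<1$ at a suitable $\xi$'') names no actual mechanism. What is missing is the selection argument of Lemma~\ref{le3.3}, \eqref{eq4.14} and Theorem~\ref{th4.6}, which makes the containment unnecessary:
\begin{enumerate}
\item Split $\Lambda$ into super-classes modulo $\frac{p_1}{b_1}\mathbb{Z}$ and classes modulo $p_1\mathbb{Z}$. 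Any choice of one class in each super-class is a bi-zero set of $\omega_1=\chi_2\ast\chi_3\ast\cdots$. Its differences lie in $p_1\mathbb{Z}\setminus\{0\}$ or outside $\frac{p_1}{b_1}\mathbb{Z}$, where $\hat\chi_1\neq0$.
\item Lemma~\ref{le2.6} then forces every nonempty super-class to meet all $b_1$ classes, and every such choice to be a spectrum of $\omega_1$.
\item Apply the same fact to $\omega_1$ with $\chi_2$ singled out (this is \eqref{eq4.1-1}). It shows that every spectrum of $\omega_1$ containing $0$ meets $\frac{p_1p_2}{b_2}+p_1p_2\mathbb{Z}$, which lies in the single class $c^*:=\frac{p_1p_2}{b_2}+p_1\mathbb{Z}$.
\item If $b_2\nmid p_2$, then $c^*\neq p_1\mathbb{Z}$. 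Choose the class $p_1\mathbb{Z}$ in the super-class of $0$. If $c^*$ lies in another super-class, choose a class other than $c^*$ there (possible since $b_1\geqslant2$). This gives a spectrum of $\omega_1$ that contains $0$ and misses $c^*$, a contradiction.
\item Iterate with $\frac{1}{p_1}\Gamma$, which is a spectrum of $\mu_{>1}$, to get $b_n\mid p_n$ for all $n\geqslant2$.
\end{enumerate}
Only $\frac{p_2}{b_2}\notin\mathbb{Z}$ is used. So the case of a nontrivial proper $\gcd(b_N,p_N)$, which you single out as the main obstacle, needs no separate treatment.

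For sufficiency, your orthogonality argument is correct, but the completeness step rests on a claim that fails. Keeping the points $(p_1\cdots p_n)^{-1}(\xi+\lambda)$ in a bounded interval is automatic. What is needed is that they stay uniformly away from the zeros of the tail transforms, and when $p_{n+1}=b_{n+1}$ the tail vanishes at $\pm1$. Taking the $C_n$ symmetric does not ensure this, and for even $b_n$ it is not even possible. For example, take $p_n=b_n=2$ for all $n$ (Lebesgue measure on $[0,1]$). Then $C_n=\{0,1\}$ at every level gives $\Lambda=\mathbb{N}_0$, which is not a spectrum, because the normalized points $(2^n-1)/2^n$ run into the zero at $1$. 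Moreover, equicontinuity alone gives no lower bound on $|\hat\nu_{>n}|$ that is uniform in $n$.

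The choice of spectrum has to be adapted, and the missing ingredient is the one in Proposition~\ref{prop5.13}, Theorem~\ref{th5.11}(iv) and Proposition~\ref{prop5.16}. The family of tails is admissible, hence equi-positive: no weak limit is $\frac12(\delta_0+\delta_1)$, as in the proof of Proposition~\ref{prop5.13}. So to each element of a new block of levels $n_k+1,\dots,n_{k+1}$ you may add an integer multiple of $p_1\cdots p_{n_{k+1}}$, chosen so that $|\hat\mu_{>n_{k+1}}|$ at the normalized point exceeds a fixed $\varepsilon$. This shift is divisible by $p_1\cdots p_m$ for every level $m$ of the block, so your first-differing-level orthogonality argument survives. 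Completeness then follows from the $Q_{k+1}-Q_k$ estimate at the end of Section~\ref{sec7}. Because Hadamard triples are available here, only the easy branch of Proposition~\ref{prop7.13}, where $H(n_2)=n_2$, is needed.
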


To go beyond the consecutive-digit setting, Shi \cite{Shi2019} considered Moran-type measures generated by sequences of two-element and three-element digit sets satisfying the following three additional assumptions.
\begin{enumerate}	
	\item[\bfseries (S1)] For every $n\geqslant1$, $\#D_n\in\{2,~3\}$.
	\item[\bfseries (S2)] If $\#D_n=2$, the set $D_n$ has the form $\{0,~d_n\}$ such that $d_n=2^{l_n}d_n'$, where $l_n\in\mathbb{N}$ and $d_n'$ is a positive odd number. Moreover, $2^{l_n+1}$ divides $p_n$.
	\item[\bfseries (S3)] If $\#D_n=3$, the set $D_n$ has the form $\{0,~a_n,~b_n\}$ such that $0<a_n<b_n<p_n$, $\{a_n,~b_n\}\equiv\{1,~-1\}(\hspace{-0.5em}\mod{3})$, $\gcd(a_n,~b_n)=1$ and $3$ divides $p_n$.	
\end{enumerate}

\begin{theorem}\textup{\cite{Shi2019}}
	Let $\mathbb{P}:=\{p_n\}_{n=1}^{\infty}$ be a sequence of integers with $p_n>1$. Let $\mathbb{D}:=\{D_n\}_{n=1}^{\infty}$ be a sequence of sets with $D_n\subset\mathbb{N}$. Suppose that the pair $(\mathbb{P},~\mathbb{D})$ satisfies the conditions {\bfseries (S1)}, {\bfseries (S2)} and {\bfseries (S3)}.  Then, $\mu_{\mathbb{P},\,\mathbb{D}}$ is a spectral measure.	
\end{theorem}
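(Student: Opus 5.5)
The plan is to find a Hadamard triple at every level and then pass from the finite-level measures to $\mu_{\mathbb{P},\,\mathbb{D}}$ by a standard Moran-type spectrality criterion. For each $n$ I choose $C_n\subset\mathbb{Z}$ so that $(p_n,\,D_n,\,C_n)$ is a Hadamard triple. If $\#D_n=2$, with $D_n=\{0,\,d_n\}$ and $d_n=2^{l_n}d_n'$, I take $C_n=\{0,\,p_n/2^{l_n+1}\}$; by (S2) this is an integer. Then $e^{2\pi i d_n c/p_n}=e^{\pi i d_n'}=-1$, since $d_n'$ is odd, so the $2\times 2$ matrix is unitary up to the factor $1/\sqrt2$. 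If $\#D_n=3$, I take $C_n=\{0,\,p_n/3,\,2p_n/3\}$, using $3\mid p_n$ from (S3). The matrix entries are then $\omega^{dk}$ with $\omega=e^{2\pi i/3}$, and $\{0,\,a_n,\,b_n\}\equiv\{0,\,1,\,2\}\pmod 3$ makes the matrix a permuted Fourier matrix, hence unitary after normalization.

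Next I set $\Lambda_n=C_1+p_1C_2+p_1p_2C_3+\cdots+p_1\cdots p_{n-1}C_n$. Using the Hadamard triple property level by level, I check that $\Lambda_n$ is a spectrum of the discrete measure $\mu_n$; the orthogonality factors through the product formula $\hat\mu_n(\xi)=\prod_{k\le n}m_{D_k}(\xi/(p_1\cdots p_k))$. The candidate spectrum for $\mu_{\mathbb{P},\,\mathbb{D}}$ is then $\Lambda=\bigcup_n\Lambda_n$, or a modified version of it. Mutual orthogonality of $E(\Lambda)$ in $L^2(\mu_{\mathbb{P},\,\mathbb{D}})$ follows from the infinite product factorization $\hat\mu=\hat\mu_n\cdot\hat\nu_{>n}(\cdot/(p_1\cdots p_n))$. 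The difference of two elements of $\Lambda_n$ is a zero of $\hat\mu_n$, so it is also a zero of $\hat\mu$.

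The main obstacle is completeness, i.e.\ showing $Q(\xi)=\sum_{\lambda\in\Lambda}|\hat\mu(\xi+\lambda)|^2\equiv1$. The standard route, as in the work of An and He and of Strichartz, writes $Q(\xi)=\lim_n\sum_{\lambda\in\Lambda_n}|\hat\mu_n(\xi+\lambda)|^2|\hat\nu_{>n}((\xi+\lambda)/(p_1\cdots p_n))|^2$. The first factors sum to $1$, so the task reduces to a uniform lower bound $|\hat\nu_{>n}(x)|\geqslant c>0$ for $x$ in a fixed bounded set, uniformly in $n$. This is delicate, because the tail measures $\nu_{>n}$ vary with $n$ and the digits $d_n,a_n,b_n$ may be unbounded relative to $p_{n+1}$. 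My first attempt would be to choose the digits of $C_n$ symmetrically, in $(-p_n/2,\,p_n/2]$, for instance $C_n=\{0,\,\pm p_n/3\}$, so that $(\xi+\lambda)/(p_1\cdots p_n)$ stays in a fixed interval. I would then use the condition (\ref{eq1.1-1}) to make the supports of $\nu_{>n}$ uniformly bounded. Equicontinuity of $\hat\nu_{>n}$ near $0$ then gives $|\hat\nu_{>n}|\geqslant c$ on a small neighborhood of $0$.

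Where the points leave that neighborhood, I would replace the plain $\Lambda$ by a modified spectrum: following Dai--He--Lau, each $\lambda$ is adjusted by multiples $p_1\cdots p_{n}k$ so that its zero-free scaled image falls into the good region. I would then verify $Q\equiv1$ through the Jorgensen--Pedersen criterion on a neighborhood of $0$, and conclude by analyticity of $Q$ (entire, since $\mu$ has compact support). The coprimality $\gcd(a_n,b_n)=1$ and the oddness of $d_n'$ would be used here to rule out extra zeros of $m_{D_n}$ that could obstruct the adjustment.
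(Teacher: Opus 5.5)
The algebraic part of your proposal is correct. The sets $C_n=\{0,p_n/2^{l_n+1}\}$ and $C_n=\{0,\pm p_n/3\}$ give Hadamard triples, each $\Lambda_n$ is a spectrum of the $n$-th partial convolution, and $E(\bigcup_n\Lambda_n)$ is orthogonal.

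\textbf{The gap is completeness.} This is the whole difficulty, and your proposal only asserts it. What you need is an \emph{equi-positivity} property of the tails. There must be a constant $\epsilon_0>0$ such that for every $n$ and every $x\in[0,1)$ there is $k\in\mathbb{Z}$ (with $k=0$ if $x=0$) satisfying $|\hat\nu_{>n}(x+k)|\geqslant\epsilon_0$. This has to be used inside a nested construction along a sparse subsequence $n_j$, in which only the new points of $\Lambda_{n_j}$ are shifted by multiples of $p_1\cdots p_{n_j}$. Dominated convergence then gives $Q=1$ near $0$, and analyticity finishes.

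Equicontinuity near $0$ only handles the old points, whose rescaled images are tiny. A new point has a rescaled image whose residue mod $1$ is fixed and can lie anywhere in $[0,1)$, so integer shifts cannot bring it near $0$.

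Symmetric digits do not help. Take $D_n=\{0,p_n-1\}$ with $p_n$ even for all $n$, which (S2) allows with $l_n=0$. Then $\pm p_1\cdots p_n/2\in\Lambda_n$ has rescaled image $\pm\frac12$, and $|\hat\nu_{>n}(\pm\frac12)|\leqslant\sin(\pi/(2p_{n+1}))$. So when $p_{n+1}\to\infty$ the plain $\bigcup_n\Lambda_n$ loses this mass, and adjustments are unavoidable. Along such a subsequence the tails also converge weakly to $\frac12(\delta_0+\delta_1)$, whose Fourier transform vanishes on all of $\frac12+\mathbb{Z}$. Hence the shift $k$ cannot come from a soft compactness argument either.

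Finally, \eqref{eq1.1-1} does not make the tail supports uniformly bounded. The choice $p_n=2$, $D_n=\{0,2n+1\}$ satisfies (S2) and \eqref{eq1.1-1}, yet the support of $\nu_{>n}$ has length $2n+5$. Uniform boundedness needs something like $D_n\subset\{0,\dots,p_n-1\}$.

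The paper only cites this theorem and gives no proof of it. Its own sufficiency proof, however, shows where the missing ingredient sits. Proposition \ref{prop5.13} places the tails in $[0,1]$ and uses the \emph{boundedness} of the digits to exclude the weak limit $\frac12(\delta_0+\delta_1)$. It then invokes the admissibility and equi-positivity theorems of An--Fu--Lai. Proposition \ref{prop5.16} and the $Q_k$-increment argument complete the proof.

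Under (S1)--(S3) the digits may be unbounded, so that route is not available as it stands. You must either prove equi-positivity directly for tails built from two- and three-element Hadamard digit sets, or add hypotheses that make the admissibility route work. For the bad example above one can take $k$ with $(\frac12+k)/p_{n+1}\approx\frac14$, making the first two factors roughly $|\sin(\pi t)\cos(\pi t)|$. A proof must produce such choices with bounds uniform in $n$ and $x$. Your closing remark that $\gcd(a_n,b_n)=1$ and the oddness of $d_n'$ rule out obstructing zeros does not supply this estimate.
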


Then An, Fu and Lai \cite{AFL2019} extended this result to more general cases.

However, all the above results require that there exist $C_n\subset\mathbb{Z}$ such that $(p_n,\,D_n,\ C_n)$ are Hadamard triples for all $n\ge1$. A natural question arise:

{\bf What will happen if we don't have the existence of Hadamard triples? }

Motivated by this, Deng and Li \cite{Deng2023}, Cao, Deng, Li and Wu\cite{Cao2024} established the necessary and sufficient conditions for $\mu_{\mathbb{P},\,\mathbb{D}}$ to be a spectral measure when $\#D_n=2$ for all $n\ge1$ and $\bigcup\limits_{n=1}^\infty D_n$ is bounded. To formulate their characterization, we first recall the multiplicity of a prime factor in an integer.

For any $n\in\mathbb{Z}$ and prime $m\in\mathbb{N}$, we denote by $\nu_m(n)$ the greatest integer $k\geqslant0$ such that $m^k$ is a factor of $n$, i.e., $\nu_m(n)=\max\{k\in\mathbb{Z}:\,m^k\mid n\}$ and write $\nu_m(0)=\infty$. Furthermore, for $\frac{a}{b}\in\mathbb{Q}$ with $a,b\in\mathbb{Z}$ and $b\ne0$, we define
\begin{equation*}
	\nu_m\Big(\frac{a}{b}\Big)=\nu_m(a)-\nu_m(b).
\end{equation*}

\begin{remark}
	If $m\in\mathbb{N}$ is not a prime number, then $\nu_m(a)-\nu_m(b)$ can not be determined uniquely by $\frac{a}{b}$. For example, $m=6$, $a=2$ and $b=6$. In this case, $\frac{2}{6}=\frac{1}{3}$, but $\nu_6(2)-\nu_6(6)=-1$ and $\nu_6(1)-\nu_6(3)=0$.
\end{remark}

\begin{theorem}\textup{\cite{Cao2024,Deng2023}}
	Let $\mathbb{P}:=\{p_n\}_{n=1}^{\infty}$ be a sequence of integers and $\mathbb{D}:=\{D_n\}_{n=1}^{\infty}$ be a sequence of sets of the form $\{0,~d_n\}\subset\mathbb{Z}$ satisfying $0<|d_n|<|p_n|$ for all $n\geqslant1$. Assume further that the sequence $\{d_n\}_{n=1}^{\infty}$ is bounded. Then, $\mu_{\mathbb{P},\,\mathbb{D}}$ is a spectral measure if and only if $k_i\neq k_j$ for all $j>i>1$, where
	\begin{equation*}
		k_{n}:=\nu_2\big(\frac{p_1p_2\cdots p_n}{2d_n}\big)=\nu_{2}(p_1p_2\cdots p_n)-\nu_2(2d_n),\qquad n=1,~2,~3,~\cdots.
	\end{equation*}		
\end{theorem}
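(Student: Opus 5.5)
The plan is to reduce everything to the zero set of the Fourier transform and then argue by orthogonality (for necessity) and by a tree construction (for sufficiency). Write $P_n=p_1\cdots p_n$ and $D_n=\{0,d_n\}$. Then
\[
\hat\mu_{\mathbb P,\mathbb D}(\xi)=\prod_{n\ge1}\frac{1+e^{2\pi i d_n\xi/P_n}}{2},
\]
so the zero set is
\[
\mathcal Z=\bigcup_{n\ge1}\frac{P_n}{2d_n}(2\mathbb Z+1).
\]
Every element of $\frac{P_n}{2d_n}(2\mathbb Z+1)$ has $2$-adic valuation exactly $k_n$. A set $\Lambda\ni0$ is orthogonal iff $\Lambda-\Lambda\subset\mathcal Z\cup\{0\}$. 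Because the $d_n$ are bounded, $\nu_2(d_n)$ is bounded, so $k_n\to\infty$ in the essential cases.

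\textbf{Sufficiency.} Assume $k_i\ne k_j$ for all $j>i>1$. I would build a spectrum level by level. Set $\Lambda_1=\{0,\lambda_1\}$ with $\lambda_1\in\frac{P_1}{2d_1}(2\mathbb Z+1)$. Then define
\[
\Lambda_n=\Lambda_{n-1}+\{0,\lambda_n\},\qquad \lambda_n\in\frac{P_n}{2d_n}(2\mathbb Z+1).
\]
The elements have to be taken in the dual form $\sum_j \epsilon_j \frac{P_j}{2d_j}t_j$ with odd $t_j$. Since the values $k_j$ for $j\ge2$ are pairwise distinct, a difference of two such sums has $2$-adic valuation equal to the minimal $k_j$ at which the digits differ. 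Only $k_1$ may coincide with some other $k_j$, and that single coincidence can be absorbed by choosing the odd multiplier at level 1 compatibly, which explains why index $1$ is excluded from the condition. One also has to check that the difference actually lies in $\frac{P_j}{2d_j}(2\mathbb Z+1)$ and not merely has the right valuation. For this I would reorder by the value $k_j$ and use the integrality of $P_m/P_j$ together with boundedness of $d_n$, replacing the factors $d_j$ by a common bounded odd part. Completeness I would obtain from the standard Jorgensen–Pedersen / Łaba–Wang criterion:
\[
Q_\Lambda(\xi)=\sum_{\lambda\in\Lambda}|\hat\mu(\xi+\lambda)|^2\equiv1.
\]
To get this, I would show a uniform lower bound on the tails $|\hat\mu_{>n}|$ on the sets $\Lambda_n+$bounded region. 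This is achieved by choosing the $t_j$ in a bounded range, and the equi-positivity argument of An–He type then works because the $d_n$ are bounded.

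\textbf{Necessity.} Suppose $k_i=k_j$ for some $1<i<j$, and assume $\Lambda$ is a spectrum containing $0$. I would consider the level-$i$ and level-$j$ factors together. By a standard reduction (decomposing $\mu=\mu_{i-1}\ast\nu$ and using that the first $i-1$ levels can be factored out), the spectrality of $\mu$ forces the tail measure to admit a spectrum whose differences must cover both blocks. Since both blocks give differences of the same valuation $k_i$, any three elements of $\Lambda$ pairwise differing with valuation $k_i$ would be impossible: odd$+$odd is even, which raises the valuation. The consequence is that the "mass" of $\hat\mu$ that must be absorbed at the two scales cannot be, and $Q_\Lambda<1$ somewhere. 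This is the argument of Deng–Li / Cao–Deng–Li–Wu.

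I expect the main obstacle to be making necessity rigorous: one must show that an arbitrary spectrum really reflects the tree structure. The route I would try first is to pass to a weak limit of translated and rescaled spectra, following Deng–Li, so that one obtains a spectrum of a measure in which the coincident levels are adjacent. There I would invoke the classical fact that $\delta_{\frac12\{0,1\}}\ast\delta_{\frac12\{0,1\}}$-type blocks with equal scales are not spectral.
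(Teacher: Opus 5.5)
\textbf{The step that fails: the exemption of index $1$.} You claim a coincidence $k_1=k_j$ can be ``absorbed by choosing the odd multiplier at level $1$ compatibly.'' It cannot, and the statement as quoted, with $j>i>1$, is false. Take $D_n=\{0,1\}$ for all $n$, $p_1=2$, $p_2=3$ and $p_n=2$ for $n\geqslant3$. Then $k_1=k_2=0$ and $k_n=n-2$ for $n\geqslant2$, so $k_i\neq k_j$ whenever $j>i>1$. However, $\mathcal{Z}(\hat\chi_2)=3(2\mathbb{Z}+1)\subset2\mathbb{Z}+1=\mathcal{Z}(\hat\chi_1)$. Hence $\mathcal{Z}(\widehat{\mu_{\mathbb{P},\,\mathbb{D}}})=\mathcal{Z}(\hat\omega_2)$, where $\omega_2$ is the measure with $\chi_2$ deleted, as in \eqref{eq4.9}. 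Every orthogonal set of $\mu_{\mathbb{P},\,\mathbb{D}}=\omega_2\ast\chi_2$ is therefore a bi-zero set of $\omega_2$, and by Lemma~\ref{le2.8} it cannot be a spectrum. So this measure, which is normalized Lebesgue measure on $[0,\frac13]\cup[\frac12,\frac56]$, is not spectral.

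The correct condition is $k_i\neq k_j$ for all distinct $i,j\geqslant1$. This is exactly what Theorem~\ref{th1.3} gives when every $q_n=2$: only \eqref{eq1.4} survives, for all distinct $m,n$. That is how the paper recovers the cited result. Nothing in the paper's argument singles out level $1$:
\begin{itemize}
\item Theorem~\ref{th4.6} takes the level $n$ with the smallest $k_n$ and shows that $\Lambda\cap2^{k_n+1}\mathbb{Z}$ is a spectrum of $\omega_n$.
\item All differences of this set have $2$-adic valuation $>k_n$, so they miss $\mathcal{Z}(\hat\chi_j)$ for any $j\neq n$ with $k_j=k_n$. This contradicts Lemma~\ref{le2.8} applied to $\omega_n=\omega_{n,j}\ast\chi_j$.
\item Iterating, deleting levels in order of increasing $k$, gives strict monotonicity of the whole reordered sequence.
\end{itemize}
Your sufficiency construction must not try to accommodate a repeated value at level $1$, and your necessity argument must exclude it.

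\textbf{Necessity: the working mechanism is missing.} The ``three elements of valuation $k_i$'' remark gives no contradiction by itself, since nothing forces three such elements. The weak-limit route is only named, not carried out. Stripping off the first $i-1$ levels by index does not bring the repeated value to the bottom. What is needed is the refined deletion statement above, which rests on Lemma~\ref{le3.3}. Split $\Lambda$ into classes modulo $2^{k_n+1}L_n$. Using the compatible pairs $(d_n^{-1}D_n,\,\frac{1}{2^{k_n+1}L_n}\{i,\,i+2^{k_n}L_n\})$ together with Lemma~\ref{le2.6}, any choice of one class from each pair is a spectrum of $\omega_n$.

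\textbf{Sufficiency: the completeness step is not addressed.} The divisibility problem you flag is solved by taking $\lambda_n\in U_n=2^{k_n}\theta_2(p_1\cdots p_{l_{2,n}})(2\mathbb{Z}+1)$ with $l_{2,n}=\max\{j\geqslant n:\,k_j\leqslant k_n\}$ (Lemma~\ref{le5.9} (iv), Theorem~\ref{th5.11}). But the integer shifts in the An--He/An--Fu--Lai completeness argument must then keep every element inside $\sum_i(\{0\}\cup U_i)$. For elements built only from levels with $l_{2,j}>n$, this forces shifts by multiples of $p_1\cdots p_{H(n)}$, where here $H(n)=\max_{i\leqslant n}l_{2,i}$ (Theorem~\ref{th5.11} (iv), Proposition~\ref{prop5.16}). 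You therefore need a separate uniform lower bound for $\prod_{m=n+1}^{H(n)}|m_{D_m}(\lambda/p_1\cdots p_m)|$. With two-element digits only, $H(n)-n\leqslant2(G-1)$ by Lemma~\ref{le5.9} (i), so this is a bounded-length estimate. It still dictates the scales at which the $\lambda_j$ must be chosen, and ``choosing the $t_j$ in a bounded range'' does not supply it.
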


This characterization was subsequently extended by Xiong \cite{Xiong2023} to the case: $\#D_n=3$ for all $n\ge1$.

\begin{theorem}\textup{\cite{Xiong2023}}
	Let $\mathbb{P}:=\{p_n\}_{n=1}^{\infty}$ be a sequence of integers and $\mathbb{D}:=\{D_n\}_{n=1}^{\infty}$ with $D_n=\{0,~a_n,~b_n\}\subset\mathbb{Z}$ such that $0<|a_n|<|b_n|<|p_n|$ for all $n\geqslant1$. If $\{b_n\}_{n=1}^{\infty}$ is bounded. Then, $\mu_{\mathbb{P},\,\mathbb{D}}$ is a spectral measure if and only if $\big\{\frac{a_i}{\gcd(a_i,~b_i)},~\frac{b_i}{\gcd(a_i,~b_i)}\big\}\equiv\{1,~-1\}(\hspace{-0.5em}\mod{3})$ and $k_i\neq k_j$ for all $j>i>1$, where
	\begin{equation*}
		k_{n}:=\nu_3\Big(\frac{p_1p_2\cdots p_n}{3\gcd(a_n,~b_n)}\Big)=\nu_{3}(p_1p_2\cdots p_n)-\nu_3(3\gcd(a_n,~b_n)),\qquad n=1,~2,~3,~\cdots.
	\end{equation*}	
\end{theorem}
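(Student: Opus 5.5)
We follow the scheme of \cite{Deng2023,Cao2024} as adapted to three-element digit sets, and prove the two directions separately. Write $D_n=\{0,a_n,b_n\}$ and $g_n=\gcd(a_n,b_n)$. Since $|p_1|\ge 2$, condition \eqref{eq1.1-1} holds, and
\begin{equation*}
\widehat{\delta_{D_n}}(\xi)=\tfrac13\bigl(1+e^{2\pi i a_n\xi}+e^{2\pi i b_n\xi}\bigr).
\end{equation*}
A sum of three unit vectors vanishes only if they form an equilateral triangle. Hence $\widehat{\delta_{D_n}}(\xi)=0$ if and only if $a_n\xi\equiv\pm\frac13$ and $b_n\xi\equiv\mp\frac13 \pmod 1$. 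From this we get:
\begin{itemize}
\item if $\{a_n/g_n,b_n/g_n\}\not\equiv\{1,-1\}\pmod 3$, the zero set is empty;
\item otherwise $Z(\widehat{\delta_{D_n}})=\frac{1}{3g_n}(\mathbb{Z}\setminus3\mathbb{Z})$.
\end{itemize}
Consequently
\begin{equation*}
Z(\widehat{\mu_{\mathbb{P},\mathbb{D}}})=\bigcup_{n\ge1}\frac{p_1\cdots p_n}{3g_n}(\mathbb{Z}\setminus3\mathbb{Z}),
\end{equation*}
and the $3$-adic valuation of the elements in the $n$-th piece is exactly $k_n$.

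\textbf{Necessity.} Let $\Lambda\ni0$ be a spectrum. Orthogonality forces $\Lambda-\Lambda\subset Z(\widehat\mu)\cup\{0\}$.
\begin{itemize}
\item If some $D_n$ violates the mod-$3$ condition, that factor has no zeros. The remaining factors form a measure $\nu$ with $\mu=\delta_{(p_1\cdots p_n)^{-1}D_n}*\nu$, and one shows that $\Lambda$ cannot be complete: the nonvanishing factor leaves a positive deficit in the Parseval identity $Q(\xi)=\sum_{\lambda}|\widehat\mu(\xi+\lambda)|^2\equiv1$.
\item Otherwise, every difference $\lambda-\lambda'$ has $3$-adic valuation $k_n$ for some $n$, and $\Lambda$ sits in a tree structure according to residues mod $3^{k}$. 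Suppose $k_i=k_j$ for some $1<i<j$. Then two levels share the same mod-$3$ ``slot''. Inside $\Lambda$ one can realise at most $3$ branches at that valuation where $9$ would be needed, so one level's mass is lost.
\end{itemize}

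To make the second point rigorous, I will use the standard criterion. Bounded digits give the uniform lower bound $|\widehat{\delta_{D_n}}(\xi)|\ge c>0$ on compact sets away from zeros, hence equi-positivity of the tails $\nu_{>n}$. This lets us pass to the limit and derive $Q(\xi)\le 1-\epsilon$ at some $\xi$. The main difficulty is handling $i=1$ versus $i>1$: $k_1$ may coincide with others harmlessly, since $p_1$ only rescales. We treat this by replacing $\mu$ with $\mu\circ(p_1\cdot)$, which is spectral iff $\mu$ is. Boundedness of $\{b_n\}$ gives finitely many possible $g_n$, and hence finitely many $3$-adic shifts.

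\textbf{Sufficiency.} Assume the mod-$3$ condition and that the $k_n$, $n\ge2$, are distinct. Reorder levels by increasing $k_n$ and build
\begin{equation*}
\Lambda=\Bigl\{\sum_n 3^{k_n}c_n u_n:\ c_n\in\{0,\pm1\}\ \text{finitely many nonzero}\Bigr\},
\end{equation*}
with suitable $3$-adic units $u_n$. Mutual orthogonality then follows from the valuation computation. Completeness is the main obstacle. Since no Hadamard triples are available when $3\nmid p_n/g_n$ for individual $n$, I would instead regroup blocks of consecutive levels between successive increases of $k_n$ into composite digit sets, and show that these blocks do admit Hadamard-type spectra mod $3^{k}$. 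I would then verify $Q\equiv1$ via the Jorgensen--Pedersen / An--He criterion, namely that
\begin{equation*}
\inf_n \inf_{\lambda\in\Lambda_n}\bigl|\widehat{\nu_{>n}}\bigl((p_1\cdots p_n)^{-1}\lambda\bigr)\bigr|>0
\end{equation*}
along a subsequence. This positivity is secured by choosing the representatives $\lambda$ small (balanced digits $\{0,\pm1\}$) and using bounded digits, in the same way as Xiong \cite{Xiong2023} and the two-element case.
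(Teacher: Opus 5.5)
Your treatment of the index $1$ is wrong, and the statement read literally with $j>i>1$ is in fact false. Rescaling $\mu$ (Lemma~\ref{le2.7}) multiplies the zero set by a constant, so it shifts \emph{every} $k_n$ by the same amount; this is the computation in the proof of Lemma~\ref{le2.8-1}. It therefore can never make a coincidence $k_1=k_j$ harmless. Concretely, take $p_1=3$, $p_2=4$, $p_n=3$ for $n\geq3$, and $D_n=\{0,1,2\}$ for all $n$. Then $k_1=k_2=0$ and $k_n=n-2$ for $n\geq2$, so all hypotheses of the quoted statement hold. But $\mu_{\mathbb{P},\mathbb{D}}$ is normalized Lebesgue measure on $[0,\frac14]\cup[\frac13,\frac7{12}]\cup[\frac23,\frac{11}{12}]$, with $\mathcal{Z}(\widehat{\mu_{\mathbb{P},\mathbb{D}}})=(\mathbb{Z}\setminus3\mathbb{Z})\cup(4\mathbb{Z}\setminus\{0\})$. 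Two points of a bi-zero set that agree mod $3$ must therefore agree mod $12$. So every bi-zero set has density at most $\frac14<\frac34$ and, by Landau's density theorem, is not a spectrum; Theorem~\ref{th4.6}(ii) also rules this out, since $k_{3,1}=k_{3,2}$ is minimal. The correct condition is $k_i\neq k_j$ for all distinct $i,j\geq1$, as in Theorem~\ref{th1.3} with $X_2=\emptyset$. You should prove that version and drop the special treatment of $i=1$.

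Beyond this, both directions lack their key step; the mod-$3$ part of necessity is fine, since it is Lemma~\ref{le2.8}. For distinctness, counting branches at one $3$-adic level ignores that differences may be zeros of arbitrarily deep factors. Equi-positivity of tails gives \emph{lower} bounds on $|\widehat{\mu}_{>n}|$; it is a completeness tool and does not produce a deficit in $Q$. What is needed is the peeling argument of Lemma~\ref{le3.3} and Theorem~\ref{th4.6}. Split $\Lambda$ by residues mod $3^{k_n+1}L_n$. Then use $Q_\Lambda\equiv1$, the compatible pairs $(d_n^{-1}D_n,\frac{1}{3^{k_n+1}L_n}\Delta_{n,i})$ and Lemma~\ref{le2.6} to show that $\Lambda\cap3^{k_n+1}\mathbb{Z}$ is a spectrum of $\omega_n$ for the $n$ minimizing $k_n$. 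Lemma~\ref{le2.8} applied to $\omega_n=\omega_{n,j}\ast\chi_j$ then forces $k_j>k_n$, and one iterates.

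For sufficiency, orthogonality is not a pure valuation computation, because $\mathcal{Z}(\hat\chi_n)=3^{k_n}L_n(\mathbb{Z}\setminus3\mathbb{Z})$. By Corollary~\ref{co3.6}, the elements of a spectrum $\Lambda\ni0$ lying in $\mathcal{Z}(\hat\chi_n)$ must be divisible by $L_j$ for every $j$ with $k_j<k_n$. This involves $p_i$ up to $l_{3,n}$ from \eqref{eq5.8}, possibly far beyond $n$, which is why the paper uses $U_n$ with $b_{3,n}=l_{3,n}$ in Theorem~\ref{th5.11}. A shift by an element of $p_1\cdots p_n\mathbb{Z}$ generally destroys these divisibilities. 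So the integer shifts that make $|\widehat{\mu}_{>n}(\lambda/(p_1\cdots p_n))|$ large must be taken in $p_1\cdots p_{H(n)}\mathbb{Z}$ (Theorem~\ref{th5.11}(iv), Proposition~\ref{prop5.16}). Your regrouping into finite blocks therefore only covers the case $H(n)=n$ infinitely often, which is the first case of Proposition~\ref{prop7.13}. Condition {\bfseries (C6)} occurs already with all $\#D_n=3$: e.g.\ $p_n=9$, $D_n=\{0,1,2\}$ for odd $n$ and $p_n=56$, $D_n=\{0,27,54\}$ for even $n$, where $k_{2m-1}=2m-1$, $k_{2m}=2m-4$ and $l_{3,2m-1}=2m+2$. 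There no initial segment $\{1,\dots,N\}$ is closed under $n\mapsto l_{3,n}$. One must then bound $\prod_{n_2<j\leq H(n_2)}|m_{D_j}(\lambda/(p_1\cdots p_j))|$ from below uniformly at points that are not small. That is the content of Propositions~\ref{prop6.10}--\ref{prop6.9}, and your proposal has nothing in its place.
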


Nevertheless, although the admissibility assumption is no longer required, both of the above results impose a restriction that all digit sets have the same cardinality. In this paper, we remove this restriction and investigate the spectrality of Moran-type measures generated by digit sets with varying cardinalities.

In this paper, we study pairs of sequences $(\mathbb{P},\,\mathbb{D})$ satisfying the following conditions.
\begin{enumerate}	
	\item[\bfseries (C1$^{\prime}$)] For every $n\geqslant1$, $(D_n-D_n)\subset\{0,\,\pm1,\,\pm2,\,\cdots,\,\pm(|p_n|-1)\}$ and $\#D_n\in\{2,\,3\}$, where $\#D_n$ denotes the cardinality of the set $D_n$.
	\item[\bfseries (C2$^{\prime}$)] The set $\cup_{n=1}^{\infty}D_n$ is bounded.
\end{enumerate}
Without loss of generality, we may assume $D_n=\{a_{n},\,b_{n}\}$ if $\#D_n=2$ and $D_n=\{a_{n},\,b_{n},\,c_{n}\}$ if $\#D_n=3$, where $a_{n},\,b_{n},\,c_{n}\in\mathbb{Z}$.

Let $A\subset\mathbb{Z}$ be a subset of integers with $\#A\geqslant2$. We denote by $\gcd(A)$ the greatest common divisor of $A$, defined as $\gcd(A):=\max\{d\in\mathbb{N}:\,d\mid a~\mbox{for~all}~a\in A\}$. Clearly, $\gcd(A-a)$ is independent of the choice $a\in A$, i.e., $\gcd(A-a)=\gcd(A-b)$ for $a\neq b\in A$. Let
\begin{equation}\label{eq1.2-1}
	d_n:=\gcd(D_{n}-a_{n}),\qquad\forall\,n\geqslant1.
\end{equation}
If $0\in D_n$, it is clear $\gcd(D_{n}-a_{n})=\gcd(D_{n})$. Hence
\begin{equation}\label{eq1.2-2}
	d_n=\gcd(D_{n})
\end{equation}
when $0\in D_n$.

We denote by $q_n$ the cardinality of $D_n$ and by $\widetilde{q}_n$ the integer in the set $\{2,\,3\}$ that is not equal to $q_n$, i.e.,
\begin{equation*}
	q_n:=\#D_n \mbox{~~and~~} \widetilde{q}_n\in\{2,\,3\}\setminus \{q_n\},\qquad\forall n\geqslant1.
\end{equation*}
Moreover, we define $\min A$ and  $\max A$ as the minimum and maximum of $A$, respectively; that is,
\begin{equation*}
	\min A:=\min\{a:a\in A\}~~\mbox{and}~~\max A:=\max\{a:a\in A\}.
\end{equation*}

Assume $(\mathbb{P},\,\mathbb{D})$ satisfies {\bfseries (C1$^{\prime}$)}. For any integer $n\geqslant1$, it is well known that the zero set of the Fourier transform of $\delta_{p_1^{-1}p_2^{-1}\cdots p_n^{-1}D_n}$ is $\mathcal{Z}(\hat\delta_{p_1^{-1}p_2^{-1}\cdots p_n^{-1}D_n})=\frac{p_1p_2\cdots p_n(\mathbb{Z}\setminus q_n\mathbb{Z})}{d_nq_n}$. We denote by $k_n$ the number of $q_n$-factor in $\frac{p_1p_2\cdots p_n}{d_nq_n}$, i.e.,
\begin{equation}\label{eq1.2}
	k_n:=\nu_{q_n}\Big(\frac{p_1p_2\cdots p_n}{d_nq_n}\Big  )=\nu_{q_n}(p_1p_2\cdots p_n)-\nu_{q_n}(d_nq_n).
\end{equation}
If it is necessary to emphasize the cardinality of $D_n$, we write $k_n$ by $k_{q_n,n}$. It is clear that
\begin{equation}\label{eq1.3-1}
	k_n=\nu_{q_n}(p_1p_2\cdots p_n)-\nu_{q_n}(d_nq_n)=\nu_{q_n}(p_1p_2\cdots p_n)-\nu_{q_n}(d_n)-1.
\end{equation}
Moerover, there is a rational number $L_n$ such that $\nu_{q_n}(L_n)=0$ and
\begin{equation}\label{eq1.3}
	\frac{p_1p_2\cdots p_n}{d_nq_n}=q_n^{k_n}L_n,\qquad n=1,\,2,\,\cdots.
\end{equation}
It is easy to see that
\begin{equation}\label{eq1.4-1}
	\nu_{\widetilde{q}_n}(L_n)=\nu_{\widetilde{q}_n}(p_1p_2\cdots p_n)-\nu_{\widetilde{q}_n}(d_nq_n)=\nu_{\widetilde{q}_n}(p_1p_2\cdots p_n)-\nu_{\widetilde{q}_n}(d_n).
\end{equation}
%For instance, for $\mathcal{Z}(\hat\delta_{\frac{1}{2^5\cdot5}\{0,\,7\}})=\frac{2^5\cdot5(2\mathbb{Z}+1)}{2\cdot7}$, the primary factor is $2$,  the number of primary factors is $4$,  the secondary factor is $3$ and the number of secondary factors is $0$.

%In this paper, we show that the spectrality of infinite mixed convolutions of two-element and three-element digit sets not only depends on the numbers of principal factors $k_n$, and is also related to the number of secondary factors.

Now we state our main theorem.

\begin{theorem}\label{th1.3}
	Let $\mathbb{P}:=\{p_n\}_{n=1}^{\infty}$ be a sequence of integers and $\mathbb{D}:=\{D_n\}_{n=1}^{\infty}$ be a sequence of subsets of ${\mathbb Z}$. If $(\mathbb{P},\,\mathbb{D})$ satisfies the conditions {\bfseries (C1$^{\prime}$)} and {\bfseries (C2$^{\prime}$)}. Then $\mu_{\mathbb{P},\,\mathbb{D}}$ as defined by $\eqref{eq1.1}$ is a spectral measure if and only if  $\{\frac{b_{n}-a_{n}}{d_n},\frac{c_{n}-a_{n}}{d_n}\}\equiv\{-1,\,1\}(\hspace{-0.5em}\mod~3)$ when $q_n=3$ and, for any two distinct numbers $m,\,n\in\mathbb{Z}^+$, one has
	\begin{flalign}
		\rm(\romannumeral1)~&k_m\ne k_n~when~q_m=q_n,\label{eq1.4}\\
		\rm(\romannumeral2)~&either~ k_{q_m,m}\geqslant \nu_{q_m}(L_{q_n,n})~or~ k_{q_n,n}\geqslant\nu_{q_n}(L_{q_m,m})~when~q_m\ne q_n.\label{eq1.5}&
	\end{flalign}
\end{theorem}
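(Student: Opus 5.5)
Throughout, write $N_n:=p_1p_2\cdots p_n$. The first step is to normalize. Translating each $D_n$ by $a_n$ only multiplies $\hat\mu_{\mathbb{P},\mathbb{D}}$ by a unimodular factor, so spectrality is unchanged. Hence we may assume $0\in D_n$ and $d_n=\gcd(D_n)$. We may also replace $D_n$ by $D_n/d_n$ and absorb $d_n$ into the scale: only $N_n/d_n$ matters, through $\mathcal{Z}(\hat\delta_{N_n^{-1}D_n})$.

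With this normalization,
\[
\mathcal{Z}(\hat\mu)=\bigcup_{n}\frac{N_n}{d_nq_n}(\mathbb{Z}\setminus q_n\mathbb{Z})=\bigcup_n q_n^{k_n}L_n(\mathbb{Z}\setminus q_n\mathbb{Z}),
\]
provided each three-element $D_n$ satisfies the mod $3$ condition. This identity holds because a three-element set $\{0,a,b\}$ with $\gcd(a,b)=1$ has $\mathcal{Z}(\hat\delta)$ equal to a shifted lattice exactly when $\{a,b\}\equiv\{1,-1\}\pmod 3$. Otherwise the zero set of the mask contains non-lattice points. In that case a standard argument shows that mutual orthogonality forces $\Lambda-\Lambda$ into a set too sparse for completeness, and we show non-spectrality there first.

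\textbf{Necessity.} Suppose $\Lambda\ni0$ is a spectrum, so $\Lambda-\Lambda\subset\mathcal{Z}(\hat\mu)\cup\{0\}$. For (i), suppose $k_m=k_n$ with $q_m=q_n=q$ and $m<n$. We follow the two-element and three-element arguments of Deng--Li, Cao--Deng--Li--Wu and Xiong. Split $\mu=\mu_{n}*\nu_{>n}$. The two blocks $m$ and $n$ then produce zero sets $q^{k}L_m(\mathbb{Z}\setminus q\mathbb{Z})$ and $q^kL_n(\mathbb{Z}\setminus q\mathbb{Z})$ with the same $q$-adic level. A counting argument modulo $q^{k+1}$ shows the residues of $\Lambda$ can realise at most $q$ classes at that level where $q^2$ would be needed. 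Comparing with the mass distribution of $\mu_n$, via the tail estimate $\sum_\lambda|\hat\mu(\xi+\lambda)|^2\equiv1$ and the boundedness (C2$'$), we conclude that $\sum_{\lambda}|\hat\mu(\xi+\lambda)|^2<1$ for suitable $\xi$.

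For (ii), take $q_m=2$ and $q_n=3$. The mixed difference sets $\lambda-\lambda'$ lie in $2^{k_m}L_m(2\mathbb{Z}+1)$ and in $3^{k_n}L_n(\mathbb{Z}\setminus3\mathbb{Z})$. If $k_m<\nu_2(L_n)$ and $k_n<\nu_3(L_m)$, then any element of the first zero set has $\nu_3\ge\nu_3(L_m)>k_n$, and any element of the second has $\nu_2\ge\nu_2(L_n)>k_m$. So neither zero set can contain a difference lying in the other's pattern. One then shows the orthogonal set cannot simultaneously split into $2$ classes at $2$-level $k_m$ and $3$ classes at $3$-level $k_n$, because an element separating classes for one prime has valuation too large for the other. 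This again gives incompleteness, via the Jorgensen--Pedersen criterion, restricted to the finite convolution $\delta_{N_m^{-1}D_m}*\delta_{N_n^{-1}D_n}$ and an extension lemma that passes spectrality down to finite subconvolutions up to a uniformly bounded tail.

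\textbf{Sufficiency.} Assuming (i), (ii) and the mod $3$ condition, we construct a spectrum. First reorder the digit blocks by an ordering of their levels: for $q_n=2$ use $k_n$, for $q_n=3$ use $k_n$, and interleave using (ii), which says that for each mixed pair one of the two sits "below" the other in the relevant prime.

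With this ordering, define integer candidate sets $C_n$. For $q_n=2$ take $C_n=\{0,\,2^{k_n}L_n\}$, and for $q_n=3$ take $C_n=\{0,\pm3^{k_n}L_n\}$, each adjusted by an integer unit so that it lies in $\mathbb{Z}$. Condition (i) gives distinct $q$-adic levels within each prime. Condition (ii) ensures the mixed differences $c-c'$ with $c\in C_m$, $c'\in C_n$ fall in $\mathcal{Z}(\hat\mu)$, since the block with the larger valuation is absorbed by the other's $L$ factor. Then $\Lambda=\sum_n C_n$ (finite sums) is orthogonal.

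Completeness is the main obstacle. Without Hadamard triples the $C_n$ do not give tight blocks, so we cannot cite Łaba--Wang or An--Fu--Lai directly. We plan to verify $Q(\xi)=\sum_{\lambda\in\Lambda}|\hat\mu(\xi+\lambda)|^2\equiv1$ by the Strichartz/An--He method. Write $\mu=\mu_n*\mu_{>n}$ with $\hat\mu_{>n}(\xi)=\hat\mu'(\xi/N_n)$, and show that $\mu_{>n}$'s transforms are bounded below uniformly on the relevant compact sets. This uniform lower bound (an "equi-positive family" argument) is where (C2$'$) is used, since bounded digits make the family $\{\mu_{>n}\}$ compact in the weak topology. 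The remaining hard point is to check that the rescaled sets $N_n^{-1}\Lambda_{\le n}$ stay in a bounded window despite the reordering forced by (ii). We expect to handle this by grouping consecutive blocks into finite "super-levels" within which (ii) makes the combined block spectral, and then applying the bounded-tail criterion to the sequence of super-levels.
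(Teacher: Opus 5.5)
\textbf{Sufficiency.} Your sufficiency construction breaks at its first step. With $C_n=\{0,q_n^{k_n}L_n\}$ (resp.\ $\{0,\pm3^{k_n}L_n\}$), the set $\sum_nC_n$ is not even orthogonal, already when only one cardinality occurs and (i) holds. Take $p_1=2^5$, $p_2=5$, $D_1=\{0,1\}$, $D_2=\{0,4\}$, and $p_n=2$, $D_n=\{0,1\}$ for $n\ge3$. Then $\mathcal{Z}(\hat\chi_1)=16(2\mathbb{Z}+1)$, $\mathcal{Z}(\hat\chi_2)=20(2\mathbb{Z}+1)$, and $\mathcal{Z}(\hat\chi_n)=2^{n+2}\cdot5(2\mathbb{Z}+1)$ for $n\ge3$. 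So $L_1=1$, $L_2=5$, and your $C_1+C_2=\{0,16,20,36\}$ contains $20-16=4\notin\mathcal{Z}(\hat\mu)$. The paper's $\{0,2^4\theta_2(p_1p_2)\}+\{0,20\}=\{0,20,80,100\}$ does work. (These generators are already integers, so your ``adjustment by a unit'' does nothing; if you meant an unspecified multiplier prime to $q_n$, choosing it is exactly the content of the construction.)

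The generator of block $n$ must carry the non-$q_n$ part of later $p_i$, and in the mixed case the right amount is not detected pairwise. Orthogonality needs every finite sum $\sum_{j\in\mathcal{A}}x_j$ with $x_j\in(C_j-C_j)\setminus\{0\}$ to be a zero of $\hat\mu$. The Remark preceding the definition of $b_{q_n,n}$ shows that with $\max\{l_{q_n,n},\alpha_{q_n,n}\}$ one gets $x_1,x_2,x_3$ whose pairwise sums and differences are zeros, yet $x_1+x_2+x_3\notin\mathcal{Z}(\hat\mu)$. That is why the paper uses $U_n=q_n^{k_n}\theta_{q_n}(p_1\cdots p_{b_{q_n,n}})(\mathbb{Z}\setminus q_n\mathbb{Z})$ with $b_{q_n,n}=\max\{l_{q_n,n},\beta_{q_n,n}\}$. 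It then proves Theorem \ref{th5.11}(ii) by passing to the minimal-level index of each prime and applying Lemma \ref{le5.9}(vi).

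This choice is also what makes completeness hard, and the obstacle is not the one you name; a bounded window for $N_n^{-1}\Lambda_{\le n}$ is obtained simply by taking the next level large. Equi-positivity (Proposition \ref{prop5.13}) bounds $|\hat\mu_{>n}(x+h)|$ below only after an integer shift $h$. A shift by $N_nh$ can push points out of $\sum_j(\{0\}\cup U_j)$, so the paper shifts at level $H(n)=\max_{i\le n}b_{q_i,i}$, which can exceed $n$ for all large $n$ (Example \ref{e.p.5.3}). One then needs a uniform lower bound for $\prod_{n<i\le H(n)}|m_{D_i}(\lambda/N_i)|$. This comes from a careful choice of generators and the counting bounds of Lemma \ref{le5.9}(vii),(viii) (Propositions \ref{prop6.3}--\ref{prop6.9}); your super-level grouping does not provide it.

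\textbf{Necessity.} The necessity part has matching gaps.

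For (i), the one-cardinality arguments you cite work because the minimal level forces $\Lambda\subset q^{k_{\min}}\mathbb{Z}$. Then $\Lambda\cap q^{k_{\min}+1}\mathbb{Z}$ is a spectrum of $\omega_n$ for the minimal-level index $n$, and every other block of that cardinality sits strictly higher. Here Lemma \ref{le4.1} only gives $\Lambda\subset2^{k_{2,s_1}}\mathbb{Z}$ \emph{or} $\Lambda\subset3^{k_{3,t_1}}\mathbb{Z}$. The descent of Theorem \ref{th4.6} might a priori remove blocks of one cardinality only and never reach a repeated level in the other family. A residue count modulo $q^{k+1}$ cannot exclude this, because differences in $\Lambda$ may lie in zero sets of blocks of the other cardinality. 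The paper's remedy (Theorems \ref{th4.7}--\ref{th4.7+2}) runs as follows:
\begin{enumerate}
\item after a translation, fix $\lambda_1\in\Lambda\cap(\mathcal{Z}(\hat\chi_{t_1})\setminus\mathcal{Z}(\hat\omega_{t_1}))$, which exists by Lemma \ref{le2.8};
\item keep $\{0,\lambda_1\}$ inside each successive sub-spectrum, using the freedom in the $j_i$ of Lemma \ref{le3.3};
\item use $\nu_2(\lambda_1)<\infty$ to force the $2$-descent to stop and hand over to the $3$-descent, and symmetrically.
\end{enumerate}

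For (ii), no lemma passes spectrality of $\mu$ down to the finite subconvolution $\chi_m*\chi_n$; here that is a consequence of the theorem, not a tool for it. The missing ingredient is Corollary \ref{co3.6}: every spectrum through $0$ meets every $\mathcal{Z}(\hat\chi_n)$. Take $\lambda\in\Lambda\cap\mathcal{Z}(\hat\chi_m)$ and $\beta\in\Lambda\cap\mathcal{Z}(\hat\chi_n)$. Failure of (ii) gives exactly $\nu_2(\lambda-\beta)=k_{2,m}$ and $\nu_3(\lambda-\beta)=k_{3,n}$. Then (i), which must therefore be proved first, rules out $\mathcal{Z}(\hat\chi_j)$ for $j\ne m,n$, and the two strict inequalities rule out $j=m,n$.

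Finally, if the mod $3$ condition fails, the mask of $D_n$ has no zeros at all (Lemma \ref{le2.1}), not non-lattice ones. Non-spectrality then follows from Lemma \ref{le2.8}, since every orthogonal set would be a bi-zero set of $\omega_n$.
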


\begin{remark}
	If $q_n=q_1$ for all integers $n>1$, only \eqref{eq1.4} needs to be considered. This means that our result contains the results of \cite{Deng2023}, \cite{Cao2024} and \cite{Xiong2023} as special cases.
\end{remark}

The following two examples, one failing to satisfy \eqref{eq1.5} and the other satisfying it, illustrate the significance of \eqref{eq1.5} for the spectrality of Moran-type measures with varying cardinalities.

\begin{example}
	Let $\mu:=\delta_{\frac{1}{2^23^4}\{0,\,1\}}\bigast\delta_{\frac{1}{2^23^4}\frac{1}{2^23}\{0,\,3,\,6\}}$. It is clear that $\mathcal{Z}(\hat\delta_{\frac{1}{2^23^4}\{0,\,1\}})=2\cdot3^4(2\mathbb{Z}+1)$ and $\mathcal{Z}(\hat\delta_{\frac{1}{2^23^4}\frac{1}{2^23}\{0,\,3,\,6\}})=3^3\cdot2^4(3\mathbb{Z}\pm1)$. Thus $k_{2,1}=1<4=\nu_{2}(L_2)$ and $k_{3,2}=3<4=\nu_{3}(L_1)$. This means that this example does not meet the conditions of the formula \eqref{eq1.5}. On the other hand, for any $z_1\in\mathcal{Z}(\hat\delta_{\frac{1}{2^23^4}\{0,\,1\}})$ and $z_2\in\mathcal{Z}(\hat\delta_{\frac{1}{2^23^4}\frac{1}{2^23}\{0,\,3,\,6\}})$, there exists integers $n_1\in(2\mathbb{Z}+1)$ and $n_2\in(3\mathbb{Z}\pm1)$ such that $z_1=2\cdot3^4\cdot n_1$ and $z_2=3^3\cdot2^4\cdot n_2$. Hence, $z_1+z_2=2\cdot3^3(3\cdot n_1+2^3\cdot n_2)$. Since $n_1\in(2\mathbb{Z}+1)$ and $n_2\in(3\mathbb{Z}\pm1)$, then $(3\cdot n_1+2^3\cdot n_2)\in[\mathbb{Z}\setminus(2\mathbb{Z}\cup3\mathbb{Z})]$. We now see $z_1+z_2\not\in\mathcal{Z}(\mu)$. According to the arbitrariness of $z_1$ and $z_2$, it is easy to see that $\mu$ is not a spectral measure.
\end{example}

\begin{example}
	Let $\mu:=\delta_{\frac{1}{2^23^4}\{0,\,1\}}\bigast\delta_{\frac{1}{2^23^4}\frac{1}{2^23}\{0,\,1,\,2\}}$. It is clear that $\mathcal{Z}(\hat\delta_{\frac{1}{2^23^4}\{0,\,1\}})=2\cdot3^4(2\mathbb{Z}+1)$ and $\mathcal{Z}(\hat\delta_{\frac{1}{2^23^4}\frac{1}{2^23}\{0,\,1,\,2\}})=3^4\cdot2^4(3\mathbb{Z}\pm1)$. Then, $k_{2,1}=1<4=\nu_{2}(L_2)$ and $k_{3,2}=4=\nu_{3}(L_1)$. It is easy to see that $2\cdot3^4\{0,\,3\}+3^4\cdot2^4\{0,\,1,\,2\}$ is a spectrum of $\mu$.
\end{example}

For convenience, we write
\begin{equation}\label{eq1.6-}
X_2:=\{n\geqslant1:\,q_n=2\},\qquad X_3:=\{n\geqslant1:\,q_n=3\},\qquad D:=\max\{d_n:\,n\geqslant1\}.
\end{equation}
For $n\geqslant0$, let
\begin{equation}\label{eq4.5-}
\mu_{>n}:=\delta_{p_{n+1}^{-1}D_{n+1}}\bigast\delta_{p_{n+1}^{-1}p_{n+2}^{-1}D_{n+2}}\bigast\cdots\bigast\delta_{p_{n+1}^{-1}\cdots p_{n+N}^{-1}D_{n+N}}\bigast\cdots.
\end{equation}

We now outline the structure of the paper. In Section \ref{sec2}, we will introduce some essential notations and lemmas. Moreover, we show that it suffices to prove Theorem \ref{th1.3} under a simplified condition, thereby reducing the problem to a simplified model. In Section \ref{sec3}, we will prove the necessity of Theorem \ref{th1.3}. In Sections \ref{sec5}, we present the construction of the spectrum of the probability measure generated by finitely many convolutions. In Section  \ref{sec7}, we prove the sufficiency of Theorem \ref{th1.3}. This proof relies on Proposition \ref{prop7.13}.

\section{Preliminaries}\label{sec2}

We  introduce some necessary results in the first part of this section, and the second part is devoted to simplifying $\{p_n\}_{n=1}^{\infty}$ and $\{D_n\}_{n=1}^{\infty}$.

Let $\{D_n\}_{n=1}^{\infty}$ be a sequence of subsets of ${\mathbb Z}$, and let $d_n$ denote the greatest common divisor of the elements in the set $D_n-a_n$ ($a_n\in D_n$).

We call
\begin{equation}\label{eq2.1}
	m_{D_n}(x):=\frac{1}{q_n}\sum_{d\in D_n}e^{2\pi idx},\qquad x\in\mathbb{R}
\end{equation}
the mask of $D_n$. Let $\widehat{\mu_{\mathbb{P},\,\mathbb{D}}}(x)$ be the Fourier transform of the measure $\mu_{\mathbb{P},\,\mathbb{D}}$ defined in \eqref{eq1.1}, we get
\begin{equation}\label{eq2.3}
	\widehat{\mu_{\mathbb{P},\,\mathbb{D}}}(x)=\prod_{n=1}^{\infty}m_{D_n}(p_1^{-1}\cdots p_n^{-1}x),\qquad\forall \, x\in\mathbb{R}.
\end{equation}

Let $\mathcal{Z}(f)$ be the set of all solutions of the equation $f(x)=0$. It is know that $\mathcal{Z}(m_{D})=\frac{\mathbb{Z}\setminus 2\mathbb{Z}}{2d}$ if $D=\{0,\,d\}$ with $d\neq0$. When $D$ is a three-element digit sets, similar results can also be obtained as in the following lemma.

\begin{lemma}[\cite{Xiong2023}]\label{le2.1}
	Let $D=\{0,\,a,\,b\}\subset\mathbb{Z}$, then
	\begin{enumerate}
		\item[\rm(\romannumeral1)] $\mathcal{Z}(m_{D})\ne\emptyset$ if and only if $\big\{\frac{a}{gcd(a,\,b)},\,\frac{b}{gcd(a,\,b)}\big\}\equiv\{1,\,-1\}(\mod~3)$;
		\item[\rm(\romannumeral2)] If $\mathcal{Z}(m_{D})\ne\emptyset$, then $\mathcal{Z}(m_{D})=\frac{\mathbb{Z}\setminus 3\mathbb{Z}}{3gcd(a,\,b)}$.
	\end{enumerate}
\end{lemma}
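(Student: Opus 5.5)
The statement is Lemma \ref{le2.1}, which concerns the zeros of
\[
m_D(x)=\tfrac13\bigl(1+e^{2\pi i a x}+e^{2\pi i b x}\bigr).
\]
The plan is to reduce it to the elementary fact that three unit complex numbers sum to zero if and only if they are, up to a common rotation, the three cube roots of unity.

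\textbf{Step 1 (geometric reduction).} Suppose $1+u+v=0$ with $|u|=|v|=1$. Then $|1+u|=1$, so $u=e^{\pm 2\pi i/3}$, and consequently $v=\bar u$. Hence $m_D(x)=0$ if and only if
\[
(ax,\,bx)\equiv\pm(1/3,\,-1/3)\pmod{\mathbb{Z}^2}.
\]
Writing $g=\gcd(a,b)$, $a=ga'$, $b=gb'$ with $\gcd(a',b')=1$ and $t=gx$, this becomes $a't\equiv\pm1/3$ and $b't\equiv\mp1/3 \pmod 1$.

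\textbf{Step 2 (the zero set when it is nonempty).} Choose integers $r,s$ with $ra'+sb'=1$. Then $t=r(a't)+s(b't)$ lies in $\frac13\mathbb{Z}$, so we may write $t=m/3$.

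Now $a'm\equiv \pm1 \pmod 3$, which forces $3\nmid m$. It also forces $a'\not\equiv0$ and $b'\equiv -a' \pmod 3$. Since $a'\not\equiv 0$ and $b'\equiv -a'$, the pair $\{a',b'\}$ is $\{1,-1\}$ modulo $3$.

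Conversely, assume $\{a',b'\}\equiv\{1,-1\}\pmod 3$ and $m\notin3\mathbb{Z}$. Then $a'm\equiv\pm1$ and $b'm\equiv\mp1 \pmod 3$, so the point $x=m/(3g)$ is a zero of $m_D$.

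\textbf{Step 3 (conclusion).} Steps 1 and 2 together show that $\mathcal{Z}(m_D)\neq\emptyset$ exactly under the stated congruence condition, which is part (i). They also show that in that case
\[
\mathcal{Z}(m_D)=\frac{\mathbb{Z}\setminus3\mathbb{Z}}{3g},
\]
which is part (ii).

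The main obstacle is the inverse direction in Step 2: showing that every zero $x$ satisfies $gx\in\frac13\mathbb{Z}$. This is exactly where the Bézout combination $ra'+sb'=1$ is needed. Everything else is routine modular arithmetic.
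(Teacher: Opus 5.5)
Your proof is correct and complete. The unit-circle reduction ($1+u+v=0$ with $|u|=|v|=1$ forces $\{u,v\}=\{e^{2\pi i/3},e^{-2\pi i/3}\}$) gives $(ax,bx)\equiv\pm(\tfrac13,-\tfrac13)\pmod{\mathbb{Z}^2}$. The B\'ezout combination $ra'+sb'=1$ then forces $\gcd(a,b)\,x\in\tfrac13\mathbb{Z}$, and from there you correctly obtain both directions of (i) and the exact zero set in (ii). The only implicit point is that $\#D=3$ guarantees $a,b\neq0$, so $g\geqslant1$ is well defined.

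The paper gives no proof of its own here; it simply imports the lemma from \cite{Xiong2023}, and your argument is the standard elementary route to it.
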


Since $m_{D_n-a_n}(x)=e^{-2\pi ia_nx}m_{D_n}(x)$ and the exponential factor is non-vanishing, it follows that $\mathcal{Z}(m_{D_n})=\mathcal{Z}(m_{D_n-a_n})$. According to $\eqref{eq2.1}$ and the above discussion, we have
\begin{equation}\label{eq2.4}
	\mathcal{Z}(m_{D_n})=\frac{\mathbb{Z}\setminus q_n\mathbb{Z}}{d_nq_n}
\end{equation}
and
\begin{equation}\label{eq2.5}
	m_{p_1^{-1}p_2^{-1}\cdots p_n^{-1}D_n}(x)=m_{D_n}(p_1^{-1}p_2^{-1}\cdots p_n^{-1}x)
\end{equation}
for $n\geqslant1$. It follows from \eqref{eq1.3}, \eqref{eq2.4} and \eqref{eq2.5} that
\begin{equation}\label{eq2.6}
	\mathcal{Z}(m_{p_1^{-1}p_2^{-1}\cdots p_n^{-1}D_n})=\frac{p_1p_2\cdots p_n}{d_nq_n}(\mathbb{Z}\setminus q_n\mathbb{Z})=q_n^{k_n}L_n(\mathbb{Z}\setminus q_n\mathbb{Z}).
\end{equation}
Furthermore, \eqref{eq2.3} shows
\begin{equation}\label{eq2.7}
	\mathcal{Z}(\widehat{\mu_{\mathbb{P},\,\mathbb{D}}})=\bigcup_{n=1}^{\infty}\mathcal{Z}(m_{p_1^{-1}p_2^{-1}\cdots p_n^{-1}D_n})=\bigcup_{n=1}^{\infty}\frac{p_1p_2\cdots p_n}{d_nq_n}(\mathbb{Z}\setminus q_n\mathbb{Z})=\bigcup_{n=1}^{\infty}q_n^{k_n}L_n(\mathbb{Z}\setminus q_n\mathbb{Z}).
\end{equation}

For any finite or countable subset $\Lambda\subset\mathbb{R}$, we write
\begin{equation*}
	E(\Lambda):=\{e^{2\pi i\lambda x}:\,\lambda\in\Lambda\}.
\end{equation*}
We say that $\Lambda$ is a bi-zero set of $\mu_{\mathbb{P},\,\mathbb{D}}$ if
\begin{equation}\label{eq2.8}
	(\Lambda-\Lambda)\setminus\{0\}\subset\mathcal{Z}(\widehat{\mu_{\mathbb{P},\,\mathbb{D}}}).
\end{equation}
It is clear that the orthogonality of $E(\Lambda)$ in $L^2(\widehat{\mu_{\mathbb{P},\,\mathbb{D}}})$ is equivalent to that $\Lambda$ is a bi-zero set of $\mu_{\mathbb{P},\,\mathbb{D}}$. Hence $\Lambda\subset\mathbb{R}$ is a uniformly discrete set since $|\widehat{\mu_{\mathbb{P},\,\mathbb{D}}}(x)|>0$ in a small interval $[-\varepsilon,\,\varepsilon]$. Without loss of generality, we always assume $0\in\Lambda$ and define
\begin{equation*}
	Q_\Lambda(\xi)=\sum_{\lambda\in\Lambda}|\widehat{\mu_{\mathbb{P},\,\mathbb{D}}}(\lambda+\xi)|^2,\qquad\forall\, x\in\mathbb R.
\end{equation*}

By using the Parseval identity, Jorgenson and Pederson (\cite{Jorgensen1998}) obtained the following basic criterion for the orthogonality and spectrality of $E_\Lambda$ in $L^2(\mu_{\mathbb{P},\,\mathbb{D}})$.

\begin{lemma}\label{le2.2}
The exponential function set $E_\Lambda$ is an orthogonal set of $L^2(\mu_{\mathbb{P},\,\mathbb{D}})$ if and only if $Q_\Lambda(\xi)\leqslant1$ for all $\xi\in\mathbb R$, and $E_\Lambda$ is an orthogonal basis of $L^2(\mu_{\mathbb{P},\,\mathbb{D}})$ if and only if $Q_\Lambda(\xi)=1$ for all $\xi\in\mathbb R$.
\end{lemma}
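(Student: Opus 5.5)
This is the Jorgensen--Pedersen criterion. I would prove it by applying Parseval's identity to the exponentials $e_\xi(x):=e^{2\pi i\xi x}$. The basic identity is
\begin{equation*}
\langle e_\xi, e_\lambda\rangle_{L^2(\mu_{\mathbb{P},\,\mathbb{D}})}=\int e^{2\pi i(\xi-\lambda)x}\,d\mu_{\mathbb{P},\,\mathbb{D}}(x)=\widehat{\mu_{\mathbb{P},\,\mathbb{D}}}(\lambda-\xi),
\end{equation*}
up to the sign convention of the Fourier transform. Since $\mu_{\mathbb{P},\,\mathbb{D}}$ is supported in $\mathbb{R}$ and $|\widehat{\mu}(-x)|=|\widehat{\mu}(x)|$, the sign is irrelevant. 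Also $\|e_\xi\|^2=\widehat{\mu}(0)=1$, because $\mu$ is a probability measure. Hence
\begin{equation*}
Q_\Lambda(\xi)=\sum_{\lambda\in\Lambda}\big|\langle e_{-\xi},e_\lambda\rangle\big|^2 .
\end{equation*}

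\textbf{Orthogonality.} If $E_\Lambda$ is orthogonal, it is an orthonormal set, since each $e_\lambda$ has norm $1$. Bessel's inequality applied to $e_{-\xi}$ then gives $Q_\Lambda(\xi)\le\|e_{-\xi}\|^2=1$.

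For the converse, assume $Q_\Lambda(\xi)\le1$ for all $\xi$ and fix $\lambda_0\in\Lambda$. Take $\xi=-\lambda_0$. The term $\lambda=\lambda_0$ contributes $|\widehat{\mu}(0)|^2=1$. All terms are nonnegative, so every other term must vanish, i.e. $\widehat{\mu}(\lambda-\lambda_0)=0$ for $\lambda\neq\lambda_0$. This says exactly that $e_\lambda\perp e_{\lambda_0}$, so $E_\Lambda$ is orthogonal.

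\textbf{Basis, forward direction.} If $E_\Lambda$ is an orthonormal basis, Parseval's identity gives $Q_\Lambda(\xi)=\|e_{-\xi}\|^2=1$.

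\textbf{Basis, converse.} Assume $Q_\Lambda\equiv1$. By the previous step $E_\Lambda$ is orthonormal. Let $P$ denote the orthogonal projection onto $\overline{\operatorname{span}}\,E_\Lambda$. Then
\begin{equation*}
\|Pe_{-\xi}\|^2=Q_\Lambda(\xi)=1=\|e_{-\xi}\|^2,
\end{equation*}
so $e_{-\xi}$ lies in the closed span for every $\xi\in\mathbb{R}$. It remains to show that $\{e_\xi:\xi\in\mathbb{R}\}$ is complete in $L^2(\mu)$. I expect this to be the only non-formal step, and I would handle it as follows.

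Since $\mu$ has compact support $T(\mathbb{P},\mathbb{D})$, the linear span of the exponentials is a self-adjoint subalgebra of $C(T)$ that contains the constants and separates points. By Stone--Weierstrass it is uniformly dense in $C(T)$. Continuous functions are dense in $L^2(\mu)$ for a finite Borel measure on a compact metric space. Combining these two facts, the exponentials span a dense subspace of $L^2(\mu)$.

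Therefore the closed span of $E_\Lambda$ is all of $L^2(\mu)$, and $E_\Lambda$ is an orthonormal basis.
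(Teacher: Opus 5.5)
Your proof is correct and is the standard Bessel/Parseval argument for the Jorgensen--Pedersen criterion. The paper gives no proof of its own: it attributes the lemma to Jorgensen and Pedersen and remarks only that it follows from the Parseval identity. The one non-formal step, completeness of $\{e_\xi:\xi\in\mathbb{R}\}$ in $L^2(\mu_{\mathbb{P},\,\mathbb{D}})$, is handled correctly by Stone--Weierstrass on the compact support; uniqueness of the Fourier transform of the finite measure $f\,d\mu_{\mathbb{P},\,\mathbb{D}}$ would serve equally well.
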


\begin{definition}
	Let $D$ and $S$ be two finite subsets of $\mathbb R$ with the same cardinality, we call $(D,S)$ is a compatible pair if
	\begin{equation*}
		\Big[\frac{1}{\sqrt{\#D}}e^{2\pi i ds}\Big]_{d\in D,s\in S}
	\end{equation*}
	is a unitary matrix.
\end{definition}

The following conclusion is well~known.

\begin{lemma}\label{le2.4}
	Let $D$ and $S$ be two finite subsets of $\mathbb R$ with the same cardinality, then the following statements are equivalent:
	\begin{enumerate}
		\item[\rm(\romannumeral1)] $(D,\,S)$ is a compatible pair,
		\item[\rm(\romannumeral2)] $m_{D}(s_{1}-s_{2})=0$ for any two distinct numbers $s_{1}$, $s_{2}\in S$,
		\item[\rm(\romannumeral3)]	$\sum_{s\in S}\lvert m_{D}(\xi+s)\rvert^{2}=1$ for any $\xi\in\mathbb {R}$.
	\end{enumerate}
\end{lemma}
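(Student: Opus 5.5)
This is the standard equivalence of Lemma~\ref{le2.4}, and I will prove it in the cycle (i)$\Rightarrow$(ii)$\Rightarrow$(i), followed by (i)$\Leftrightarrow$(iii). Throughout, write $q=\#D=\#S$ and let $M=\big[\frac{1}{\sqrt q}e^{2\pi i ds}\big]_{d\in D,s\in S}$.

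The first step is to compute the Gram matrix of the columns of $M$. For $s_1,s_2\in S$ the $(s_1,s_2)$ entry of $M^*M$ is
\[
\frac1q\sum_{d\in D}e^{2\pi i d(s_2-s_1)}=m_D(s_2-s_1).
\]
The diagonal entries equal $m_D(0)=1$ automatically. Since $M$ is square, unitarity of $M$ is equivalent to $M^*M=I$. This in turn says exactly that $m_D(s_2-s_1)=0$ for all distinct $s_1,s_2$, which gives (i)$\Leftrightarrow$(ii).

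For (i)$\Rightarrow$(iii), fix $\xi$ and set $v_\xi=\big(\frac{1}{\sqrt q}e^{-2\pi i d\xi}\big)_{d\in D}$, a unit vector in $\mathbb C^q$. For each $s\in S$ we have $|m_D(\xi+s)|=\frac{1}{\sqrt q}\,|\langle \mathrm{col}_s(M),v_\xi\rangle|$. Because $M$ is unitary, its columns form an orthonormal basis of $\mathbb C^q$, so Parseval's identity gives
\[
\sum_{s\in S}|m_D(\xi+s)|^2=\frac1q\cdot q\,\|v_\xi\|^2\cdot\frac{1}{1}=1,
\]
once the normalizations are tracked carefully. Explicitly, $m_D(\xi+s)=\frac{1}{\sqrt q}\sum_d \frac{1}{\sqrt q}e^{2\pi i d(\xi+s)}$, which is the inner product of the unit column $\mathrm{col}_s(M)$ with the unit vector $\big(\frac{1}{\sqrt q}e^{-2\pi i d\xi}\big)_{d\in D}$.

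For (iii)$\Rightarrow$(ii), put $\xi=-s_1$ for a fixed $s_1\in S$. The term with $s=s_1$ contributes $|m_D(0)|^2=1$. All remaining terms are nonnegative and the total is $1$, so each of them vanishes. Hence $m_D(s-s_1)=0$ for every $s\neq s_1$, which is (ii).

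The only real care needed is in the bookkeeping of the normalization constants in (i)$\Rightarrow$(iii). Equivalently, one can argue that $\sum_s|m_D(\xi+s)|^2=\|M^*v_\xi\|^2$, and then apply unitarity of $M^*$.
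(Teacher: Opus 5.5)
The paper gives no proof of this lemma (it is quoted as well known), so there is nothing to compare against. Your argument is the standard one and it is correct:
\begin{itemize}
\item the Gram-matrix identity $(M^*M)_{s_1,s_2}=m_D(s_2-s_1)$ gives (i)$\Leftrightarrow$(ii);
\item Parseval's identity for the orthonormal columns of $M$ gives (i)$\Rightarrow$(iii);
\item evaluating (iii) at $\xi=-s_1$ gives (iii)$\Rightarrow$(ii).
\end{itemize}

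There is one slip to fix in the (i)$\Rightarrow$(iii) paragraph. The claim $|m_D(\xi+s)|=\frac{1}{\sqrt q}\,|\langle \mathrm{col}_s(M),v_\xi\rangle|$ is off by a factor $\frac{1}{\sqrt q}$. In fact $m_D(\xi+s)$ equals that inner product exactly, as your own ``Explicitly'' sentence shows. So the garbled chain $\frac1q\cdot q\,\|v_\xi\|^2\cdot\frac11$ should be replaced by $\sum_{s\in S}|m_D(\xi+s)|^2=\|M^*v_\xi\|^2=\|v_\xi\|^2=1$. This is the version you give at the end, and it holds because $(M^*v_\xi)_s=\overline{m_D(\xi+s)}$.
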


%
%In particular, that $(p_n^{-1}D_n,\,C_n)$ is a compatible pair means that $\#C_n=\#D_n$ and all non-zero elements in $C_n-C_n$ are solutions of the equation $m_{D_n}(p_n^{-1}x)=0$. Equivalently, $C_n$ is a spectrum for the uniform probability measure $\delta_{p_n^{-1}D_n}$, i.e.,
%\begin{equation*}
%	\sum_{c\in C_n}|m_{D_n}(p_n^{-1}(x+c))|^2=\sum_{c\in C_n}|\frac{1}{\#D_n}\sum_{d\in D_n}e^{2\pi idp_n^{-1}(x+c)}|^2=1,\qquad\forall \, x\in\mathbb{R}.
%\end{equation*}

\begin{lemma}\textup{\cite[Proposition 2.2]{Cao2024}}\label{le5.2}
	Let $\nu$ be a probability measure and its support has finite cardinality $N$. If $E(\Lambda)$ is an orthogonal set of $L^2(\nu)$ and $\#\Lambda\geqslant N$, then $\Lambda$ is a spectrum of $\nu$ and $\#\Lambda=N$.
\end{lemma}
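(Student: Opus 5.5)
The statement to prove is Lemma~\ref{le5.2}: a probability measure $\nu$ supported on $N$ points, together with an orthogonal family $E(\Lambda)$ in $L^2(\nu)$ satisfying $\#\Lambda\geqslant N$. The plan is a dimension count in the finite-dimensional space $L^2(\nu)$, followed by an appeal to Lemma~\ref{le2.2} if one wants the spectral identity $Q_\Lambda\equiv1$ explicitly.

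Write $\operatorname{supp}\nu=\{x_1,\dots,x_N\}$ with weights $w_j=\nu(\{x_j\})>0$. Two functions agreeing $\nu$-a.e. are equal in $L^2(\nu)$, so the map $f\mapsto (f(x_1),\dots,f(x_N))$ is an isomorphism of $L^2(\nu)$ onto $\mathbb{C}^N$ with the weighted inner product $\langle f,g\rangle=\sum_j w_j f(x_j)\overline{g(x_j)}$. Hence $\dim L^2(\nu)=N$.

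Each exponential $e_\lambda(x)=e^{2\pi i\lambda x}$ has modulus one, so $\|e_\lambda\|^2_{L^2(\nu)}=\sum_j w_j=1$. The family $E(\Lambda)$ is therefore orthonormal, not merely orthogonal. In particular, for $\lambda\ne\lambda'$ we have $\langle e_\lambda,e_{\lambda'}\rangle=0\neq1=\langle e_\lambda,e_\lambda\rangle$, so distinct $\lambda$ give distinct elements of $L^2(\nu)$. An orthonormal set is linearly independent, so any $N+1$ distinct elements of $\Lambda$ would give $N+1$ independent vectors in an $N$-dimensional space, which is impossible. Thus $\#\Lambda\leqslant N$, and with the hypothesis $\#\Lambda\geqslant N$ we get $\#\Lambda=N$.

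An orthonormal set of $N$ vectors in an $N$-dimensional Hilbert space is an orthonormal basis, so $E(\Lambda)$ is an orthonormal basis of $L^2(\nu)$ and $\Lambda$ is a spectrum of $\nu$. Equivalently, by Parseval, $\sum_{\lambda\in\Lambda}|\hat\nu(\lambda+\xi)|^2=\sum_\lambda|\langle e_{-\xi},e_\lambda\rangle|^2=\|e_{-\xi}\|^2=1$ for every $\xi$, which is the $Q_\Lambda\equiv1$ criterion of Lemma~\ref{le2.2}.

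There is no real obstacle here. The one point that needs care is that distinct frequencies might collapse to the same function on a finite support; the normalization computation above rules this out, because orthogonality of distinct elements together with unit norms forces them to be different vectors.
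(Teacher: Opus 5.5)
Your proof is correct. Every point of the finite support carries positive mass, so $L^2(\nu)$ is an $N$-dimensional Hilbert space. Unit-norm exponentials that are pairwise orthogonal give distinct, linearly independent vectors, which forces $\#\Lambda=N$ and makes $E(\Lambda)$ an orthonormal basis. The paper gives no proof of its own here but imports the statement from \cite[Proposition 2.2]{Cao2024}, and your dimension count is the standard argument behind that citation.
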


\begin{lemma}[\cite{Deng2022}]\label{le2.6}
	Let $p_{i,j}>0$ be positive numbers such that $\sum_{j=1}^{k}p_{i,j}=1~(i=1,\,2,\,\cdots,\,m)$ and $x_{i,j}\geqslant0$ be nonnegative numbers with $\sum_{i=1}^{m}\max\{x_{i,j}:\,1\leqslant j\leqslant k\}\leqslant1$. Then $\sum_{i=1}^{m}\sum_{j=1}^{k}p_{i,j}x_{i,j}=1$ if and only if $x_{i,1}=x_{i,2}=\cdots=x_{i,k}$ for $1\leqslant i\leqslant m$ and $\sum_{i=1}^{m}x_{i,1}=1$.
\end{lemma}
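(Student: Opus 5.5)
The plan is a direct convexity (weighted-average) argument. For each fixed $i$, set $M_i:=\max\{x_{i,j}:\,1\leqslant j\leqslant k\}$. Since $p_{i,j}>0$ and $\sum_{j=1}^{k}p_{i,j}=1$, the inner sum $\sum_{j=1}^{k}p_{i,j}x_{i,j}$ is a convex combination of $x_{i,1},\dots,x_{i,k}$. Hence
\begin{equation*}
\sum_{j=1}^{k}p_{i,j}x_{i,j}\leqslant \sum_{j=1}^{k}p_{i,j}M_i=M_i .
\end{equation*}
Summing over $i$ and using the hypothesis gives the chain
\begin{equation*}
\sum_{i=1}^{m}\sum_{j=1}^{k}p_{i,j}x_{i,j}\leqslant\sum_{i=1}^{m}M_i\leqslant 1 .
\end{equation*}

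For the ``only if'' direction, assume the double sum equals $1$. Then both inequalities in the chain are equalities. Since each summand inequality $\sum_j p_{i,j}x_{i,j}\leqslant M_i$ holds termwise, equality of the sums forces $\sum_j p_{i,j}x_{i,j}=M_i$ for every $i$. This can be rewritten as $\sum_j p_{i,j}(M_i-x_{i,j})=0$. Each term $p_{i,j}(M_i-x_{i,j})$ is nonnegative, and $p_{i,j}>0$ strictly, so every term vanishes. Therefore $x_{i,j}=M_i$ for all $j$, i.e.\ $x_{i,1}=\cdots=x_{i,k}$. Equality in the second inequality then gives $\sum_i M_i=\sum_i x_{i,1}=1$.

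For the ``if'' direction, assume $x_{i,j}=x_{i,1}$ for all $j$ and $\sum_i x_{i,1}=1$. Then $\sum_j p_{i,j}x_{i,j}=x_{i,1}\sum_j p_{i,j}=x_{i,1}$, and summing over $i$ gives $1$.

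There is no real obstacle here. The only point needing care is the strict positivity $p_{i,j}>0$: it is exactly what rules out some $x_{i,j}<M_i$ being hidden by a zero weight.
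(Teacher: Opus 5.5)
Your proof is correct and complete. The weighted-average bound $\sum_j p_{i,j}x_{i,j}\leqslant \max_j x_{i,j}$, summed over $i$ and combined with the hypothesis, forces every inequality in the chain to be an equality, and the strict positivity of the $p_{i,j}$ then makes each row constant. The paper gives no proof of this lemma and simply quotes it from Deng--Li; your convexity argument is the standard one, so there is nothing to add.
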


\begin{lemma}[\cite{Dai2014}]\label{le2.8}
	Let $\mu=\mu_0\bigast\mu_1$ be the convolution of the Borel probability measures $\mu_0$ and $\mu_1$, where $\mu_0$ and $\mu_1$ are not Dirac measures. Suppose that $\Lambda$ is a bi-zero set of $\mu_0$, then $\Lambda$ is also a bi-zero set of $\mu$, but cannot be a spectrum of $\mu$.
\end{lemma}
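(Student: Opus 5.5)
The plan is to work entirely on the Fourier side, using $\widehat{\mu}=\widehat{\mu_0}\,\widehat{\mu_1}$ and the Jorgensen--Pedersen criterion (Lemma \ref{le2.2}) in the form valid for an arbitrary probability measure $\nu$: $E(\Lambda)$ is orthogonal in $L^2(\nu)$ iff $\sum_{\lambda\in\Lambda}|\widehat{\nu}(\lambda+\xi)|^2\leqslant 1$ for all $\xi$, and is an orthonormal basis iff this sum is identically $1$.

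\emph{Bi-zero part.} Since $\widehat{\mu}(x)=\widehat{\mu_0}(x)\widehat{\mu_1}(x)$, we have $\mathcal{Z}(\widehat{\mu_0})\subset\mathcal{Z}(\widehat{\mu})$. Hence $(\Lambda-\Lambda)\setminus\{0\}\subset\mathcal{Z}(\widehat{\mu_0})\subset\mathcal{Z}(\widehat{\mu})$, and $\Lambda$ is a bi-zero set of $\mu$.

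\emph{Not a spectrum.} Suppose $\Lambda$ were a spectrum of $\mu$. Then for every $\xi$,
\begin{equation*}
1=\sum_{\lambda\in\Lambda}|\widehat{\mu_0}(\lambda+\xi)|^2|\widehat{\mu_1}(\lambda+\xi)|^2\leqslant\sum_{\lambda\in\Lambda}|\widehat{\mu_0}(\lambda+\xi)|^2\leqslant 1,
\end{equation*}
where the last inequality uses orthogonality of $E(\Lambda)$ in $L^2(\mu_0)$. Equality throughout means that whenever $\widehat{\mu_0}(\lambda+\xi)\neq0$ we must have $|\widehat{\mu_1}(\lambda+\xi)|=1$. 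Writing $G:=\{x\in\mathbb{R}:|\widehat{\mu_1}(x)|=1\}$, this gives
\begin{equation*}
1=\sum_{\lambda\in\Lambda,\;\lambda+\xi\in G}|\widehat{\mu_0}(\lambda+\xi)|^2\qquad\text{for all }\xi\in\mathbb{R}.
\end{equation*}

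\emph{Structure of $G$ (the key step).}
1. $|\widehat{\mu_1}(x)|=\big|\int e^{2\pi i xt}\,d\mu_1(t)\big|=1$ holds iff $e^{2\pi ixt}$ is $\mu_1$-a.e.\ equal to a unimodular constant. This is the equality case of the triangle inequality for integrals.
2. $G$ is a group. If $e^{2\pi i xt}=c_1$ and $e^{2\pi i yt}=c_2$ $\mu_1$-a.e., then $e^{2\pi i(x\pm y)t}=c_1c_2^{\pm1}$ $\mu_1$-a.e.
3. $G$ is closed, because $\widehat{\mu_1}$ is continuous.
4. $G\neq\mathbb{R}$. Otherwise $|\widehat{\mu_1}|\equiv1$, so $\mu_1$ is a Dirac measure, contrary to hypothesis. (For instance, $x=0$ is an accumulation point of $G$, which forces $\mu_1$ to have zero variance.)
5. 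A proper closed subgroup of $\mathbb{R}$ is of the form $a\mathbb{Z}$, so $G$ is countable.

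\emph{Conclusion.} $\Lambda$ is countable, being uniformly discrete, so $G-\Lambda$ is countable. Pick $\xi\notin G-\Lambda$. Then no $\lambda\in\Lambda$ satisfies $\lambda+\xi\in G$, so the last sum is empty and equals $0\neq1$, a contradiction. Hence $\Lambda$ is not a spectrum of $\mu$.

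The only genuinely delicate point is showing that $G$ is countable when $\mu_1$ is not Dirac; the group and closedness argument above handles it.
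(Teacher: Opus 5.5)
Your proof is correct and complete. The paper gives no proof of this lemma; it quotes it from \cite{Dai2014}, so there is no in-text argument to compare against. Your route is the natural one:
\begin{itemize}
\item Bessel's inequality for $\mu_0$ gives $\sum_{\lambda}|\widehat{\mu}(\lambda+\xi)|^2\leqslant\sum_{\lambda}|\widehat{\mu_0}(\lambda+\xi)|^2\leqslant 1$.
\item So a spectrum would force $|\widehat{\mu_1}(\lambda+\xi)|=1$ whenever $\widehat{\mu_0}(\lambda+\xi)\neq0$.
\item The set $G$ where $|\widehat{\mu_1}|=1$ is a proper closed subgroup of $\mathbb{R}$.
\end{itemize}
It uses only Parseval and Bessel, which hold for arbitrary probability measures, even though Lemma \ref{le2.2} is stated in the paper only for $\mu_{\mathbb{P},\,\mathbb{D}}$.

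There are two small points to tighten.

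First, the parenthetical in step 4 about zero variance presupposes a finite second moment, which a general Borel probability measure need not have. It is cleaner to use your step 1 directly. If $G=\mathbb{R}$, then for each $x$ the closed set $\{t:\,e^{2\pi ixt}=c_x\}$ has full $\mu_1$-measure and hence contains $\mathrm{supp}\,\mu_1$. Two support points $a\neq b$ would then give $x(a-b)\in\mathbb{Z}$ for every $x\in\mathbb{R}$, which is absurd. In the paper's setting all measures are compactly supported, so your variance remark is also valid there.

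Second, the countability argument at the end can be replaced by a one-term argument. Since $G$ is a proper closed subgroup, $0$ is an isolated point of $G$. Fix any $\lambda_0\in\Lambda$ and choose $\xi$ so that $\lambda_0+\xi\neq0$ is small enough that $\widehat{\mu_0}(\lambda_0+\xi)\neq0$ (by continuity at $0$) and $\lambda_0+\xi\notin G$. The $\lambda_0$-term then violates the dichotomy you derived. This avoids having to know that $\Lambda$ and $G$ are countable, although your version of that step is also correct.
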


The following lemma shows that condition {\bfseries (C1$^{\prime}$)} in Theorem \ref{th1.3} can be replaced by a stricter condition.

\begin{lemma}[\cite{Deng2023}]\label{le2.5}
In order to prove Theorem \ref{th1.3}, one only need to consider the special case: $p_n>1$ and $0\in D_n\subset\{0,\,1,\cdots,\, p_n-1\}$ for all $n\geq 1$.
\end{lemma}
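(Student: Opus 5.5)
The reduction rests on two operations under which both the spectrality of $\mu_{\mathbb{P},\,\mathbb{D}}$ and every quantity in Theorem \ref{th1.3} stay unchanged: \emph{translating} each digit set and \emph{changing the sign} of each $p_n$. I will carry out these two normalizations in turn and then check that the hypotheses and conclusion of Theorem \ref{th1.3} are invariant.

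\textbf{Translation.} First replace $D_n$ by $D_n':=D_n-\min D_n$. Then $0\in D_n'$ and $D_n'-D_n'=D_n-D_n$. Writing $t:=\sum_{n}p_1^{-1}\cdots p_n^{-1}\min D_n$, the new measure satisfies $\mu_{\mathbb{P},\,\mathbb{D}'}=\mu_{\mathbb{P},\,\mathbb{D}}\bigast\delta_{-t}$. The series defining $t$ converges by \textbf{(C2$^{\prime}$)} together with \eqref{eq1.1-1}. Since $\widehat{\mu_{\mathbb{P},\,\mathbb{D}'}}(x)=e^{2\pi i t x}\widehat{\mu_{\mathbb{P},\,\mathbb{D}}}(x)$, the two measures have the same $|\widehat\mu|$. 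By Lemma \ref{le2.2} they therefore have exactly the same spectra.

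\textbf{Sign of $p_n$.} Next replace $p_n$ by $|p_n|$, adjusting the digits so that each scale factor keeps its sign. Put $\varepsilon_n:=\mathrm{sgn}(p_1\cdots p_n)$ and $D_n'':=\varepsilon_n D_n'$, so that $p_1^{-1}\cdots p_n^{-1}D_n'=|p_1\cdots p_n|^{-1}D_n''$ and the measure is literally unchanged. If $\varepsilon_n=-1$, the set $-D_n'$ lies in $\{-(|p_n|-1),\dots,0\}$. Translating it again by $\max D_n'$, as in the previous step, puts it back inside $\{0,1,\dots,|p_n|-1\}$ with $0$ in it. The inclusion $D_n\subset\{0,\dots,p_n-1\}$ follows from $(D_n-D_n)\subset\{0,\pm1,\dots,\pm(|p_n|-1)\}$ after translation by the minimum. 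The resulting measure is $\mu_{\mathbb{P},\,\mathbb{D}}$ convolved with a Dirac mass, and the new digit sets are still uniformly bounded.

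\textbf{Invariance of the conditions.} Finally I verify that the conditions of Theorem \ref{th1.3} do not change.
\begin{itemize}
\item The quantities $q_n$ and $d_n=\gcd(D_n-a_n)$ are invariant under translation and under $D\mapsto -D$.
\item Each $\nu_m$ for prime $m$ ignores signs, so $k_n$ and $\nu_{\widetilde q_n}(L_n)$ in \eqref{eq1.2}--\eqref{eq1.4-1} are unchanged.
\item The mod-$3$ condition $\{\frac{b_n-a_n}{d_n},\frac{c_n-a_n}{d_n}\}\equiv\{-1,1\}$ is preserved under negation. For translations, the ratio set computed from a different base point $a_n$ is obtained by subtracting one ratio from the others. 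In the three-element case $\{1,-1\}$ gives $\{-1,-2\}\equiv\{-1,1\}$, so the condition is preserved. Equivalently, Lemma \ref{le2.1}(i) identifies the condition with $\mathcal{Z}(m_{D_n})\neq\emptyset$, which is translation invariant.
\end{itemize}
Hence the theorem for the normalized pair is equivalent to the theorem for the original pair.

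The only point needing care is bookkeeping: the double translation in the negative-sign case, together with confirming that \textbf{(C1$^{\prime}$)} and \textbf{(C2$^{\prime}$)} survive every step.
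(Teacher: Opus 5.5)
Your proposal is correct and follows the paper's argument. The paper does both normalizations in one step by setting $E_n=|p_1\cdots p_n|\bigl(p_1^{-1}\cdots p_n^{-1}D_n-\min(p_1^{-1}\cdots p_n^{-1}D_n)\bigr)$, which is exactly your $D_n-\min D_n$ or $\max D_n-D_n$; it then notes that the measure is only translated and that $d_n$, and hence every condition of Theorem \ref{th1.3}, is unchanged. Your explicit check that the mod-$3$ condition is unaffected by the choice of base point and by negation is a detail the paper leaves implicit.
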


\begin{pf}
For any $n\geqslant1$, we define
\begin{equation*}
	E_n=:|p_1p_2\cdots p_n|(p_1^{-1}p_2^{-1}\cdots p_n^{-1}D_n-\min\{x:\,x\in p_1^{-1}p_2^{-1}\cdots p_n^{-1}D_n\}),\quad \forall\ n\ge1.
\end{equation*}
It is obvious that
\begin{equation*}
\mu_{\{p_n\},\{D_n\}}(~\cdot~)=\mu_{\{|p_n|\},\{E_n\}}\Big(~\cdot-\sum_{n=1}^{\infty}\min\{x:\,x\in p_1^{-1}p_2^{-1}\cdots p_n^{-1}D_n\}\Big).
\end{equation*}
Thus, Lemma \ref{le2.2} shows that $\mu_{\{p_n\},\{D_n\}}$ is a spectral measure if and only if $\mu_{\{|p_n|\},\{E_n\}}$ is.

Furthermore, note that either $E_n=D_n-\alpha_n$ for some $\alpha_n\in D_n$ or $E_n=\alpha_n-D_n$ for some $\alpha_n\in D_n$, so $d_n=\gcd E_n$. Hence the conditions \rm(\romannumeral1) and \rm(\romannumeral2) of Theorem \ref{th1.3} are the same when $(\{p_n\},\,\{D_n\})$ is replaced by $(\{|p_n|\},\,\{E_n\})$. Hence Lemma \ref{le2.5} is proven.		
\end{pf}

The following lemma show that the spectrality of $\mu$ is independent of $p_1\ne0$.

\begin{lemma}[\cite{Wu2022}]\label{le2.7}
	Let $\mu$ and $\nu$ be Borel probability measure with compact support on $\mathbb{R}$. If there exists $c\in\mathbb{R}\setminus\{0\}$ and $b\in\mathbb{R}$ such that
	\begin{equation*}
		\hat\nu(x)=e^{2\pi ibx }\hat\mu(cx),\qquad x\in\mathbb R.
	\end{equation*}
	Then $\nu$ is a spectral measure with spectrum $\Lambda$ if and only if $\mu$ is a spectral measure with spectrum $c\Lambda$.
\end{lemma}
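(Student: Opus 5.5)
The argument reduces to the Fourier transform: show that $\hat\nu$ is a rescaled, phase-shifted copy of $\hat\mu$, then transport orthonormal bases through the induced unitary change of variables on frequencies. This uses only the hypothesis $\hat\nu(x)=e^{2\pi ibx}\hat\mu(cx)$ and the Jorgensen--Pedersen criterion of Lemma \ref{le2.2}. That lemma is stated above for $\mu_{\mathbb{P},\,\mathbb{D}}$, but its proof (Parseval applied to $e_\xi$) is valid for any compactly supported Borel probability measure, so I will use it in that generality. For a measure $\rho$ and a countable set $\Lambda$, write
\begin{equation*}
Q_{\rho,\Lambda}(\xi)=\sum_{\lambda\in\Lambda}|\hat\rho(\lambda+\xi)|^2 .
\end{equation*}
Then $E(\Lambda)$ is an orthonormal basis of $L^2(\rho)$ if and only if $Q_{\rho,\Lambda}\equiv1$ on $\mathbb{R}$.

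\textbf{Step 1.} The hypothesis gives $|\hat\nu(x)|=|\hat\mu(cx)|$ for all $x$, because the phase factor has modulus one. Hence, for every $\xi\in\mathbb{R}$,
\begin{equation*}
Q_{\nu,\Lambda}(\xi)=\sum_{\lambda\in\Lambda}|\hat\mu(c\lambda+c\xi)|^2=Q_{\mu,c\Lambda}(c\xi).
\end{equation*}

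\textbf{Step 2.} The map $\xi\mapsto c\xi$ is a bijection of $\mathbb{R}$ because $c\neq0$. Therefore $Q_{\nu,\Lambda}\equiv1$ if and only if $Q_{\mu,c\Lambda}\equiv1$. By Lemma \ref{le2.2}, $\Lambda$ is a spectrum of $\nu$ if and only if $c\Lambda$ is a spectrum of $\mu$. This settles both directions at once.

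\textbf{Alternative route.} One can also argue directly. By uniqueness of the Fourier transform of finite measures, the hypothesis means $\nu=T_\#\mu$ for the affine map $T(x)=cx+b$, possibly after replacing $b$ by $-b$ depending on the sign convention for $\hat\nu$. Then $f\mapsto f\circ T$ is a unitary operator from $L^2(\nu)$ onto $L^2(\mu)$. It sends $e^{2\pi i\lambda x}$ to $e^{2\pi i\lambda b}e^{2\pi i c\lambda x}$, and unimodular constants do not affect orthonormality or completeness.

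The only point that needs any care is justifying Lemma \ref{le2.2} for general measures, and I expect this to be harmless. In the direct route, the corresponding care point is the sign bookkeeping in $b$; it does not affect the conclusion, because the translation contributes only unimodular constant factors.
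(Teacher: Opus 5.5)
Your proof is correct, and nothing is missing. The paper gives no proof of this lemma; it is quoted from the literature. So there is no in-paper argument to compare against, and your Steps 1--2 are the standard justification.

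Your two routes buy different things.

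\textbf{The $Q$-function route.} It uses only $|\hat\nu(x)|=|\hat\mu(cx)|$, so it proves the conclusion under that weaker hypothesis. That hypothesis does not force $\nu$ to be an affine image of $\mu$. For example, the uniform measures on the homometric sets $\{0,1,4,10,12,17\}$ and $\{0,1,8,11,13,17\}$ have Fourier transforms of equal modulus, but neither set is an affine image of the other.

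The cost is that you need both directions of Lemma \ref{le2.2} for a general compactly supported $\rho$. The direction ``$Q\equiv1\Rightarrow$ complete'' needs more than Bessel/Parseval applied to $e_\xi$. It also needs the exponentials $\{e^{2\pi i\xi x}\}_{\xi\in\mathbb{R}}$ to be total in $L^2(\rho)$, which follows from Stone--Weierstrass on the compact support. You called this point ``harmless'', which is true, but it is the one ingredient you should state explicitly.

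\textbf{The pushforward route.} It needs uniqueness of Fourier--Stieltjes transforms to get $\nu=T_\#\mu$. After that it is completely elementary: $f\mapsto f\circ T$ is unitary from $L^2(\nu)$ onto $L^2(\mu)$, unitaries carry orthonormal bases to orthonormal bases, and the factors $e^{2\pi i\lambda b}$ are unimodular constants. It also gives you the explicit intertwining operator, and it needs no completeness criterion at all.
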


Similar to the proof of \cite[Proposition 2.1]{Cao2024}, we have the following lemma.

\begin{lemma}\label{le5.1}
	In order to prove Theorem \ref{th1.3}, one only need to consider the special case: $\gcd(p_n,\,d_n)=1$ for all $n\geqslant1$.
\end{lemma}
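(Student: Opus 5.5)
The idea is to rewrite the same measure with new data $(\{p_n'\},\{D_n'\})$ so that every term $p_1^{-1}\cdots p_n^{-1}D_n$ is unchanged. Both the measure $\mu_{\mathbb{P},\,\mathbb{D}}$ and the conditions of Theorem \ref{th1.3} are then untouched, while the new data satisfy $\gcd(p_n',d_n')=1$. By Lemma \ref{le2.5} we may first assume $p_n>1$ and $0\in D_n\subset\{0,1,\cdots,p_n-1\}$, so that $d_n=\gcd(D_n)$ by \eqref{eq1.2-2}. Write $P_n:=p_1p_2\cdots p_n$.

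We divide out common factors recursively, carrying leftover factors forward to later levels. Put $g_0:=1$ and
\begin{equation*}
g_n:=\gcd(d_n,\,p_ng_{n-1}),\qquad p_n':=\frac{p_ng_{n-1}}{g_n},\qquad D_n':=\frac{D_n}{g_n},\qquad n\geqslant1.
\end{equation*}
Since $g_n\mid p_ng_{n-1}$, each $p_n'$ is an integer. Since $g_n\mid d_n=\gcd(D_n)$, each $D_n'\subset\mathbb{Z}$ and $0\in D_n'$. The product telescopes to
\begin{equation*}
p_1'p_2'\cdots p_n'=\frac{P_n}{g_n}.
\end{equation*}
Hence $(p_1'\cdots p_n')^{-1}D_n'=P_n^{-1}D_n$ for every $n$, and $\mu_{\{p_n'\},\{D_n'\}}=\mu_{\mathbb{P},\,\mathbb{D}}$ as measures, not merely up to scaling.

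Next I would check the required properties of the new data.
\begin{itemize}
\item Coprimality. The new gcd is $d_n'=\gcd(D_n')=d_n/g_n$. For any integers $a,m$ with $g=\gcd(a,m)$, one has $\gcd(a/g,m/g)=1$. Applying this with $a=d_n$ and $m=p_ng_{n-1}$ gives $\gcd(d_n',p_n')=1$.
\item Condition (C1$'$). We have $\max D_n'\leqslant (p_n-1)/g_n<p_ng_{n-1}/g_n=p_n'$, so $D_n'-D_n'\subset\{0,\pm1,\cdots,\pm(p_n'-1)\}$. Because $D_n'$ contains a positive element, this also forces $p_n'\geqslant2$. Clearly $\#D_n'=q_n$.
\item Condition (C2$'$). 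It holds since $D_n'\subset D_n$ up to division by a positive integer, so $\bigcup_n D_n'$ stays bounded.
\item Conditions of Theorem \ref{th1.3}. First, $\frac{D_n'}{d_n'}=\frac{D_n}{d_n}$, so the mod $3$ condition is identical. Second,
\begin{equation*}
\frac{p_1'\cdots p_n'}{d_n'q_n}=\frac{P_n/g_n}{(d_n/g_n)q_n}=\frac{P_n}{d_nq_n}.
\end{equation*}
By \eqref{eq1.2} and \eqref{eq1.3}, every $k_n$ and $L_n$ is unchanged, and so are \eqref{eq1.4} and \eqref{eq1.5}.
\end{itemize}
Therefore $\mu_{\mathbb{P},\,\mathbb{D}}$ is spectral and satisfies the hypotheses and conditions of Theorem \ref{th1.3} exactly when the new pair does, and it suffices to prove the theorem when $\gcd(p_n,d_n)=1$.

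The main obstacle is choosing $g_n$ so that $p_n'$ is an integer and coprimality holds simultaneously. The naive choice $g_n=\gcd(p_n,d_n)$ gives $p_n'=p_ng_{n-1}/g_n$, whose integrality is not automatic. The recursive choice $\gcd(d_n,p_ng_{n-1})$ resolves both requirements at once. The remaining verifications are routine.
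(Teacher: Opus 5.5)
Your proposal is correct and is essentially the paper's proof: your $g_n$, $p_n'$ and $D_n'$ are exactly the paper's $z_n$, $t_n$ and $G_n$. The telescoping identity $p_1'\cdots p_n'=P_n/g_n$ is what leaves every term $P_n^{-1}D_n$ and every ratio $\frac{P_n}{d_nq_n}$ unchanged, and your checks of (C1$'$) and $p_n'\geqslant2$ are a bit more explicit than the paper's. One small correction to your closing remark: with the naive choice $g_n=\gcd(p_n,d_n)$ the integrality of $p_ng_{n-1}/g_n$ is in fact automatic, and what can fail is coprimality, since $g_{n-1}$ may share a factor with $d_n/g_n$.
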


\begin{pf}
By Lemma \ref{le2.5}, without loss of generality, we may assume that $0\in D_n$ for all $n\geqslant1$. In this case, we have
\begin{equation*}
	d_n=\gcd (D_n),\qquad\forall\,n\geqslant1.
\end{equation*}

Let $z_0:=1$ and define inductively
\begin{equation}\label{eq5.2-2}
	z_n:=\gcd(|z_{n-1}p_n|,\,|d_n|),\,~t_n:=\frac{z_{n-1}p_n}{z_n}~~\mbox{and}~~G_n:=\frac{1}{z_n}D_n,\qquad\forall \, n\geqslant1.
\end{equation}
It is clear $\delta_{t_1^{-1}t_2^{-1}\cdots t_{n}^{-1}G_n}=\delta_{p_1^{-1}p_2^{-1}\cdots p_n^{-1}D_n}$ for any $n\geqslant1$. Hence
\begin{equation*}
	\mu_{\mathbb{P},\,\mathbb{D}}=\mu_{\{t_n\},\,\{G_n\}}.
\end{equation*}
This means that $\mu_{\mathbb{P},\,\mathbb{D}}$ is a spectral measure if and only if $\mu_{\{t_n\},\{G_n\}}$ is.

Since $z_n\mid d_n$ and $d_n=\gcd(D_n)$, we get $G_n=\frac{1}{z_n}D_n\subset\mathbb{Z}$ for all $n\geqslant1$. Thus, the definition of $G_n$ shows that $0\in G_n$ and
\begin{equation*}
	d_n^{\mathbb{G}}:=\gcd(G_n)=\frac{d_n}{z_n}\cdot\gcd\Big(\frac{D_n}{d_n}\Big)=\frac{d_n}{z_n},\qquad\forall \, n\geqslant1.
\end{equation*}
Therefore, $\gcd(t_n,\,d_n^{\mathbb{G}})=1$ for any $n\geqslant1$. Note that $t_1^{-1}t_2^{-1}\cdots t_{n}^{-1}d_n^{\mathbb{G}}=p_1^{-1}p_2^{-1}\cdots p_n^{-1}d_n$, we see that condition \rm(\romannumeral1) and \rm(\romannumeral2) of Theorem \ref{th1.3} are the same when $(\{p_n\},\,\{D_n\})$ is replaced by $(\{t_n\},\,\{G_n\})$. Hence Lemma \ref{le5.1} is proven.	
\end{pf}

The following lemma shows that conditions \eqref{eq1.4} and \eqref{eq1.5} in Theorem \ref{th1.3} are independent of the choice of $p_1\neq0$.

\begin{lemma}\label{le2.8-1}
In order to prove Theorem \ref{th1.3}, it suffices to consider the special case: $\mathcal{Z}(\hat\mu_{\mathbb{P},\,\mathbb{D}})\subset\mathbb{Z}$ and so $k_n\geqslant0$ and $\nu_{\widetilde{q}_n}(L_n)\geqslant0$ for all $n\geqslant1$.
\end{lemma}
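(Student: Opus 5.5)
The idea is to rescale the whole measure by a suitable integer $c$, using Lemma \ref{le2.7}, so that every zero lands in $\mathbb{Z}$, and then to check that conditions \eqref{eq1.4} and \eqref{eq1.5} are unchanged. By Lemmas \ref{le2.5} and \ref{le5.1} we may already assume $p_n>1$, $0\in D_n$ and $\gcd(p_n,d_n)=1$. By \eqref{eq2.7}, the zero set of $\hat\mu_{\mathbb{P},\mathbb{D}}$ is $\bigcup_n \frac{p_1\cdots p_n}{d_nq_n}(\mathbb{Z}\setminus q_n\mathbb{Z})$.

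\textbf{Choosing the scaling.} By {\bfseries (C2$^{\prime}$)}, $d_n\le D:=\max\{d_n\}<\infty$, so the integer $c:=6\cdot D!$ is a common multiple of all $d_nq_n$. Replace $p_1$ by $p_1'=cp_1$ and leave all $p_n$, $n\ge2$, and all $D_n$ unchanged. Only the first factor of the products changes, so the resulting measure $\mu'$ satisfies $\hat{\mu'}(x)=\hat\mu_{\mathbb{P},\mathbb{D}}(x/c)$. Lemma \ref{le2.7} then shows that $\mu'$ is spectral if and only if $\mu_{\mathbb{P},\mathbb{D}}$ is. Moreover
\[
\mathcal{Z}(\hat{\mu'})=c\,\mathcal{Z}(\hat\mu_{\mathbb{P},\mathbb{D}})=\bigcup_n \frac{c}{d_nq_n}\,p_1\cdots p_n(\mathbb{Z}\setminus q_n\mathbb{Z})\subset\mathbb{Z}.
\]
Writing $\frac{cp_1\cdots p_n}{d_nq_n}=q_n^{k_n'}L_n'$ with $\nu_{q_n}(L_n')=0$, integrality gives $k_n'\ge0$ and $\nu_{\widetilde q_n}(L_n')\ge0$. (If one also wants to keep $D_n\subset\{0,\dots,p_n-1\}$, this still holds for $n=1$ since $p_1'>p_1$.)

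\textbf{Invariance of the conditions.} The mod-$3$ condition on $D_n$ is untouched, since the $D_n$ do not change. For every $n$,
\[
k_n'=k_n+\nu_{q_n}(c),\qquad \nu_{\widetilde q_n}(L_n')=\nu_{\widetilde q_n}(L_n)+\nu_{\widetilde q_n}(c).
\]
\begin{itemize}
\item Condition \eqref{eq1.4}: when $q_m=q_n$, both $k_m$ and $k_n$ are shifted by the same constant $\nu_{q_m}(c)$, so $k_m\ne k_n$ if and only if $k_m'\ne k_n'$.
\item Condition \eqref{eq1.5}: take $q_m=2$ and $q_n=3$. Then $k_{2,m}$ and $\nu_2(L_{3,n})$ are both shifted by $\nu_2(c)$, so the inequality $k_{2,m}\ge\nu_2(L_{3,n})$ is preserved. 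Symmetrically, $k_{3,n}$ and $\nu_3(L_{2,m})$ are both shifted by $\nu_3(c)$.
\end{itemize}
Hence the statement of Theorem \ref{th1.3} for $(\mathbb{P},\mathbb{D})$ is equivalent to the statement for the modified pair.

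\textbf{Main obstacle.} The delicate point is making sure the modification stays inside the framework. After replacing $p_1$ by $cp_1$, condition $\gcd(p_1',d_1)=1$ from Lemma \ref{le5.1} may fail. We therefore apply Lemma \ref{le5.1} first and do the scaling last, noting that $\gcd(p_n,d_n)=1$ is only a convenience. Alternatively, we can rerun the renormalization \eqref{eq5.2-2}: it preserves the ratios $p_1^{-1}\cdots p_n^{-1}d_n$, hence preserves the zero set and integrality. Since $\mathcal{Z}$, $k_n$ and $L_n$ depend only on $\frac{p_1\cdots p_n}{d_nq_n}$, all the reductions commute, and the lemma follows.
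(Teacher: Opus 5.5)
Your proof is correct and is essentially the paper's own argument. The paper also changes only the first factor, replacing $p_1$ by an integer $t_1\geqslant|p_1|$ divisible by every $q_nd_n$, and invokes Lemma \ref{le2.7}. It then notes that every $\nu_q\big(\frac{p_1\cdots p_n}{d_nq_n}\big)$ shifts by the same constant $\nu_q(p_1)-\nu_q(t_1)$, so \eqref{eq1.4} and \eqref{eq1.5} are unchanged.

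Your extra remark goes slightly beyond the paper. Rerunning the renormalization \eqref{eq5.2-2} after the scaling restores $\gcd(p_n,d_n)=1$ without changing $\frac{p_1\cdots p_n}{d_nq_n}$. This settles a compatibility issue the paper leaves implicit: its $t_1$ is divisible by $d_1$, so $\gcd(t_1,d_1)=1$ fails whenever $d_1>1$.
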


\begin{pf}
By the assumption that $\bigcup\limits_{n=1}^\infty D_n$ is bounded and so is a finite subset of $\mathbb{Z}$, we see that there is a positive integer $t_1\ge |p_1|$ such that $q_nd_n|t_1$ for all $n\ge 1$. Let $t_n:=p_n$ for all $n\geqslant2$ and $\mathbb{T}:=\{t_n\}_{n=1}^{\infty}$. Hence $\mathcal{Z}(\hat\mu_{\mathbb{T},\mathbb{D}})\subset\mathbb{Z}$. Furthermore, it is easy to see that $(\mathbb{T},\,\mathbb{D})$ satisfies the conditions {\bfseries (C1$^{\prime}$)} and {\bfseries (C2$^{\prime}$)}.

Lemma \ref{le2.7} shows $\mu_{\mathbb{P},\,\mathbb{D}}$ is a spectral measure if and only if $\mu_{\mathbb{T},\,\mathbb{D}}$ is. On the other hand, note
\[
\nu_{q}\Big(\frac{p_1p_2\cdots p_n}{d_nq_n}\Big)-\nu_{q}\Big(\frac{t_1t_2\cdots t_n}{d_nq_n}\Big)=\nu_{q}(p_1)-\nu_{q}(t_1),\quad\forall\ q\in\{2,\ 3\}\mbox{ and }n\ge1.
\]
Hence the conditions \rm(\romannumeral1) and \rm(\romannumeral2) of Theorem \ref{th1.3} hold for $(\{p_n\},\,\{D_n\})$ if and only if they hold for $(\{t_n\},\,\{D_n\})$. Our Lemma is proven. 		
\end{pf}

It follows from formula \eqref{eq1.2-2} and Lemmas \ref{le2.5}, \ref{le5.1} and \ref{le2.8-1} that, in order to prove Theorem \ref{th1.3}, we only need to consider the special case:
\begin{flalign}\label{eq2.12-3}
	&\left\{\begin{aligned}
		\text{\bfseries (C1)} \quad &\mbox{For~every}~n\geqslant1,\,0\in D_n\subset\{0,\,1,\cdots,p_n-1\}~\mbox{and}~\#D_n\in\{2,\,3\}.\\
		\text{\bfseries (C2)} \quad & \mbox{The~set}~\cup_{n=1}^{\infty}D_n~\mbox{is~bounded}.\\
		\text{\bfseries (C3)} \quad & \mathcal{Z}(\hat\mu_{\mathbb{P},\,\mathbb{D}})\subset\mathbb{Z}\mbox{ and }\gcd(p_n,\,d_n)=1 \mbox{ for all } n\geqslant1, \mbox{where }d_n=\gcd( D_n).
	\end{aligned}\right.&
\end{flalign}
We will always assume the above \eqref{eq2.12-3} in the following argument. Since $0\in D_n$ for all $n\geqslant1$, \eqref{eq1.2-2} holds for all $D_n$. Without loss of generality, we may replace the definition of $d_n$ in \eqref{eq1.2-1} with \eqref{eq1.2-2} in what follows. Given that $d_n=\gcd( D_n)$ implies $d_n^{-1}D_n\subset\mathbb{Z}$, one may verify that the function $m_{d_n^{-1}D_n}$ is an integral periodic function, i.e.,
\begin{equation}\label{eq2.2}
	m_{d_n^{-1}D_n}(x+L)=m_{d_n^{-1}D_n}(x),\qquad \forall \, L\in\mathbb{Z},\,n\geqslant1.
\end{equation}

Assume that the pair of sequences $(\mathbb{P},\,\mathbb{D})$ satisfies \eqref{eq2.12-3}. By {\bfseries (C1)}, we have
\begin{equation}\label{eq2.11}
	p_n\geqslant\bigg\{\begin{array}{ll}
		2,&\mbox{if } n\in X_2,\\
		3,&\mbox{if } n\in X_3
	\end{array}\bigg.
\end{equation}
for all $n\geqslant1$. Applying  {\bfseries (C1)} again, we obtain that there exist integer sequences $\{\alpha_n\}_{n\in X_3}$ and $\{\beta_n\}_{n\in X_3}$ such that $0<\alpha_n<\beta_n<p_n$ and $D_n=\{0,\,\alpha_n,\,\beta_n\}$ for all $n\in X_3$. It is clear that $0<\frac{\alpha_{n}}{d_{n}}<\frac{\beta_{n}}{d_{n}}<\frac{p_{n}}{d_{n}}$ for any $n\in X_3$. This means $\frac{p_{n}}{d_{n}}>2$ if $n\in X_3$ by noting $\{\frac{\alpha_{n}}{d_{n}},\,\frac{\beta_{n}}{d_{n}}\}\subset\mathbb{Z}$. Moreover, by {\bfseries (C1)}, we have $d_n<p_n$ for all $n\in X_2$. In conclusion, we get
\begin{equation}\label{eq2.10}
	\frac{d_{n}}{p_{n}}<\bigg\{\begin{array}{ll}
		1,&\mbox{if}~~n\in X_2,\\
		\frac{1}{2},&\mbox{if}~~n\in X_3.
	\end{array}\bigg.
\end{equation}

At the end of this section, we give a general result on the lower bound of the products of mask functions which will be used in Section 5.

\begin{lemma}\label{le2.9}
	Let $\{E_n\}_{n\geqslant1}$ be a sequence of subsets of $\mathbb{Z}$ such that $0\in E_n$, $\#E_n\in\{2,\,3\}$ and $\gcd(E_n)=1$ for every $n\geqslant1$. If $\cup_{n=1}^{\infty}E_n$ is bounded, then there exists a constant $\varepsilon_1>0$ such that
	\begin{equation*}
		\prod_{n=1}^{\infty}\lvert m_{E_n}(x_{n})\rvert\geqslant\varepsilon_1
	\end{equation*}
	for any sequence of real numbers $\{x_n\}_{n=1}^{\infty}\subset[-\frac{1}{6},\,\frac{1}{6}]$ satisfying $\sum_{n=1}^{\infty}|x_n|\leqslant 1$.	
\end{lemma}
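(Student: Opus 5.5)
The plan is to get a uniform lower bound for each factor that depends only on $|x_n|$, and then to multiply these bounds using $\sum|x_n|\leqslant1$.

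\textbf{Step 1 (finitely many digit sets).} By boundedness, $\cup_{n}E_n\subset\{-M,\dots,M\}$ for some $M$. Since every $E_n$ contains $0$ and has two or three elements, the sequence $\{E_n\}$ takes only finitely many distinct values $F_1,\dots,F_r$. It therefore suffices to find, for each fixed $F\in\{F_1,\dots,F_r\}$, constants $c_F>0$ and $\delta_F>0$ such that
\begin{equation*}
|m_F(x)|\geqslant \max\{\delta_F,\ 1-c_F|x|\},\qquad |x|\leqslant \tfrac16 .
\end{equation*}

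\textbf{Step 2 (no zeros on $[-\frac16,\frac16]$).} This is the key point, and it is where $\gcd(F)=1$ and the bound $\frac16$ are used.
\begin{itemize}
\item If $F=\{0,a\}$, then $\gcd(F)=1$ forces $a=\pm1$. Hence $|m_F(x)|=|\cos\pi x|\geqslant\cos\frac{\pi}{6}>0$ on $[-\frac16,\frac16]$.
\item If $\#F=3$, then either $\mathcal{Z}(m_F)=\emptyset$, or by Lemma~\ref{le2.1} (applied after translation) $\mathcal{Z}(m_F)=\frac{\mathbb{Z}\setminus3\mathbb{Z}}{3}$, whose point of smallest modulus is $\pm\frac13>\frac16$.
\end{itemize}
In either case $m_F$ has no zero on the compact interval $[-\frac16,\frac16]$. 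By continuity, $\delta_F:=\min_{|x|\leqslant1/6}|m_F(x)|>0$.

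\textbf{Step 3 (linear bound near $0$).} Since $m_F(0)=1$ and $|m_F'(x)|\leqslant 2\pi M$, we have $|m_F(x)|\geqslant1-2\pi M|x|$. Put $\eta:=\frac{1}{4\pi M}$. For $|x|\leqslant\eta$ this gives $|m_F(x)|\geqslant 1-2\pi M|x|\geqslant e^{-4\pi M|x|}$, using $1-t\geqslant e^{-2t}$ for $t\in[0,\frac12]$.

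\textbf{Step 4 (counting the large terms).} Because $\sum|x_n|\leqslant1$, at most $\lfloor 1/\eta\rfloor$ indices satisfy $|x_n|>\eta$. Let $\delta:=\min_F\delta_F$. Then
\begin{equation*}
\prod_{n}|m_{E_n}(x_n)|\geqslant \delta^{1/\eta}\prod_{|x_n|\leqslant\eta}e^{-4\pi M|x_n|}\geqslant \delta^{1/\eta}e^{-4\pi M}=:\varepsilon_1>0 .
\end{equation*}

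The only delicate step is Step 2: we must check that no three-element $F$ with $\gcd(F)=1$ has a zero of $m_F$ closer to the origin than $\frac13$. This follows from Lemma~\ref{le2.1}(ii) with $\gcd(a,b)=1$. The rest is routine.
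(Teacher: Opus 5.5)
Your proof is correct and follows essentially the same route as the paper: there are only finitely many masks, each has a positive lower bound on $[-\frac16,\frac16]$ because its zeros lie at distance at least $\frac13$ from the origin, the bound $|m_F(x)|\geqslant 1-c|x|$ is combined with $1-t\geqslant e^{-2t}$, and only boundedly many indices have large $|x_n|$. Your Step 2 is slightly more careful than the paper's version, which asserts the zero set $\frac{3\mathbb{Z}\pm1}{3}$ for every three-element $E_n$ without noting that it may be empty.
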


\begin{pf}
	Since $0\in E_n$ and $\gcd(E_n)=1$, we see that $\mathcal{Z}(m_{E_n})=\frac{2\mathbb{Z}+1}{2}$ if $\#E_n=2$ and $\mathcal{Z}(m_{E_n})=\frac{3\mathbb{Z}\pm1}{3}$ if $\#E_n=3$.
	
	Since $\cup_{n=1}^{\infty}E_n$ is bounded, the family $\{m_{E_n}(x):\,n\geqslant1\}$ is finite. Note that $m_{E_n}(0)=1$ for every $n\geqslant1$, we get
	\begin{equation}\label{eq2.14}
		\alpha_1:=\inf\{m_{E_n}(x):\,|x|\leqslant\frac{1}{6},\,n\geqslant1\}>0.
	\end{equation}
	
	Since the family $\{m_{E_n}(x):\,n\geqslant1\}$ is finite, we see that there exists a constant $c_1$ such that
	\begin{equation*}
		\lvert 1 -m_{E_n}(x)\rvert=\lvert m_{E_n}(0)-m_{E_n}(x)\rvert\leqslant c_1\lvert x\rvert,\qquad\forall \, n\geqslant1,\,~ x\in(-1,\,1).
	\end{equation*}
	Note an elementary inequality $1-x\geqslant e^{-2x}$ for all $0\leqslant x\leqslant\frac{1}{2}$, we have
	\begin{equation*}
		\lvert m_{E_n}(x_{n})\rvert\geqslant1-c_1\lvert x_{n}\rvert\geqslant e^{-2c_1\lvert x_{n}\rvert},\quad\mbox{if}~~2c_1\lvert x_{n}\rvert\leqslant1.
	\end{equation*}
	Note that $\sum_{n=1}^{\infty}|x_n|\leqslant 1$ shows $\#\{n:\,2c_1\lvert x_{n}\rvert>1\}\leqslant2c_1$, we have
	\begin{equation*}
		\begin{split}
			\prod_{n=1}^{\infty}\lvert m_{E_n}(x_{n})\rvert=&\prod_{n:\,2c_1\lvert x_{n}\rvert>1}\lvert m_{E_n}(x_{n})\rvert\cdot\prod_{n:\,2c_1\lvert x_{n}\rvert\leqslant1} \lvert m_{E_n}(x_{n})\rvert\\
			\geqslant&\alpha_1^{2c_1}\cdot\prod_{n:\,2c_1\lvert x_{n}\rvert\leqslant1} e^{-2c_1\lvert x_{n}\rvert}\\
			\geqslant&\alpha_1^{2c_1}\cdot e^{-2c_1}=:\varepsilon_1>0.
		\end{split}
	\end{equation*}
	This completes the proof.
\end{pf}

\section{Proof of the necessity Theorem \ref{th1.3}}\label{sec3}

Assume that the pair $(\mathbb{P},\,\mathbb{D})$ satisfies \eqref{eq2.12-3}. Let $\mu_{\mathbb{P},\,\mathbb{D}}$ be the measure defined by \eqref{eq1.1}, and let $\Lambda$ be a spectrum of $\mu_{\mathbb{P},\,\mathbb{D}}$. Given that $\Lambda$ is a bi-zero set of $\mu_{\mathbb{P},\,\mathbb{D}}$, it follows from \eqref{eq2.8} and {\bfseries (C3)} that $\Lambda\subset\mathbb{Z}$. Define
\begin{equation}\label{eq3.1}
	\Lambda_{i,\,j}^{(n)}:=\mathbb{Z}\cap\Big[\frac{1}{q_n^{k_n+1}L_n}\Big(\Lambda-(i+q_n^{k_n}L_nj)\Big)\Big]
\end{equation}
for every $n\geqslant1$, $0\leqslant i\leqslant q_n^{k_n}L_n-1$ and $0\leqslant j\leqslant q_n-1$. For $n\geqslant1$, $0\leqslant i\leqslant q_n^{k_n}L_n-1$ and $0\leqslant j\leqslant q_n-1$, it is clear $0\leqslant i+q_n^{k_n}L_nj\leqslant q_n^{k_n+1}L_n-1$, which implies that the family $\{\Lambda_{i,\,j}^{(n)}\}_{0\leqslant i\leqslant q_n^{k_n}L_n-1,\,0\leqslant j\leqslant q_n-1}$ is pairwise disjoint. That is, whenever $i_1\neq i_2$ or $j_1\neq j_2$, we have $\Lambda_{i_1,\,j_1}^{(n)}\cap\Lambda_{i_2,\,j_2}^{(n)}=\emptyset$.

According to the definition of $\Lambda_{i,\,j}^{(n)}$ given in \eqref{eq3.1}, we can decompose $\Lambda$ as follows
\begin{equation}\label{eq3.2}
	\displaystyle \Lambda=\bigcup_{i=0}^{q_n^{k_n}L_n-1}\bigcup_{j=0}^{q_n-1}\Big(i+q_n^{k_n}L_nj+q_n^{k_n+1}L_n\Lambda_{i,\,j}^{(n)}\Big),
\end{equation}
where $i+q_n^{k_n}L_nj+q_n^{k_n+1}L_n\Lambda_{i,\,j}^{(n)}$ is empty when $\Lambda_{i,\,j}^{(n)}=\emptyset$.

For any integer $n\geqslant 1$, we define
\begin{equation}\label{eq4.8}
	\chi_n:=\delta_{p_1^{-1}p_2^{-1}\cdots p_n^{-1}D_n}.
\end{equation}
Therefore, the measure $\mu_{\mathbb{P},\,\mathbb{D}}$ defined in \eqref{eq1.1} can be rewritten as $\mu_{\mathbb{P},\,\mathbb{D}}=\chi_1\bigast\chi_2\bigast\cdots\chi_n\cdots$. Let ${\mathcal A}=\{s_1,\ s_2,\ \cdots,\ s_m\}$ be a finite subset of positive integers. Without loss of generality, assume that  $s_1<s_2<\cdots<s_m$. We define
\begin{equation}\label{eq4.8+}
	\chi_{\mathcal A}=\chi_{s_1}\bigast\chi_{s_2}\bigast\cdots\bigast\chi_{s_m},
\end{equation}
\begin{equation}\label{eq4.9}
	\begin{split}
		\omega_{\mathcal A}:=&\chi_1\bigast\chi_2\bigast\cdots\bigast\chi_{s_1-1}\\
		\bigast&\chi_{s_1+1}\bigast\chi_{s_1+2}\bigast\cdots\bigast\chi_{s_2-1}\\
		\bigast&\chi_{s_2+1}\bigast\chi_{s_2+2}\bigast\cdots\bigast\chi_{s_3-1}\\
		\bigast&\cdots\bigast\\
		\bigast& \chi_{s_m+1}\bigast\chi_{s_m+2}\bigast\cdots\bigast\chi_{s_m+n}\bigast\cdots.
	\end{split}
\end{equation}
It is clear that $\omega_{\mathcal A}$ is the probability measure obtained  by deleting $\chi_{s_1}$, $\chi_{s_2}$, $\cdots$, $\chi_{s_m}$ from  the convolution representation \eqref{eq1.1} and so $\mu_{\mathbb{P},\,\mathbb{D}}=\chi_{\mathcal A}\bigast\omega_{\mathcal A}$.

\begin{lemma}\label{le3.2}
	Assume that the pair $(\mathbb{P},\,\mathbb{D})$ satisfies \eqref{eq2.12-3}. Let $\mu_{\mathbb{P},\,\mathbb{D}}$ be the measure defined by \eqref{eq1.1} and $\Lambda$ be a spectrum of $\mu_{\mathbb{P},\,\mathbb{D}}$. Take $0\leqslant j_i\leqslant q_n-1$ for each $0\leqslant i\leqslant q_n^{k_n}L_n-1$, and write
	\begin{equation*}
		\Gamma_{j_0,\,j_1,\cdots,j_{q_n^{k_n}L_n-1}}:=\bigcup_{i=0}^{q_n^{k_n}L_n-1}\Big(i+q_n^{k_n}L_nj_i+q_n^{k_n+1}L_n\Lambda_{i,\,j_i}^{(n)}\Big).
	\end{equation*}
	Then, $E(\Gamma_{j_0,\,j_1,\cdots,j_{q_n^{k_n}L_n-1}})$ is either an empty set or an orthogonal family of $L^2(\omega_{n})$, where $\omega_{n}$ is defined as in \eqref{eq4.9}.
\end{lemma}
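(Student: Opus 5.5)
We show directly that $\Gamma:=\Gamma_{j_0,\,j_1,\cdots,j_{q_n^{k_n}L_n-1}}$ is a bi-zero set of $\omega_n=\omega_{\{n\}}$. Since $\mu_{\mathbb{P},\,\mathbb{D}}=\chi_n\bigast\omega_n$, the Fourier transform factorizes as $\hat\mu_{\mathbb{P},\,\mathbb{D}}=\hat\chi_n\cdot\hat\omega_n$, so that
\[
\mathcal{Z}(\hat\mu_{\mathbb{P},\,\mathbb{D}})=\mathcal{Z}(\hat\chi_n)\cup\mathcal{Z}(\hat\omega_n).
\]
By \eqref{eq2.6}, $\mathcal{Z}(\hat\chi_n)=q_n^{k_n}L_n(\mathbb{Z}\setminus q_n\mathbb{Z})$. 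Since $\Gamma\subset\Lambda$ and $\Lambda$ is a bi-zero set of $\mu_{\mathbb{P},\,\mathbb{D}}$, every nonzero difference $\gamma-\gamma'$ with $\gamma,\gamma'\in\Gamma$ lies in $\mathcal{Z}(\hat\chi_n)\cup\mathcal{Z}(\hat\omega_n)$. It therefore suffices to show that no such difference lies in $\mathcal{Z}(\hat\chi_n)$. If $\Gamma=\emptyset$ there is nothing to prove, and orthogonality of $E(\Gamma)$ in $L^2(\omega_n)$ is equivalent to $\Gamma$ being a bi-zero set of $\omega_n$.

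Take distinct $\gamma,\gamma'\in\Gamma$, and write
\[
\gamma=i+q_n^{k_n}L_nj_i+q_n^{k_n+1}L_n\lambda,\qquad \gamma'=i'+q_n^{k_n}L_nj_{i'}+q_n^{k_n+1}L_n\lambda',
\]
with $\lambda\in\Lambda^{(n)}_{i,j_i}$ and $\lambda'\in\Lambda^{(n)}_{i',j_{i'}}$. We split into two cases according to whether the residues $i$ and $i'$ coincide.

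\emph{Case $i\neq i'$.} Since $0\leqslant i,i'\leqslant q_n^{k_n}L_n-1$, we have $\gamma-\gamma'\equiv i-i'\not\equiv 0 \pmod{q_n^{k_n}L_n}$. Here we need $q_n^{k_n}L_n$ to be a positive integer, which follows from \textbf{(C3)} together with Lemma \ref{le2.8-1}. Every element of $\mathcal{Z}(\hat\chi_n)$ is a multiple of $q_n^{k_n}L_n$, so $\gamma-\gamma'\notin\mathcal{Z}(\hat\chi_n)$.

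\emph{Case $i=i'$.} Then $j_i=j_{i'}$ by the choice of $\Gamma$, so
\[
\gamma-\gamma'=q_n^{k_n}L_n\cdot q_n(\lambda-\lambda')\in q_n^{k_n}L_n\cdot q_n\mathbb{Z}.
\]
This multiplier lies in $q_n\mathbb{Z}$, so again $\gamma-\gamma'\notin q_n^{k_n}L_n(\mathbb{Z}\setminus q_n\mathbb{Z})$.

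In both cases $\gamma-\gamma'\in\mathcal{Z}(\hat\omega_n)$, so $E(\Gamma)$ is orthogonal in $L^2(\omega_n)$.

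The only delicate point is the integrality of $q_n^{k_n}L_n$. It is needed so that the residues $i$ are well defined and so that the case $i\neq i'$ can be excluded through divisibility. Both facts are guaranteed by the reductions in \eqref{eq2.12-3}.
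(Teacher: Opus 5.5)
Your proposal is correct and follows the paper's proof essentially step for step. Like the paper, you factor $\hat\mu_{\mathbb{P},\,\mathbb{D}}=\hat\chi_n\hat\omega_n$ and split on whether the residues $i,i'$ coincide, excluding $\gamma-\gamma'$ from $\mathcal{Z}(\hat\chi_n)=q_n^{k_n}L_n(\mathbb{Z}\setminus q_n\mathbb{Z})$ either because $q_n^{k_n}L_n$ does not divide it or because $q_n^{k_n+1}L_n$ does; your remark that $q_n^{k_n}L_n$ is a positive integer (which already follows from \textbf{(C3)}) just makes explicit a fact the paper uses tacitly.
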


\begin{pf}
	Assume that the set $\Gamma_{j_0,\,j_1,\cdots,j_{q_n^{k_n}L_n-1}}$ contains at least two elements, otherwise the conclusion holds trivially. Let $\alpha$ and $\beta$ be distinct elements in set $\Gamma_{j_0,\,j_1,\cdots,j_{q_n^{k_n}L_n-1}}$, we can write $\alpha=i_1+q_n^{k_n}L_nj_{i_1}+q_n^{k_n+1}L_n\lambda_1$ and $\beta=i_2+q_n^{k_n}L_nj_{i_2}+q_n^{k_n+1}L_n\lambda_2$ for some $0\leqslant i_1,\,i_2\leqslant q_n^{k_n}L_n-1$, $\lambda_1\in\Lambda_{i_1,\,j_1}^{(n)}$ and $\lambda_2\in\Lambda_{i_2,\,j_2}^{(n)}$, where either $i_1\ne i_2$ or $\lambda_1\ne\lambda_2$. It is clear that
	\begin{equation*}
		\alpha-\beta=i_1-i_2+q_n^{k_n}L_n(j_{i_1}-j_{i_2})+q_n^{k_n+1}L_n(\lambda_1-\lambda_2).
	\end{equation*}
	By \eqref{eq3.2}, it is easy to check that both $\alpha$ and $\beta$ belong to $\Lambda$. Given that $\Lambda$ is a bi-zero set of $\mu_{\mathbb{P},\,\mathbb{D}}$, it follows that
	\begin{equation}\label{eq3.6-1}
		0=\widehat{\mu_{\mathbb{P},\,\mathbb{D}}}(\alpha-\beta)=\hat\chi_{n}(\alpha-\beta)\hat\omega_n(\alpha-\beta).
	\end{equation}
	
	$\rm(a)$ If $i_1=i_2$, then $\lambda_1\ne\lambda_2$ and $\alpha-\beta=q_n^{k_n+1}L_n(\lambda_1-\lambda_2)\not\in q_n^{k_n}L_n(\mathbb{Z}\setminus q_n\mathbb{Z})=\mathcal{Z}(\hat\chi_n)$. By \eqref{eq3.6-1}, we get $\hat\omega_n(\alpha-\beta)=0$.
	
	$\rm(b)$ If $i_1\ne i_2$, we get $0<|i_1-i_2|\leqslant q_n^{k_n}L_n-1$, which yields $(q_n^{k_n}L_n)\nmid(\alpha-\beta)$. This means that $\alpha-\beta\not\in q_n^{k_n}L_n(\mathbb{Z}\setminus q_n\mathbb{Z})=\mathcal{Z}(\hat\chi_n)$. By \eqref{eq3.6-1}, we get $\hat\omega_n(\alpha-\beta)=0$.
	
	By the arbitrariness of $\alpha$ and $\beta$, we see that $E(\Gamma_{j_0,\,j_1,\cdots,j_{q_n^{k_n}L_n-1}})$ is an orthogonal family of $L^2(\omega_n)$. The proof is completed.
\end{pf}

\begin{lemma}\label{le3.3}
Assume that the pair $(\mathbb{P},\,\mathbb{D})$ satisfies \eqref{eq2.12-3}. Let $\mu_{\mathbb{P},\,\mathbb{D}}$ be the measure defined by \eqref{eq1.1}. If $\Lambda$ is a spectrum of $\mu_{\mathbb{P},\,\mathbb{D}}$. Then, the set $\Gamma_{j_0,\,j_1,\cdots,j_{q_n^{k_n}L_n-1}}$ is a spectrum of $\omega_n$ for any $0\leqslant j_0,\,j_1,\,\cdots,\,j_{q_n^{k_n}L_n-1}\leqslant q_n-1$, where $\omega_n$ is defined as in \eqref{eq4.9} and $\Gamma_{j_0,\,j_1,\cdots,j_{q_n^{k_n}L_n-1}}$ is defined in Lemma \ref{le3.2}.
\end{lemma}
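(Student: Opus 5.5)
The plan is to compare the Jorgensen--Pedersen function $Q_\Lambda$ for $\mu_{\mathbb{P},\,\mathbb{D}}=\chi_n\bigast\omega_n$ with the corresponding function for $\omega_n$. We then apply Lemma \ref{le2.6} to the decomposition \eqref{eq3.2}.

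\textbf{Step 1: periodicity of $\hat\chi_n$.} By \eqref{eq1.3}, $p_1p_2\cdots p_n=d_nq_n^{k_n+1}L_n$. Hence
\[
\hat\chi_n(x)=m_{D_n}\Big(\frac{x}{p_1\cdots p_n}\Big)=m_{d_n^{-1}D_n}\Big(\frac{x}{q_n^{k_n+1}L_n}\Big),
\]
and by \eqref{eq2.2} this function is periodic with period $q_n^{k_n+1}L_n$. For $\lambda=i+q_n^{k_n}L_nj+q_n^{k_n+1}L_n\lambda'$ with $\lambda'\in\Lambda^{(n)}_{i,j}$ we therefore get
\[
|\hat\chi_n(\lambda+\xi)|^2=\Big|m_{d_n^{-1}D_n}\Big(\frac{i+\xi}{q_n^{k_n+1}L_n}+\frac{j}{q_n}\Big)\Big|^2=:p_{i,j}(\xi),
\]
which depends only on $i$, $j$ and $\xi$.

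\textbf{Step 2: the weights $p_{i,j}$ sum to one.} The zero set of $m_{d_n^{-1}D_n}$ is $\frac{\mathbb{Z}\setminus q_n\mathbb{Z}}{q_n}$ (using the mod-$3$ condition when $q_n=3$, via Lemma \ref{le2.1}). So $(d_n^{-1}D_n,\{0,\frac1{q_n},\dots,\frac{q_n-1}{q_n}\})$ is a compatible pair. Lemma \ref{le2.4}(iii) then gives $\sum_{j}p_{i,j}(\xi)=1$ for every $i$. Moreover, for $\xi$ outside a countable set $\mathcal E$, all $p_{i,j}(\xi)>0$.

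\textbf{Step 3: rewrite $Q_\Lambda$.} Set
\[
x_{i,j}(\xi):=\sum_{\lambda'\in\Lambda^{(n)}_{i,j}}\big|\hat\omega_n(i+q_n^{k_n}L_nj+q_n^{k_n+1}L_n\lambda'+\xi)\big|^2,
\]
with $x_{i,j}=0$ when $\Lambda^{(n)}_{i,j}=\emptyset$. Since $\hat\mu_{\mathbb{P},\,\mathbb{D}}=\hat\chi_n\hat\omega_n$ and $\Lambda$ is a spectrum, Lemma \ref{le2.2} yields
\[
1=Q_\Lambda(\xi)=\sum_i\sum_j p_{i,j}(\xi)\,x_{i,j}(\xi).
\]

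\textbf{Step 4: verify the hypothesis of Lemma \ref{le2.6}.} For each $i$ choose $j_i$ with $x_{i,j_i}=\max_j x_{i,j}$. Then
\[
\sum_i\max_j x_{i,j}=\sum_i x_{i,j_i}=Q^{\omega_n}_{\Gamma_{j_0,\dots}}(\xi),
\]
where $Q^{\omega_n}_{\Gamma}$ denotes the $Q$-function of $\Gamma$ with respect to $\omega_n$. By Lemma \ref{le3.2} and the first half of Lemma \ref{le2.2}, this is at most $1$. (If that $\Gamma$ is empty, all $x_{i,j}=0$, contradicting $Q_\Lambda=1$.)

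\textbf{Step 5: conclude for $\xi\notin\mathcal E$.} Lemma \ref{le2.6} now applies. It gives $x_{i,0}=\dots=x_{i,q_n-1}$ for every $i$ and $\sum_i x_{i,0}=1$. Consequently, for an arbitrary choice of $(j_0,\dots,j_{q_n^{k_n}L_n-1})$,
\[
Q^{\omega_n}_{\Gamma_{j_0,\dots}}(\xi)=\sum_i x_{i,j_i}(\xi)=1\qquad\text{for all }\xi\notin\mathcal E.
\]
In particular every such $\Gamma$ is nonempty.

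\textbf{Step 6: extend to all $\xi$ (main obstacle).} Passing from $\xi\notin\mathcal E$ to every $\xi\in\mathbb R$ is the only delicate point. I would use the standard fact that, since $E(\Gamma)$ is orthogonal for the compactly supported $\omega_n$, $Q^{\omega_n}_\Gamma$ extends to an entire function on $\mathbb C$ (the Jorgensen--Pedersen argument). An entire function equal to $1$ on the dense set $\mathbb R\setminus\mathcal E$ is identically $1$. Lemma \ref{le2.2} then shows $\Gamma_{j_0,\dots}$ is a spectrum of $\omega_n$.
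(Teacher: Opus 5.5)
Your proof is correct and follows the paper's argument essentially step for step. You factor $Q_\Lambda$ through the periodicity of $\hat\chi_n$ and the decomposition \eqref{eq3.2}, get $\sum_j p_{i,j}=1$ from the compatible pair, bound $\sum_i\max_j x_{i,j}\leqslant 1$ via Lemma \ref{le3.2}, and apply Lemma \ref{le2.6} at a point where all $p_{i,j}>0$ (the paper takes $x$ irrational; you exclude a countable set).

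The only divergence is the passage to all $\xi\in\mathbb{R}$. There the paper just cites continuity of $\hat\omega_n$, which by itself only gives lower semicontinuity of the infinite sums $x_{i,j}$. Your appeal to the Jorgensen--Pedersen entire extension of $Q^{\omega_n}_{\Gamma}$ actually justifies that step. Plain continuity of $Q^{\omega_n}_{\Gamma}$ would also suffice, since it is the squared norm of the orthogonal projection of $e^{2\pi i\xi x}$ onto $\overline{\mathrm{span}}\,E(-\Gamma)$ in $L^2(\omega_n)$.
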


\begin{pf}
Since $\Lambda$ is a spectrum of $\mu_{\mathbb{P},\,\mathbb{D}}$ with $0\in\Lambda$. By Lemma \ref{le2.2} and \eqref{eq3.2}, we have
\begin{equation}\label{eq3.5-2}
	1=Q_\Lambda(x)=\sum_{i=0}^{q_n^{k_n}L_n-1}\sum_{j=0}^{q_n-1}\sum_{\lambda\in\Lambda_{i,\,j}^{(n)}}\Big|\widehat{\mu_{\mathbb{P},\,\mathbb{D}}}\Big(x+i+q_n^{k_n}L_nj+q_n^{k_n+1}L_n\lambda\Big)\Big|^2
\end{equation}
for any $ x\in\mathbb{R}$, where $\sum_{\lambda\in i+q_n^{k_n}L_nj+q_n^{k_n+1}L_n\Lambda_{i,\,j}^{(n)}}|\widehat{\mu_{\mathbb{P},\,\mathbb{D}}}(x+\frac{p_1}{dq}\lambda)|^2=0$ when $\Lambda_{i,\,j}^{(n)}=\emptyset$. By \eqref{eq1.3} and \eqref{eq4.8}, we have $\hat\chi_n(\cdot)=m_{d_n^{-1}D_n}(\frac{1}{q_n^{k_n+1}L_n}~\cdot)$ for each $n\geqslant1$. Hence, according to \eqref{eq2.2}, \eqref{eq2.5}, \eqref{eq3.2} and \eqref{eq3.5-2}, we get
\begin{equation*}
	\begin{split}
		1=&\sum_{i=0}^{q_n^{k_n}L_n-1}\sum_{j=0}^{q_n-1}\sum_{\lambda\in\Lambda_{i,\,j}^{(n)}}\Big|m_{d_n^{-1}D_n}\Big(\frac{1}{q_n^{k_n+1}L_n}x+\frac{1}{q_n^{k_n+1}L_n}\big(i+q_n^{k_n}L_nj+q_n^{k_n+1}L_n\lambda\big)\Big)\Big|^2\\
		&\hspace{18em}\cdot\Big|\omega_{n}\Big(x+i+q_n^{k_n}L_nj+q_n^{k_n+1}L_n\lambda\Big)\Big|^2\\
		=&\sum_{i=0}^{q_n^{k_n}L_n-1}\sum_{j=0}^{q_n-1}\Big|m_{d_n^{-1}D_n}\Big(\frac{1}{q_n^{k_n+1}L_n}x+\frac{1}{q_n^{k_n+1}L_n}\big(i+q_n^{k_n}L_nj\big)\Big)\Big|^2\\
		&\hspace{16em}\cdot\sum_{\lambda\in\Lambda_{i,\,j}^{(n)}}\Big|\omega_{n}\Big(x+i+q_n^{k_n}L_nj+q_n^{k_n+1}L_n\lambda\Big)\Big|^2
	\end{split}
\end{equation*}
for any $ x\in\mathbb{R}$ and $n\geqslant1$. For $0\leqslant i\leqslant q_n^{k_n}L_n-1$ and $0\leqslant j\leqslant q_n-1$, we write $p_{i,j}:=\big|m_{d_n^{-1}D_n}\big(\frac{1}{q_n^{k_n+1}L_n}x+\frac{1}{q_n^{k_n+1}L_n}\big(i+q_n^{k_n}L_nj\big)\big)\big|^2$ and $x_{i,j}:=\big|\omega_{n}\big(x+i+q_n^{k_n}L_nj+q_n^{k_n+1}L_n\lambda\big)\big|^2$. For any $i\in\{0,\,1,\cdots,\,q_n^{k_n}L_n-1\}$, define
\begin{equation}\label{eq3.5-1}
	\Delta_{n,i}:=\{i+q_n^{k_n}L_nj:\,0\leqslant j\leqslant q_n-1\}.
\end{equation}
By \eqref{eq2.4}, we have $\frac{a-b}{q_n^{k_n+1}L_n}\in\mathcal{Z}(m_{d_n^{-1}D_n})$ for any $a\neq b\in\Delta_{n,i}$. Since $\#D_n=q_n=\#\Delta_{n,i}$. It follows from Lemma \ref{le2.4} \rm(\romannumeral2) that $(d_n^{-1}D_n,\,\frac{1}{q_n^{k_n+1}L_n}\Delta_{n,i})$ is a compatible pair for each $i\in\{0,\,1,\cdots,\,q_n^{k_n}L_n-1\}$. Thus, by Lemma \ref{le2.4} \rm(\romannumeral3), we get $\sum_{j=0}^{q_n-1}p_{i,j}=1$ for each $i\in\{0,\,1,\cdots,\,q_n^{k_n}L_n-1\}$. Moreover, for any $j_0,\,j_1,\,\cdots,\,j_{q_n^{k_n}L_n-1}\in\{0,\,1,\cdots,\,q_n-1\}$, Lemma \ref{le3.2} show that $E(\Gamma_{j_0,\,j_1,\cdots,j_{q_n^{k_n}L_n-1}})$ is either an empty set or an orthogonal family of $L^2(\omega_n)$. By Lemma \ref{le2.2}, we get that $\sum_{i=0}^{q_n^{k_n}L_n-1}x_{i,j_i}\leqslant1$ for any $j_0,\,j_1,\,\cdots,\,j_{q_n^{k_n}L_n-1}\in\{0,\,1,\cdots,\,q_n-1\}$. Therefore, it follows that $\sum_{i=0}^{q_n^{k_n}L_n-1}\max\{x_{i,0},\,x_{i,1},\,\cdots,\,x_{i,q_n-1}\}\leqslant1$.

Choose an irrational number $x\in\mathbb{R}\setminus\mathbb{Z}$. By \eqref{eq2.4}, we get $p_{i,j}>0$. Thus, $p_{i,j}$ and $x_{i,j}$ satisfy the assumption in Lemma \ref{le2.6}. This means that
\begin{equation}\label{eq3.5}
	x_{i,0}=x_{i,1}=\cdots=x_{i,q_n-1},\qquad 0\leqslant i\leqslant q_n^{k_n}L_n-1,
\end{equation}
and
\begin{equation}\label{eq3.6}
	1=\sum_{i=0}^{q_n^{k_n}L_n-1}x_{i,0}.
\end{equation}
Noting that $\hat\omega_{n}$ is a continuous functions on $\mathbb{R}$, then the  equations \eqref{eq3.5} and \eqref{eq3.6} hold for all $x\in\mathbb{R}$. Furthermore, \eqref{eq3.5} and \eqref{eq3.6} implies that
\begin{equation*}
	\sum_{i=0}^{q_n^{k_n}L_n-1}x_{i,j_i}\equiv1,\qquad \forall \, x\in\mathbb{R},\,j_0,\,j_1,\,\cdots,\,j_{q_n^{k_n}L_n-1}\in\{0,\,1,\cdots,\,q_n-1\}.
\end{equation*}
Combining the above with Lemma \ref{le2.2}, the conclusion hold.	
\end{pf}

Assume that the pair $(\mathbb{P},\,\mathbb{D})$ satisfies \eqref{eq2.12-3}. Let $\mu_{\mathbb{P},\,\mathbb{D}}$ be the measure defined by \eqref{eq1.1}, and let $\Lambda$ be a spectrum of $\mu_{\mathbb{P},\,\mathbb{D}}$. For any $n\geqslant1$ and $0\leqslant i\leqslant L_n-1$, define $E_{n,i}:=\{i+q_n^{k_n}L_nj:\,\Lambda_{i,\,j}^{(n)}\ne\emptyset,\,0\leqslant j\leqslant q_n-1\}$. We see that
\begin{equation}\label{eq4.14}
	\mbox{either}~~E_{n,i}=\Delta_{n,i} \mbox{~~or~~} E_{n,i}=\emptyset
\end{equation}
by using \eqref{eq3.5}, where $\Delta_{n,i}$ is defined in \eqref{eq3.5-1}. If $0\in\Lambda$, we have $0\in\Lambda_{0,\,0}^{(n)}$. Thus,
\begin{equation}\label{eq4.1-1}
	E_{n,0}=\Delta_{n,0}=q_n^{k_n}L_n\{0,1,\cdots,q_n-1\}
\end{equation}
and so $q_n^{k_n}L_nj+q_n^{k_n+1}L_n\Lambda_{0,\,j}^{(n)}\neq\emptyset$ for all $0\leqslant j\leqslant q_n-1$ when $0\in\Lambda$. Since $q_n\geqslant2$, we may  take $j=1$. It follows from \eqref{eq2.6} and \eqref{eq3.2} that $q_n^{k_n}L_n+q_n^{k_n+1}L_n\Lambda_{0,\,1}^{(n)}$ is contained in both $\mathcal{Z}(\hat\chi_n)$ and $\Lambda$. Hence $\mathcal{Z}(\hat\chi_{n})\cap\Lambda\neq\emptyset$. We thus obtain the following corollary.

\begin{coro}\label{co3.6}
Assume that the pair $(\mathbb{P},\,\mathbb{D})$ satisfies \eqref{eq2.12-3}. Let $\mu_{\mathbb{P},\,\mathbb{D}}$ be the measure defined by \eqref{eq1.1}. If $\Lambda$ is a spectrum of $\mu_{\mathbb{P},\,\mathbb{D}}$ with $0\in\Lambda$, then $\mathcal{Z}(\hat\chi_{n})\cap\Lambda\neq\emptyset$ for any $n\geqslant1$.
\end{coro}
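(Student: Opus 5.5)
The corollary follows directly from Lemma \ref{le3.3} and the dichotomy \eqref{eq4.14}. Fix $n\geqslant1$ and a spectrum $\Lambda$ of $\mu_{\mathbb{P},\,\mathbb{D}}$ with $0\in\Lambda$. By \eqref{eq2.12-3} every element of $\Lambda$ is an integer, so the decomposition \eqref{eq3.2} is available.

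The first step is to check that the block $i=0$ is nonempty. Since $0=0+q_n^{k_n}L_n\cdot0+q_n^{k_n+1}L_n\cdot0$, we have $0\in\Lambda_{0,0}^{(n)}$. Hence $E_{n,0}\neq\emptyset$.

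The second step uses the equal-mass identity \eqref{eq3.5}. This identity came from the proof of Lemma \ref{le3.3}, through Lemma \ref{le2.6} with the positive weights $p_{i,j}$ and the compatible pairs $(d_n^{-1}D_n,\frac{1}{q_n^{k_n+1}L_n}\Delta_{n,i})$. It says that the quantities
\[
x_{0,j}=\sum_{\lambda\in\Lambda_{0,j}^{(n)}}\bigl|\hat\omega_n\bigl(x+q_n^{k_n}L_nj+q_n^{k_n+1}L_n\lambda\bigr)\bigr|^2
\]
agree for all $j$ and all $x$. At $x=0$ the term $j=0$, $\lambda=0$ contributes $|\hat\omega_n(0)|^2=1$, so $x_{0,0}(0)\geqslant1>0$. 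Consequently $x_{0,1}(0)>0$, which forces $\Lambda_{0,1}^{(n)}\neq\emptyset$. This is exactly \eqref{eq4.1-1}: $E_{n,0}=\Delta_{n,0}$. Here we use $q_n\geqslant2$, so that the index $j=1$ exists.

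The last step picks any $\lambda\in\Lambda_{0,1}^{(n)}$ and sets $\gamma:=q_n^{k_n}L_n+q_n^{k_n+1}L_n\lambda$. By \eqref{eq3.1}, $\gamma\in\Lambda$. We also have $\gamma=q_n^{k_n}L_n(1+q_n\lambda)$, and $1+q_n\lambda\in\mathbb{Z}\setminus q_n\mathbb{Z}$. By \eqref{eq2.6}, $\mathcal{Z}(\hat\chi_n)=q_n^{k_n}L_n(\mathbb{Z}\setminus q_n\mathbb{Z})$, so $\gamma\in\mathcal{Z}(\hat\chi_n)\cap\Lambda$.

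The only point needing care is the second step, namely that \eqref{eq3.5} holds pointwise for all $x$ (including $x=0$), not only for irrational $x$. This follows from the continuity of $\hat\omega_n$, as noted in the proof of Lemma \ref{le3.3}. Alternatively, evaluate at an irrational $x$ near $0$, where $x_{0,0}(x)\geqslant|\hat\omega_n(x)|^2>0$ already suffices.
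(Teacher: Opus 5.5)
Your proof is correct and is essentially the paper's argument in the paragraph preceding the corollary: $0\in\Lambda_{0,0}^{(n)}$, the equal-mass identity \eqref{eq3.5} then forces $\Lambda_{0,1}^{(n)}\neq\emptyset$, and any $\lambda$ there gives $q_n^{k_n}L_n(1+q_n\lambda)\in\Lambda\cap\mathcal{Z}(\hat\chi_n)$. Your fallback of evaluating at an irrational $x$ near $0$ is a slight improvement, because it uses \eqref{eq3.5} only at points where Lemma \ref{le2.6} gives it directly, and so avoids the continuity extension of the infinite sums $x_{i,j}$ invoked at the end of the proof of Lemma \ref{le3.3}.
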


\begin{lemma}\label{le4.1}
Assume that the pair $(\mathbb{P},\,\mathbb{D})$ satisfies \eqref{eq2.12-3}. Let $\mu_{\mathbb{P},\,\mathbb{D}}$, defined by \eqref{eq1.1}, be a spectral measure. If $\Lambda$ is a spectrum of $\mu_{\mathbb{P},\,\mathbb{D}}$ with $0\in\Lambda$, then there exists an integer $n\geqslant1$ such that $k_{q_n,n}=\min\{k_{q_n,i}:\,i\in X_{q_n}\}$ and
\begin{equation}\label{eq4.2}
	q_n^{-k_{q_n,n}}\Lambda\subset\mathbb{Z}.
\end{equation}	
\end{lemma}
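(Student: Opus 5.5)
The plan is to exploit only that $\Lambda$ is a bi-zero set contained in $\mathbb{Z}$ (by {\bfseries (C3)} and \eqref{eq2.8}), together with the description \eqref{eq2.7} of $\mathcal{Z}(\widehat{\mu_{\mathbb{P},\,\mathbb{D}}})$. For $q\in\{2,3\}$ with $X_q\neq\emptyset$, put $K_q:=\min\{k_{q,i}:\,i\in X_q\}$. This minimum exists because, by Lemma \ref{le2.8-1}, all $k_n\geqslant 0$ are integers. Choose $n\in X_q$ attaining $K_q$. Then the first requirement $k_{q_n,n}=\min\{k_{q_n,i}:\,i\in X_{q_n}\}$ holds automatically, and it suffices to find one $q\in\{2,3\}$ with $X_q\ne\emptyset$ and $q^{K_q}\mid\lambda$ for every $\lambda\in\Lambda$.

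First I record a valuation fact. If $z\in\mathcal{Z}(\hat\chi_m)=q_m^{k_m}L_m(\mathbb{Z}\setminus q_m\mathbb{Z})$, then $\nu_{q_m}(z)=k_m$ exactly, since $\nu_{q_m}(L_m)=0$ and the last factor is prime to $q_m$. Since $z\in\mathbb{Z}$ by {\bfseries (C3)}, this gives $q_m^{K_{q_m}}\mid z$. Consequently every nonzero element of $\mathcal{Z}(\widehat{\mu_{\mathbb{P},\,\mathbb{D}}})$ is divisible by $2^{K_2}$ (if it lies in a zero set with $q_m=2$) or by $3^{K_3}$ (if $q_m=3$).

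Now split into cases. If $X_3=\emptyset$, every $\lambda\in\Lambda\setminus\{0\}$ lies in $\Lambda-\{0\}\subset\mathcal{Z}(\widehat{\mu_{\mathbb{P},\,\mathbb{D}}})$. Hence $2^{K_2}\mid\lambda$, and we are done with $q=2$. The case $X_2=\emptyset$ is symmetric, with $q=3$.

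In the mixed case, suppose neither choice works. Then there is $\lambda_1\in\Lambda$ with $2^{K_2}\nmid\lambda_1$, and $\lambda_2\in\Lambda$ with $3^{K_3}\nmid\lambda_2$. Both are nonzero. By the valuation fact applied to $\lambda_i-0$, we get $3^{K_3}\mid\lambda_1$ and $2^{K_2}\mid\lambda_2$, so $\lambda_1\neq\lambda_2$. Then $\lambda_1-\lambda_2$ is divisible neither by $2^{K_2}$ (because $2^{K_2}\mid\lambda_2$ but $2^{K_2}\nmid\lambda_1$) nor by $3^{K_3}$ (similarly). By the valuation fact, $\lambda_1-\lambda_2\notin\mathcal{Z}(\widehat{\mu_{\mathbb{P},\,\mathbb{D}}})$. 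This contradicts the bi-zero property \eqref{eq2.8}. Hence one $q$ works, and \eqref{eq4.2} follows for the corresponding minimizing $n$.

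The only delicate point I anticipate is the integrality bookkeeping: $L_m$ is merely rational. This is handled by noting that $\nu_{q_m}(z)=k_m\geqslant K_{q_m}\geqslant0$ for an integer $z$, which gives $q_m^{K_{q_m}}\mid z$ directly.
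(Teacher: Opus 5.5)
Your proof is correct and is essentially the paper's argument. Both show that $\mathcal{Z}(\widehat{\mu_{\mathbb{P},\,\mathbb{D}}})\subset 2^{K_2}\mathbb{Z}\cup 3^{K_3}\mathbb{Z}$, then get a contradiction from a difference $\lambda_1-\lambda_2$ of elements of $\Lambda$ that lies outside this union. You additionally spell out the valuation bookkeeping $\nu_{q_m}(z)=k_m$ and treat the degenerate cases $X_2=\emptyset$ or $X_3=\emptyset$, both of which the paper leaves implicit.
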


\begin{pf}
We can label the elements of $X_2$ as $s_{1},\ s_{2},\ \cdots,\ s_{m},\ \cdots$ in such a way that
\begin{equation}\label{eq4.3}
	\left\{\begin{array}{l}
		X_2=\{s_{1},\ s_{2},\ \cdots,\ s_{m},\ \cdots\},\\
		\mbox{either } k_{2,s_{j}}<k_{2,s_{j+1}}\mbox{ or, both }k_{2,s_{j}}=k_{2,s_{j+1}}\mbox{ and }s_{j}<s_{j+1}\mbox{ for all }j\ge1.
	\end{array}\right.
\end{equation}
Similarly, reorder the elements of the set $X_3$  such that
\begin{equation}\label{eq4.4}
	\left\{\begin{array}{l}
		X_3=\{t_{1},\ t_{2},\ \cdots,\ t_{m},\ \cdots\},\\
		\mbox{either } k_{3,t_{j}}<k_{3,t_{j+1}}\mbox{ or, both } k_{3,t_{j}}=k_{3,t_{j+1}}\mbox{ and }t_{j}<t_{j+1}\mbox{ for all }j\ge1.
	\end{array}\right.
\end{equation}	
	
Since $\Lambda$ is a spectrum of $\mu_{\mathbb{P},\,\mathbb{D}}$ with $0\in\Lambda$, we have $\Lambda\subset\mathcal{Z}(\widehat{\mu_{\mathbb{P},\,\mathbb{D}}})\cup\{0\}$. By \eqref{eq2.7} and \eqref{eq2.12-3}, we have $\mathcal{Z}(\widehat{\mu_{\mathbb{P},\,\mathbb{D}}})\subset(2^{k_{2,s_{1}}}\mathbb{Z}\cup3^{k_{3,t_{1}}}\mathbb{Z})$. Therefore,
\begin{equation}\label{eq4.5}
	\Lambda\subset\mathcal{Z}(\widehat{\mu_{\mathbb{P},\,\mathbb{D}}})\cup\{0\}\subset (2^{k_{2,s_{1}}}\mathbb{Z}\cup3^{k_{3,t_{1}}}\mathbb{Z}).
\end{equation}
If $\Lambda\not\subset2^{k_{2,s_{1}}}\mathbb{Z}$ and $\Lambda\not\subset3^{k_{3,t_{1}}}\mathbb{Z}$, \eqref{eq4.5} shows that there exist $\lambda$, $\gamma\in\Lambda\setminus\{0\}$ such that
$\lambda\in (3^{k_{3,t_{1}}}\mathbb{Z})\setminus(2^{k_{2,s_{1}}}\mathbb{Z})$ and $\gamma\in (2^{k_{2,s_{1}}}\mathbb{Z})\setminus(3^{k_{3,t_{1}}}\mathbb{Z})$. Hence $\lambda-\gamma\notin(2^{k_{2,s_{1}}}\mathbb{Z}\cup3^{k_{3,t_{1}}}\mathbb{Z})$, so \eqref{eq4.5} shows $\lambda-\gamma\notin\mathcal{Z}(\widehat{\mu_{\mathbb{P},\,\mathbb{D}}})$, which contradicts to our assumption that $\Lambda$ is a spectrum of $\mu_{\mathbb{P},\,\mathbb{D}}$.
The lemma is proven.
\end{pf}

With reference to the above notation, we have the following Theorem:

\begin{theorem}\label{th4.6}
Assume that the pair $(\mathbb{P},\,\mathbb{D})$ satisfies \eqref{eq2.12-3}. Let $\mu_{\mathbb{P},\,\mathbb{D}}$, defined by \eqref{eq1.1}, be a spectral measure. If $\Lambda$ is a spectrum of $\mu_{\mathbb{P},\,\mathbb{D}}$ with $0\in\Lambda$. Then, there exists an integer $n\in\{s_{1},\,t_{1}\}$ satisfies \eqref{eq4.2}. Furthermore, the following statements hold.
\begin{enumerate}
	\item[\rm(\romannumeral1)] $\Lambda_n:=[(q_n^{1+k_{q_n,n}}{\mathbb Z})\cap\Lambda]$ is a spectrum of the measure $\omega_{n}$,
	\item[\rm(\romannumeral2)] $k_{q_n,j}>k_{q_n,n}$  for every $j\in (X_{q_n}\setminus\{n\})$,
\end{enumerate}
where $\omega_{n}$ is defined in \eqref{eq4.9} and, the sequences $\{s_{j}\}_{j=1}^{\infty}$ and $\{t_{j}\}_{j=1}^{\infty}$ are defined in \eqref{eq4.3} and \eqref{eq4.4}, respectively.	
\end{theorem}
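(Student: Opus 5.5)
Lemma \ref{le4.1} already gives an index $n$ with $q_n^{-k_{q_n,n}}\Lambda\subset\mathbb{Z}$ and $k_{q_n,n}=\min\{k_{q_n,i}: i\in X_{q_n}\}$. Its proof in fact shows $\Lambda\subset 2^{k_{2,s_1}}\mathbb{Z}$ or $\Lambda\subset 3^{k_{3,t_1}}\mathbb{Z}$. So the first step is to take $n=s_1$ in the first case and $n=t_1$ in the second; then $n\in\{s_1,t_1\}$ and \eqref{eq4.2} holds. Write $q=q_n$ and $k=k_{q,n}$.

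For (ii), I argue by contradiction. Suppose some $j\in X_q\setminus\{n\}$ has $k_{q,j}=k$ (smaller is impossible by minimality). Corollary \ref{co3.6} applied to $j$ gives $\lambda\in\Lambda\cap\mathcal{Z}(\hat\chi_j)$, so $\lambda=q^kL_j m$ with $q\nmid m$; likewise there is $\lambda'\in\Lambda\cap q^kL_n(\mathbb{Z}\setminus q\mathbb{Z})$. Since $\nu_q(L_j)=\nu_q(L_n)=0$, both lie in $q^k\mathbb{Z}\setminus q^{k+1}\mathbb{Z}$.

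The contradiction should come from Lemma \ref{le3.3} applied at level $n$. Because $\Lambda\subset q^k\mathbb{Z}$, only residues $i\in q^k\mathbb{Z}$ occur. By \eqref{eq4.14}, each nonempty class $E_{n,i}$ is the full coset $\Delta_{n,i}$, so $\Lambda$ modulo $q^{k+1}$ fills out complete cosets of $q^k\mathbb{Z}/q^{k+1}\mathbb{Z}$. Lemma \ref{le3.3} then says that choosing one $j$ per $i$ always gives a spectrum of $\omega_n$.

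Now $\omega_n$ still contains $\chi_j$ with the same $k$. I would pick a choice $\Gamma$ that puts every element into $q^{k+1}\mathbb{Z}$ (take $j_i$ with $i+q^kL_nj_i\equiv 0 \pmod{q^{k+1}}$, using $q\nmid L_n$). Then $\Gamma\subset q^{k+1}\mathbb{Z}$, which is disjoint from $\mathcal{Z}(\hat\chi_j)$.

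I expect this to be the main obstacle: showing that such a $\Gamma$ cannot be a spectrum of $\omega_n=\chi_j\ast\omega_{\{n,j\}}$. The intended route is that $\Gamma$ avoids $\mathcal{Z}(\hat\chi_j)$, hence is a bi-zero set of $\omega_{\{n,j\}}$. Lemma \ref{le2.8} then says it cannot be a spectrum of $\chi_j\ast\omega_{\{n,j\}}$. This contradicts Lemma \ref{le3.3}, and (ii) follows.

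For (i), the set $\Lambda_n=q^{k+1}\mathbb{Z}\cap\Lambda$ is exactly the $\Gamma$ above built from $i\equiv 0 \pmod{q^{k+1}}$. More precisely, it is the union over $i$ of the pieces $i+q^kL_nj_i+q^{k+1}L_n\Lambda^{(n)}_{i,j_i}$ with $j_i$ chosen so that $i+q^kL_nj_i\in q^{k+1}\mathbb{Z}$, and such a $j_i$ is unique mod $q$. Since the elements of $\Lambda$ are multiples of $q^k$, each one lands in exactly one such piece. So $\Lambda_n=\Gamma_{j_0,\dots}$ for this choice, and Lemma \ref{le3.3} gives (i) directly. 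The remaining bookkeeping is to check that the indices $i$ range over $[0,q^kL_n)$ and that $j_i$ exists in $\{0,\dots,q-1\}$, which holds because $\gcd(L_n,q)=1$.
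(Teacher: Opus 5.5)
Your proposal is correct and follows essentially the paper's own route: you pick $n\in\{s_1,t_1\}$ from the proof of Lemma \ref{le4.1}, identify $\Lambda_n$ with the set $\Gamma_{j_0,\dots}$ of Lemma \ref{le3.3} by choosing $j_i$ with $i+q^kL_nj_i\in q^{k+1}\mathbb{Z}$, and obtain (ii) from Lemma \ref{le2.8} applied to $\omega_n=\chi_j\ast\omega_{\{n,j\}}$, using $(\Gamma-\Gamma)\subset q^{k+1}\mathbb{Z}$. The only difference is cosmetic: the paper argues (ii) directly, noting that some difference of elements of $\Gamma$ must lie in $\mathcal{Z}(\hat\chi_j)\cap q^{k+1}\mathbb{Z}$, which forces $k_{q,j}\geqslant k+1$ for every $j\in X_q\setminus\{n\}$; so the contradiction setup, the appeal to minimality of $k_{q,n}$, and Corollary \ref{co3.6} are all unnecessary.
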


\begin{pf}
\rm(\romannumeral1) Lemma \ref{le4.1} shows that there exists an integer $n\in\{s_{1},\,t_{1}\}$ such that
\begin{equation}\label{eq4.12-1}
	\Lambda\subset q_n^{k_n}\mathbb{Z}.
\end{equation}
By \eqref{eq3.1} and \eqref{eq4.12-1}, we see that $\Lambda_{i,\,j}^{(n)}=\emptyset$ when $i\not\in q_n^{k_n}\mathbb{Z}$, where $0\leqslant i\leqslant q_n^{k_n}L_n-1$. This means that
\begin{equation}\label{eq4.12-4}
	i+q_n^{k_n}L_nj+q_n^{k_n+1}L_n\Lambda_{i,\,j}^{(n)}=\emptyset
\end{equation}
when $i\not\in q_n^{k_n}\mathbb{Z}$, where $0\leqslant i\leqslant q_n^{k_n}L_n-1$.
Therefore, the decomposition \eqref{eq3.2} of $\Lambda$ becomes
\begin{equation}\label{eq4.7}
	\Lambda=\bigcup_{i=0}^{L_n-1}\bigcup_{j=0}^{q_n-1}\Big(q_n^{k_n}i+q_n^{k_n}L_nj+q_n^{k_n+1}L_n\Lambda_{q_n^{k_n}i,\,j}^{(n)}\Big),
\end{equation}
where $q_n^{k_n}i+q_n^{k_n}L_nj+q_n^{k_n+1}L_n\Lambda_{q_n^{k_n}i,\,j}^{(n)}=\emptyset$ when $\Lambda_{q_n^{k_n}i,\,j}^{(n)}=\emptyset$. For each $i\in\{0,\,1,\cdots,L_n-1\}$, we choose $j_i$ as follows: If $n\in X_2$, then
\begin{equation}\label{eq4.12}
	j_i=\left\{\begin{array}{ll}
		0, &\mbox{ if } i\in2\mathbb{Z}, \\
		1, &\mbox{ if } i\in2\mathbb{Z}+1.
	\end{array}\right.
\end{equation}
If $n\in X_3$, then
\begin{equation}\label{eq4.13}
	j_i= \left\{\begin{array}{ll}
		0, &\mbox{ if } i\in3\mathbb{Z}, \\
		1, &\mbox{ if } i\in3\mathbb{Z}+1 \mbox{ and } L_{n}\in3\mathbb{Z}-1,\\
		2, &\mbox{ if } i\in3\mathbb{Z}+1 \mbox{ and } L_{n}\in3\mathbb{Z}+1,\\
		1, &\mbox{ if } i\in3\mathbb{Z}-1 \mbox{ and } L_{n}\in3\mathbb{Z}+1,\\
		2, &\mbox{ if } i\in3\mathbb{Z}-1 \mbox{ and } L_{n}\in3\mathbb{Z}-1.
	\end{array}\right.
\end{equation}
By \eqref{eq4.12}, \eqref{eq4.13} and the fact $L_n\in(\mathbb{Z}\setminus q_n\mathbb{Z})$, we get
\begin{equation}\label{eq4.12-3}
	i+L_nj_i\in q_n\mathbb{Z},\qquad\forall \, i\in\{0,\,1,\,\cdots,\,L_n-1\}
\end{equation}
Define $\Gamma_{j_0,j_1,\cdots,j_{L_{n}-1}}:=\cup_{i=0}^{L_{n}-1}\big(q_n^{k_n}i+q_n^{k_n}L_nj_i+q_n^{k_n+1}L_n\Lambda_{q_n^{k_n}i,\,j_i}^{(n)}\big)$. Since $j_0=0$ and $0\in\Lambda_{0,\,0}^{(n)}$, we have $0\in\Gamma_{j_0,j_1,\cdots,j_{L_{n}-1}}$. Moreover, it follows from \eqref{eq4.12-3} that $\Gamma_{j_0,j_1,\cdots,j_{L_{n}-1}}\subset q_n^{k_n+1}\mathbb{Z}$. In conclusion, we have
\begin{equation}\label{eq4.12-5}
	0\in\Gamma_{j_0,j_1,\cdots,j_{L_{n}-1}}\subset [(q_n^{k_n+1}\mathbb{Z})\cap\Lambda].
\end{equation}	
	
For any $\lambda\in[(q_n^{k_n+1}\mathbb{Z})\cap\Lambda]$, \eqref{eq4.7} shows that there exist $i_{\lambda}\in\{0,\,1,\cdots,L_n-1\}$ and $j_{\lambda}\in\{0,\,1,\cdots,q_n-1\}$ such that $\lambda\in q_n^{k_n}i_{\lambda}+q_n^{k_n}L_nj_{\lambda}+q_n^{k_n+1}L_n\Lambda_{q_n^{k_n}i_{\lambda},\,j_{\lambda}}^{(n)}$ and $i_{\lambda}+L_nj_{\lambda}\in q_n\mathbb{Z}$. Since $L_n\in(\mathbb{Z}\setminus q_n\mathbb{Z})$ and $i_{\lambda}+L_nj_{\lambda}\in q_n\mathbb{Z}$, the pair $(i_{\lambda},\,j_{\lambda})$ must satisfy \eqref{eq4.12} when $n\in X_2$ and \eqref{eq4.13} when $n\in X_3$. It follows that $\lambda\in\Gamma_{j_0,j_1,\cdots,j_{L_{n}-1}}$. According to the arbitrariness of $\lambda$, we have $[(q_n^{k_n+1}\mathbb{Z})\cap\Lambda]\subset\Gamma_{j_0,j_1,\cdots,j_{L_{n}-1}}$. Combining this with \eqref{eq4.12-5}, we have $\Gamma_{j_0,j_1,\cdots,j_{L_{n}-1}}=[(q_n^{k_n+1}\mathbb{Z})\cap\Lambda]$.

Moreover, it follows from Lemma \ref{le3.3} and \eqref{eq4.12-4} that $\Gamma_{j_0,j_1,\cdots,j_{L_{n}-1}}$ is a spectrum of $\omega_{n}$. Therefore, statement \rm(\romannumeral1) is proved.

\rm(\romannumeral2) Since $\Gamma_{j_0,j_1,\cdots,j_{L_{n}-1}}$ is a spectrum of $\omega_{n}$ and $\omega_{n}=\omega_{n,\,j}\bigast\chi_j$ for each $j\in(X_{q_n}\setminus\{n\})$. Hence, by \eqref{eq2.8} and Lemma \ref{le2.8}, we obtain that for each $j\in(X_{q_n}\setminus\{n\})$, there exist distinct integers $\lambda_{j,1}$, $\lambda_{j,2}\in\Gamma_{j_0,j_1,\cdots,j_{L_{n}-1}}$ such that
\begin{equation}\label{eq4.12-6}
	\lambda_{j,1}-\lambda_{j,2}\in\mathcal{Z}(\hat\chi_j).
\end{equation}
Moreover, by \eqref{eq4.12-5}, we get $\lambda_{j,1}-\lambda_{j,2}\in q_n^{k_n+1}\mathbb{Z}$. Hence, by \eqref{eq2.6} and \eqref{eq4.12-6}, we obtain that $k_j\geqslant k_n+1$ for each $j\in(X_{q_n}\setminus\{n\})$. This finishes the proof.	
\end{pf}

Recall that in \eqref{eq4.3}, the elements of $X_2$ are arranged as
$X_2=\{s_{1},\ s_{2},\ \cdots,\ s_{m},\ \cdots\}$ such that
\begin{equation}\label{eq4.15}
	k_{2,s_{1}}\leqslant k_{2,s_{2}}\leqslant k_{2,s_{3}}\leqslant\cdots\leqslant k_{2,s_{n}}\leqslant\cdots,
\end{equation}
and for all $j\geqslant1$, if $k_{2,s_{j}}=k_{2,s_{j+1}}$, then $s_{j}<s_{j+1}$. Similarly, in \eqref{eq4.4}, we index the elements of $X_3$ as $t_{1},\ t_{2},\ \cdots,\ t_{m},\ \cdots$ such that $X_3=\{t_{1},\ t_{2},\ \cdots,\ t_{m},\ \cdots\}$,
\begin{equation}\label{eq4.16}
	k_{3,t_{1}}\leqslant k_{3,t_{2}}\leqslant k_{3,t_{3}}\leqslant\cdots\leqslant k_{3,t_{n}}\leqslant\cdots,
\end{equation}
and for all $j\geqslant1$, if $k_{3,t_{j}}=k_{3,t_{j+1}}$, then $t_{j}<t_{j+1}$.

In reference \cite{Deng2023} (or \cite{Xiong2023}), the authors assume $q_1=q_2=\cdots=q_n=\cdots =2$ (or $=3$). This implies that one of $X_3$ and $X_2$ is empty. It follows that in Theorem \ref{th4.6}, $n$ can only take the value $s_1$ (or $t_1$). Hence, we have $k_{2,s_1}<k_{2,s_2}$ (or $k_{3,t_1}<k_{3,t_2}$) and $\omega_{s_1}$ (or $\omega_{t_1}$) is a spectral measure. Replace $\mu_{\mathbb{P},\,\mathbb{D}}$ in Theorem \ref{th4.6} by $\omega_{s_1}$ (or $\omega_{t_1}$), we have $k_{2,s_2}<k_{2,s_3}$ (or $k_{3,t_2}<k_{3,t_3}$) and $\omega_{s_1,s_2}$ (or $\omega_{t_1,t_2}$) is a spectral measure. By induction, one easily checks that $k_{2,s_1}<k_{2,s_2}<\cdots<k_{2,s_m}<\cdots$ (or  $k_{3,t_1}<k_{3,t_2}<\cdots<k_{3,t_m}<\cdots$). The authors of \cite{Deng2023} (or \cite{Xiong2023}) thus obtained the result presented in their work.

From the above discussion, we see that under the assumption $q_1=q_2=\cdots=q_n=\cdots$ made in reference \cite{Deng2023} (or \cite{Xiong2023}), the decomposition path of $X_2$ (or $X_3$) must be exactly $s_1\to s_2\to\cdots\to s_m\to\cdots$ (or $t_1\to t_2\to\cdots\to  t_m\to\cdots$). In the present work, however, the condition $q_1=q_2=\cdots=q_n=\cdots$ may fail; that is, both sets $X_2$ and $X_3$ can be nonempty. Moreover, Lemma \ref{le4.1} only guarantees the existence of some $n$ that satisfies \eqref{eq4.2}, but does not determine whether $n=s_1$ or $n=t_1$. Consequently, there can be various possible decomposition paths for set $X_2\cup X_3$, and some of these paths may not traverse $X_2\cup X_3$. For instance, when $\#X_2=\infty$ and $\#X_3=\infty$, the paths $s_1\to s_2\to\cdots\to s_m\to\cdots$, $t_1\to t_2\to\cdots\to t_m\to\cdots$ or $s_1\to s_2\to t_1\to t_2\to\cdots\to t_m\to\cdots$ do not traverse $X_2\cup X_3$. To prove statement \rm(\romannumeral1) of the necessity in Theorem \ref{th1.3}, we need to find a path that does traverse $X_2\cup X_3$. For this purpose, we present the following two theorems.

\begin{theorem}\label{th4.7}
Assume that the pair $(\mathbb{P},\,\mathbb{D})$ satisfies \eqref{eq2.12-3}. Let $\mu_{\mathbb{P},\,\mathbb{D}}$ be the measure defined by \eqref{eq1.1}, and let $\Lambda$ be a spectrum of $\mu_{\mathbb{P},\,\mathbb{D}}$ with $0\in\Lambda$. If $2^{-k_{2,s_1}}\Lambda\subset\mathbb{Z}$, then there exist an integer $m:=m(s_1)\geqslant1$ and a subset $\Gamma\subset3^{k_{3,t_{1}}}\mathbb{Z}$ with $0\in\Gamma$ such that the following statements hold.
\begin{enumerate}
	\item[\rm(\romannumeral1)] $k_{2,s_1}<k_{2,s_2}<\cdots<k_{2,s_m}<k_{2,s_{m+1}}$,
	\item[\rm(\romannumeral2)] $\Gamma$ is a spectrum of $\omega_{s_1,s_2,\cdots,s_m}$,
\end{enumerate}
where the sequences $\{s_{j}\}_{j=1}^{\infty}$ and $\{t_{j}\}_{j=1}^{\infty}$ are defined in \eqref{eq4.15} and \eqref{eq4.16}, respectively.	
\end{theorem}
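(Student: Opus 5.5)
We will iterate Theorem \ref{th4.6} along the $2$-branch, peeling off $s_1,s_2,\dots$ one at a time for as long as the current spectrum stays inside a power of $2$ times $\mathbb{Z}$, and stop at the first step where it is forced into the $3$-branch.

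\emph{Initial step.} By hypothesis $2^{-k_{2,s_1}}\Lambda\subset\mathbb{Z}$, so in the proof of Theorem \ref{th4.6} we may take $n=s_1$. That theorem then gives:
\begin{itemize}
\item $k_{2,j}>k_{2,s_1}$ for every $j\in X_2\setminus\{s_1\}$, in particular $k_{2,s_1}<k_{2,s_2}$;
\item $\Lambda^{(1)}:=(2^{1+k_{2,s_1}}\mathbb{Z})\cap\Lambda$ is a spectrum of $\omega_{s_1}$, and $0\in\Lambda^{(1)}$.
\end{itemize}

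\emph{Inductive step.} Now apply the same reasoning to the measure $\omega_{s_1}$ with spectrum $\Lambda^{(1)}$. The measure $\omega_{s_1}$ is again an infinite convolution of the $\chi_j$ with $j\neq s_1$. Lemmas \ref{le3.2}, \ref{le3.3}, \ref{le4.1} and Theorem \ref{th4.6} only use the convolution structure, the zero sets $\mathcal{Z}(\hat\chi_j)=q_j^{k_j}L_j(\mathbb{Z}\setminus q_j\mathbb{Z})$, and Lemma \ref{le2.4}. Hence they hold verbatim with $\mu_{\mathbb{P},\,\mathbb{D}}$ replaced by $\omega_{s_1,\dots,s_r}$ and the index sets replaced by $X_2\setminus\{s_1,\dots,s_r\}$ and $X_3$. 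The minimal $2$-index of the remaining set is $s_{r+1}$, and the minimal $3$-index is still $t_1$.

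So suppose we have reached a spectrum $\Lambda^{(r)}\ni 0$ of $\omega_{s_1,\dots,s_r}$ with $k_{2,s_1}<\dots<k_{2,s_r}<k_{2,s_{r+1}}$. Lemma \ref{le4.1} applied to this measure gives $\Lambda^{(r)}\subset 2^{k_{2,s_{r+1}}}\mathbb{Z}$ or $\Lambda^{(r)}\subset 3^{k_{3,t_1}}\mathbb{Z}$.
\begin{itemize}
\item In the second case we stop, with $m=r$ and $\Gamma=\Lambda^{(r)}$. This gives (ii), and (i) holds by the induction hypothesis.
\item In the first case Theorem \ref{th4.6} with $n=s_{r+1}$ yields $k_{2,s_{r+1}}<k_{2,j}$ for all remaining $j\in X_2$, in particular $k_{2,s_{r+1}}<k_{2,s_{r+2}}$. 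It also yields a spectrum $\Lambda^{(r+1)}=(2^{1+k_{2,s_{r+1}}}\mathbb{Z})\cap\Lambda^{(r)}$ of $\omega_{s_1,\dots,s_{r+1}}$, and we continue.
\end{itemize}
If $X_2$ is finite, the process stops when $X_2$ is exhausted. Then only $X_3$ remains, and Lemma \ref{le4.1} forces the $3$-alternative, provided $X_3\neq\emptyset$. If $X_3=\emptyset$ the statement should be read in the obvious degenerate way.

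\emph{Main obstacle: termination when $X_2$ is infinite.} We must rule out the $2$-alternative holding forever. Suppose it does. Then $\Lambda^{(r)}\subset 2^{1+k_{2,s_r}}\mathbb{Z}$ for every $r$. Since $k_{2,s_r}\to\infty$ (the sequence is strictly increasing), every nonzero element of $\Lambda$ lies in some $\Lambda^{(r)}$ only if it is divisible by arbitrarily high powers of $2$. On the other hand, Corollary \ref{co3.6} applied to $\omega_{s_1,\dots,s_r}$ and its spectrum $\Lambda^{(r)}$ gives an element of $\Lambda^{(r)}\cap\mathcal{Z}(\hat\chi_{t_1})$. Such an element lies in $\Lambda^{(r)}\subset\Lambda^{(1)}$ and in $3^{k_{3,t_1}}L_{t_1}(\mathbb{Z}\setminus 3\mathbb{Z})$, and so its $2$-adic valuation is exactly $\nu_2(L_{t_1})$. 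This is a fixed finite number, independent of $r$. Taking $r$ with $1+k_{2,s_r}>\nu_2(L_{t_1})$ gives a contradiction, so the $2$-alternative fails at some finite $r=m$.

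Finally we check $\Gamma\subset 3^{k_{3,t_1}}\mathbb{Z}$. This is exactly the $3$-alternative of Lemma \ref{le4.1} at the stopping stage, and $0\in\Gamma$ by construction. It remains to verify $m\geq1$, which holds because the first step always occurs. I expect the careful justification that Lemma \ref{le4.1} and Theorem \ref{th4.6} transfer to the reduced measures, together with the termination argument above, to be where the real work lies.
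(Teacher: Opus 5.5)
Your overall scheme (peeling off $s_1,s_2,\dots$ with Theorem \ref{th4.6} and Lemma \ref{le4.1} applied to the reduced measures, and stopping at the first $3$-alternative) matches the paper. The termination argument, however, has a genuine gap.

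You claim that every element of $\mathcal{Z}(\hat\chi_{t_1})=3^{k_{3,t_1}}L_{t_1}(\mathbb{Z}\setminus 3\mathbb{Z})$ has $2$-adic valuation exactly $\nu_2(L_{t_1})$. This is false, because $\mathbb{Z}\setminus 3\mathbb{Z}$ contains $2^N$ for every $N$, so you only get $\nu_2\geq \nu_2(L_{t_1})$. Exact valuation holds on the $2$-branch, where the cofactors are odd, but not on the $3$-branch. The witness from Corollary \ref{co3.6} also depends on $r$, so it can lie in $\Lambda^{(r)}\subset 2^{1+k_{2,s_r}}\mathbb{Z}$ for every $r$ without any contradiction. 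Worse, with your canonical choice $\Lambda^{(r)}=(2^{1+k_{2,s_r}}\mathbb{Z})\cap\Lambda$, the sets shrink to $\{0\}$. No fixed nonzero element survives all stages, and nothing in your argument forces $\Lambda^{(r)}\subset 3^{k_{3,t_1}}\mathbb{Z}$ at a finite stage.

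The missing idea is to fix one witness in advance and keep it in every spectrum you construct.

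\begin{enumerate}
\item Apply Lemma \ref{le2.8} to $\mu_{\mathbb{P},\,\mathbb{D}}=\chi_{t_1}\bigast\omega_{t_1}$. This gives $\lambda_1\neq\lambda_2$ in $\Lambda$ with $\lambda_1-\lambda_2\in\mathcal{Z}(\hat\chi_{t_1})\setminus\mathcal{Z}(\hat\omega_{t_1})$. Translate so that $\lambda_2=0$; this preserves $\Lambda\subset 2^{k_{2,s_1}}\mathbb{Z}$.
\item Instead of the canonical spectrum of Theorem \ref{th4.6}(i), use the freedom in Lemma \ref{le3.3}. Since $\lambda_1\notin\mathcal{Z}(\hat\chi_{s_r})$, in the decomposition \eqref{eq3.2} at $n=s_r$ the element $\lambda_1$ lies either in a class with $i_1\neq 0$ or in the class $(i,j)=(0,0)$. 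The case $i_1=0$, $j_1\neq 0$ would put it in $\mathcal{Z}(\hat\chi_{s_r})$.
\item Choosing $j_0=0$ and $j_{i_1}=j_1$ then gives a spectrum of the reduced measure that contains both $0$ and $\lambda_1$.
\end{enumerate}

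Iterating, every stage's spectrum contains the fixed nonzero integer $\lambda_1$. The $2$-alternative at stage $r$ then forces $k_{2,s_{r+1}}\leq \nu_2(\lambda_1)<\infty$, which fails for large $r$ because the $k_{2,s_r}$ strictly increase. This is how the paper obtains $m$ and $\Gamma$. Note that the resulting $\Gamma$ is in general not of the form $(2^{1+k}\mathbb{Z})\cap\Lambda$.
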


\begin{pf}
By \eqref{eq2.8} and Lemma \ref{le2.8}, there exist $\lambda_1$, $\lambda_2\in\Lambda$ with $\lambda_1\neq\lambda_2$ such that
\begin{equation*}
	\lambda_1-\lambda_2\in[\mathcal{Z}(\hat\chi_{t_{1}})\setminus\mathcal{Z}(\hat\omega_{t_{1}})].
\end{equation*}
Note that $0\in\Lambda-\lambda_2$ and $\Lambda-\lambda_2$ is also a spectrum of $ \mu_{\mathbb{P},\,\mathbb{D}}$, without loss of generality, we can assume $\lambda_2=0$ and fix the $\lambda_1$ in the following argument.

By Theorem \ref{th4.6} \rm(\romannumeral1), we obtain that $(2^{1+k_{2,s_1}}{\mathbb Z})\cap\Lambda$ is a spectrum of $\omega_{s_1}$, so \eqref{eq2.8} shows $k_{2,s_2}>k_{2,s_1}$.

Since we have assumed $\lambda_2=0$, \eqref{eq3.2} shows $\lambda_1\in i_1+q_{s_1}^{k_{s_1}}L_{s_1}j_1+q_{s_1}^{k_{s_1}+1}L_{s_1}\Lambda_{i_1,\,j_1}^{(s_1)}$ for some $0\leqslant i_1\leqslant q_n^{k_n}L_n-1$ and $0\leqslant j_1\leqslant q_n-1$. Since $\lambda_1\in[\mathcal{Z}(\hat\chi_{t_{1}})\setminus\mathcal{Z}(\hat\omega_{t_{1}})]\subset[\mathcal{Z}(\hat\chi_{t_{1}})\setminus\mathcal{Z}(\hat\chi_{s_{1}})]$, we see that either $i_1\neq0$, or $i_1=0$ and $j_1=0$. (Otherwise, if $i_1=0$ and $j_1\neq0$, we see that $\lambda_1\in q_{s_1}^{k_{s_1}}L_{s_1}(\mathbb{Z}\setminus q\mathbb{Z})=\mathcal{Z}(\hat\chi_{s_{1}})$, which contradicts $\lambda_1\not\in\mathcal{Z}(\hat\chi_{s_{1}})$.) Hence, Lemma \ref{le3.3} shows that there is a spectrum $\Omega_{s_1}$ of $\omega_{s_1}$ such that $\{0,\,\lambda_1\}\subset\Omega_{s_1}$.

If $\Omega_{s_1}\subset3^{k_{3,t_{1}}}\mathbb{Z}$, then our conclusions \rm(\romannumeral1) and \rm(\romannumeral2) hold with $m=1$ and $\Gamma=\Omega_{s_1}$. Otherwise, applying Lemma \ref{le4.1} to $\omega_{s_1}$ shows $\Omega_{s_1}\subset2^{k_{2,s_{2}}}\mathbb{Z}$. Then, applying the above argument (replace the spectral pair $(\mu_{\mathbb{P},\,\mathbb{D}},\,\Lambda)$ by  $(\omega_{s_1},\Omega_{s_1})$) shows that $k_{2,s_{2}}<k_{2,s_{3}}$ and there is a spectrum $\Omega_{s_1,s_2}$ of $\omega_{s_1,s_2}$ such that $\{0,\,\lambda_1\}\subset\Omega_{s_1,s_2}$.

If $\Omega_{s_1,s_2}\subset3^{k_{3,t_{1}}}\mathbb{Z}$, then our conclusions \rm(\romannumeral1) and \rm(\romannumeral2) hold with $m=2$ and $\Gamma=\Omega_{s_1,s_2}$. Otherwise, applying the above argument (replace the spectral pair $(\mu_{\mathbb{P},\,\mathbb{D}},\,\Lambda)$ by  $(\omega_{s_1,s_2},\Omega_{s_1,s_2})$) shows that $\Omega_{s_1,s_2}\subset2^{k_{2,s_{3}}}\mathbb{Z}$, $k_{2,s_{3}}<k_{2,s_{4}}$ and there is a spectrum $\Omega_{s_1,s_2,s_3}$ of $\omega_{s_1,s_2,s_3}$ such that $\{0,\,\lambda_1\}\subset\Omega_{s_1,s_2,s_3}$.

Since $\lambda_1\not\in2^l\mathbb{Z}$ for some $l>k_{2,s_{1}}$, we see that the integer $m$ and the set $\Gamma$ exist.	
\end{pf}

\begin{theorem}\label{th4.7+1}
Assume that the pair $(\mathbb{P},\,\mathbb{D})$ satisfies \eqref{eq2.12-3}. Let $\mu_{\mathbb{P},\,\mathbb{D}}$ be the measure defined by \eqref{eq1.1}, and let $\Lambda$ be a spectrum of $\mu_{\mathbb{P},\,\mathbb{D}}$ with $0\in\Lambda$. If $3^{-k_{3,t_1}}\Lambda\subset\mathbb{Z}$, then there exist an integer $m:=m(t_1)\geqslant1$ and a subset $\Gamma\subset 2^{k_{2,s_1}}\mathbb{Z}$ with $0\in\Gamma$ such that the following statements hold.
\begin{enumerate}
	\item[\rm(\romannumeral1)] $k_{3,t_1}<k_{3,t_2}<\cdots<k_{3,t_m}<k_{3,t_{m+1}}$,
	\item[\rm(\romannumeral2)] $\Gamma$ is a spectrum of $\omega_{t_1,t_2,\cdots,t_m}$,
\end{enumerate}
where the sequences $\{s_{j}\}_{j=1}^{\infty}$ and $\{t_{j}\}_{j=1}^{\infty}$ are defined in \eqref{eq4.15} and \eqref{eq4.16}, respectively.	
\end{theorem}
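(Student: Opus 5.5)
The proof will mirror that of Theorem \ref{th4.7}, with the roles of the $2$-factors and $3$-factors exchanged. Since $X_2\neq\emptyset$ (otherwise there is nothing to prove beyond the case of \cite{Xiong2023}), the measure $\chi_{s_1}$ is a genuine factor of $\mu_{\mathbb{P},\,\mathbb{D}}=\chi_{s_1}\bigast\omega_{s_1}$. By \eqref{eq2.8} and Lemma \ref{le2.8}, $\Lambda$ cannot be a bi-zero set of $\omega_{s_1}$ alone. So there are distinct $\lambda_1,\lambda_2\in\Lambda$ with
\[
\lambda_1-\lambda_2\in\mathcal{Z}(\hat\chi_{s_1})\setminus\mathcal{Z}(\hat\omega_{s_1}).
\]
Translating $\Lambda$ by $-\lambda_2$ (still a spectrum containing $0$), I may assume $\lambda_2=0$. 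I fix this $\lambda_1$. It lies in $2^{k_{2,s_1}}\mathbb{Z}$ and is not a zero of $\hat\chi_{t_j}$ for any $j$, since it avoids $\mathcal{Z}(\hat\omega_{s_1})$.

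Next comes the first step of the descent. By hypothesis $3^{-k_{3,t_1}}\Lambda\subset\mathbb{Z}$, so Theorem \ref{th4.6} applies with $n=t_1$. It gives $k_{3,t_2}>k_{3,t_1}$, and it shows that $(3^{1+k_{3,t_1}}\mathbb{Z})\cap\Lambda$ is a spectrum of $\omega_{t_1}$. I still need a spectrum of $\omega_{t_1}$ that contains both $0$ and $\lambda_1$. Write $\lambda_1\in i_1+3^{k_{t_1}}L_{t_1}j_1+3^{k_{t_1}+1}L_{t_1}\Lambda^{(t_1)}_{i_1,j_1}$ via \eqref{eq3.2}. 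Since $\lambda_1\notin\mathcal{Z}(\hat\chi_{t_1})=3^{k_{t_1}}L_{t_1}(\mathbb{Z}\setminus3\mathbb{Z})$, the case $i_1=0,\ j_1\neq0$ is impossible. Hence either $i_1\neq0$, or $i_1=0$ and $j_1=0$. In either case I can choose the tuple $(j_0,\dots)$ with $j_0=0$ and $j_{i_1}=j_1$ (when $i_1=0$ this is consistent). Lemma \ref{le3.3} then gives a spectrum $\Omega_{t_1}=\Gamma_{j_0,\dots}$ of $\omega_{t_1}$ with $\{0,\lambda_1\}\subset\Omega_{t_1}$.

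Then I iterate. If $\Omega_{t_1}\subset2^{k_{2,s_1}}\mathbb{Z}$, the theorem holds with $m=1$ and $\Gamma=\Omega_{t_1}$. Otherwise Lemma \ref{le4.1}, applied to $\omega_{t_1}$, forces $\Omega_{t_1}\subset3^{k_{3,t_2}}\mathbb{Z}$. Here I use that the minimal $2$-exponent among the remaining factors is still $k_{2,s_1}$ and the minimal $3$-exponent is now $k_{3,t_2}$; the measures $\omega_{t_1,\dots}$ satisfy the same structure, with zero sets the corresponding subunions in \eqref{eq2.7}. The reduction conditions \eqref{eq2.12-3} are only used for integrality, which still holds. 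Repeating the argument with $(\omega_{t_1},\Omega_{t_1})$ in place of $(\mu_{\mathbb{P},\,\mathbb{D}},\Lambda)$ gives $k_{3,t_2}<k_{3,t_3}$ and a spectrum $\Omega_{t_1,t_2}$ of $\omega_{t_1,t_2}$ containing $\{0,\lambda_1\}$, and so on. Each step needs $\lambda_1\notin\mathcal{Z}(\hat\chi_{t_r})$, which holds as noted above.

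The main obstacle is showing the iteration terminates. If it ran forever, we would have $\lambda_1\in\Omega_{t_1,\dots,t_r}\subset3^{k_{3,t_{r+1}}}\mathbb{Z}$ for every $r$, with $k_{3,t_r}$ strictly increasing. Since $\lambda_1\neq0$ is a fixed integer, $\nu_3(\lambda_1)<\infty$ is a contradiction. (If $X_3$ is finite the chain ends even sooner: once all $t_j$ are removed, the remaining zero set lies in $2^{k_{2,s_1}}\mathbb{Z}$, so $\Omega\subset2^{k_{2,s_1}}\mathbb{Z}$ automatically; I would treat this as a separate case.) Therefore some finite $m$ yields $\Gamma:=\Omega_{t_1,\dots,t_m}\subset2^{k_{2,s_1}}\mathbb{Z}$ with $0\in\Gamma$, a spectrum of $\omega_{t_1,\dots,t_m}$, and $k_{3,t_1}<\dots<k_{3,t_{m+1}}$.
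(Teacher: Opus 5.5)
Your proposal is correct and follows essentially the paper's route, which proves this statement as the mirror image of Theorem \ref{th4.7}. You fix a difference $\lambda_1\in\mathcal{Z}(\hat\chi_{s_1})\setminus\mathcal{Z}(\hat\omega_{s_1})$, peel off $t_1,t_2,\dots$ using Theorem \ref{th4.6}, Lemma \ref{le3.3} and Lemma \ref{le4.1} while keeping $\{0,\lambda_1\}$ in the spectrum, and conclude that the process stops because $\nu_3(\lambda_1)<\infty$. The translation by $-\lambda_2$ preserves the hypothesis $3^{-k_{3,t_1}}\Lambda\subset\mathbb{Z}$ because $\lambda_2\in\Lambda$, and your separate treatment of a finite $X_3$ is a harmless refinement that the paper omits.
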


\begin{pf}
	The proof of Theorem \ref{th4.7+1} is almost the same as Theorem \ref{th4.7}.
\end{pf}

\begin{theorem}\label{th4.7+2}
Assume that the pair $(\mathbb{P},\,\mathbb{D})$ satisfies \eqref{eq2.12-3}. Let $\mu_{\mathbb{P},\,\mathbb{D}}$ be the measure defined by \eqref{eq1.1}. If $\mu_{\mathbb{P},\,\mathbb{D}}$ is a spectral measure, then the sequences in \eqref{eq4.15} and \eqref{eq4.16} are both strictly increasing, i.e.,
\begin{equation*}
	k_{2,s_{1}}<k_{2,s_{2}}<k_{2,s_{3}}\cdots<k_{2,s_{n}}<\cdots\quad\mbox{and}\quad k_{3,t_{1}}<k_{3,t_{2}}<k_{3,t_{3}}\cdots<k_{3,t_{n}}<\cdots.
\end{equation*}	
\end{theorem}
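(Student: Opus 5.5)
We argue by induction along a ``decomposition path'' that alternates between the two sequences, using Theorems \ref{th4.7} and \ref{th4.7+1} as the basic steps. Let $\Lambda$ be a spectrum of $\mu_{\mathbb{P},\,\mathbb{D}}$ with $0\in\Lambda$. The induction hypothesis at each stage is the following: for some finite set $\mathcal{A}=\{s_1,\cdots,s_a\}\cup\{t_1,\cdots,t_b\}$, we have
$k_{2,s_1}<\cdots<k_{2,s_a}<k_{2,s_{a+1}}$ and $k_{3,t_1}<\cdots<k_{3,t_b}<k_{3,t_{b+1}}$, and $\omega_{\mathcal{A}}$ has a spectrum $\Gamma$ with $0\in\Gamma$. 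The aim is to show that $a$ or $b$ can be enlarged, and that along the way both $a$ and $b$ tend to infinity (or exhaust $X_2$, respectively $X_3$, when these are finite).

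\emph{Single step.} The measure $\omega_{\mathcal{A}}$ is again a Moran-type measure of the same form. Its remaining $2$-digits are $s_{a+1},s_{a+2},\cdots$ and its remaining $3$-digits are $t_{b+1},\cdots$. Deleting finitely many factors does not change $k_{q_n,n}$ as defined from the original products $p_1\cdots p_n$, because $\hat\chi_n$ is unchanged. So Lemma \ref{le4.1} and Theorems \ref{th4.6}, \ref{th4.7}, \ref{th4.7+1} apply verbatim to $(\omega_{\mathcal{A}},\Gamma)$. The zero sets of the individual factors are the same as before, and only the index sets shrink. I would make this remark explicit: the proofs above only use the factorisation $\mu=\chi_n\bigast\omega_n$, the formula for $\mathcal{Z}(\hat\chi_n)$, and $\Gamma\subset\mathbb{Z}$. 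By Lemma \ref{le4.1} applied to $\omega_{\mathcal{A}}$, either $\Gamma\subset 2^{k_{2,s_{a+1}}}\mathbb{Z}$ or $\Gamma\subset 3^{k_{3,t_{b+1}}}\mathbb{Z}$. In the first case Theorem \ref{th4.7} gives $m\geq1$ with $k_{2,s_{a+1}}<\cdots<k_{2,s_{a+m+1}}$. It also gives a spectrum of $\omega_{\mathcal{A}\cup\{s_{a+1},\cdots,s_{a+m}\}}$ that contains $0$ and lies in $3^{k_{3,t_{b+1}}}\mathbb{Z}$. The second case is symmetric, using Theorem \ref{th4.7+1}.

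\emph{Alternation.} The key point is that the spectrum $\Gamma'$ produced in the first case lies in $3^{k_{3,t_{b+1}}}\mathbb{Z}$. I want to force the next step to use Theorem \ref{th4.7+1}, i.e.\ to land in the $3$-branch, so that the path cannot run forever only along $s$'s. If $\Gamma'$ also lies in $2^{k_{2,s_{a+m+1}}}\mathbb{Z}$, the next step may again be of $2$-type. To prevent this, I would refine the choice of $\lambda_1$ in Theorem \ref{th4.7}. That proof keeps $\lambda_1\in\mathcal{Z}(\hat\chi_{t_{b+1}})\setminus\mathcal{Z}(\hat\omega_{t_{b+1}})$ inside all the successive spectra. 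Since $\lambda_1\in\mathcal{Z}(\hat\chi_{t_{b+1}})$ has finite $2$-adic valuation, the $2$-type steps must stop at a spectrum that is \emph{not} contained in $2^{k}\mathbb{Z}$ for the next $k=k_{2,s_{a+m+1}}$. This is exactly the stopping rule used in that proof. Then Lemma \ref{le4.1} forces the next step to be of $3$-type. Symmetrically, each $3$-type run ends with a spectrum that forces a $2$-type step, as long as $X_2\setminus\mathcal{A}\neq\emptyset$. If one of $X_2,X_3$ is exhausted, the remaining measure has digits of a single cardinality, and the argument of \cite{Deng2023}/\cite{Xiong2023} recalled above finishes the proof.

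\emph{Conclusion and obstacle.} Each run adds at least one index, and runs alternate, so every $s_j$ and every $t_j$ is eventually absorbed into $\mathcal{A}$. Hence every strict inequality $k_{2,s_j}<k_{2,s_{j+1}}$ and $k_{3,t_j}<k_{3,t_{j+1}}$ is certified at some stage. The main obstacle I expect is the bookkeeping in the step claim: I must check carefully that Theorems \ref{th4.7}/\ref{th4.7+1}, when applied to $\omega_{\mathcal{A}}$, genuinely terminate with a spectrum outside the current $2$-power lattice (respectively $3$-power lattice). This requires the existence of $\lambda_1\in\mathcal{Z}(\hat\chi_{t_{b+1}})\setminus\mathcal{Z}(\hat\omega_{\mathcal{A}\cup\{t_{b+1}\}})$ via Lemma \ref{le2.8}. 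I also need to check that the reindexing of the sequences in \eqref{eq4.15}--\eqref{eq4.16} for $\omega_{\mathcal{A}}$ is consistent with the original ordering, i.e.\ that ties $k_{2,s_j}=k_{2,s_{j+1}}$ cannot be bypassed.
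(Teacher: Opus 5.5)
Your proposal is correct and is essentially the paper's proof: start from Lemma~\ref{le4.1}, then apply Theorem~\ref{th4.7} and Theorem~\ref{th4.7+1} alternately to the successively reduced measures $\omega_{\mathcal{A}}$, so that each run certifies at least one new strict inequality in the corresponding sequence. Your refinement of the stopping rule is harmless but unnecessary (the paper's proof of Theorem~\ref{th4.7} in fact stops as soon as the spectrum lies in $3^{k_{3,t_{b+1}}}\mathbb{Z}$, not when it leaves the $2$-power lattice): the spectrum produced by Theorem~\ref{th4.7} already satisfies exactly the hypothesis of Theorem~\ref{th4.7+1}, so you may simply choose to apply Theorem~\ref{th4.7+1} next, as the paper does, whether or not that spectrum also lies in a $2$-power lattice.
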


\begin{pf}
Since $\mu_{\mathbb{P},\,\mathbb{D}}$ is a spectral measure, there is a subset $\Lambda\subset\mathbb{Z}$ such that $0\in\Lambda$ and $\Lambda$ is a spectrum of $\mu_{\mathbb{P},\,\mathbb{D}}$. Lemma \ref{le4.1} shows that at least one of $2^{-k_{2,s_{1}}}\Lambda\subset\mathbb{Z}$ and $3^{-k_{3,t_{1}}}\Lambda\subset\mathbb{Z}$ holds.

Without loss of generality, we assume $2^{-k_{2,s_{1}}}\Lambda\subset\mathbb{Z}$.

By Theorem \ref{th4.7}, one can find an integer $m_1:=m(s_1)\geqslant1$ and a subset $\Gamma_1\subset\mathbb{Z}$ with $0\in\Gamma_1$ such that
\begin{numcases}{}
	k_{2,s_1}<k_{2,s_2}<\cdots<k_{2,s_{m_1}}<k_{2,s_{m_1+1}},\label{eq4.47}\\
	\Gamma_1~\mbox{is~a~spectrum~of}~\omega_{s_1,s_2,\cdots,s_{m_1}},\label{eq4.48}\\
	3^{-k_{3,t_{1}}}\Gamma_1\subset\mathbb{Z}\label{eq4.49}.
\end{numcases}
From \eqref{eq4.48} and \eqref{eq4.49}, it follows that the pair $(\omega_{s_1,s_2,\cdots,s_{m_1}},\,\Gamma_1)$ satisfies the conditions of Theorem \ref{th4.7+1}. Replace the spectral pair $(\mu_{\mathbb{P},\,\mathbb{D}},\,\Lambda)$ in Theorem \ref{th4.7+1} by $(\omega_{s_1,s_2,\cdots,s_{m_1}},\,\Gamma_1)$, there is an integer $m_2:=m(t_1)\geqslant1$ and a subset $\Gamma_2\subset\mathbb{Z}$ with $0\in\Gamma_2$ such that
\begin{numcases}{}
	k_{3,t_1}<k_{3,t_2}<\cdots<k_{3,t_{m_2}}<k_{3,t_{m_2+1}},\label{eq4.50}\\
	\Gamma_2~\mbox{is~a~spectrum~of}~\omega_{s_1,s_2,\cdots,s_{m_1},\,t_1,t_2,\cdots,t_{m_2}},\label{eq4.51}\\
	2^{-k_{2,s_{m_1+1}}}\Gamma_2\subset\mathbb{Z}\label{eq4.52}.
\end{numcases}
By \eqref{eq4.51} and \eqref{eq4.52}, we get that the pair $(\omega_{s_1,s_2,\cdots,s_{m_1},\,t_1,t_2,\cdots,t_{m_2}},\,\Gamma_2)$  satisfies the conditions of Theorem \ref{th4.7}. Replace the pair $(\mu_{\mathbb{P},\,\mathbb{D}},\,\Lambda)$ in Theorem \ref{th4.7} by $(\omega_{s_1,s_2,\cdots,s_{m_1},\,t_1,t_2,\cdots,t_{m_2}},\,\Gamma_2)$, there exist an integer $m_3:=m(s_{m_1+1})\geqslant1$ and a subset $\Gamma_3\subset\mathbb{Z}$ with $0\in\Gamma_3$ such that
\begin{numcases}{}
	k_{2,s_{m_1+1}}<k_{2,s_{m_1+2}}<\cdots<k_{2,s_{m_3+1}}<k_{2,s_{m_3+1}},\label{eq4.53}\\
	\Gamma_3~\mbox{is~a~spectrum~of}~\omega_{s_1,s_2,\cdots,s_{m_3},\,t_1,t_2,\cdots,t_{m_2}},\label{eq4.54}\\
	3^{-k_{3,t_{m_2+1}}}\Gamma_3\subset\mathbb{Z}\label{eq4.55}.
\end{numcases}
It follows from \eqref{eq4.54} that $(\omega_{s_1,s_2,\cdots,s_{m_3},\,t_1,t_2,\cdots,t_{m_2}},\,\Gamma_3)$ is a spectral pair. Replace the spectral pair $(\mu_{\mathbb{P},\,\mathbb{D}},\,\Lambda)$ in Theorem \ref{th4.7+1} by $(\omega_{s_1,s_2,\cdots,s_{m_3},\,t_1,t_2,\cdots,t_{m_2}},\,\Gamma_3)$. By \eqref{eq4.55} and Theorem \ref{th4.7+1}, we can find an integer $m_4:=m(t_{m_2+1})\geqslant1$ and a subset $\Gamma_4\subset\mathbb{Z}$ with $0\in\Gamma_4$ such that
\begin{numcases}{}
	k_{3,t_{m_2+1}}<k_{3,t_{m_2+2}}<\cdots<k_{3,t_{m_4}}<k_{3,t_{m_4+1}},\label{eq4.56}\\
	\Gamma_4~\mbox{is~a~spectrum~of}~\omega_{s_1,s_2,\cdots,s_{m_3},\,t_1,t_2,\cdots,t_{m_4}},\nonumber\\
	2^{-k_{2,s_{m_3+1}}}\Gamma_4\subset\mathbb{Z}\nonumber.
\end{numcases}
By continuing the procedure, we see that the conclusion of the lemma follows from \eqref{eq4.47}, \eqref{eq4.50}, \eqref{eq4.53} and \eqref{eq4.56}. The lemma was proved in this case.	
\end{pf}

{\bf Proof of the necessity Theorem \ref{th1.3}}. For any $n\in X_3$, by Lemma \ref{le2.8}, we have $\mathcal{Z}(\hat\chi_n)\ne\emptyset$. Combining this with Lemma \ref{eq2.1}, we obtain $\{\frac{a_n}{d_n},\,\frac{b_n}{d_n}\}\equiv\{1,\,-1\}(\mod~3)$.

Since $\{s_n\}_{n=1}^{\infty}=X_2$. It follows from Theorem \ref{th4.7+2} that
\begin{equation}\label{eq4.57}
	k_{2,i}\neq k_{2,j},\qquad\forall \, i\neq j\in X_2
\end{equation}
and
\begin{equation}\label{eq4.58}
	k_{3,i}\neq k_{3,j},\qquad\forall \, i\neq j\in X_3.
\end{equation}
We now see that, in the necessity part of Theorem \ref{th1.3}, formula \eqref{eq1.4} holds.

Next, we will prove that the \eqref{eq1.5} in the necessity part of Theorem \eqref{th1.3} is also true. Suppose on the contrary that there exist positive integers $m\in X_2$ and $n\in X_3$ such that
\begin{equation}\label{eq4.59}
	\nu_{2}(L_{n})>k_{2,m}~~\mbox{ and }~~\nu_{3}(L_{m})>k_{3,n}.
\end{equation}
It follows from \eqref{eq2.6} and \eqref{eq4.59} that
\begin{equation}\label{eq4.60}
	\mathcal{Z}(\hat\chi_{m})\cap\mathcal{Z}(\hat\chi_{n})=\emptyset.
\end{equation}

Since $\mu_{\mathbb{P},\,\mathbb{D}}$ is a spectral measure, there is a subset $\Lambda\subset\mathbb{Z}$ such that $0\in\Lambda$ and $\Lambda$ is a spectrum of $\mu_{\mathbb{P},\,\mathbb{D}}$. According to Corollary \ref{co3.6}, there exist integers $\lambda,\,\beta\in\Lambda$ such that
$\lambda\in\mathcal{Z}(\hat\chi_{m})$ and $\beta\in\mathcal{Z}(\hat\chi_{n})$. By \eqref{eq4.60}, we have
\begin{equation*}
	\lambda\ne\beta.
\end{equation*}
From \eqref{eq2.6}, we have $\lambda:=2^{k_{2,m}}L_{m}\lambda'$ and $\beta:=3^{k_{3,n}}L_{n}\beta'$ for some $\lambda'\in(2\mathbb{Z}+1)$ and $\beta'\in(3\mathbb{Z}\pm1)$. It follows that
\begin{equation}\label{eq4.61}
	\lambda-\beta=2^{k_{2,m}}\cdot3^{k_{3,n}}\Big(\frac{L_{m}}{3^{k_{3,n}}}\lambda'-\frac{L_{n}}{2^{k_{2,m}}}\beta'\Big).
\end{equation}
Since $m\in X_2$ and $n\in X_3$, the definitions of $L_m$ and $L_n$ show that $L_m\in(2\mathbb{Z}+1)$ and $L_n\in(3\mathbb{Z}\pm1)$. Hence, the assumption \eqref{eq4.59} implies that $\frac{L_{m}}{3^{k_{3,n}}}\in [(3\mathbb{Z}\setminus\{0\})\cap(2\mathbb{Z}+1)]$ and $\frac{L_{n}}{2^{k_{2,m}}}\in[(2\mathbb{Z}\setminus\{0\})\cap(3\mathbb{Z}\pm1)]$. Thus, we have $\frac{L_{m}}{3^{k_{3,n}}}\lambda'-\frac{L_{n}}{2^{k_{2,m}}}\beta'\in[\mathbb{Z}\setminus(2\mathbb{Z}\cup3\mathbb{Z})]$ by noting that $\lambda'\in(2\mathbb{Z}+1)$ and $\beta'\in(3\mathbb{Z}\pm1)$. Combining this with \eqref{eq4.61}, we obtain
\begin{equation}\label{eq4.62}
	\lambda-\beta\in2^{k_{2,m}}\cdot3^{k_{3,n}}[\mathbb{Z}\setminus(2\mathbb{Z}\cup3\mathbb{Z})].
\end{equation}

Since $\nu_{3}(L_{m})>k_{3,n}$, it follows from \eqref{eq2.6} and \eqref{eq4.62} that  $\lambda-\beta\not\in\mathcal{Z}(\hat\chi_{m})$. Moreover, by \eqref{eq2.6}, \eqref{eq4.57} and \eqref{eq4.62}, we have $\lambda-\beta\not\in\mathcal{Z}(\hat\chi_{j})$ for any $j\in(X_{2}\setminus\{m\})$. Thus,
\begin{equation}\label{eq4.63}
	\lambda-\beta\not\in\cup_{i\in X_{2}}\mathcal{Z}(\hat\chi_{i}).
\end{equation}
On the other hand, it follows from \eqref{eq2.6}, \eqref{eq4.59} and \eqref{eq4.62} that $\lambda-\beta\not\in\mathcal{Z}(\hat\chi_{n})$. Moreover, by \eqref{eq2.6}, \eqref{eq4.58} and \eqref{eq4.62}, we have $\lambda-\beta\not\in\mathcal{Z}(\hat\chi_{j})$ for any $j\in(X_{3}\setminus\{n\})$. Hence,
\begin{equation}\label{eq4.64}
	\lambda-\beta\not\in\cup_{i\in X_{3}}\mathcal{Z}(\hat\chi_{i}).
\end{equation}
By \eqref{eq4.63} and \eqref{eq4.64}, we have $\lambda-\beta\not\in\mathcal{Z}(\widehat{_{\mathbb{P},\,\mathbb{D}}})$, in contradiction to the assumption  that $\Lambda$ is a spectrum of $\mu_{\mathbb{P},\,\mathbb{D}}$. Therefore, the assumption in \eqref{eq4.59} is false, and the desired formula \eqref{eq1.5} holds. In conclusion, the necessity of Theorem \ref{th1.3} is proved.

\section{Spectra of Finite Convolutions}\label{sec5}

As discussed in Section \ref{sec2}, we will always assume {\bfseries (C1)}, {\bfseries (C2)} and {\bfseries (C3)} in the following part.
Since the sequence $\{d_n\}_{n=1}^{\infty}$ is bounded. Then, there is an integer $G\geqslant1$ such that
\begin{equation}\label{eq5.6}
	G=\max\{\nu_2(2d_n),\,\nu_3(3d_n):\,n\geqslant1\}.
\end{equation}
For any integer $n\ne0$ and $m\in\{2,\, 3\}$, we denote by $\theta_{m}(n)$ the non $m$-factor part of $n$, i.e.,
\begin{equation}\label{eq5.7}
	\theta_{m}(n):=\frac{n}{m^{\nu_m(n)}}.
\end{equation}

%T in Theorem 1, we introduce the notion of absorption for sets. Let $B$ and $C$ be two nonempty sets. We say that $B$
%is absorbed by $C$ if
%\begin{equation*}
%	B+C\subset C.
%\end{equation*}

This section is devoted to construction of spectra of finite convolutions: $\chi_{k_1}\ast\chi_{k_2}\ast\cdots\ast\chi_{k_\ell}$. Note that, for $n>k\ge1$, without the existence of Hadamard triples, $\lambda_1\in{\mathcal Z}(\widehat{\chi}_n)$ and $\lambda_2\in{\mathcal Z}(\widehat{\chi}_k)$ cann't guarantee $\lambda_1-\lambda_2\in{\mathcal Z}(\widehat{\mu}_{\mathbb{P},\,\mathbb{D}})$. On the other hand, however, Corollary \ref{co3.6} shows that $\Lambda\cap{\mathcal Z}(\widehat{\chi}_n)\ne\emptyset$ for all $n\ge1$ and all spectrum $\Lambda$ of ${\mu}_{\mathbb{P},\,\mathbb{D}}$ satisfying $0\in\Lambda$. Hence, in order to prove the sufficiency, the first thing is to choose some suitable subset $U_n\subset{\mathcal Z}(\widehat{\chi}_n)$ so that any spectrum $C_n$ for each $\chi_n$ satisfying $C_n\subset U_n\cup\{0\}$ will ensure that $C_{n_1}+C_{n_2}+\cdots +C_{n_k}$ is a spectrum of $\chi_{n_1}\ast\chi_{n_2}\ast\cdots\chi_{n_k}$ for any subset $\{n_1,\ n_2,\ \cdots,\ n_k\}$. \eqref{eq2.6} shows that the $U_n$ should be of the form $q_n^{k_n}\theta_{q_n}(p_1p_2\cdots p_{n'})(\mathbb{Z}\setminus q_n\mathbb{Z})$ for some $n'\ge n$. We now consider the choice of the $n'$.
	
Let
\begin{equation}\label{eq5.8}
	l_{q_n,n}:=\max\{j\in X_{q_n}:\,n\leqslant j,\,k_{q_n,n}\geqslant k_{q_j,j}\},\quad\forall\ n\ge1.
\end{equation}
It is easy to see that $q_n=q_{l_{q_n,n}}$ and $n\leqslant l_{q_n,n}$. Moreover, by \eqref{eq1.4}, we have $k_{q_n,n}>k_{q_n,l_{q_n,n}}$ if $n<l_{q_n,n}$. Then $\lambda_1\in q_n^{k_n}\theta_{q_n}(p_1p_2\cdots p_{l_{q_n,n}})(\mathbb{Z}\setminus q_n\mathbb{Z}){\mathcal Z}(\widehat{\chi}_n)$ and $\lambda_2\in{\mathcal Z}(\widehat{\chi}_{l_{q_n,n}})$ guarantee $\lambda_1-\lambda_2\in{\mathcal Z}(\widehat{\chi}_{l_{q_n,n}})$. The above properties will be frequently invoked in the following sections. The following example illustrates the role of $l_{q_n,n}$ in constructing the spectrum of $\mu_{\mathbb{P},\,\mathbb{D}}$.	
\begin{example}
	Let $p_1=2^5$, $p_2=5$, $D_1=\{0,\,1\}$, $D_2=\{0,\,2^2\}$ and, $p_n=2$ and $D_n=\{0,\,1\}$ for all $n\geqslant3$. By \eqref{eq1.2} and \eqref{eq5.8}, we have $k_{2,1}=4$, $k_{2,2}=2$, $l_{2,1}=2$ and  $l_{2,2}=2$. Obviously, $\{0,\,2^4\theta_2(p_1\cdots p_{l_{2,1}})\}+\{0,\,2^2\theta_2(p_1\cdots p_{l_{2,2}})\}$ is a spectrum of $\chi_1*\chi_2$.
	
	%, but we do not have  $\mathcal{Z}(\widehat{\chi_1*\chi_2})\subset\mathcal{Z}(\hat\delta_{\frac{1}{p_1}D_1})\pm\mathcal{Z}(\hat\delta_{\frac{1}{p_1}\frac{1}{p_2}D_2})$.
\end{example}

In order to make sure that $\lambda_1\in U_n$ and $\lambda_2\in U_k$ guarantee $\lambda_1-\lambda_2\in{\mathcal Z}(\widehat{\mu}_{\mathbb{P},\,\mathbb{D}})$ when $q_n\neq q_k$, we also need to consider the following $\alpha_{q_n,n}$, $\beta_{q_n,n}$ and $b_{q_n,n}$.
For any $n\geqslant1$, let
\begin{equation}\label{eq5.9}
	\alpha_{q_n,n}:= \begin{cases}
		0, &\mbox{ if } \{j\in X_{\widetilde{q}_n}:\,\nu_{\widetilde{q}_n}(L_{n})>k_{\widetilde{q}_n,j}\}=\emptyset, \\
		\max\{j\in X_{\widetilde{q}_n}:\,\nu_{\widetilde{q}_n}(L_{n})>k_{\widetilde{q}_n,j}\}, &\mbox{ else}.
	\end{cases}
\end{equation}
The following example illustrates the role of $\alpha_{q_n,n}$ in the spectrum of $\mu_{\mathbb{P},\,\mathbb{D}}$.
\begin{example}
	Let $p_1=2^53^4$, $p_2=5$, $D_1=\{0,\,1\}$, $D_2=\{0,\,2,\,2^2\}$ and, $p_n=2$ and $D_n=\{0,\,1\}$ for all $n\geqslant3$. Moreover, define $\varphi_2:=\delta_{\frac{1}{p_1}D_1}\bigast\delta_{\frac{1}{p_1}\frac{1}{p_2}D_2}$. It follows from \eqref{eq1.2} and \eqref{eq1.3} that $k_{2,1}=4$, $k_{3,2}=3$, $\nu_3(L_1)=4$ and $\nu_2(L_2)=4$. Since $\nu_3(L_1)>k_{3,2}$. By \eqref{eq5.9}, we have $\alpha_{2,1}=2$. Obviously, $\{0,\,2^4\theta_2(p_1\cdots p_{\alpha_{2,1}})\}+\{0,\,3^32^4\times5,\,-3^32^4\times5\}$ is a spectrum of $\varphi_2$.
\end{example}

Moreover, for any $n\geqslant1$, write
\begin{equation}\label{eq5.10}
	\beta_{q_n,n}:=\max\{\alpha_{q_n,j}:\,j\geqslant1,\, q_j=q_n,\,k_{q_j,j}\leqslant k_{q_n,n}\}.
\end{equation}	
The following example illustrates the role of $\beta_{q_n,n}$ in the construction of the spectrum of $\mu_{\mathbb{P},\,\mathbb{D}}$.
\begin{example}
Let $p_1=2^53^4$, $p_2=3^2$, $p_3=17$, $D_1=\{0,\,1\}$, $D_2=\{0,\,2^2\}$, $D_3=\{0,\,2^3,\,2^4\}$ and $p_n=2$ and $D_n=\{0,\,1\}$ for all $n\geqslant4$.. Moreover, define
\begin{equation*}
	\varphi_3:=\delta_{\frac{1}{p_1}D_1}\bigast\delta_{\frac{1}{p_1}\frac{1}{p_2}D_2}\bigast\delta_{\frac{1}{p_1}\frac{1}{p_2}\frac{1}{p_3}D_3}=\delta_{\frac{1}{2^53^4}\cdot\{0,\,1\}}\bigast\delta_{\frac{1}{2^53^4}\frac{1}{3^2}\cdot\{0,\,2^2\}}\bigast\delta_{\frac{1}{2^53^4}\frac{1}{3^2}\frac{1}{17}\cdot\{0,\,2^3,\,2^4\}}.
\end{equation*}
It follows from \eqref{eq1.2} and \eqref{eq1.3} that $k_{2,1}=4$, $k_{2,2}=2$, $k_{3,3}=5$, $\nu_3(L_1)=4$, $\nu_3(L_2)=6$ and $\nu_2(L_3)=2$. Since $\nu_3(L_2)>k_{3,3}$. By \eqref{eq5.9}, we have $\alpha_{2,2}=3$. Hence, we have $\beta_{2,1}=\alpha_{2,2}=3$ by noting $k_{2,2}<k_{2,1}$. It is easy to check that $\{0,\,2^4\theta_2(p_1\cdots p_{\beta_{2,1}})\}+\{0,\,2^2\theta_2(p_1\cdots p_{\alpha_{2,2}})\}+\{0,\,3^52^2\cdot17,\,-3^52^2\cdot17\}$ is a spectrum of $\varphi_3$.	
\end{example}

\begin{remark}
The role of $l_{q_n,n}$ in the construction of the prespectrum is obvious. However, the role of $\beta_{q_n,n}$ is not obvious. We will now illustrate the reason why $\beta_{q_n,n}$ defines $b_{q_n,n}$ instead of $\alpha_{q_n,n}$ by the following example.
\begin{example}
	Let $p_1=2^63^4$, $p_2=3^2$, $p_3=89$, $p_{2n}=2^2$ for all $n\geqslant2$ and  $p_{2n+1}=3^2$ for all $n\geqslant2$. Assume further $D_1=\{0,\,1,\,2\}$, $D_2=\{0,\,2^2,\,2\cdot2^2\}$, $D_3=\{0,\,3^3\}$, $D_{2n}=\{0,\,1\}$ for all $n\geqslant2$ and $D_{2n+1}=\{0,\,1,\,2\}$ for all $n\geqslant2$. Let $\mu=\bigast\limits_{n=1}^{\infty}\delta_{\frac{1}{p_1}\frac{1}{p_2}\cdots\frac{1}{p_n}D_n}$. Then,
	\begin{table}[H]
		\centering
		\begin{tabular}{c|c|c}
			$\delta_{\frac{1}{2^63^5}\{0,\,1,\,2\}}$ &  $\delta_{\frac{1}{2^63^5}\frac{1}{3^2}\{0,\,2^2,\,2\cdot2^2\}}$ & $\delta_{\frac{1}{2^63^5}\frac{1}{3^2}\frac{1}{89}\{0,\,3^3\}}$ \\
			$\mathcal{Z}(\hat\chi_1)=3^42^6(3\mathbb{Z}\pm1)$  & $\mathcal{Z}(\hat\chi_2)=3^62^4(3\mathbb{Z}\pm1)$ &  $\mathcal{Z}(\hat\chi_3)=2^53^4(2\mathbb{Z}+1)$ \\
			$k_{3,1}=4$  & $k_{3,2}=6$ &$k_{2,3}=5$ \\
			$\nu_{2}(L_{3,1})=6$  & $\nu_{2}(L_{3,2})=4$ & $\nu_{3}(L_{2,3})=4$ \\
			$b_{3,1}=3$  & $b_{3,2}=2$ & $b_{2,3}=3$ \\
		\end{tabular}
	\end{table}
	If we let $b_{q_n,n}:=\max\{l_{q_n,n},\,\alpha_{q_n,n}\}$. Choose $x_1=3^42^689\in3^4\theta_{3}(p_1\cdots p_{b_{3,1}})(3\mathbb{Z}\pm1)$, $x_2=3^62^6\in3^6\theta_{3}(p_1\cdots p_{b_{3,2}})(3\mathbb{Z}\pm1)$ and $x_3=2^53^789\in2^5\theta_{3}(p_1\cdots p_{b_{2,3}})(2\mathbb{Z}+1)$. It is clear that $x_1+x_2+x_3\not\in\mathcal{Z}(\widehat{\mu_{\mathbb{P},\,\mathbb{D}}})$. This means that, if we define $b_{q_n,n}:=\max\{l_{q_n,n},\,\alpha_{q_n,n}\}$, then Theorem \ref{th5.11} \rm(\romannumeral2) in the following does not hold.
\end{example}	
\end{remark}

Finally, we define
\begin{equation}\label{eq5.11}
	b_{q_n,n}:=\max\{l_{q_n,n},\,\beta_{q_n,n}\}.
\end{equation}	

The following example will illustrate the role of $b_{q_n,n}$ in the construction of the spectrum of a convolution measure.
\begin{example}
Let $p_1=2^53^4$, $p_2=5$, $p_3=3^2$, $p_4=37$, $D_1=\{0,\,1\}$, $D_2=\{0,\,1,\,2\}$, $D_3=\{0,\,2^2\}$, $D_4=\{0,\,2^4,2^5\}$ and, $p_{n}=2^2$ and $D_n=\{0,\,1\}$ for all $n\geqslant5$. Let  $\varphi_4:=\delta_{\frac{1}{p_1}D_1}\bigast\delta_{\frac{1}{p_1}\frac{1}{p_2}D_2}\bigast\delta_{\frac{1}{p_1}\frac{1}{p_2}\frac{1}{p_3}D_3}\bigast\delta_{\frac{1}{p_1}\frac{1}{p_2}\frac{1}{p_3}\frac{1}{p_4}D_4}$. It follows that
\begin{table}[H]\small
\begin{tabular}{c|c|c|c}
	$\delta_{\frac{1}{2^53^4}\{0,\,1\}}$ & $\delta_{\frac{1}{2^53^4}\frac{1}{17}\{0,\,2^3,\,2^4\}}$ & $\delta_{\frac{1}{2^53^4}\frac{1}{17}\frac{1}{3^2}\{0,\,2^2\}}$ & $\delta_{\frac{1}{2^53^4}\frac{1}{17}\frac{1}{3^2}\frac{1}{37}\{0,\,2^4,\,2^5\}}$\\
	$\mathcal{Z}(\hat\chi_1)=2^43^4(2\mathbb{Z}+1)$\hspace{-0.5em}   & \hspace{-0.5em}$\mathcal{Z}(\hat\chi_2)=3^32^2\times17(3\mathbb{Z}\pm1)$\hspace{-0.5em} & \hspace{-0.5em}$\mathcal{Z}(\hat\chi_3)=2^23^6\times17(2\mathbb{Z}+1)$\hspace{-0.5em} & \hspace{-0.5em}$\mathcal{Z}(\hat\chi_4)=3^52\times17\times37(2\mathbb{Z}+1)$ \\
	$k_{2,1}=4$  &   $k_{3,2}=3$ &  $k_{2,3}=2$ & $k_{3,4}=5$ \\
	$\nu_{3}(L_{1})=4$  &   $\nu_{2}(L_{2})=2$ &  $\nu_{3}(L_{3})=6$ & $\nu_{2}(L_{4})=1$\\
	$b_{2,1}=4$  &   $b_{3,2}=2$ &  $b_{2,3}=4$ & $b_{3,4}=4$ \\
\end{tabular}	
\end{table}
\hspace{-2em}It is easy to see that  $\{0,\,2^4\theta_2(p_1\cdots p_{b_{2,1}})\}+\{0,\,3^3\theta_3(p_1\cdots p_{b_{3,2}}),\,-3^3\theta_3(p_1\cdots p_{b_{3,2}})\}+\{0,\,2^2\theta_2(p_1\cdots p_{b_{2,3}})\}+\{0,\,3^5\theta_3(p_1\cdots p_{b_{3,4}}),\,-3^5\theta_3(p_1\cdots p_{b_{3,4}})\}$ is a spectrum of $\varphi_4$.	
\end{example}

\begin{lemma}\label{le5.8}
	Assume that the pair $(\mathbb{P},\,\mathbb{D})$ satisfies \eqref{eq1.4}, \eqref{eq1.5} and \eqref{eq2.12-3}. Then, $b_{q_n,n}<\infty$ for all $n\geqslant1$.
\end{lemma}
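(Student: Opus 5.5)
The plan is to show separately that each of the two quantities in \eqref{eq5.11}, $l_{q_n,n}$ and $\beta_{q_n,n}$, is a maximum over a finite nonempty set of finite integers. The only tools needed are the injectivity in \eqref{eq1.4} and the nonnegativity of the $k$'s from Lemma \ref{le2.8-1} (i.e.\ {\bfseries (C3)}). I do not expect \eqref{eq1.5} to be needed.

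\emph{Key counting fact.} Fix $q\in\{2,3\}$ and an integer $M\geqslant0$. By \eqref{eq1.4}, the map $j\mapsto k_{q,j}$ is injective on $X_q$. By {\bfseries (C3)}, $k_{q,j}\geqslant0$ for every $j$. Hence
\[
\#\{j\in X_q:\,k_{q,j}\leqslant M\}\leqslant M+1 .
\]

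\emph{Finiteness of $l_{q_n,n}$.} The set in \eqref{eq5.8} is $\{j\in X_{q_n}:\,j\geqslant n,\ k_{q_j,j}\leqslant k_{q_n,n}\}$. It contains $n$, so it is nonempty. By the counting fact with $q=q_n$ and $M=k_{q_n,n}$, it has at most $k_{q_n,n}+1$ elements. Its maximum is therefore a finite integer.

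\emph{Finiteness of $\alpha_{q_n,n}$.} The nonzero rational $L_n$ satisfies $\nu_{\widetilde q_n}(L_n)<\infty$, and $\nu_{\widetilde q_n}(L_n)\geqslant0$ by Lemma \ref{le2.8-1}. The set in \eqref{eq5.9} is $\{j\in X_{\widetilde q_n}:\,k_{\widetilde q_n,j}<\nu_{\widetilde q_n}(L_n)\}$. By the counting fact with $q=\widetilde q_n$, it has at most $\nu_{\widetilde q_n}(L_n)$ elements. So it is either empty, in which case $\alpha_{q_n,n}=0$, or a finite set whose maximum is finite.

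\emph{Finiteness of $\beta_{q_n,n}$.} The index set in \eqref{eq5.10} is $\{j:\,q_j=q_n,\ k_{q_j,j}\leqslant k_{q_n,n}\}$. It contains $n$, so it is nonempty, and by the counting fact it is finite. Each $\alpha_{q_n,j}$ in it is finite by the previous step. Hence $\beta_{q_n,n}$ is a maximum of finitely many finite integers.

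Combining the steps, $b_{q_n,n}=\max\{l_{q_n,n},\beta_{q_n,n}\}<\infty$. The only point needing care is checking that the $k$'s are bounded below, which is exactly what the normalization \eqref{eq2.12-3} supplies.
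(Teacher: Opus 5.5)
Your proof is correct and follows essentially the same route as the paper: injectivity of $j\mapsto k_{q,j}$ from \eqref{eq1.4} and nonnegativity from \eqref{eq2.12-3} bound the index sets defining $l_{q_n,n}$ and $\alpha_{q_n,n}$, and \eqref{eq1.5} is not used. Your explicit remark that the index set in \eqref{eq5.10} is itself finite spells out the step the paper compresses into ``combining the above two inequalities''.
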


\begin{pf}
	By \eqref{eq2.12-3}, we have $k_{q_n,n}\geqslant0$ for every $n\geqslant1$. Moreover, by \eqref{eq1.4}, we can see that $k_{q_n,i}\ne k_{q_n,j}$ for any distinct integers $i,\,j\in X_{q_n}$ and $n\geqslant1$. Hence, for any $n\geqslant1$, we have $\#\{j\in X_{q_n}:\,k_{q_n,j}<k_{q_n,n}\}=\#\{k_{q_n,j}:\,j\in X_{q_n},\,k_{q_n,j}<k_{q_n,n}\}\leqslant k_{q_n,n}$. Thus,
	\begin{equation*}
		l_{q_n,n}<\infty.
	\end{equation*}
	
	On the other hand, it follows from \eqref{eq2.12-3} that $\nu_{\widetilde{q}_n}(L_n)\geqslant0$ for any $n\geqslant1$. Moreover, it follows from \eqref{eq1.4} that $k_{\widetilde{q}_n,i}\ne k_{\widetilde{q}_n,j}$ for any $i\neq j\in X_{\widetilde{q}_n}$ and $n\geqslant1$.  Therefore, we get $\#\{j\in X_{\widetilde{q}_n}:\,k_{\widetilde{q}_n,j}<\nu_{\widetilde{q}_n}(L_n)\}=\#\{k_{\widetilde{q}_n,j}:\,j\in X_{\widetilde{q}_n},\,k_{\widetilde{q}_n,j}<\nu_{\widetilde{q}_n}(L_n)\}\leqslant \nu_{\widetilde{q}_n}(L_n)$. Hence,
	\begin{equation*}
		\alpha_{q_n,n}<\infty.
	\end{equation*}
	Combining the above two inequalities, we have $\beta_{q_n,n}<\infty$. Thus, by \eqref{eq5.11}, we have $b_{q_n,n}<\infty$. Given the arbitrariness of $n$, the lemma is proven.
\end{pf}

%, where $\nu_{q_{n}}(p_{n+1}p_{n+2}\cdots p_m):=0$ when $n=m$

\begin{lemma}\label{le5.9}
Assume that the pair $(\mathbb{P},\,\mathbb{D})$ satisfies \eqref{eq1.4}, \eqref{eq1.5} and \eqref{eq2.12-3}. Let $n,\,m$ be two distinct positive integers, then
\begin{enumerate}
	\item[\rm(\romannumeral1)] $\#\{i\in X_{q_n}:\,n<i\leqslant l_{q_n,n}\}\leqslant2(G-1)$ for all $n\geqslant1$.
	\item[\rm(\romannumeral2)] If $n<m$, $q_n=q_m$ and $k_{q_n,n}>k_{q_n,m}$, then $G\geqslant2$ and $\nu_{q_{n}}(p_{n+1}p_{n+2}\cdots p_m)\leqslant G-2$.
	\item[\rm(\romannumeral3)] If $n<m$, $q_n\neq q_m$ and $k_{q_n,n}\geqslant\nu_{q_{n}}(L_{m})$, then $G\geqslant2$ and $\nu_{q_{n}}(p_{n+1}p_{n+2}\cdots p_m)\leqslant G-2$.		
	\item[\rm(\romannumeral4)] If $q_m=q_n$ and $k_{q_n,m}>k_{q_n,n}$, then $l_{q_n,m}\geqslant l_{q_n,n}$.
	\item[\rm(\romannumeral5)] If $q_m=q_n$ and $k_{q_n,m}>k_{q_n,n}$, then $b_{q_n,m}\geqslant b_{q_n,n}$.
	\item[\rm(\romannumeral6)] If $q_m\neq q_n$ and $\nu_{q_n}(L_{m})>k_{q_n,n}$, then $b_{q_m,m}\geqslant b_{q_n,n}$.
	\item[\rm(\romannumeral7)] For any positive integers $z_1$ and $z_2$ with $z_1\leqslant z_2$. If $q\in\{2,\,3\}$ and $\#\{i\in X_{q}:\,z_1\leqslant i\leqslant z_2\}\geqslant G$, then $\nu_{q}(p_{z_1}p_{z_1+1}\cdots p_{z_2})\geqslant\#\{i\in X_{q}:\,z_1\leqslant i\leqslant z_2\}-G$.
	\item[\rm(\romannumeral8)] $\nu_{q_n}(p_{n+1}p_{n+2}\cdots p_{b_{q_n,n}})\leqslant G-1$, where $\nu_{q_n}(p_{n+1}\cdots p_{b_{q_n,n}}):=0$ when $n=b_{q_n,n}$.
	\item[\rm(\romannumeral9)] For any integer $n\geqslant1$, we have $k_{q_n,n}\geqslant\nu_{q_n}(L_{\beta_{q_n,n}})$.
\end{enumerate}	
\end{lemma}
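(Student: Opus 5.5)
The plan is to reduce every item to the explicit formulas \eqref{eq1.3-1} and \eqref{eq1.4-1}. For $q\in\{2,3\}$ these give
\[
k_n=\nu_{q}(p_1\cdots p_n)-\nu_{q}(d_n)-1\quad(q=q_n),\qquad
\nu_{q}(L_m)=\nu_{q}(p_1\cdots p_m)-\nu_{q}(d_m)\quad(q\neq q_m).
\]
By \eqref{eq5.6} we also have $0\le\nu_q(d_j)\le G-1$ for every $j$. Items (ii), (iii), (vii), (i) are arithmetic consequences of these identities combined with the distinctness \eqref{eq1.4}. Items (iv), (v), (vi) are monotonicity statements about the maxima \eqref{eq5.8}--\eqref{eq5.11}. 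Items (viii) and (ix) follow by combining the two groups with \eqref{eq1.5}.

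\textbf{Arithmetic items.} For (ii), write $q=q_n=q_m$ with $n<m$. Subtracting the two formulas for $k$ gives
\[
0<k_n-k_m=\nu_q(d_m)-\nu_q(d_n)-\nu_q(p_{n+1}\cdots p_m),
\]
so $\nu_q(p_{n+1}\cdots p_m)\le \nu_q(d_m)-1\le G-2$. Since the left side is $\ge0$, this also forces $G\ge2$.

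For (iii), the same subtraction with $q=q_n$ gives
\[
0\le k_n-\nu_q(L_m)=\nu_q(d_m)-\nu_q(d_n)-1-\nu_q(p_{n+1}\cdots p_m),
\]
and the same conclusion follows.

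For (vii), set $A=\nu_q(p_1\cdots p_{z_1-1})$ and $N=\nu_q(p_{z_1}\cdots p_{z_2})$. For each $i\in X_q$ with $z_1\le i\le z_2$ we have $\nu_q(p_1\cdots p_i)\in[A,A+N]$ and $\nu_q(d_i)+1\in[1,G]$. Hence $k_i$ lies in the integer interval $[A-G,\,A+N-1]$, which has $N+G$ elements. By \eqref{eq1.4} these $k_i$ are distinct, so $\#\{i\}\le N+G$, which is the claim.

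For (i), assume $l_{q_n,n}>n$ (otherwise the set is empty). Then $k_{q_n,n}>k_{q_n,l_{q_n,n}}$, and (ii) gives $\nu_q(p_{n+1}\cdots p_{l_{q_n,n}})\le G-2$. For $n<i\le l_{q_n,n}$ with $i\in X_q$, the same counting as in (vii) puts $k_i$ in $[A-G,\,A+G-3]$ with $A=\nu_q(p_1\cdots p_n)$. This is a window of $2G-2$ integers, and distinctness gives the bound $2(G-1)$.

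\textbf{Monotonicity items.} For (iv), either $l_{q_n,n}\ge m$ or $l_{q_n,n}<m$. In the first case $l_{q_n,n}$ is admissible in the definition of $l_{q_n,m}$, because $k_{l_{q_n,n}}\le k_n<k_m$; hence $l_{q_n,m}\ge l_{q_n,n}$. In the second case $l_{q_n,m}\ge m>l_{q_n,n}$.

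For (v), the index set defining $\beta_{q_n,n}$ is contained in the one defining $\beta_{q_n,m}$, since $k_j\le k_n<k_m$. Combined with (iv) this gives $b_{q_n,m}\ge b_{q_n,n}$.

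For (vi), two bounds are needed.
\begin{itemize}
\item $b_{q_m,m}\ge l_{q_n,n}$: since $k_{l_{q_n,n}}\le k_n<\nu_{q_n}(L_m)$, the index $l_{q_n,n}$ lies in the set defining $\alpha_{q_m,m}$. Therefore $l_{q_n,n}\le\alpha_{q_m,m}\le\beta_{q_m,m}$.
\item $b_{q_m,m}\ge\beta_{q_n,n}$: write $\beta_{q_n,n}=\alpha_{q_n,j}$ with $q_j=q_n$ and $k_j\le k_n<\nu_{q_n}(L_m)$. Then \eqref{eq1.5} applied to the pair $(j,m)$ forces $k_m\ge\nu_{q_m}(L_j)$. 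Every $i\in X_{q_m}$ with $k_i<\nu_{q_m}(L_j)$ therefore has $k_i<k_m$. Such an $i$ is either $<m$ or satisfies $m<i\le l_{q_m,m}$. In both cases $\alpha_{q_n,j}\le l_{q_m,m}\le b_{q_m,m}$.
\end{itemize}

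\textbf{Items (viii), (ix) and the expected obstacle.} For (viii), split according to which term attains $b_{q_n,n}$. If $b_{q_n,n}=l_{q_n,n}>n$, apply (ii). If $b_{q_n,n}=\beta_{q_n,n}=\alpha_{q_n,j}=:i>n$, then $i\in X_{\widetilde q_n}$ with $\nu_{\widetilde q_n}(L_j)>k_i$ and $k_j\le k_n$. By \eqref{eq1.5} for the pair $(j,i)$ we get $k_j\ge\nu_{q_n}(L_i)$, hence $k_n\ge\nu_{q_n}(L_i)$, and (iii) applies. If $b_{q_n,n}\le n$, there is nothing to prove.

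The same \eqref{eq1.5} step proves (ix): $k_n\ge k_j\ge\nu_{q_n}(L_{\alpha_{q_n,j}})$. The degenerate case $\alpha=0$ must be treated by convention; in that case the claim is vacuous.

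The only genuinely delicate point is in (vi) and (viii): one must invoke \eqref{eq1.5} for the correct auxiliary pair, namely the index $j$ realizing $\beta$, not $n$ itself. One must also track the two cases $i<m$ versus $i\ge m$ when bounding $\alpha$ by $l$. This bookkeeping is where I expect the main care to be needed.
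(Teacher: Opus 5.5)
Your proposal is correct and follows essentially the same route as the paper. Items (ii), (iii), (vii) and (i) come from the explicit formulas for $k_n$ and $\nu_{\widetilde q_n}(L_n)$ together with the distinctness in \eqref{eq1.4}, and (vi), (viii), (ix) rest on applying \eqref{eq1.5} to the index $j$ that realizes $\beta_{q_n,n}$, exactly as the paper does; your explicit handling of the constraint $j\ge m$ in the definition of $l_{q_m,m}$ (in (iv) and (vi)) and of the degenerate value $\beta_{q_n,n}=0$ in (ix) fills small gaps that the paper's proof passes over.
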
	

\begin{pf}
	\rm(\romannumeral1) For any $n\geqslant1$, it suffices to prove that \rm(\romannumeral1) holds when $n<l_{q_n,n}$, since the case $n=l_{q_n,n}$ is trivial.
	
	For any integers $n\geqslant1$ and $i\geqslant1$, if $n<i\leqslant l_{q_n,n}$ and $q_i=q_n$, then
	\begin{equation*}
		k_{q_n,i}=\nu_{q_n}(p_{1}p_{2}\cdots p_{i})-\nu_{q_n}(q_nd_{i})\geqslant \nu_{q_n}(p_{1}p_{2}\cdots p_{n})-G\geqslant k_{q_n,n}+1-G
	\end{equation*}
	and
	\begin{equation*}
		k_{q_n,i}=\nu_{q_n}(p_{1}p_{2}\cdots p_{i})-\nu_{q_n}(d_{i})-1\leqslant \nu_{q_n}(p_{1}p_{2}\cdots p_{l_{q_n,n}})-1\leqslant k_{q_n,l_{q_n,n}}+G-1\leqslant k_{q_n,n}+G-2
	\end{equation*}
	by using \eqref{eq1.2} and \eqref{eq5.6}. Moreover, by \eqref{eq1.4}, we get that $k_{q_n,s}\ne k_{q_n,t}$ for any $s\neq t\in X_{q_n}$. Hence, we have
	\begin{equation*}
		\begin{split}
			\#\{i\in X_{q_n}:\,n<i\leqslant l_{q_n,n}\}=&\#\{k_{q_n,i}:\,i\in X_{q_n},\,n<i\leqslant l_{q_n,n}\}\\
			\leqslant&\{j\in\mathbb{N}:\,k_{q_n,n}+1-G\leqslant j\leqslant k_{q_n,n}+G-2\}\leqslant2(G-1).
		\end{split}
	\end{equation*}
	This proves \rm(\romannumeral1).
	
	\rm(\romannumeral2) Since $n<m$ and $k_{q_n,n}>k_{q_n,m}$, we have $k_{q_n,n}>k_{q_n,m}$. Hecne,
	\begin{equation}\label{eq5.17}
		0<k_{q_n,n}-k_{q_n,m}=-\nu_{q_{n}}(p_{n+1}p_{n+2}\cdots p_m)-\nu_{q_{n}}(d_n)-1+\nu_{q_{n}}(d_m)+1.
	\end{equation}
	Thus, we have $\nu_{q_{n}}(d_m)>0$, which yields $G\geqslant\nu_{q_{n}}(q_{n}d_m)\geqslant2$. Moreover, by \eqref{eq5.17}, we have $\nu_{q_{n}}(p_{n+1}p_{n+2}\cdots p_m)<\nu_{q_{n}}(d_m)\leqslant G-1$. Thus, \rm(\romannumeral2) holds.
	
	\rm(\romannumeral3) Since $n<m$ and $k_{q_n,n}\geqslant\nu_{q_{n}}(L_{m})$, we have
	\begin{equation*}
		0\leqslant k_{q_n,n}-\nu_{q_{n}}(L_{m})=-\nu_{q_{n}}(p_{n+1}\cdots p_m)-\nu_{q_{n}}(d_n)-1+\nu_{q_{n}}(d_m).
	\end{equation*}
	Therefore, we have $\nu_{q_{n}}(d_m)\geqslant1$, which yields $G\geqslant\nu_{q_{n}}(q_{n}d_m)\geqslant2$. Hence, we have $\nu_{q_{n}}(p_{n+1}\cdots p_m)\leqslant\nu_{q_{n}}(d_m)-1\leqslant G-2$. This proves \rm(\romannumeral3).
	
	\rm(\romannumeral4) By \eqref{eq1.4}, we have $k_{q_n,i}\ne k_{q_n,j}$ for any distinct integers $i,\,j\in X_{q_n}$. The definitions of $l_{q_n,n}$ and the assumption $k_{q_n,m}> k_{q_n,n}$ shows $k_{q_n,m}>k_{q_n,l_{q_n,n}}$. Note that $l_{q_n,m}$ is the largest integer $s$ such that $k_{q_n,m}\geqslant k_{q_n,s}$ and $s\in X_{q_n}$, we see that $l_{q_n,m}\geqslant l_{q_n,n}$. Hence, \rm(\romannumeral4) is established.
	
	\rm(\romannumeral5) Since $k_{q_n,m}>k_{q_n,n}$. According to the definition of $\beta_{q_n,m}$, we have $\beta_{q_n,m}\geqslant \beta_{q_n,n}$. On the other hand, by \rm(\romannumeral4), we have  $l_{q_n,m}\geqslant l_{q_n,n}$.  Hence, we have $b_{q_n,m}=\max\{l_{q_n,m},\,\beta_{q_n,m}\}\geqslant \max\{l_{q_n,n},\,\beta_{q_n,n}\}=b_{q_n,n}$.

	\rm(\romannumeral6) Choose positive integers $m$ and $n$ with $q_m\ne q_n$ satisfying
	\begin{equation}\label{eq5.19}
		\nu_{q_n}(L_{m})>k_{q_n,n}.
	\end{equation}
	
	By \eqref{eq5.19}, we have $\nu_{q_n}(L_{m})>k_{q_n,j}$ for all $j\in X_{q_n}$ with $k_{q_n,n}\geqslant k_{q_n,j}$. Hence, by \eqref{eq1.5}, we get $k_{q_m,m}\geqslant\nu_{q_m}(L_{j})$ for all $j\in X_{q_n}$ with $k_{q_n,n}\geqslant k_{q_n,j}$. Moreover, by \eqref{eq5.9}, we have $\nu_{\widetilde{q}_n}(L_{j})>k_{\widetilde{q}_n,\alpha_{q_n,j}}$ for all $j\in X_{q_n}$. Since $q_m=\widetilde{q}_n$, we have
	\begin{equation*}
		k_{q_m,m}>k_{q_m,\alpha_{q_n,j}}
	\end{equation*}
	for all $j\in X_{q_n}$ with $k_{q_n,n}\geqslant k_{q_n,j}$. It follows from \eqref{eq5.8} that $l_{q_m,m}\geqslant\alpha_{q_n,j}$ for all $j\in X_{q_n}$ with $k_{q_n,n}\geqslant k_{q_n,j}$. Hence, by \eqref{eq5.10}, we get $l_{q_m,m}\geqslant\beta_{q_n,n}$.
	
	On the other hand, according to the definition of $l_{q_n,n}$, we have $k_{q_n,n}\geqslant k_{q_n,l_{q_n,n}}$. It follows from \eqref{eq5.19} that $\nu_{q_n}(L_{m})>k_{q_n,l_{q_n,n}}$. Hence, by the definitions of $\alpha_{q_m,m}$, we have $\alpha_{q_m,m}\geqslant l_{q_n,n}$. Since $\beta_{q_m,m}\geqslant\alpha_{q_m,m}$, we have $\beta_{q_m,m}\geqslant l_{q_n,n}$. Thus, we obtain $b_{q_m,m}=\max\{l_{q_m,m},\,\beta_{q_m,m}\}\geqslant\max\{l_{q_n,n},\,\beta_{q_n,n}\}=b_{q_n,n}$, so \rm(\romannumeral6) is true.

	\rm(\romannumeral7) Let $s$ and $t$ be integers satisfying $z_1\leqslant s,\,t\leqslant z_2$, and such that $k_{q,s}=\min\{k_{q,i}:\,z_1\leqslant i\leqslant z_2\}$ and $k_{q,t}=\max\{k_{q,i}:\,z_1\leqslant i\leqslant z_2\}$. Then \eqref{eq1.2}, \eqref{eq1.4} and \eqref{eq5.6} shows
	\begin{equation*}
		\begin{split}
			\#\{i\in X_{q}:\,z_1\leqslant i\leqslant z_2\}\leqslant& k_{q,t}-k_{q,s}+1\\
			=&\nu_{q}(p_{z_1}\cdots p_t)-\nu_{q}(p_{z_1}\cdots p_s)-\nu_{q}(d_t)+\nu_{q}(d_s)+1\\
			\leqslant&\nu_{q}(p_{z_1}\cdots p_{z_2})+\nu_{q}(d_s)+1\\
			\leqslant&\nu_{q}(p_{z_1}\cdots p_{z_2})+G.
		\end{split}
	\end{equation*}
	It is now evident that \rm(\romannumeral7) holds.
	
	\rm(\romannumeral8)  It suffices to consider the case $n<b_{q_n,n}$. According to the definition of $b_{q_n,n}$, we get that either $b_{q_n,n}=l_{q_n,n}$ or $b_{q_n,n}=\beta_{q_n,n}$. If $b_{q_n,n}=l_{q_n,n}$, then $k_{q_n,n}>k_{q_n,l_{q_n,n}}$. Hence, by \rm(\romannumeral2), we get
	\begin{equation*}
		\nu_{q_n}(p_{n+1}\cdots p_{b_{q_n,n}})\leqslant G-1.
	\end{equation*}

	If $b_{q_n,n}=\beta_{q_n,n}$. According to the definition of $\beta_{q_{n},n}$, there exists $n'\in X_{q_n}$ with $k_{q_n,n'}\leqslant k_{q_n,n}$ such that $\alpha_{q_n,n'}=\beta_{q_n,n}=b_{q_n,n}$, $q_{\alpha_{q_n,n'}}=\widetilde{q}_{n}$ and $\nu_{\widetilde{q}_{n}}(L_{n'})>k_{\widetilde{q}_{n},\alpha_{q_n,n'}}$. Hence, by \eqref{eq1.5}, we have $k_{q_n.n'}\geqslant\nu_{q_n}(L_{\alpha_{q_n,n'}})$. Combining this result with the condition $k_{q_n,n}\geqslant k_{q_n,n'}$, then $k_{q_n,n}\geqslant\nu_{q_n}(L_{\alpha_{q_n,n'}})$. Hence, by \rm(\romannumeral3), we get
	\begin{equation*}
		\nu_{q_n}(p_{n+1}\cdots p_{b_{q_n,n}})\leqslant G-1.
	\end{equation*}
	We have thus shown \rm(\romannumeral8).

	\rm(\romannumeral9) According to the definition of $\beta_{q_n,n}$, we get that $\beta_{q_n,n}\in X_{\widetilde{q}_n}$ and there exists an integer $j\in X_{q_n}$ such that $k_{q_n,j}\leqslant k_{q_n,n}$, $\alpha_{q_n,j}=\beta_{q_n,n}$ and
	\begin{equation*}
		\nu_{\widetilde{q}_n}(L_j)>k_{\widetilde{q}_n,\alpha_{q_n,j}}.
	\end{equation*}
	Hence, by \eqref{eq1.5}, we have $k_{q_n,j}\geqslant\nu_{q_n}(L_{\alpha_{q_n,j}})$. Since $k_{q_n,j}\leqslant k_{q_n,n}$, we get $k_{q_n,n}\geqslant\nu_{q_n}(L_{\alpha_{q_n,j}})$. Since $\alpha_{q_n,j}=\beta_{q_n,n}$. It follows that $k_{q_n,n}\geqslant\nu_{q_n}(L_{\beta_{q_n,n}})$, so \rm(\romannumeral9) is true.
\end{pf}

Let
\begin{equation}\label{eq5.28}
	U_n:=q_n^{k_n}\theta_{q_n}(p_1p_2\cdots p_{b_{q_n,n}})(\mathbb{Z}\setminus q_n\mathbb{Z}),\qquad n\geqslant1.
\end{equation}
Since $b_{q_n,n}\geqslant n$ and $L_n=\theta_{q_n}(\frac{p_1p_2\cdots p_{n}}{d_nq_n})$, we get $\theta_{q_n}(p_1p_2\cdots p_{b_{q_n,n}})\in L_n(\mathbb{Z}\setminus q_n\mathbb{Z})$. It follows from \eqref{eq2.6} that
\begin{equation}\label{eq5.30}
	U_n\subset\mathcal{Z}(\hat\chi_n),\qquad\forall \, n\geqslant1.
\end{equation}

\begin{theorem}\label{th5.11}
Assume that the pair $(\mathbb{P},\,\mathbb{D})$ satisfies \eqref{eq1.4}, \eqref{eq1.5} and \eqref{eq2.12-3}. Then, we have the following statements.
\begin{enumerate}
	\item[\rm(\romannumeral1)] Let $q\in\{2,\,3\}$ and suppose further that $\mathcal{A}$ is a finite nonempty subset of $X_q$. Then, for any $x_j\in U_j$ with $j\in\mathcal{A}$, we have $\sum_{j\in\mathcal{A}}x_j\in U_{j_1}$, where $j_1\in\mathcal{A}$ satisfies $k_{q,j_1}=\min\{k_{q,j}:\,j\in\mathcal{A}\}$.
	\item[\rm(\romannumeral2)] Let $\mathcal{A}$ be a finite nonempty subset of positive integers. Then, for any $x_j\in U_j$ with $j\in\mathcal{A}$, we have $\widehat{\mu_{\mathbb{P},\,\mathbb{D}}}(\sum_{j\in\mathcal{A}}x_j)=0$.
	\item[\rm(\romannumeral3)] Let $\mathcal{A}$ be a finite nonempty subset of positive integers. For any $j\in\mathcal{A}$ and $x_j\in U_j$, define $C_j=\{0,\,x_j\}$ if $q_j=2$ and $C_j=\{0,\,x_j,\,y_j\}$ if $q_j=3$. Then, $\Gamma:=\sum_{j\in \mathcal{A}}C_j$ is a spectrum of $\bigast\limits_{j\in \mathcal{A}}\delta_{p_1^{-1}p_2^{-1}\cdots p_{j}^{-1}D_j}$.
	\item[\rm(\romannumeral4)] Let $\mathcal{A}$ and $\Gamma$ be as defined in (\romannumeral3). For any $\lambda\in\Gamma$, there exists an integer sequence $\{c_j\}_{j\in\mathcal{A}}$ with $c_j\in C_j$ such that $\lambda=\sum_{j\in\mathcal{A}}c_j$. If $\lambda\neq0$, then there exists $j_{\lambda}\in\mathcal{A}$ such that $c_{j_{\lambda}}\neq0$. For any $\lambda\in(\Gamma\setminus\{0\})$, define $n_{\lambda}:=\max(\{b_{q_{j_{\lambda}},j_{\lambda}}\}\cup\mathcal{A})$. Take $z_{\lambda}\in p_1p_2\cdots p_{n_{\lambda}}\mathbb{Z}$ with $z_0=0$. Then, the set $\{\lambda+z_{\lambda}:\, \lambda\in\Gamma\}$ is also a spectrum of the probability measure $\bigast\limits_{j\in \mathcal{A}}\delta_{p_1^{-1}p_2^{-1}\cdots p_{j}^{-1}D_j}$. Furthermore, we have $\{\lambda+z_{\lambda}:\, \lambda\in\Gamma\}\subset\sum_{j\in \mathcal{A}}(\{0\}\cup U_j)$.
\end{enumerate}		
\end{theorem}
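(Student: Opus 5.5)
The plan is to prove the four parts in order, each feeding the next. Throughout I write $x_j=q_j^{k_j}\theta_{q_j}(p_1\cdots p_{b_{q_j,j}})u_j$ with $u_j\in\mathbb{Z}\setminus q_j\mathbb{Z}$, which is possible by \eqref{eq5.28}. In (iii) I take $y_j\in U_j$ with $x_j-y_j\in U_j$; for instance $y_j=-x_j$, as in the examples.

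\emph{Part (i).} By \eqref{eq1.4} the values $k_{q,j}$, $j\in\mathcal{A}$, are pairwise distinct. Hence every $j\neq j_1$ satisfies $k_{q,j}\geqslant k_{q,j_1}+1$. By Lemma \ref{le5.9}(v) we also have $b_{q,j}\geqslant b_{q,j_1}$, so $\theta_q(p_1\cdots p_{b_{q,j_1}})$ divides $\theta_q(p_1\cdots p_{b_{q,j}})$. Factoring out $q^{k_{q,j_1}}\theta_q(p_1\cdots p_{b_{q,j_1}})$ from the sum leaves $u_{j_1}+q(\text{integer})\in\mathbb{Z}\setminus q\mathbb{Z}$. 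Therefore $\sum_{j\in\mathcal{A}}x_j\in U_{j_1}$.

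\emph{Part (ii).} Split $\mathcal{A}=\mathcal{A}_2\cup\mathcal{A}_3$ with $\mathcal{A}_q=\mathcal{A}\cap X_q$. By (i), $\sum_{\mathcal{A}_2}x_j=:X\in U_s$ and $\sum_{\mathcal{A}_3}x_j=:Y\in U_t$, where $s,t$ are the minimisers of $k$ in each part. If one of the parts is empty, \eqref{eq5.30} finishes the argument. Otherwise we need $X+Y\in\mathcal{Z}(\hat\chi_s)\cup\mathcal{Z}(\hat\chi_t)$, and we compare $b_{2,s}$ with $b_{3,t}$.

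Suppose, say, $\nu_3(L_s)>k_{3,t}$. Then $\alpha_{2,s}\geqslant t$, so $b_{2,s}\geqslant t$. Lemma \ref{le5.9}(vi) gives $b_{3,t}\geqslant b_{2,s}$ in the reverse situation. Condition \eqref{eq1.5} gives $k_{2,s}\geqslant\nu_2(L_t)$, and Lemma \ref{le5.9}(ix) controls the $L$ at the $\beta$-index. From these I aim to show two things: $\nu_2(Y)\geqslant k_{2,s}+1$, and $\theta_2(Y)$ is divisible by $\theta_2(p_1\cdots p_{b_{2,s}})$ in the relevant sense. Then $X+Y\in 2^{k_{2,s}}\theta_2(p_1\cdots p_{b_{2,s}})(2\mathbb{Z}+1)\subset\mathcal{Z}(\hat\chi_s)$. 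The symmetric case puts $X+Y$ in $\mathcal{Z}(\hat\chi_t)$.

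The remaining case is $\nu_3(L_s)\leqslant k_{3,t}$ together with $\nu_2(L_t)\leqslant k_{2,s}$. Here I plan to compare $\nu_2(X)$ with $\nu_2(Y)$ and the odd parts directly, again using Lemma \ref{le5.9}(viii) to bound how many extra $q$-factors $p_{n+1}\cdots p_{b}$ can contribute. I expect this bookkeeping of $q$-adic valuations of the $\theta$-parts to be the main obstacle, and it is exactly why $b$ is defined via $\beta$ rather than $\alpha$.

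\emph{Part (iii).} For $\lambda\ne\lambda'\in\Gamma$, the difference is $\sum_{j}(c_j-c_j')$, and each nonzero term lies in $U_j$ by the choice of $C_j$ (note $\pm x_j$, $\pm y_j$, $x_j-y_j\in U_j$). Applying (ii) to the support of this sum gives orthogonality. The support of $\bigast_{j\in\mathcal{A}}\delta_{p_1^{-1}\cdots p_j^{-1}D_j}$ has exactly $\prod q_j$ points because $D_j\subset\{0,\dots,p_j-1\}$ by (C1). Moreover $\#\Gamma=\prod q_j$, since orthogonal exponentials are distinct. Lemma \ref{le5.2} then yields that $\Gamma$ is a spectrum.

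\emph{Part (iv).} The idea is to absorb $z_\lambda$ into $c_{j_\lambda}$. Since $n_\lambda\geqslant b_{q_{j_\lambda},j_\lambda}$ and $\nu_q(p_1\cdots p_{n_\lambda})\geqslant\nu_q(p_1\cdots p_{j_\lambda})\geqslant k_{j_\lambda}+1$, the element $c_{j_\lambda}+z_\lambda$ stays in $U_{j_\lambda}$. This gives the final inclusion. For orthogonality, in a difference $(\lambda+z_\lambda)-(\lambda'+z_{\lambda'})$ I regroup $z_\lambda-z_{\lambda'}$ into the term of the index $j$ with the smallest $k$ among the indices where the coordinates differ. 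I check that this index's term remains in $U_j$ using $n_\lambda\geqslant\max\mathcal{A}$ and Lemma \ref{le5.9}(viii). Then I conclude with (ii) and Lemma \ref{le5.2} as in (iii).
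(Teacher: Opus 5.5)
Your parts (i) and (iii) follow the paper, but part (ii) contains a genuine error. In the case $\nu_3(L_s)>k_{3,t}$ you aim to place $X+Y$ in $\mathcal{Z}(\hat\chi_s)$, and this is impossible. First, $\nu_3(Y)=k_{3,t}$ exactly, because $\theta_3$ removes every factor $3$ and $z_3\in3\mathbb{Z}\pm1$. Second, $\nu_3(X)\geqslant\nu_3(p_1\cdots p_{b_{2,s}})\geqslant\nu_3(p_1\cdots p_s)\geqslant\nu_3(L_s)>k_{3,t}$. Hence $\nu_3(X+Y)=k_{3,t}<\nu_3(L_s)$, whereas every element of $\mathcal{Z}(\hat\chi_s)=2^{k_{2,s}}L_s(2\mathbb{Z}+1)$ is divisible by $3^{\nu_3(L_s)}$. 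For the same reason, $\theta_2(p_1\cdots p_{b_{2,s}})$ can never divide $\theta_2(Y)$. The facts you derive in this case, $b_{2,s}\geqslant t$ and $k_{2,s}\geqslant\nu_2(L_t)$, point the other way. Together with $\nu_3(X)\geqslant\nu_3(p_1\cdots p_t)\geqslant k_{3,t}+1$ they give $X\in3^{k_{3,t}+1}L_t\mathbb{Z}$, hence $X+Y\in3^{k_{3,t}}L_t(3\mathbb{Z}\pm1)=\mathcal{Z}(\hat\chi_t)$. Symmetrically, when $\nu_2(L_t)>k_{2,s}$ the sum must land in $\mathcal{Z}(\hat\chi_s)$, not in $\mathcal{Z}(\hat\chi_t)$. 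Note also that the right target is $\mathcal{Z}(\hat\chi_s)$, i.e.\ divisibility by $L_s$. Aiming for $U_s$, i.e.\ divisibility by $\theta_2(p_1\cdots p_{b_{2,s}})$, is stronger than needed and need not hold.

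The case you leave open, where both $k_{3,t}\geqslant\nu_3(L_s)$ and $k_{2,s}\geqslant\nu_2(L_t)$, is where the missing idea lies, and it does not need Lemma \ref{le5.9}\,(viii). Simply compare the indices $s$ and $t$. If $s<t$, then $b_{3,t}\geqslant t>s$ gives $\nu_2(Y)=\nu_2(p_1\cdots p_{b_{3,t}})\geqslant\nu_2(p_1\cdots p_s)\geqslant k_{2,s}+1$. The prime-to-$6$ part of $L_s$ divides $\theta_3(p_1\cdots p_{b_{3,t}})$, and $\nu_3(Y)=k_{3,t}\geqslant\nu_3(L_s)$. So $Y\in2^{k_{2,s}+1}L_s\mathbb{Z}$ and $X+Y\in\mathcal{Z}(\hat\chi_s)$. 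If $t<s$, argue symmetrically. This is exactly how the paper organizes (ii). It assumes $i_1<j_1$ and splits only on whether $k_{3,j_1}\geqslant\nu_3(L_{i_1})$. If so, the sum lands in $\mathcal{Z}(\hat\chi_{i_1})$ by the index comparison; if not, it lands in $\mathcal{Z}(\hat\chi_{j_1})$ via Lemma \ref{le5.9}\,(vi) and \eqref{eq1.5}.

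A smaller point concerns (iv). Regrouping $z_\lambda-z_{\lambda'}$ into a $U_j$-term can fail, because $z_\lambda-z_{\lambda'}$ is only known to lie in $p_1\cdots p_{\min(n_\lambda,n_{\lambda'})}\mathbb{Z}$. That set need not be divisible by $\theta_{q_j}(p_1\cdots p_{b_{q_j,j}})$ for the index $j$ you pick. The paper avoids this: every $z_\lambda$ lies in $p_1\cdots p_{\max\mathcal{A}}\mathbb{Z}$, a common period of all $\hat\chi_j$ with $j\in\mathcal{A}$. So the $Q$-function of the shifted set coincides with that of $\Gamma$, and Lemma \ref{le2.2} applies directly.
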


%\item[\rm(\romannumeral4)] Let $\mathcal{A}$ be a finite nonempty subset of positive integers, and let $\Gamma$ be as defined in (\romannumeral3). For any $\lambda:=\sum_{j\in \mathcal{A}}c_j\in(\Gamma\backslash\{0\})$, there exists $j_{\lambda}\in\mathcal{A}$ such that $c_{j_{\lambda}}\neq0$, where $c_j\in C_j$. We choose $z_{\lambda}\in p_1p_2\cdots p_{b_{q_{j_{\lambda}},j_{\lambda}}}\mathbb{Z}$ with $z_0=0$. Then, the set $\{\lambda+z_{\lambda}:\, \lambda\in\Gamma\}$ is also a spectrum of the probability measure $\bigast\limits_{j\in \mathcal{A}}\delta_{p_1^{-1}p_2^{-1}\cdots p_{j}^{-1}D_j}$. Furthermore, we have $\{\lambda+z_{\lambda}:\, \lambda\in\Gamma\}\subset\sum_{j\in \mathcal{A}}(\{0\}\cup U_j)$.	

\begin{pf}
\rm(\romannumeral1) Without loss of generality, assume that $\mathcal{A}$ contains at least two elements; otherwise the statement \rm(\romannumeral1) holds trivially. We arrange the elements of $\mathcal{A}$ as $j_1,\,j_2,\cdots,\,j_t$ in such a way that $k_{q,j_1}<k_{q,j_2}<\cdots<k_{q,j_t}$. By Lemma \ref{le5.9} \rm(\romannumeral5), we have $b_{q,j_1}\leqslant b_{q,j_2}\leqslant\cdots\leqslant b_{q,j_t}$. Hence, we have
\begin{equation*}
	\theta_{q}(p_1p_2\cdots p_{b_{q,j_1}})\mid\theta_{q}(p_1p_2\cdots p_{b_{q,j_i}}),\qquad 1\leqslant i\leqslant t.
\end{equation*}
Moreover, for any $1\leqslant i\leqslant t$ and $x_{j_i}\in U_{j_i}$, one can find an integer $z_{j_i}\in(\mathbb{Z}\setminus q\mathbb{Z})$ such that $x_{j_i}=q^{k_{q,j_i}}\theta_{q}(p_1p_2\cdots p_{b_{q,j_i}})z_{j_i}$. Hence,
\begin{equation*}
	\begin{split}
		\sum_{j\in\mathcal{A}}x_j=\sum_{i=1}^tx_{j_i}
		=&q^{k_{q,j_1}}\theta_{q}(p_1p_2\cdots p_{b_{j_1}})z_{j_1}+\sum_{i=2}^t q^{k_{q,j_i}}\theta_{q}(p_1p_2\cdots p_{b_{j_i}})z_{j_i}\\
		=&q^{k_{q,j_1}}\theta_{q}(p_1\cdots p_{b_{j_1}})\Big(z_{j_1}+\sum_{i=2}^t q^{k_{q,j_i}-k_{q,j_1}}\theta_{q}(p_{b_{j_1}+1}\cdots p_{b_{j_i}})z_{j_i}\Big).
	\end{split}
\end{equation*}
Thus, we have $\sum_{j\in\mathcal{A}}x_j\in U_{j_1}$ by noting $k_{q,j_i}>k_{q,j_1}$ for $2\leqslant i\leqslant t$. This completes the proof of conclusion \rm(\romannumeral1).

\rm(\romannumeral2) If $\mathcal{A}\subset X_2$ or $\mathcal{A}\subset X_3$, by \rm(\romannumeral1), we have $\sum_{j\in\mathcal{A}}x_j\in U_{j_1}$ and so $\widehat{\mu_{\mathbb{P},\,\mathbb{D}}}(\sum_{j\in\mathcal{A}}x_j)=0$.

If both $\mathcal{A}\cap X_2$ and $\mathcal{A}\cap X_3$ are non-empty. Suppose $\mathcal{A}\cap X_2=\{i_1,\,i_2,\cdots,\,i_t\}$ with $k_{2,i_1}<k_{2,i_2}<\cdots<k_{2,i_t}$ and $\mathcal{A}\cap X_3=\{j_1,\,j_2,\cdots,\,j_s\}$ with $k_{3,j_1}<k_{3,j_2}<\cdots<k_{3,j_s}$. By \rm(\romannumeral1), one can find $z_2\in(2\mathbb{Z}+1)$ and $z_3\in(3\mathbb{Z}\pm1)$ such that
\begin{equation}\label{eq5.31}
	\sum_{h\in(\mathcal{A}\cap X_2)}x_h=2^{k_{2,i_1}}\theta_{2}(p_1p_2\cdots p_{b_{2,i_1}})z_2\in U_{i_1}~~\mbox{ and }~~\sum_{h\in(\mathcal{A}\cap X_3)}x_h=3^{k_{3,j_1}}\theta_{3}(p_1p_2\cdots p_{b_{3,j_1}})z_3\in U_{j_1}.
\end{equation}

Note that $i_1\in X_2$ and $j_1\in X_3$, we have $i_1\neq j_1$. We only consider the case $i_1<j_1$, since the case $i_1>j_1$ is almost the same. Since $j_1\leqslant b_{3,j_1}$, we get $i_1\leqslant b_{3,j_1}$.

If $k_{3,j_1}\geqslant\nu_{3}(L_{i_1})$, then
\begin{equation*}
	3^{k_{3,j_1}}\theta_{2}(\theta_{3}(p_1p_2\cdots p_{b_{3,j_1}}))z_3\in L_{i_1}\mathbb{Z}
\end{equation*}
by noting $L_{i_1}=3^{\nu_{3}(L_{i_1})}\theta_{2}\big(\theta_{3}\big(\frac{p_1p_2\cdots p_{i_1}}{d_{i_1}}\big)\big)$. Moreover, it follows from \eqref{eq1.3-1} and the assumption $i_1\leqslant b_{3,j_1}$ that $k_{2,i_1}<\nu_{2}(p_1p_2\cdots p_{b_{3,j_1}})$. Hence, it follows from \eqref{eq5.31} that
\begin{equation}\label{eq5.32-1}
	\sum_{h\in(\mathcal{A}\cap X_3)}x_h=2^{\nu_{2}(p_1p_2\cdots p_{b_{3,j_1}})}3^{k_{3,j_1}}\theta_{2}(\theta_{3}(p_1p_2\cdots p_{b_{3,j_1}}))z_3\in2^{k_{2,i_1}+1}L_{i_1}\mathbb{Z}
\end{equation}
if $k_{3,j_1}\geqslant\nu_{3}(L_{i_1})$. By \eqref{eq5.31} and \eqref{eq5.32-1}, we get
\begin{equation*}
	\sum_{h\in\mathcal{A}}x_h
	=\sum_{h\in(\mathcal{A}\cap X_2)}x_h+\sum_{h\in(\mathcal{A}\cap X_3)}x_h\in2^{k_{2,i_1}}L_{i_1}(2\mathbb{Z}+1)=\mathcal{Z}(\hat\chi_{i_1}).
\end{equation*}

If $\nu_{3}(L_{i_1})>k_{3,j_1}$, then $b_{2,i_1}\geqslant b_{3,j_1}$ by Lemma \ref{le5.9} \rm(\romannumeral6). Hence, we get $b_{2,i_1}\geqslant j_1$ by noting $b_{3,j_1}\geqslant j_1$. Moreover, it follows from \eqref{eq1.4} and the assumption $\nu_{3}(L_{i_1})>k_{3,j_1}$ that $k_{2,i_1}\geqslant\nu_{2}(L_{j_1})$. Thus, similar to \eqref{eq5.32-1}, we get
\begin{equation*}
	\sum_{h\in(\mathcal{A}\cap X_2)}x_h=3^{\nu_{3}(p_1p_2\cdots p_{b_{2,i_1}})}2^{k_{2,i_1}}\theta_{2}(\theta_{3}(p_1p_2\cdots p_{b_{2,i_1}}))z_2\in3^{k_{3,j_1}+1}L_{j_1}\mathbb{Z}.
\end{equation*}
Therefore, \eqref{eq5.31} shows                                  $\sum_{h\in\mathcal{A}}x_h
=\sum_{h\in(\mathcal{A}\cap X_2)}x_h+\sum_{h\in(\mathcal{A}\cap X_3)}x_h\in3^{k_{3,j_1}}L_{j_1}(3\mathbb{Z}\pm1)=\mathcal{Z}(\hat\chi_{j_1})$.

Consequently, $\sum_{h\in\mathcal{A}}x_h\in\mathcal{Z}(\hat\mu)$. This proves \rm(\romannumeral2).

\rm(\romannumeral3) For any $j\in\mathcal{A}$, it is clear $U_j=-U_j$ and, we get  $x_j-(-x_j)\in U_j$ when $q_j=3$.

For any distinct elements $\lambda_1,\,\lambda_2\in\Gamma$, they can be written as $\lambda_1=\sum_{j\in\mathcal{A}}a_j$ and $\lambda_2=\sum_{j\in\mathcal{A}}c_j$, where $a_j,\,c_j\in (\{0\}\cup U_j)$. Hence, $\lambda_1-\lambda_2=\sum_{j\in\mathcal{A}}(a_j-c_j)$. According to the above discussion, we have $a_j-c_j\in U_j$ if $a_j\neq c_j$. Since $\lambda_1\ne\lambda_2$, then $\{j\in\mathcal{A}:\,a_j\neq c_j\}\ne\emptyset$. Hence $\lambda_1-\lambda_2=\sum_{j\in\mathcal{A},\, a_j\neq c_j}(a_j-c_j)\in\sum_{j\in\mathcal{A},\, a_j\neq c_j} U_j$. By \rm(\romannumeral2), we have $\lambda_1-\lambda_2\in\mathcal{Z}(\hat\mu)$.

On the other hand, we see that the support of the measure $\bigast\limits_{j\in \mathcal{A}}\delta_{p_1^{-1}p_2^{-1}\cdots p_{j}^{-1}D_j}$ has cardinality at most $2^{\#(\mathcal{A}\cap X_2)}3^{\#(\mathcal{A}\cap X_3)}$ and $\#\Gamma=2^{\#(\mathcal{A}\cap X_2)}3^{\#(\mathcal{A}\cap X_3)}$. Lemma \ref{le5.2} show that $\Gamma$ is a spectrum of $\bigast\limits_{j\in \mathcal{A}}\delta_{p_1^{-1}p_2^{-1}\cdots p_{j}^{-1}D_j}$. The proof of \rm(\romannumeral3) is finished.

\rm(\romannumeral4) For any $j\in\mathcal{A}$ and $\lambda\in\Gamma$, we have $j\leqslant n_{\lambda}$, so $p_1^{-1}p_2^{-1}\cdots p_{j}^{-1}z_{\lambda}\in\mathbb{Z}$. Since $p_1p_2\cdots p_j$ is a period of the function $\hat\delta_{p_1^{-1}p_2^{-1}\cdots p_{j}^{-1}D_j}$, we get $\hat\delta_{p_1^{-1}p_2^{-1}\cdots p_{j}^{-1}D_j}(x+\lambda+z_{\lambda})=\hat\delta_{p_1^{-1}p_2^{-1}\cdots p_{j}^{-1}D_j}(x+\lambda)$ for all $x\in\mathbb{R}$, $j\in\mathcal{A}$ and $\lambda\in\Gamma$. Hence,
\begin{equation*}
	\sum_{t\in\{\lambda+z_{\lambda}:\,\lambda\in\Gamma\}}\Big\lvert\prod_{j\in\mathcal{A}}\hat\delta_{p_1^{-1}p_2^{-1}\cdots p_{j}^{-1}D_j}(x+t)\Big\rvert^2=\sum_{\lambda\in\Gamma}\Big\lvert\prod_{j\in\mathcal{A}}\hat\delta_{p_1^{-1}p_2^{-1}\cdots p_{j}^{-1}D_j}(x+\lambda)\Big\rvert^2=1,\qquad \forall \, x\in\mathbb{R}, \end{equation*}
where the second equality follows from \rm(\romannumeral3) and Lemma \ref{le2.2}. By the arbitrariness of $x$, the first conclusion in \rm(\romannumeral4) is proven by using Lemma \ref{le2.2}.

If $\lambda=0$, it is clear $\lambda+z_{\lambda}=0\in(\{0\}\cup U_j\})$. Else $\lambda\ne0$, we have $k_{q_{j_{\lambda}},j_{\lambda}}<\nu_{q_{j_{\lambda}}}(z_{\lambda})$ and $\theta_{q_{j_{\lambda}}}(p_1p_2\cdots p_{b_{q_{j_{\lambda}},\,j_{\lambda}}})\mid\theta_{q_{j_{\lambda}}}(z_{\lambda})$ by noting $z_{\lambda}\in p_1p_2\cdots p_{b_{q_{j_{\lambda}},j_{\lambda}}}\mathbb{Z}$. Hence  $c_{j_{\lambda}}+z_{\lambda}\in U_{j_{\lambda}}$. It follows that $\lambda+z_{\lambda}=\sum_{j\in\mathcal{A}}c_j+z_{\lambda}=(c_{j_{\lambda}}+z_{\lambda})+\sum_{j\in(\mathcal{A}\setminus\{j_{\lambda}\})}c_j\in\sum_{j\in\mathcal{A}}(\{0\}\cup U_j)$. The second conclusion in \rm(\romannumeral4) is proven.	
\end{pf}

%\section{The Low Bound of $\hat\mu_{>n}$}\label{sec6}

\section{Proof of the Sufficiency of Theorem \ref{th1.3}}\label{sec7}

We first introduce the ``equi-positive", ``admissible" and ``equi-continuous" defined in \cite{AFL2019}.

\begin{definition}[\cite{AFL2019}]
Let $\Phi$ be a collection of probability measure on $[0,\,1]$.
\begin{enumerate}
	\item[\rm(\romannumeral1)] We say that $\Phi$ is an equi-positive family if there exists $\epsilon_0>0$ such that for all $x\in[0,\,1]$ and all $\nu\in\Phi$, there exists an integer $\psi_{x,\nu}\in\mathbb{Z}$ such that
	\begin{equation*}
		|\hat\nu(x+\psi_{x,\nu})|>\epsilon_0.
	\end{equation*}
	\item[\rm(\romannumeral2)] We say that $\Phi$ is an admissible family if for all $\nu\in\Phi$ or any possible weak limits $\nu$ of a sequence in $\Phi$, the set $\{x\in\mathbb{R}:\,\hat\nu(x+n)=0 \mbox{ for~all } n\in\mathbb{Z}\}$ is empty.
	\item[\rm(\romannumeral3)] We say a family of function $\Psi$ defined on $\mathbb{R}$ equi-continuous, if for $\epsilon_1>0$, there is a number $\delta>0$ such that $|f(x)-f(y)|\leqslant\epsilon_1$ holds for all $f\in\Psi$ and all $x$ and $y$ with $|x-y|<\delta$.
\end{enumerate}	
\end{definition}

\begin{prop}\label{prop5.13}
Assume that the pair $(\mathbb{P},\,\mathbb{D})$ satisfies \eqref{eq1.4}, \eqref{eq1.5} and \eqref{eq2.12-3}. Let $\mu_{\mathbb{P},\,\mathbb{D}}$ be the measure defined by \eqref{eq1.1}. Then, there are small constants $\varepsilon_2>0$ and $\omega_2>0$ such that for all $x\in\mathbb{R}$ and $n\geqslant0$, there is an integer $h_{x,n}\in\mathbb{Z}$ with $h_{0,n}=0$, such that
\begin{equation*}
	|\hat\mu_{>n}(y+x+h_{x,n})|>\frac{1}{2}\varepsilon_2,\qquad\forall \, n\geqslant0,\,~ y\in[-\omega_2,\,\omega_2].
\end{equation*}	
\end{prop}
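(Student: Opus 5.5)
The plan is to reduce Proposition \ref{prop5.13} to a uniform non-vanishing statement for a compact family of measures, and then spread the bound to a neighbourhood by equi-continuity, in the spirit of \cite{AFL2019}.

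\emph{Setup and splitting.} Write $P_{n,j}:=p_{n+1}\cdots p_j$ and $E_j:=d_j^{-1}D_j$. By {\bfseries (C1)}--{\bfseries (C3)}, $0\in E_j$, $\gcd(E_j)=1$, $\#E_j\in\{2,3\}$ and $\bigcup_j E_j$ is finite. Then
\[
\hat\mu_{>n}(x)=\prod_{j>n}m_{E_j}\big(d_jx/P_{n,j}\big).
\]
Each $m_{E_j}$ is $1$-periodic by \eqref{eq2.2}, and $d_j\le D$. By \eqref{eq2.11} we have $P_{n,j}\ge 2^{j-n}$.
\begin{itemize}
\item \emph{Tail factors.} Fix $N$ with $D\,2^{-N}\le \tfrac1{12}$. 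For $x$ in a bounded window, every factor with $j>n+N$ has argument in $[-\tfrac16,\tfrac16]$, and these arguments are summable with sum at most $1$. Lemma \ref{le2.9} then bounds the tail product below by $\varepsilon_1$, uniformly in $n$.
\item \emph{Head factors.} The problem is reduced to the $N$ factors $n<j\le n+N$. We must find an integer $h=h_{x,n}$ such that $\prod_{j=n+1}^{n+N}|m_{E_j}(d_j(x+h)/P_{n,j})|$ is bounded below uniformly, while $x+h$ stays in a controlled window so that the tail estimate still applies.
\end{itemize}
For $x=0$ we take $h_{0,n}=0$. In that case all head arguments are $0$, so Lemma \ref{le2.9} applied to the whole product gives the bound directly.

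\emph{Head factors via compactness.} For $x\in[0,1]$ the data entering the head product are the finitely many possible sets $E_j$, the bounded integers $d_j$, and the ratios $1/P_{n,j}\in[0,\tfrac12]$. I would treat the family of finite measures
\[
\nu_n:=\delta_{P_{n,n+1}^{-1}D_{n+1}}\ast\cdots\ast\delta_{P_{n,n+N}^{-1}D_{n+N}},
\]
together with all their weak limits. In a limit some $p_j\to\infty$, and the corresponding factor becomes a Dirac mass at $0$. The key claim is admissibility in the sense of \cite{AFL2019}: for every such $\nu$ and every $x$, there is $k\in\mathbb Z$ with $\hat\nu(x+k)\ne0$.

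Once admissibility holds, the standard argument of \cite{AFL2019} applies. Continuity of $(\nu,x)\mapsto \max_{|k|\le K}|\hat\nu(x+k)|$, compactness of the parameter space, and equi-continuity of the trigonometric polynomials $\hat\nu_n$ (their frequencies are bounded by $\max\bigcup_j D_j$) give constants $K$, $\epsilon_0$ and $\omega_2$. These satisfy $|\hat\nu_n(y+x+h)|\ge \epsilon_0/2$ for some $|h|\le K$ and all $|y|\le\omega_2$. Combined with the tail bound, applied with $x+h+y$ in a window of size $K+2$ (after enlarging $N$ accordingly), this gives $\varepsilon_2:=\epsilon_0\varepsilon_1$. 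Finally, periodicity in $x$ modulo $1$ extends the estimate from $x\in[0,1]$ to all $x\in\mathbb R$.

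\emph{Main obstacle.} The hard step is admissibility: showing that the integer translates $x+\mathbb Z$ can never lie entirely in $\bigcup_{j}\mathcal Z(\hat\chi^{(n)}_j)$. By \eqref{eq2.4}, the $j$-th zero set is $P_{n,j}(\mathbb Z\setminus q_j\mathbb Z)/(d_jq_j)$.
\begin{itemize}
\item If $x\notin \frac1{d_jq_j}\mathbb Z$ for every $j$, some translate trivially avoids all zero sets.
\item Otherwise $x$ is rational with denominator dividing some $d_jq_j$. I would then count densities: the $j$-th zero set meets $x+\mathbb Z$ in an arithmetic progression of step $P_{n,j}/\gcd(P_{n,j},d_jq_j)$. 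The union must have density $<1$ unless two progressions coincide modulo the $2$-adic and $3$-adic structure.
\end{itemize}
This is exactly where I expect \eqref{eq1.4} and \eqref{eq1.5} to enter, through the bound of Lemma \ref{le5.9}(vii) on how fast $\nu_q(P_{n,j})$ grows along $X_q$. Two zero sets of the same type $q$ have distinct $k$'s, so they are nested progressions of different $q$-adic depth. Mixed types $2$ and $3$ cannot together cover all residues, by the argument used in the necessity proof around \eqref{eq4.62}.

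I would first try a $2$-adic/$3$-adic argument. Choose $k$ so that $\nu_2(x+k)$ and $\nu_3(x+k)$ (with $x$ rational) avoid every level $k_{2,j}-\nu_2(\cdot)$ and $k_{3,j}-\nu_3(\cdot)$ occurring among the finitely many head factors. The finite number $N$ of head factors, and the strictness in \eqref{eq1.4}, should make such a $k$ exist with $|k|$ bounded only in terms of $N$, $D$ and $G$.
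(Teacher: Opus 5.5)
Your overall architecture (equi-positivity from admissibility plus compactness, then equi-continuity, and $h_{0,n}=0$ from $\hat\mu_{>n}(0)=1$) is the one the paper uses. The gap is that the step you single out as the main obstacle is never proved, and the route you sketch for it is misdirected. The hypotheses \eqref{eq1.4} and \eqref{eq1.5} play no role in this proposition, whose conclusion holds for any pair satisfying \eqref{eq2.12-3}, and no $2$-adic/$3$-adic analysis is needed. The missing idea is the soft fact behind \cite[Theorem 4.4]{AFL2019}. Suppose $\nu$ is a probability measure on $[0,1]$ with $\hat\nu(x+k)=0$ for all $k\in\mathbb{Z}$. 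Push the complex measure $e^{2\pi i xt}\,d\nu(t)$ to $\mathbb{R}/\mathbb{Z}$; all its Fourier coefficients vanish, so it is the zero measure. Since $[0,1]\to\mathbb{R}/\mathbb{Z}$ is injective except for $0\sim 1$, this forces $\nu=\frac12(\delta_0+\delta_1)$ and $e^{2\pi i x}=-1$. Admissibility of the whole weak closure of $\{\mu_{>n}\}$ therefore reduces to showing that $\frac12(\delta_0+\delta_1)$ is not a weak limit of the $\mu_{>n}$. The paper gets this from the digit bound $B$ alone: either $\mu_{>n}$ is carried by $[0,1-\frac{1}{2(B+1)}]$, or it gives mass at least $\frac19$ to $[\frac13,1-\frac{1}{2(B+1)}]$. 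Then \cite[Theorem 3.5]{AFL2019} gives equi-positivity for $x\in[0,1]$, and the Lipschitz bound $|\hat\mu_{>n}(x)-\hat\mu_{>n}(y)|\leqslant 2\pi|x-y|$ (supports in $[0,1]$) gives $\omega_2$.

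Your head/tail split introduces a second gap that you do not resolve. The constants $K$ and $\epsilon_0$ come from compactness of the length-$N$ head family, while the tail bound via Lemma \ref{le2.9} needs $D(K+2)2^{-N}$ small. ``Enlarging $N$ accordingly'' replaces the head family, and hence changes $K$ and $\epsilon_0$; nothing in your argument keeps $K(N)2^{-N}$ small. The heads and their weak limits are finitely supported in $[0,1)$, so their admissibility is actually trivial: the characters $k\mapsto e^{2\pi i a k}$ for distinct $a\in[0,1)$ are linearly independent (a Vandermonde argument). That argument, however, only gives $|k|<3^N$, which is useless against the factor $2^{-N}$. So in your version the real difficulty is uniformity in $N$, not arithmetic. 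The remedy is to drop the split and run the compactness argument directly on $\{\mu_{>n}:\,n\geqslant 0\}$, as the paper does.
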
	

\begin{pf}
For any integer $n\geqslant1$, let $c_n=\max\{a\in D_n:\,n\geqslant1\}$. By {\bfseries (C1)} and \eqref{eq2.12-3}, we see that $q_n-1\leqslant c_n\leqslant p_n-1$ for any $n\geqslant1$. Hence, by  \eqref{eq2.12-3}, we get that the support of $\mu_{>n}$ satisfies
\begin{equation*}
	\sup(\mu_{>n})\subseteq\Big[0,\,\sum_{k=1}^{\infty}p_{n+1}^{-1}p_{n+2}^{-1}\cdots p_{n+k}^{-1}c_{n+k}\Big]\subseteq[0,\,1],\quad \forall\ n\geqslant0.
\end{equation*}
Thus $\Phi:=\{\mu_{>n}:\,n=0,\,1,\,2,\cdots\}$ is a family of probability measures on $[0,\,1]$.

Let $B:=\max\{c_n:\,n\geqslant1\}$. Since $d_n\geqslant1$ for any $n>0$ and the set $\cup_{n\geqslant1}D_n$ is bounded, we get that $2\leqslant B+1<\infty$.

If $p_{n+1}\geqslant2c_{n+1}+2$, then
\begin{equation*}
	\sum_{k=1}^{\infty}p_{n+1}^{-1}p_{n+2}^{-1}\cdots p_{n+k}^{-1}c_{n+k}\leqslant\frac{1}{2}\sum_{k=1}^{\infty}(c_{n+1}+1)^{-1}\cdots (c_{n+k}+1)^{-1}c_{n+k}\leqslant\frac{1}{2}<1-\frac{1}{2(B+1)}.
\end{equation*}

If $c_{n+1}<p_{n+1}<2c_{n+1}+2$. Note that $c_n\geqslant1$ and $p_n\geqslant2$ for all $n\geqslant1$, we have
\begin{equation*}
	\begin{split}
		\frac{1}{3}\leqslant p_{n+1}^{-1}c_{n+1}\leqslant& p_{n+1}^{-1}c_{n+1}+\sum_{k=3}^{\infty}p_{n+1}^{-1}p_{n+2}^{-1}\cdots p_{n+k}^{-1}c_{n+k}\\
		=&p_{n+1}^{-1}\Big(c_{n+1}+p_{n+2}^{-1}\sum_{k=3}^{\infty}p_{n+3}^{-1}\cdots p_{n+k}^{-1}c_{n+k}\Big)\\
		\leqslant&(c_{n+1}+1)^{-1}\Big(c_{n+1}+\frac{1}{p_{n+2}}\Big)\\
		\leqslant&(c_{n+1}+1)^{-1}\Big(c_{n+1}+\frac{1}{2}\Big)\\
		\leqslant&1-\frac{1}{2(B+1)}.
	\end{split}
\end{equation*}
Hence, by the fact that
\begin{equation*}
	\mu_{>n}\Big(\Big[p_{n+1}^{-1}c_{n+1},\, p_{n+1}^{-1}c_{n+1}+\sum_{k=3}^{\infty}p_{n+1}^{-1}p_{n+2}^{-1}\cdots p_{n+k}^{-1}c_{n+k}\Big]\Big)\geqslant\frac{1}{9},
\end{equation*}
we see that either $\mu_{>n}([0,\,1-\frac{1}{2(B+1)}])=1$ or $\mu_{>n}([\frac{1}{3},\,1-\frac{1}{2(B+1)}])\geqslant\frac{1}{9}$ for every $n\geqslant0$, which implies that any subsequence of $\Phi$ cannot converge to the probability measure $\frac{1}{2}(\delta_0+\delta_1)$, where $\delta_a$  denote the Dirac measure at the point $a$. Hence, \cite[Theorem 4.4]{AFL2019} shows that $\Phi$ is an admissible family. Furthermore, \cite[Theorem 3.5]{AFL2019} shows that $\Phi$ is an equi-positive family. Therefore, there exists $\varepsilon_2>0$ such that for all $x\in[0,\,1]$ and all $n\geqslant0$, there exists an integer $h_{x,n}\in\mathbb{Z}$ such that
\begin{equation}\label{eq5.38}
	|\hat\mu_{>n}(x+h_{x,n})|>\varepsilon_2.
\end{equation}	
	
Note that $|e^{i\xi}-1|\leqslant\xi$ for all $\xi\in\mathbb{R}$ and $\Phi$ is a family of probability measure on $[0,\,1]$. We have
\begin{equation*}
	\begin{split}
		|\hat\mu_{>n}(x)-\hat\mu_{>n}(y)|=\ &  \Big|\int\nolimits_{\mathbb{R}}( e^{2\pi ixt}-e^{2\pi iyt}) d\mu_{>n}(t)\Big|
		\leqslant\int\nolimits_{\mathbb{R}}\big|e^{2\pi i(x-y)t}-1\big|\, d\mu_{>n}(t)\\
		\leqslant\ & 2\pi |x-y|\int\nolimits_{\mathbb{R}} |t|\, d\mu_{>n}(t)
		\leqslant 2\pi C|x-y|,
	\end{split}
\end{equation*}
for any $n\geqslant0$ and $x,\,y\in\mathbb{R}$, where $C:=\int\nolimits_{\mathbb{R}} |t|\, d\mu_{>n}(t)<\infty$. Therefore, for any $\varepsilon>0$, taking $\delta=\frac{\varepsilon}{2\pi C}$ ensures that $|\hat\mu_{>n}(x)-\hat\mu_{>n}(y)|<\varepsilon$ for any $x,\,y\in\mathbb{R}$ satisfying $|x-y|<\delta$ and $n\geqslant1$. This means that $\hat\Phi:=\{\hat\mu_{>n}:\,n\geqslant1\}$ is equi-continuous. Hence, for the $\varepsilon_2$ in \eqref{eq5.38}, there exists $\omega_2>0$ such that for any $x\in\mathbb{R}$ and $n\geqslant0$, we have
\begin{equation*}
	|\hat\mu_{>n}(x+y+h_{x,n})-\hat\mu_{>n}(x+h_{x,n})|\leqslant\frac{1}{2}\varepsilon_2,\qquad\forall \, y\in[-\omega_2,\,\omega_2].
\end{equation*}
Combining this inequality and \eqref{eq5.38} implies
\begin{equation*}
	|\hat\mu_{>n}(x+y+h_{x,n})|\geqslant|\hat\mu_{>n}(x+h_{x,n})|-\frac{1}{2}\varepsilon_2\geqslant\frac{1}{2}\varepsilon_2,\qquad\forall \, y\in[-\omega_2,\,\omega_2].
\end{equation*}	
	
Finally, the fact that $\hat\mu_{>n}(0)=1$ for any $n\geqslant0$ implies that we may set $h_{x,n}=0$ when $x=0$. This completes the proof.
\end{pf}

For any integers $n_1$ and $n_2$ with $1\leqslant n_1<n_2$, we denote by $\mathcal{S}[n_1,\,n_2]$ the set of integers from $n_1+1$ to $n_2$, i.e.,
\begin{equation}\label{eq5.39}
	\mathcal{S}[n_1,\, n_2]:=\{i:\,n_1+1\leqslant i\leqslant n_2\}.
\end{equation}
We denote by $\mathcal{B}[n_1,\, n_2]$ the set of indices $i$ with $n_1<i\leqslant n_2$ such that $b_{q_i,\,i}>n_2$, i.e.,
\begin{equation*}
	\mathcal{B}[n_1,\, n_2]:=\{i\in\mathcal{S}[n_1,\, n_2]:\,b_{q_i,\,i}>n_2\}.
\end{equation*}
Moreover, let $H(n)$ represent the maximum value of $b_{q_i,i}$ over $\mathcal{S}[0,\, n]$, i.e.,
\begin{equation}\label{eq5.40}
	H(n):=\max\{b_{q_i,i}:\,1\leqslant i\leqslant n\},\qquad\forall n\geqslant1.
\end{equation}

\begin{prop}\label{prop5.16}
Assume that the pair $(\mathbb{P},\,\mathbb{D})$ satisfies \eqref{eq1.4}, \eqref{eq1.5} and \eqref{eq2.12-3}. Let $\mu_{\mathbb{P},\,\mathbb{D}}$ be the measure defined by \eqref{eq1.1}. Then, for any spectrum $\Lambda$ of $\bigast\limits_{i=n_1+1}^{n_2}\delta_{p_{1}^{-1}p_{2}^{-1}\cdots p_{i}^{-1}D_{i}}$ given in Theorem \ref{th5.11} \rm(\romannumeral3), one can find a subset $\{z_{\lambda}\}_{\lambda\in\Lambda}\subset\mathbb{Z}$ with $z_{0}=0$ so that
\begin{equation*}
	\Big|\hat\mu_{>H}\Big(y+\frac{\lambda}{p_1p_2\cdots p_H}+z_{\lambda}\Big)\Big|>\frac{1}{2}\varepsilon_2,\qquad\forall \, y\in[-\omega_2,\,\omega_2],\,\,\lambda\in\Lambda,
\end{equation*}
where $H:=H(n_2)$ and, $\varepsilon_2$ and $\omega_2$ are given in Proposition \ref{prop5.13}. Furthermore, the set $\{\lambda+p_1p_2\cdots p_Hz_{\lambda}:\,\lambda\in\Lambda\}\subset\sum_{i=n_1+1}^{n_2}(\{0\}\cup U_i)$ is also a spectrum of $\bigast\limits_{i=n_1+1}^{n_2}\delta_{p_{1}^{-1}p_{2}^{-1}\cdots p_{i}^{-1}D_{i}}$.	
\end{prop}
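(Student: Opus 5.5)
The plan is to combine Proposition \ref{prop5.13} with Theorem \ref{th5.11} \rm(\romannumeral4). Write $P_H:=p_1p_2\cdots p_H$ with $H=H(n_2)$, and let $\Lambda=\sum_{i=n_1+1}^{n_2}C_i$ be a spectrum as in Theorem \ref{th5.11} \rm(\romannumeral3), with $\mathcal{A}=\mathcal{S}[n_1,n_2]$.

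For each $\lambda\in\Lambda$, apply Proposition \ref{prop5.13} with $n=H$ and with $x=\lambda/P_H$ reduced modulo $1$. Concretely, write $\lambda/P_H=x_\lambda+m_\lambda$ with $x_\lambda\in[0,1)$ and $m_\lambda\in\mathbb{Z}$. Proposition \ref{prop5.13} gives $h_{x_\lambda,H}\in\mathbb{Z}$, and I set $z_\lambda:=h_{x_\lambda,H}-m_\lambda\in\mathbb{Z}$. Then
\[
\hat\mu_{>H}\Big(y+\frac{\lambda}{P_H}+z_\lambda\Big)=\hat\mu_{>H}(y+x_\lambda+h_{x_\lambda,H}),
\]
so the lower bound $\frac12\varepsilon_2$ holds for all $y\in[-\omega_2,\omega_2]$. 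For $\lambda=0$ we have $x_0=0$, $m_0=0$ and $h_{0,H}=0$, hence $z_0=0$. The first assertion thus follows directly. The only care needed is that Proposition \ref{prop5.13} is stated for all $x\in\mathbb{R}$, so the reduction modulo $1$ could even be skipped.

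For the second assertion, the plan is to invoke Theorem \ref{th5.11} \rm(\romannumeral4) with the shift $\lambda\mapsto\lambda+P_Hz_\lambda$. That result requires $P_Hz_\lambda\in p_1\cdots p_{n_\lambda}\mathbb{Z}$, where $n_\lambda=\max(\{b_{q_{j_\lambda},j_\lambda}\}\cup\mathcal{A})$. This in turn needs $n_\lambda\le H$.
\begin{itemize}
\item $\max\mathcal{A}=n_2\le b_{q_{n_2},n_2}\le H(n_2)$, using $b_{q_i,i}\ge l_{q_i,i}\ge i$.
\item $j_\lambda\in\mathcal{A}\subset\{1,\dots,n_2\}$, so $b_{q_{j_\lambda},j_\lambda}\le H(n_2)$ by the definition \eqref{eq5.40}.
\end{itemize}
Hence $p_1\cdots p_{n_\lambda}$ divides $P_H$, and therefore $P_Hz_\lambda\in p_1\cdots p_{n_\lambda}\mathbb{Z}$. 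Theorem \ref{th5.11} \rm(\romannumeral4) then shows that $\{\lambda+P_Hz_\lambda\}$ is again a spectrum of the finite convolution, and that it lies in $\sum_{i=n_1+1}^{n_2}(\{0\}\cup U_i)$. The condition $z_0=0$ matches the requirement there.

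The main (mild) obstacle is checking the index bound $n_\lambda\le H(n_2)$, which uses $b_{q_i,i}\ge i$, together with the fact that the periodicity argument of Theorem \ref{th5.11} \rm(\romannumeral4) only needs divisibility by $p_1\cdots p_j$ for $j\le n_2$. One also has to make sure the $z_\lambda$ chosen for the first assertion is the same one used in the second. It is, since the second assertion holds for arbitrary integers $z_\lambda$ with $z_0=0$.
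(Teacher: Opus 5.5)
Your proposal is correct and matches the paper's proof: you apply Proposition \ref{prop5.13} with $n=H$ and $x=\lambda/(p_1\cdots p_H)$ to obtain $z_\lambda$ with $z_0=0$, and then invoke Theorem \ref{th5.11} (iv), using that $p_1\cdots p_{n_\lambda}$ divides $p_1\cdots p_H$. Your check that $n_\lambda\le H(n_2)$ is what the argument needs; the paper instead asserts the equality $H=\max(\{b_{q_i,i}:i\in\mathcal{S}[n_1,n_2]\}\cup\mathcal{S}[n_1,n_2])$, which in general only holds as an inequality, and only that inequality is used.
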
	

\begin{pf}
Let $\Lambda$ be a spectrum of $\bigast\limits_{i=n_1+1}^{n_2}\delta_{p_{1}^{-1}p_{2}^{-1}\cdots p_{i}^{-1}D_{i}}$ as described in Theorem \ref{th5.11} \rm(\romannumeral3). By Proposition \ref{prop5.13}, we obtain that for any $\lambda\in\Lambda$, there exists $h_{\frac{\lambda}{p_1p_2\cdots p_H},\,H}$ with $h_{0,\,H}:=0$ such that
\begin{equation*}
	\Big|\hat\mu_{>H}\Big(y+\frac{\lambda}{p_1p_2\cdots p_H}+h_{\frac{\lambda}{p_1p_2\cdots p_H},\,H}\Big)\Big|\geqslant\frac{1}{2}\varepsilon_2,\qquad\forall \, y\in[-\omega_2,\,\omega_2],\quad\lambda\in\Lambda.
\end{equation*}
The first result of the proposition is established by taking $z_{\lambda}:=h_{\frac{\lambda}{p_1p_2\cdots p_H},\,H}$ for all $\lambda\in\Lambda$.

By the definition of $H$, we get $H=\max(\{b_{q_i,i}:\,i\in\mathcal{S}[n_1,\,n_2]\}\cup\mathcal{S}[n_1,\,n_2])$. Hence, it follows from Theorem \ref{th5.11} \rm(\romannumeral4) that the set $\{\lambda+p_1p_2\cdots p_Hz_{\lambda}:\,\lambda\in\Lambda\}\subset\sum_{i=n_1+1}^{n_2}(\{0\}\cup U_i)$ is also a spectrum of $\bigast\limits_{i=n_1+1}^{n_2}\delta_{p_{1}^{-1}p_{2}^{-1}\cdots p_{i}^{-1}D_{i}}$.
\end{pf}

%We denote by $\mathcal{A}:=\{i\in\mathcal{S}[n_1,\, n_2]:\,b_{q_i,\,i}>n_2\}$ the set of indices $i$ with $n_1<i\leqslant n_2$ such that $b_{q_i,\,i}>n_2$.

In order to prove the sufficiency of Theorem \ref{th1.3}, after choosing $C_j$ as Theorem \ref{th5.11} \rm(\romannumeral3), we need to prove that there exist infinite integers $\{n_j\}_{j=1}^n$ and $z_{\lambda}\in\mathbb{Z}$ with $z_{0}=0$ such that
\begin{equation*}
	\inf_{j\geqslant1}\Big\{\Big|\hat\mu_{>n_j}\Big(y+\frac{\lambda}{p_1p_2\cdots p_{n_j}}+z_{\lambda}\Big)\Big|:\,y\in[-\omega,\,\omega],\,\lambda\in\sum_{i=n_{j-1}+1}^{n_j}C_i\Big\}>0
\end{equation*}
for some constant $\omega>0$.

If the set $\{n\geqslant1:\,b_{q_j,j}\leqslant b_{q_n,n}~\mbox{for~all}~j\leqslant b_{q_n,n}\}$ is infinite, we can take $\{n_j\}_{j=1}^n$ to be this one, since $H(n_j)=n_j$ in this case. But the following Example \ref{e.p.5.3} shows that this set may be finite, so we also need to prove
\begin{equation*}
	\inf_{j\geqslant1}\Big\{\prod_{n=n_j+1}^{H(n_j)}\Big|\hat\delta_{p_{n_j+1}^{-1}p_{n_j+2}^{-1}\cdots p_n^{-1}D_n}\Big(y+\frac{\lambda}{p_1p_2\cdots p_{n_j}}+z_{\lambda}\Big)\Big|:\,y\in[-\omega,\,\omega],\,\lambda\in\sum_{i=n_{j-1}+1}^{n_j}C_i\Big\}>0.
\end{equation*}
This is the most complicated part of the proof of the sufficiency in Theorem \ref{th1.3}. We will address this by dividing the product into several parts and proving their uniform lower bounds in different cases. Specifically, Lemma \ref{le5.9} \rm(\romannumeral7) and \rm(\romannumeral8) show that $\#(X_q\cap\mathcal{S}[n_2,\,H(n_2)])<2G$ for some $q\in\{2,\ 3\}$. Let $X_q\cap\mathcal{S}[n_2,\,H(n_2)]=\{\ell_1,\ \ell_2,\ \cdots,\ \ell_r\}$, then $X_q\cap \mathcal{S}[n_2,\,H(n_2)]$ can be decomposed into at most $2G+1$ subsets of $X_{\widetilde{q}}$ with consecutive integers.

\begin{example}\label{e.p.5.3}
	Let $p_1=2^2$, $p_2=3^2$ and, $p_{4n+3}=3^2$, $p_{4n+4}=5^4$, $p_{4n+5}=2^2$ and $p_{4n+6}=5^2$ for all $n\geqslant0$. Further, assume that $D_{1}=\{0,\,1\}$, $D_{2}=\{0,\,1,\,2\}$ and, $D_{4n+3}=\{0,\,2,\,2\times2\}$, $D_{4n+5}=\{0,\,2\times3^4\}$, $D_{4n+5}=\{0,\,3\}$ and $D_{4n+6}=\{0,\,2\times3,\,2\times2\times3\}$ for all $n\geqslant0$. It is clear that $k_{2,1}=1$, $k_{3,2}=1$, $\nu_{3}(L_1)=0$, $\nu_{2}(L_2)=2$ and, $k_{3,4n+3}=2n+3$, $k_{2,4n+4}=2n$, $k_{2,4n+5}=2n+3$, $k_{3,4n+6}=2n+2$, $\nu_{2}(L_{4n+3})=2n+1$, $\nu_{3}(L_{4n+4})=2n$, $\nu_{3}(L_{4n+5})=2n+3$ $\nu_{3}(L_{4n+6})=2n+3$ for all $n\geqslant1$.
	
	For any integer $n\geqslant0$, it is easy to see that $4n+5<4n+6=b_{3,4n+3}$, but $b_{2,4n+5}=4(n+1)+4>4n+6=b_{3,4n+3}$. Similarly, we obtain $4n+3<4n+4=b_{2,4n+4}<4n+6=b_{3,4n+3}$, $4(n+1)+3<4(n+1)+4=b_{2,4n+5}<4(n+1)+6=b_{3,4(n+1)+3}$ and $4n+5<4n+6=b_{2,4n+6}<4(n+1)+4=b_{2,4n+5}$. This means that for any integer $n\geqslant2$, there exists an integer $m\leqslant b_{q_n,n}$ such that $b_{q_m,m}>b_{q_n,n}$. Hence, set $\{n\geqslant1:\,j\leqslant b_{q_n,n},\,b_{q_j,j}\leqslant b_{q_n,n}\}$ is finite.
\end{example}

The set $\{n\geqslant1:\,b_{q_j,j}\leqslant b_{q_n,n}~\mbox{for~every}~j\leqslant b_{q_n,n}\}$ is not infinite means the following assumption {\bfseries (C6)} holds:
\begin{enumerate}	
	\item[\bfseries (C6)] There exists an integer $N_4>0$ such that $H(n)>n$ for any $n\geqslant N_4$, where $H(n)$ is defined in \eqref{eq5.40}.
\end{enumerate}

For a real number $r$ and an integer $q\in\{2,\ 3\}$, we use $\left\Vert r\right\Vert_{\frac{1}{q}}$ to denote the distance from $r$ to the set $\frac{\mathbb{Z}\setminus q\mathbb{Z}}{q}$, i.e.
\begin{equation}\label{eq6.2-1}
	\left\Vert r\right\Vert_{\frac{1}{2}}=\min_{z\in\mathbb{Z}}\Big\{\Big|r-z-\frac{1}{2}\Big|\Big\}.
\end{equation}
\begin{equation}\label{eq6.2}
	\left\Vert r\right\Vert_{\frac{1}{3}}=\min_{z\in\mathbb{Z}}\Big\{\Big|r-z-\frac{1}{3}\Big|,\,\Big|r-z+\frac{1}{3}\Big|\Big\}.
\end{equation}
Since the set $\cup_{i=1}^{\infty}D_i$ is bounded, and for every $i>0$, the function $m_{d_i^{-1}D_i}$ is continuous and has period $1$. Hence
\begin{equation}\label{eq6.3}
	\varepsilon(a):=\inf\left\{m_{d_i^{-1}D_i}(x):\,i\ge1,\,\left\Vert x\right\Vert_{\frac{1}{q_i}}\geqslant a\right\}>0,\quad\forall\ a\in\Big(0,\,\frac{1}{3}\Big).
\end{equation}
Clearly, the function $\varepsilon(~\cdot~)$ is increasing.

The proof of the following lemma is straightforward, requiring only the formula for the sum of a geometric series. However, this lemma will be used repeatedly in what follows and highlights the crucial role of equation \eqref{eq1.4} in the proof of the sufficiency part of Theorem \ref{th1.3}.

%\begin{equation*}
%	\Big|\sum_{i\in B}\frac{d_{M+j}c_i}{p_1p_2\cdots p_{M+j}}\Big|\leqslant\Bigg\{\begin{array}{ll}
	%		\frac{\theta_q(d_{M+j})}{\theta_q(p_{M+1}p_{M+2}\cdots p_{M+j})}\cdot\frac{q}{q-1}\cdot q^{k_{q,i_1}-k_{q,M+j}-1},&\mbox{if}~M+j\in X_q,\\
	%		\frac{\theta_q(d_{M+j})}{\theta_q(p_{M+1}p_{M+2}\cdots p_{M+j})}\cdot\frac{q}{q-1}\cdot q^{k_{q,i_1}-\nu_{q}(L_{M+j})},&\mbox{if}~M+j\in X_{\widetilde{q}}
	%	\end{array}
%\end{equation*}

\begin{lemma}\label{le5.14}
Assume that the pair $(\mathbb{P},\,\mathbb{D})$ satisfies \eqref{eq1.4}, \eqref{eq1.5} and \eqref{eq2.12-3}. Let $q\in\{2,\,3\}$. Assume that $B\subset X_q$ is a finite nonempty set and $M\geqslant\max B$ is an integer. Let $\{c_i\}_{i\in B}$ be an integer sequence satisfying $|c_i|\leqslant q^{k_{q,i}}\theta_q(p_1p_2\cdots p_M)$ for every $i\in B$. Then, we have
\begin{equation*}
	\Big|\sum_{i\in B}\frac{c_i}{p_1p_2\cdots p_{M+j}}\Big|\leqslant
	\frac{1}{\theta_q(p_{M+1}p_{M+2}\cdots p_{M+j})}\cdot\frac{q}{q-1}\cdot q^{k_{q,i_1}-\nu_q(p_1p_2\cdots p_{M+j})},\qquad\forall\, j\geqslant0,
\end{equation*}
where $i_1\in B$ satisfies $k_{q,i_1}=\max\{k_{q,i}:\,i\in B\}$ and $\theta_q(p_{M+1}p_{M+2}\cdots p_{M+j}):=1$ if $j=0$.		
\end{lemma}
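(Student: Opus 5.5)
The plan is to bound the sum termwise by the triangle inequality and then use \eqref{eq1.4} to turn the result into a geometric series. Since $B\subset X_q$ and $(\mathbb{P},\,\mathbb{D})$ satisfies \eqref{eq1.4}, the integers $k_{q,i}$, $i\in B$, are pairwise distinct. Consequently, if $i_1$ is the index with the largest $k_{q,i}$, the remaining values are distinct integers strictly below $k_{q,i_1}$, and so
\begin{equation*}
	\sum_{i\in B}q^{k_{q,i}}\leqslant\sum_{t=0}^{\infty}q^{k_{q,i_1}-t}=\frac{q}{q-1}\,q^{k_{q,i_1}}.
\end{equation*}
This is the only place where \eqref{eq1.4} enters, and it explains the factor $\frac{q}{q-1}$.

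Next I would combine this with the hypothesis $|c_i|\leqslant q^{k_{q,i}}\theta_q(p_1p_2\cdots p_M)$:
\begin{equation*}
	\Big|\sum_{i\in B}\frac{c_i}{p_1p_2\cdots p_{M+j}}\Big|\leqslant\frac{\theta_q(p_1p_2\cdots p_M)}{|p_1p_2\cdots p_{M+j}|}\cdot\frac{q}{q-1}\,q^{k_{q,i_1}}.
\end{equation*}
It remains to rewrite the denominator. By the definition \eqref{eq5.7} of $\theta_q$, and the fact that $\theta_q$ is multiplicative because $\nu_q$ is additive, we have $p_1p_2\cdots p_{M+j}=q^{\nu_q(p_1\cdots p_{M+j})}\theta_q(p_1\cdots p_M)\,\theta_q(p_{M+1}\cdots p_{M+j})$. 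Here all $p_n>1$ by \textbf{(C1)}, so no absolute values are needed. The factor $\theta_q(p_1\cdots p_M)$ then cancels, which leaves exactly $\frac{1}{\theta_q(p_{M+1}\cdots p_{M+j})}\cdot\frac{q}{q-1}\cdot q^{k_{q,i_1}-\nu_q(p_1\cdots p_{M+j})}$. In the case $j=0$ the convention $\theta_q(\emptyset)=1$ makes the same computation go through unchanged.

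I do not expect a real obstacle, since the argument is just the geometric-series estimate. The only points needing care are that the distinctness of the $k_{q,i}$ is what justifies the geometric bound, and that the hypothesis $M\geqslant\max B$ is not actually used in the inequality beyond guaranteeing that the quantities are well defined. Note also that $k_{q,i}\geqslant0$ by \textbf{(C3)}, but negative exponents would not affect the bound in any case.
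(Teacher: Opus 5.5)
Your proposal is correct and matches the paper's proof: the triangle inequality, the bound $\sum_{i\in B}q^{k_{q,i}}\leqslant \frac{q^{k_{q,i_1}+1}}{q-1}$ (which comes from the distinctness of the $k_{q,i}$ in \eqref{eq1.4}), and the factorization $p_1\cdots p_{M+j}=q^{\nu_q(p_1\cdots p_{M+j})}\theta_q(p_1\cdots p_M)\theta_q(p_{M+1}\cdots p_{M+j})$. The only cosmetic difference is that you bound by the infinite series $\sum_{t\geqslant0}q^{k_{q,i_1}-t}$, so you do not need $k_{q,i}\geqslant0$ from \textbf{(C3)}, whereas the paper truncates the geometric sum at exponent $0$.
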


\begin{pf}
{\bfseries (C3)} shows $k_{q,i}\geqslant0$, \eqref{eq1.4} shows that $k_{q,i}\ne k_{q,j}$ for any $i\neq j\in X_q$. Hence
\begin{equation}\label{eq5.39-1}
	\sum_{i\in B}q^{k_{q,i}}\leqslant\sum_{l=0}^{k_{q,i_1}}q^{l}\leqslant\frac{q^{k_{q,i_1}+1}}{q-1}.
\end{equation}
Moreover, by the assumption $|c_i|\leqslant q^{k_{q,i}}\theta_q(p_1p_2\cdots p_M)$ for every $i\in B$, we get
\begin{equation*}
	\Big|\sum_{i\in B}\frac{c_i}{p_1p_2\cdots p_{M+j}}\Big|\leqslant\frac{1}{\theta_q(p_{M+1}p_{M+2}\cdots p_{M+j})}\cdot\sum_{i\in B}q^{k_{q,i}-\nu_q(p_1p_2\cdots p_{M+j})},\qquad\forall\, j\geqslant0.
\end{equation*}
Combining the above formula with \eqref{eq5.39-1} leads to the conclusion of the lemma.	
\end{pf}

%The following lemma illustrates another crucial role of equation \eqref{eq1.4} in the proof of the sufficiency part of Theorem \ref{th1.3}.

\begin{lemma}\label{le5.15}
	Assume that the pair $(\mathbb{P},\,\mathbb{D})$ satisfies \eqref{eq1.4}, \eqref{eq1.5} and \eqref{eq2.12-3}. Let $q\in\{2,\,3\}$. Suppose $B\subset X_q$ is a finite nonempty set and $M\geqslant1$ is an integer. Let $\{z_i\}_{i\in B}\subset(\mathbb{Z}\setminus q\mathbb{Z})$ be an integer sequence satisfying $z_i\geqslant M$ for all $i\in B$. If $\sum_{i\in B}\frac{d_{M+j}q^{k_{q,i}}\theta_q(p_1p_2\cdots p_{z_i})}{p_1p_2\cdots p_{M}}\not\in\mathbb{Z}$, then
	\begin{equation*}
		\left\{\begin{array}{ll}k_{q,i_2}<k_{q,M}~~\mbox{and}~~\Big\Vert\sum_{i\in B}\frac{d_{M+j}q^{k_{q,i}}\theta_q(p_1p_2\cdots p_{z_i})}{p_1p_2\cdots p_{M}}\Big\Vert_{\frac{1}{q}}\geqslant q^{k_{q,i_2}-k_{q,M}-1},&\mbox{if}~M\in X_q,\\
			k_{q,i_2}<\nu_{\widetilde{q}}(L_{M})~~\mbox{and}~~\Big\Vert\sum_{i\in B}\frac{d_{M+j}q^{k_{q,i}}\theta_q(p_1p_2\cdots p_{z_i})}{p_1p_2\cdots p_{M}}\Big\Vert_{\frac{1}{\widetilde{q}}}\geqslant \frac{1}{\widetilde{q}}\cdot q^{k_{q,i_2}-\nu_{q}(L_{M})},&\mbox{if}~M\in X_{\widetilde{q}},
		\end{array}\right.
	\end{equation*}
	where $i_2\in B$ satisfies $k_{q,i_2}=\min\{k_{q,i}:\,i\in B\}$.
\end{lemma}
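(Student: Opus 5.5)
The plan is to reduce the sum to a single fraction whose denominator is a pure power of $q$, and then read off both conclusions from its $q$-adic valuation. Throughout I read the digit factor as $d_M$. The statement writes $d_{M+j}$, but with $j$ fixed the argument is identical. I also read $z_i$ as indices with $z_i\geqslant M$.

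\textbf{Rewriting each term.} Since $z_i\geqslant M$, we have $\theta_q(p_1\cdots p_{z_i})=\theta_q(p_1\cdots p_M)\,\theta_q(p_{M+1}\cdots p_{z_i})$, and $p_1\cdots p_M=q^{\nu_q(p_1\cdots p_M)}\theta_q(p_1\cdots p_M)$. Writing $d_M=q^{\nu_q(d_M)}\theta_q(d_M)$, each summand becomes
\begin{equation*}
\theta_q(d_M)\,\theta_q(p_{M+1}\cdots p_{z_i})\,q^{e_i},\qquad e_i:=k_{q,i}-\nu_q(p_1\cdots p_M)+\nu_q(d_M).
\end{equation*}
Here $w_i:=\theta_q(d_M)\theta_q(p_{M+1}\cdots p_{z_i})$ is an integer prime to $q$.

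\textbf{Valuation of the sum.} By \eqref{eq1.4} the $k_{q,i}$, $i\in B$, are pairwise distinct, so the minimal exponent $e_{i_2}$ is attained only at $i_2$. Factoring out $q^{e_{i_2}}$ gives $S:=\sum_{i\in B}(\cdots)=q^{e_{i_2}}N$ with
\begin{equation*}
N=w_{i_2}+\sum_{i\neq i_2}q^{e_i-e_{i_2}}w_i\equiv w_{i_2}\not\equiv0 \pmod q .
\end{equation*}
If $e_{i_2}\geqslant0$ then $S\in\mathbb Z$, which is excluded. Hence $s:=-e_{i_2}\geqslant1$ and $S=N/q^s$ with $q\nmid N$.

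\textbf{Translating $s\geqslant1$ into the inequalities on $k_{q,i_2}$.}
\begin{itemize}
\item If $M\in X_q$, then \eqref{eq1.3-1} gives $\nu_q(p_1\cdots p_M)-\nu_q(d_M)=k_{q,M}+1$. So $e_{i_2}=k_{q,i_2}-k_{q,M}-1<0$, i.e.\ $k_{q,i_2}\leqslant k_{q,M}$.
\item If $M\in X_{\widetilde q}$, then $\nu_q(L_M)=\nu_q(p_1\cdots p_M)-\nu_q(d_M)$ by \eqref{eq1.4-1}. So $e_{i_2}=k_{q,i_2}-\nu_q(L_M)<0$, giving the stated strict inequality. (I read the statement's $\nu_{\widetilde q}(L_M)$ as $\nu_q(L_M)$, which is what the exponent requires.)
\end{itemize}

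\textbf{Distance estimates.} In the case $M\in X_{\widetilde q}$, every point of $\frac{\mathbb Z\setminus\widetilde q\mathbb Z}{\widetilde q}$ has the form $c/\widetilde q$. Then
\begin{equation*}
\Bigl|\tfrac{N}{q^s}-\tfrac{c}{\widetilde q}\Bigr|=\frac{|N\widetilde q-cq^s|}{q^s\widetilde q}.
\end{equation*}
The numerator is a nonzero integer, because $q\nmid N\widetilde q$ while $q\mid cq^s$. So the distance is at least $\frac1{\widetilde q}q^{-s}=\frac1{\widetilde q}q^{k_{q,i_2}-\nu_q(L_M)}$.

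In the case $M\in X_q$, the points are $c/q=cq^{s-1}/q^s$. If $s\geqslant2$, then $N\neq cq^{s-1}$ since $q\nmid N$, so the distance is at least $q^{-s}=q^{k_{q,i_2}-k_{q,M}-1}$. This also gives the strict inequality $k_{q,i_2}<k_{q,M}$.

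\textbf{Main obstacle: the boundary case $s=1$ when $M\in X_q$.} Here $k_{q,i_2}=k_{q,M}$. By \eqref{eq1.4} this forces $i_2=M$, i.e.\ $M\in B$, and then $N/q$ can lie exactly in $\frac{\mathbb Z\setminus q\mathbb Z}{q}$. Neither conclusion survives in that situation, so this case is the one that needs care. I would handle it by using the way the lemma is applied: there $M\notin B$, or the indices of $B$ have $k$-values strictly below $k_{q,M}$. With that, \eqref{eq1.4} excludes $s=1$, and the argument above completes the proof. If the lemma is used with $M\in B$, I would need to restrict the statement to $B\setminus\{M\}$.
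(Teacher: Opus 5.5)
Your argument is correct and is essentially the paper's proof. Each summand is $q^{e_i}w_i$ with $q\nmid w_i$; \eqref{eq1.4} makes the minimal exponent unique, so the sum lies in $q^{e_{i_2}}(\mathbb{Z}\setminus q\mathbb{Z})$; and the bounds on $\Vert\cdot\Vert_{\frac{1}{q}}$ and $\Vert\cdot\Vert_{\frac{1}{\widetilde{q}}}$ are read off from this $q$-power denominator. Your readings $d_M$ and $\nu_q(L_M)$ are the intended ones.

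You are also right that the case $s=1$ is a defect of the statement, not of your argument. The paper's closing appeal to \eqref{eq1.4} to get $k_{q,i_2}<k_{q,M}$ tacitly assumes $M\notin B$. For instance, $p_n=2$, $D_n=\{0,1\}$, $B=\{1\}$, $M=z_1=1$ gives the sum $\frac12$, and both conclusions fail. Your extra hypothesis is harmless, because in every use (Propositions \ref{prop6.10}, \ref{prop6.3}, \ref{prop6.7}) one has $M>\max B$.
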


\begin{pf}
Since $z_i\geqslant M$, we get $\frac{d_{M+j}q^{k_{q,i}}\theta_q(p_1p_2\cdots p_{z_i})}{p_1p_2\cdots p_{M}}\in q^{k_{q,i}-k_{q,M}-1}(\mathbb{Z}\backslash q\mathbb{Z})$ if $M\in X_q$ and $\frac{d_{M+j}q^{k_{q,i}}\theta_q(p_1p_2\cdots p_{z_i})}{p_1p_2\cdots p_{M}}\in q^{k_{q,i}-\nu_{q}(L_{M})}(\mathbb{Z}\backslash q\mathbb{Z})$ if $M\in X_{\widetilde{q}}$. If $B\backslash\{i_2\}$ is nonempty, \eqref{eq1.4} shows $k_{q,i}>k_{q,i_2}$ for all $i\in (B\backslash\{i_2\})$. It follows that
\begin{equation}\label{eq6.88-1}
	\left\{\begin{array}{ll}\sum_{i\in B}\frac{d_{M+j}q^{k_{q,i}}\theta_q(p_1p_2\cdots p_{z_i})}{p_1p_2\cdots p_{M}}\in q^{k_{q,i_2}-k_{q,M}-1}(\mathbb{Z}\backslash q\mathbb{Z}),&\mbox{if}~M\in X_q,\\
		\sum_{i\in B}\frac{d_{M+j}q^{k_{q,i}}\theta_q(p_1p_2\cdots p_{z_i})}{p_1p_2\cdots p_{M}}\in q^{k_{q,i_2}-\nu_{q}(L_{M})}(\mathbb{Z}\backslash q\mathbb{Z}),&\mbox{if}~M\in X_{\widetilde{q}}.
	\end{array}\right.	
\end{equation}
Hence, it follows from \eqref{eq1.4}, \eqref{eq6.2-1} and \eqref{eq6.2} that the lemma holds.	
\end{pf}

Since the sequence $\{d_n\}_{n=1}^{\infty}$ is bounded, there exists an integer $c_3>0$ and $c_4>0$ such that
\begin{equation}\label{eq6.24}
	2^{c_3}>3^{5G}D,
\end{equation}
where $D$ is defined as in \eqref{eq1.6-}. It is clear that $c_3>5G$.

\begin{prop}\label{prop6.10}
	Assume that the pair $(\mathbb{P},\,\mathbb{D})$ satisfies \eqref{eq1.4}, \eqref{eq1.5}, \eqref{eq2.12-3} and {\bfseries (C6)}. Let $n_1$ and $n_2$ be integers satisfying $1\leqslant n_1<n_2$. If $\sup\{\nu_{2}(p_{n}),\,\nu_{3}(p_{n}):\,n>n_2\}<c_3$, then for any integer $\mathcal{N}>n_2$, one can find a spectrum $\Lambda:=\Lambda(n_1,\,n_2,\,\mathcal{N})$ of $\bigast\limits_{i\in\mathcal{B}[n_1,\,n_2]}\delta_{p_1^{-1}p_2^{-1}\cdots p_i^{-1}D_i}$ satisfying Theorem \ref{th5.11} (\romannumeral3) such that
	\begin{equation*}
		\prod_{j=1}^{\mathcal{N}-n_2}\Big|m_{D_{n_2+j}}\Big(\frac{\lambda}{p_1p_2\cdots p_{n_2+j}}\Big)\Big|\geqslant\Big[\varepsilon\Big(\frac{1}{3^{2(\mathcal{N}-n_2+1)c_3}}\Big)\Big]^{\mathcal{N}-n_2},\qquad\forall \,\lambda\in\Lambda,
	\end{equation*}
	where $k_{q_n,n}$, $l_{q_n,n}$, $U_n$, $\mathcal{S}$, $H$  and $c_3$ are defined in \eqref{eq1.2}, \eqref{eq5.8}, \eqref{eq5.28}, \eqref{eq5.39}, \eqref{eq5.40} and \eqref{eq6.24}, respectively.		
\end{prop}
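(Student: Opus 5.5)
The plan is to build $\Lambda$ from the sets of Theorem \ref{th5.11} (\romannumeral3), taking $x_i\in U_i$ of the special form
\[
x_i:=q_i^{k_{q_i,i}}\theta_{q_i}(p_1p_2\cdots p_{z_i}),\qquad y_i:=-x_i\ \ (q_i=3),\qquad i\in\mathcal{B}[n_1,\,n_2],
\]
with $z_i\geqslant b_{q_i,i}$. Such $x_i$ lies in $U_i$ as long as $\theta_{q_i}(p_{b_{q_i,i}+1}\cdots p_{z_i})\notin q_i\mathbb{Z}$, which holds automatically because $\theta_{q_i}$ removes every $q_i$-factor. The free parameters $z_i$ (say $z_i=\max\{b_{q_i,i},\mathcal{N}\}$, or $z_i$ chosen just beyond $\mathcal N$) are then tuned so that the points $\lambda/(p_1\cdots p_{n_2+j})$, $\lambda\in\Lambda$, stay quantitatively away from the zero sets of the masks $m_{D_{n_2+j}}$ for $1\leqslant j\leqslant\mathcal{N}-n_2$. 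By \eqref{eq6.3} it is enough to show, for each such $j$ and each $\lambda\in\Lambda$, that either $d_{n_2+j}\lambda/(p_1\cdots p_{n_2+j})\in\mathbb{Z}$ (mask equal to $1$) or $\big\Vert d_{n_2+j}\lambda/(p_1\cdots p_{n_2+j})\big\Vert_{1/q_{n_2+j}}\geqslant 3^{-2(\mathcal N-n_2+1)c_3}$. Taking the product over $j$ and using monotonicity of $\varepsilon(\cdot)$ then gives the stated bound.

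To verify this, fix $j$ and set $M=n_2+j$. I write $\lambda=\lambda^{(2)}+\lambda^{(3)}$, where $\lambda^{(q)}$ collects the terms from $B_q:=\{i\in\mathcal{B}[n_1,n_2]\cap X_q:\ c_i\neq0\}$. I treat the part whose prime equals $q_M$ and the other part separately.

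\textbf{The $q_M$-part.} I apply Lemma \ref{le5.15} with $q=q_M$. Either that part is an integer after multiplying by $d_M/(p_1\cdots p_M)$, or its distance to $\frac{\mathbb{Z}\setminus q\mathbb{Z}}{q}$ is at least $q^{k_{q,i_2}-k_{q,M}-1}$.

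\textbf{The other part.} Here Lemma \ref{le5.15} gives a distance to the $\widetilde q$-lattice of at least $\frac1{\widetilde q}q^{k_{q,i_2}-\nu_q(L_M)}$. Once shifted to the $1/q_M$-norm, this part is a rational with denominator a power of $\widetilde{q}_M$ times bounded factors, so its fractional contribution is controlled through Lemma \ref{le5.14}.

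\textbf{Lower bounds on the exponent gaps.} The hypothesis $\nu_2(p_n),\nu_3(p_n)<c_3$ for $n>n_2$ gives $k_{q,M}-k_{q,i_2}\leqslant (M-n_2)c_3+G$, since $i_2\leqslant n_2$ and $k_{q,i_2}\geqslant \nu_q(p_1\cdots p_{n_2})-G$ by Lemma \ref{le5.9} (\romannumeral7),(\romannumeral8). Together with $2^{c_3}>3^{5G}D$ from \eqref{eq6.24}, the distance bounds are at least $3^{-2(\mathcal N-n_2+1)c_3}$ up to the factor $D$, which is absorbed.

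The main obstacle is the interaction between the two parts. The $q_M$-part may sit at a distance $\geqslant\delta$ from the bad set while the $\widetilde q_M$-part shifts it by an amount of order $1/\widetilde q$ that could land it back on the bad set. To prevent this, I would use the freedom in $z_i$ for $i\in B_{\widetilde q_M}$. Choosing $z_i\geqslant\mathcal N$ makes $\theta_{\widetilde q_M}(p_1\cdots p_{z_i})$ divisible by the $\widetilde q_M$-free part of $p_1\cdots p_M$. The cross term $d_M\lambda^{(\widetilde q_M)}/(p_1\cdots p_M)$ then has denominator a pure power of $\widetilde q_M$ (coprime to $q_M$), and I would aim to make it either an integer or a number whose residue modulo $1$ is combined with the $q_M$-part only through the Chinese remainder theorem. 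In that case the distance to $\frac{\mathbb{Z}\setminus q_M\mathbb{Z}}{q_M}$ is bounded below by the product of the two denominators' inverse, at least $3^{-2(\mathcal N-n_2+1)c_3}$.

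The indices $i\in\mathcal{S}[n_1,n_2]\setminus\mathcal{B}[n_1,n_2]$ have $b_{q_i,i}\leqslant n_2$, and condition (C6) together with Lemma \ref{le5.9} (\romannumeral6),(\romannumeral9) ensures that $\mathcal{B}$ captures exactly the indices whose $U_i$ interacts with the range $(n_2,\mathcal N]$. I expect the case analysis using \eqref{eq1.5} (which of $k_{q_M,M}\geqslant\nu(L_i)$ or the reverse holds) to be the delicate step. I would first try to settle it by comparing $\nu_{q_M}$ of both parts directly.
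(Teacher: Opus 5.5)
Your proposal follows essentially the same route as the paper. Both take $x_i=q_i^{k_{q_i,i}}\theta_{q_i}(p_1\cdots p_{\eta_i})$ with $\eta_i\geqslant\max\{b_{q_i,i},\mathcal N\}$, split $\lambda$ by prime, apply Lemma \ref{le5.15} to each part, and settle the mixed case by the coprime pure-prime-power-denominator argument, with exponent gaps bounded through Lemma \ref{le5.9}(viii) and the hypothesis on $\nu_q(p_n)$.

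Three corrections. First, your denominator argument already closes the mixed case, so no further case analysis via \eqref{eq1.5} is needed; \eqref{eq1.5} enters only through Lemma \ref{le5.9}(viii). Second, Lemma \ref{le5.14} plays no role here. Third, the gap bound should read $k_{q,M}-k_{q,i_2}<jc_3+2G$ rather than $jc_3+G$, because Lemma \ref{le5.9}(viii) only gives $\nu_q(p_{i_2+1}\cdots p_{n_2})\leqslant G-1$; this is harmless since $c_3>5G$.
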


\begin{pf}
	Let $q$ and $\widetilde{q}$ be two integers such that $\{q,\,\widetilde{q}\}=\{2,\,3\}$. By {\bfseries (C6)}, we get $H(n_2)>n_2$, so $\mathcal{B}[n_1,\,n_2]\neq\emptyset$.  Let $\mathcal{B}:=\mathcal{B}[n_1,\,n_2]$, and partition it into two sets by setting
	\begin{equation}\label{eq6.88}
		\mathcal{B}_{q}:=\mathcal{B}[n_1,\,n_2]\cap X_{q}~~\mbox{and}~~
		\mathcal{B}_{\widetilde{q}}:=\mathcal{B}[n_1,\,n_2]\cap X_{\widetilde{q}}.
	\end{equation}

	For any $j\in\mathcal{B}$, we get $n_2<b_{q_j,j}$, so $\nu_{q_j}(p_{j+1}p_{j+2}\cdots p_{n_2})\leqslant\nu_{q_j}(p_{j+1}p_{j+2}\cdots p_{b_{q_j,j}})$. It follows from Lemma \ref{le5.9} \rm(\romannumeral8) that
	\begin{equation}\label{eq6.93-1}
		\nu_{q_j}(p_{j+1}p_{j+2}\cdots p_{n_2})\leqslant G-1,\qquad\forall\, j\in\mathcal{B}.
	\end{equation}
	
	For all $i\in\mathcal{B}_{q}$ and $j\geqslant1$ with $n_2+j\in X_q$, we get
	By \eqref{eq1.3-1} and \eqref{eq6.93-1}, we get
	\begin{equation}\label{eq6.93}
		\begin{split}
			k_{q,n_2+j}-k_{q,i}=&\nu_{q}(p_{i+1}p_{i+2}\cdots p_{n_2})+\nu_{q}(p_{n_2+1}p_{n_2+2}\cdots p_{n_2+j})-\nu(qd_{n_2+j})+\nu(qd_{i})\\
			<&G-1+(n_2+j-n_2)c_3+G\\
			\leqslant&(j+1)c_3,
		\end{split}
	\end{equation}
	where the first equality comes from \eqref{eq1.3-1}, and the first inequality comes from \eqref{eq6.93-1} and the assumption $\nu_{q}(p_{n_2+j})<c_3$ for all $j>n_0+n_1$. Also, we have
	\begin{equation}\label{eq6.94}
		\nu_{q}(L_{n_2+j})-k_{q,i}=\nu_{q}(p_{i+1}p_{i+2}\cdots p_{n_2+j})-\nu_{q}(d_{n_2+j})+\nu_{q}(qd_{i})<(j+1)c_3
	\end{equation}
	for all $i\in\mathcal{B}_{q}$ and $j\geqslant1$ with $n_2+j\in X_{\widetilde{q}}$.

	Analogous to \eqref{eq6.93} and \eqref{eq6.94}, one can derive the corresponding relations
	\begin{equation}\label{eq6.96}
		k_{\widetilde{q},n_2+j}-k_{\widetilde{q},i}<(j+1)c_3
	\end{equation}
	for all $i\in\mathcal{B}_{\widetilde{q}}$ and $j\geqslant1$ with $n_2+j\in X_{\widetilde{q}}$, and
	\begin{equation}\label{eq6.97}
		\nu_{\widetilde{q}}(L_{n_2+j})-k_{\widetilde{q},i}<(j+1)c_3
	\end{equation}
	for all $i\in\mathcal{B}_{\widetilde{q}}$ and $j\geqslant1$ with $n_2+j\in X_q$.

	We first construct the spectrum $\Lambda$. For any $i\in\mathcal{B}$, choose $\eta_i\in\mathbb{Z}$ such that $\eta_i\geqslant\max\{b_{q_i,i},\,\mathcal{N}\}$. For any $i\in\mathcal{B}$, we take
	\begin{equation}\label{eq6.98}
		x_i=\begin{cases}
			q^{k_{q,i}}\theta_{q}(p_1p_2\cdots p_{\eta_i}), & i \in\mathcal{B}_{q}\\
			\widetilde{q}^{k_{\widetilde{q},i}}\theta_{\widetilde{q}}(p_1p_2\cdots p_{\eta_i}), & i\in\mathcal{B}_{\widetilde{q}}
		\end{cases}
	\end{equation}
	and
	\begin{equation}\label{eq6.99}
		C_i:=\begin{cases}
			\{0,\,x_i\},  & \mbox{if } i\in X_2\\
			\{0,\,x_i,\,-x_i\},  & \mbox{if } i\in X_3.
		\end{cases}
	\end{equation}
	Let $\Lambda:=\sum_{i\in\mathcal{B}}C_i$. By Theorem \ref{th5.11} \rm(\romannumeral4), we see that $\Lambda$ is a spectrum of $\bigast\limits_{i\in\mathcal{B}}\delta_{p_1^{-1}p_2^{-1}\cdots p_{i}^{-1}D_i}$.

	{\bf Claim}. For any $1\leqslant j\leqslant\mathcal{N}-n_2$. If $n_2+j\in X_{q}$, then
	\begin{equation}\label{eq6.104}
		\Big\Vert\frac{d_{n_2+j}\lambda}{p_1p_2\cdots p_{n_2+j}}\Big\Vert_{\frac{1}{q}}>\frac{1}{3^{2(j+1)c_3}},\qquad\forall\,\lambda\in\Lambda.
	\end{equation}
	
	{\bf Proof of Claim}. Choose $\lambda=\sum_{i\in\mathcal{B}}c_i\in\Lambda$, where $c_i\in C_i$. If $\sum_{i\in\mathcal{B}_{q}}\frac{d_{n_2+j}c_i}{p_1p_2\cdots p_{n_2+j}}\not\in\mathbb{Z}$, then the set $\{i\in\mathcal{B}_{q}:\,c_i\neq0\}$ is nonempty, so there exists $i_1\in\mathcal{B}_{q}$ such that
	\begin{equation*}
		k_{q,i_1}:=\min\{k_{q,i}:\,i\in\mathcal{B}_{q},\, c_i\neq0\}.
	\end{equation*}
	If $\sum_{i\in\mathcal{B}_{\widetilde{q}}}\frac{d_{n_2+j}c_i}{p_1p_2\cdots p_{n_2+j}}\not\in\mathbb{Z}$, then the set $\{i\in\mathcal{B}_{\widetilde{q}}:\,c_i\neq0\}$ is nonempty, so there exists $j_1\in\mathcal{B}_{\widetilde{q}}$ such that
	\begin{equation*}
		k_{\widetilde{q},j_1}:=\min\{k_{\widetilde{q},i}:\,i\in\mathcal{B}_{\widetilde{q}},\, c_i\neq0\}.
	\end{equation*}

	If $\sum_{i\in\mathcal{B}_{q}}\frac{d_{n_2+j}c_i}{p_1p_2\cdots p_{n_2+j}}\in\mathbb{Z}$ and $\sum_{i\in\mathcal{B}_{\widetilde{q}}}\frac{d_{n_2+j}c_i}{p_1p_2\cdots p_{n_2+j}}\in\mathbb{Z}$, we have
	\begin{equation*}
		\Big\Vert\sum_{i\in\mathcal{B}_{q}}\frac{d_{n_2+j}c_i}{p_1p_2\cdots p_{n_2+j}}+\sum_{i\in\mathcal{B}_{\widetilde{q}}}\frac{d_{n_2+j}c_i}{p_1p_2\cdots p_{n_2+j}}\Big\Vert_{\frac{1}{q}}=\frac{1}{q}.
	\end{equation*}

	If $\sum_{i\in\mathcal{B}_{q}}\frac{d_{n_2+j}c_i}{p_1p_2\cdots p_{n_2+j}}\not\in\mathbb{Z}$ and $\sum_{i\in\mathcal{B}_{\widetilde{q}}}\frac{d_{n_2+j}c_i}{p_1p_2\cdots p_{n_2+j}}\in\mathbb{Z}$, we have $k_{q,i_1}<k_{q,n_2+j}$ and
	\begin{equation*}
		\Big\Vert\sum_{i\in\mathcal{B}_{q}}\frac{d_{n_2+j}c_i}{p_1p_2\cdots p_{n_2+j}}+\sum_{i\in\mathcal{B}_{\widetilde{q}}}\frac{d_{n_2+j}c_i}{p_1p_2\cdots p_{n_2+j}}\Big\Vert_{\frac{1}{q}}=\Big\Vert\sum_{i\in\mathcal{B}_{q}}\frac{d_{n_2+j}c_i}{p_1p_2\cdots p_{n_2+j}}\Big\Vert_{\frac{1}{q}}\geqslant q^{k_{q,i_1}-k_{q,n_2+j}-1},
	\end{equation*}
	where the last inequality follows from Lemma \ref{le5.15}.

	If $\sum_{i\in\mathcal{B}_{q}}\frac{d_{n_2+j}c_i}{p_1p_2\cdots p_{n_2+j}}\in\mathbb{Z}$ and $\sum_{i\in\mathcal{B}_{\widetilde{q}}}\frac{d_{n_2+j}c_i}{p_1p_2\cdots p_{n_2+j}}\not\in\mathbb{Z}$, we have $k_{\widetilde{q},j_1}<\nu_{\widetilde{q}}(L_{n_2+j})$ and
	\begin{equation*}
		\Big\Vert\sum_{i\in\mathcal{B}_{q}}\frac{d_{n_2+j}c_i}{p_1p_2\cdots p_{n_2+j}}+\sum_{i\in\mathcal{B}_{\widetilde{q}}}\frac{d_{n_2+j}c_i}{p_1p_2\cdots p_{n_2+j}}\Big\Vert_{\frac{1}{q}}=\Big\Vert\sum_{i\in\mathcal{B}_{\widetilde{q}}}\frac{d_{n_2+j}c_i}{p_1p_2\cdots p_{n_2+j}}\Big\Vert_{\frac{1}{q}}\geqslant q^{-1}\cdot\widetilde{q}^{k_{\widetilde{q},j_1}-\nu_{\widetilde{q}}(L_{n_2+j})},
	\end{equation*}
	where the last inequality follows from Lemma \ref{le5.15}.

	If $\sum_{i\in\mathcal{B}_{q}}\frac{d_{n_2+j}c_i}{p_1p_2\cdots p_{n_2+j}}\not\in\mathbb{Z}$ and $\sum_{i\in\mathcal{B}_{\widetilde{q}}}\frac{d_{n_2+j}c_i}{p_1p_2\cdots p_{n_2+j}}\not\in\mathbb{Z}$, it follows from \eqref{eq6.88-1} that
	\begin{equation*}
		\sum_{i\in\mathcal{B}_{q}}\frac{d_{n_2+j}c_i}{p_1p_2\cdots p_{n_2+j}}\in \frac{\mathbb{Z}\setminus q\mathbb{Z}}{q^{k_{q,n_2+j}+1-k_{q,i_1}}}~~\mbox{and}~~\sum_{i\in\mathcal{B}_{\widetilde{q}}}\frac{d_{n_2+j}c_i}{p_1p_2\cdots p_{n_2+j}}\in\frac{\mathbb{Z}\setminus\widetilde{q}\mathbb{Z}}{\widetilde{q}^{\nu_{\widetilde{q}}(L_{n_2+j})-k_{\widetilde{q},j_1}}}.
	\end{equation*}
	Hence, we obtain that $k_{q,n_2+j}>k_{q,i_1}$, $\nu_{\widetilde{q}}(L_{n_2+j})>k_{\widetilde{q},j_1}$ and
	\begin{equation*}
		\sum_{i\in\mathcal{B}_{q}}\frac{d_{n_2+j}c_i}{p_1p_2\cdots p_{n_2+j}}+\sum_{i\in\mathcal{B}_{\widetilde{q}}}\frac{d_{n_2+j}c_i}{p_1p_2\cdots p_{n_2+j}}\in\frac{\widetilde{q}^{\nu_{\widetilde{q}}(L_{n_2+j})-k_{\widetilde{q},j_1}}(\mathbb{Z}\setminus q\mathbb{Z})+q^{k_{q,n_2+j}+1-k_{q,i_1}}(\mathbb{Z}\setminus\widetilde{q}\mathbb{Z})}{q^{k_{q,n_2+j}+1-k_{q,i_1}}\cdot \widetilde{q}^{\nu_{\widetilde{q}}(L_{n_2+j})-k_{\widetilde{q},j_1}}}.
	\end{equation*}
	Since $k_{q,n_2+j}>k_{q,i_1}$ and $\nu_{\widetilde{q}}(L_{n_2+j})>k_{\widetilde{q},j_1}$, we get
	\begin{equation*}
		\widetilde{q}^{\nu_{\widetilde{q}}(L_{n_2+j})-k_{\widetilde{q},j_1}}(\mathbb{Z}\setminus q\mathbb{Z})+q^{k_{q,n_2+j}+1-k_{q,i_1}}(\mathbb{Z}\setminus\widetilde{q}\mathbb{Z})\in[\mathbb{Z}\setminus(q\mathbb{Z}\cup\widetilde{q}\mathbb{Z})].
	\end{equation*}
	Thus, we have
	\begin{equation*}
		\Big\Vert\sum_{i\in\mathcal{B}_{q}}\frac{d_{n_2+j}c_i}{p_1p_2\cdots p_{n_2+j}}+\sum_{i\in\mathcal{B}_{\widetilde{q}}}\frac{d_{n_2+j}c_i}{p_1p_2\cdots p_{n_2+j}}\Big\Vert_{\frac{1}{q}}\geqslant\frac{1}{q^{k_{q,n_2+j}+1-k_{q,i_1}}\cdot \widetilde{q}^{\nu_{\widetilde{q}}(L_{n_2+j})-k_{\widetilde{q},j_1}}}.
	\end{equation*}
	From the inequalities arising in the four cases above, together with \eqref{eq6.93} and \eqref{eq6.97}, it follows that
	\begin{equation*}
		\Big\Vert\sum_{i\in\mathcal{B}_{q}}\frac{d_{n_2+j}c_i}{p_1p_2\cdots p_{n_2+j}}+\sum_{i\in\mathcal{B}_{\widetilde{q}}}\frac{d_{n_2+j}c_i}{p_1p_2\cdots p_{n_2+j}}\Big\Vert_{\frac{1}{q}}\geqslant\frac{1}{q^{(j+1)c_3}\cdot \widetilde{q}^{(j+1)c_3}}>\frac{1}{3^{2(j+1)c_3}}.
	\end{equation*}
	With this, the proof of the claim is finished.

	In view of the symmetry between $q$ and $\widetilde{q}$, formula \eqref{eq6.104} still holds under the assumption $n_2+j\in X_{\widetilde{q}}$. Namely, for any $1\leqslant j\leqslant\mathcal{N}-n_2$, if $n_2+j\in X_{\widetilde{q}}$, then
	\begin{equation}\label{eq6.100-1}
		\Big\Vert\frac{d_{n_2+j}\lambda}{p_1p_2\cdots p_{n_2+j}}\Big\Vert_{\frac{1}{\widetilde{q}}}>\frac{1}{3^{2(j+1)c_3}},\qquad\forall\,\lambda\in\Lambda.
	\end{equation}
	
	By \eqref{eq6.104} and \eqref{eq6.100-1}, we get $\big|m_{p_1^{-1}p_2^{-1}\cdots p_{n_2+j}^{-1}D_{n_2+j}}(\lambda)\big|\geqslant\varepsilon\big(\frac{1}{3^{2(j+1)c_3}}\big)$ for all $\lambda\in\Lambda$ and $1\leqslant j\leqslant\mathcal{N}-n_2$. Hence, we have
	\begin{equation*}
		\prod_{j=1}^{\mathcal{N}-n_2}\Big|m_{D_{n_2+j}}\Big(\frac{\lambda}{p_1p_2\cdots p_{n_2+j}}\Big)\Big|\geqslant\Big[\varepsilon\Big(\frac{1}{3^{2(\mathcal{N}-n_2+1)c_3}}\Big)\Big]^{\mathcal{N}-n_2},\qquad\forall \,\lambda\in\Lambda
	\end{equation*}
	by noting that $\varepsilon(x)$ is an non-decreasing function of $x$. This proves our lemma.
\end{pf}

For any integer $n\geqslant0$ and real number $\delta>0$, let
\begin{equation}\label{eq6.4}
	\widetilde{\Lambda}[n;~\delta]:=\Big\{z\in\mathbb{Z}:\,\Big|\frac{z}{p_1p_2\cdots p_{n+1}}\Big|\leqslant\delta\Big\}.
\end{equation}

\begin{prop}\label{prop6.3}
	Assume that the pair $(\mathbb{P},\,\mathbb{D})$ satisfies \eqref{eq1.4}, \eqref{eq1.5}, \eqref{eq2.12-3} and {\bfseries (C6)}. Let $n_1$ and $n_2$ be integers satisfying $1\leqslant H(n_1)\leqslant n_2$. Then, there exists $i'\in\mathcal{S}[n_1,\,n_2]$ satisfying $b_{q_{i'},i'}=H(n_2)$, which implies that the set $\mathcal{B}[n_1;~n_2]\cap X_{q_{i'}}$ is nonempty. Furthermore, suppose that $M_1$ and $M_2$ are integers satisfying $n_2\leqslant M_1<M_2\leqslant H(n_2)$. For any nonempty subset $\mathfrak{B}\subset[\mathcal{B}[n_1;~n_2]\cap X_{q_{i'}}]$ and any integer $i\in\mathfrak{B}$, define
	\begin{equation}\label{eq6.5}
		\Lambda_1:=
		\begin{cases}
			\sum_{i\in\mathfrak{B}}\left\{0,\,2^{k_{2,i}}\theta_{2}(p_1p_2\cdots p_{\sigma_i})\right\},&\mbox{if}~q=2,\\
			\sum_{i\in\mathfrak{B}}\left\{0,\,3^{k_{3,i}}\theta_{3}(p_1p_2\cdots p_{\sigma_i}),\,-3^{k_{3,i}}\theta_{3}(p_1p_2\cdots p_{\sigma_i})\right\},&\mbox{if}~q=3,
		\end{cases}
	\end{equation}
	where $\sigma_i\geqslant M_2$ for $i\in\mathfrak{B}$. Moreover, choose $s_1\in\mathfrak{B}$ such that $k_{q_{i'},s_1}=\min\{k_{q_{i'},i}:\,i\in\mathfrak{B}\}$. Then we have
	\begin{equation*}
		\prod_{j=1}^{M_2-M_1}\Big|m_{D_{M_1+j}}\Big(\frac{\lambda}{p_1p_2\cdots p_{M_1+j}}\Big)\Big|\geqslant\varepsilon_1\cdot\Big[\varepsilon\Big(\frac{1}{\widetilde{q}\cdot q^{3G}}\Big)\Big]^{2G+\#\big\{j\in\mathbb{Z}:\,1\leqslant j\leqslant M_2-M_1,\,k_{q,s_1}<\nu_{q_{i'}}(L_{M_1+j})\big\}}
	\end{equation*}
	for all $\lambda\in\Lambda_1+\widetilde{\Lambda}\big[M_1;~\frac{1}{D\cdot  q^{3G}}\big]$, where $k_{q_n,n}$, $l_{q_n,n}$, $\varepsilon_1$, $\mathcal{S}[n_1,\,n_2]$, $H(n_2)$ and $\varepsilon(\cdot)$ are defined in \eqref{eq1.2}, \eqref{eq5.8}, Lemma \ref{le2.9}, \eqref{eq5.39}, \eqref{eq5.40} and \eqref{eq6.3}, respectively.	
\end{prop}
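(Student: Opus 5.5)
The first assertion is a pigeonhole argument. By {\bfseries (C6)} (taking $n_2$ large enough that $H(n_2)>n_2$), the maximum defining $H(n_2)$ in \eqref{eq5.40} is attained at some $i'\leqslant n_2$. For every $i\leqslant n_1$ we have $b_{q_i,i}\leqslant H(n_1)\leqslant n_2<H(n_2)$. Hence the maximizer satisfies $i'\in\mathcal{S}[n_1,\,n_2]$ and $b_{q_{i'},i'}=H(n_2)>n_2$. This gives $i'\in\mathcal{B}[n_1;\,n_2]\cap X_{q_{i'}}$.

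From now on write $q=q_{i'}$. Fix $\lambda=\lambda_1+z$ with $\lambda_1=\sum_{i\in\mathfrak{B}}c_i\in\Lambda_1$ and $z\in\widetilde{\Lambda}[M_1;\,\frac{1}{Dq^{3G}}]$. By \eqref{eq2.2}, the $j$-th factor of the product equals
\[
\Big|m_{d_{M_1+j}^{-1}D_{M_1+j}}\big(u_j+v_j\big)\Big|,\qquad u_j=\frac{d_{M_1+j}\lambda_1}{p_1\cdots p_{M_1+j}},\quad v_j=\frac{d_{M_1+j}z}{p_1\cdots p_{M_1+j}}.
\]
The perturbation is small: $|v_j|\leqslant\frac{1}{q^{3G}}(p_{M_1+2}\cdots p_{M_1+j})^{-1}\leqslant q^{-3G}2^{-(j-1)}$. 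So $|v_j|\leqslant\frac16$ and $\sum_j|v_j|\leqslant1$.

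I split the indices $1\leqslant j\leqslant M_2-M_1$ into two classes.

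\emph{Class (a): $u_j\in\mathbb{Z}$.} Here the factor equals $|m_{d^{-1}D}(v_j)|$. Applying Lemma \ref{le2.9} (with $E_n=d_n^{-1}D_n$) to the whole family $\{v_j\}$ bounds the product over class (a) from below by $\varepsilon_1$.

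\emph{Class (b): $u_j\notin\mathbb{Z}$.} Since $\sigma_i\geqslant M_2\geqslant M_1+j$, Lemma \ref{le5.15} applies with $B=\{i\in\mathfrak{B}:c_i\neq0\}$ and minimal index $i_2$, where $k_{q,i_2}\geqslant k_{q,s_1}$. There are two cases.
\begin{itemize}
\item If $M_1+j\in X_q$, then $k_{q,i_2}<k_{q,M_1+j}$ and $\Vert u_j\Vert_{\frac1q}\geqslant q^{k_{q,i_2}-k_{q,M_1+j}-1}$.
\item If $M_1+j\in X_{\widetilde q}$, then $k_{q,i_2}<\nu_q(L_{M_1+j})$ and $\Vert u_j\Vert_{\frac1{\widetilde q}}\geqslant\frac1{\widetilde q}q^{k_{q,i_2}-\nu_q(L_{M_1+j})}$.
\end{itemize}
In either case the index $j$ lies in the set $\{j:k_{q,s_1}<k_{q,M_1+j}\}$ or in $\{j:k_{q,s_1}<\nu_q(L_{M_1+j})\}$.

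To make these lower bounds uniform I will show that the exponent gaps are at most about $2G$. Every $i\in\mathfrak{B}$ satisfies $i\leqslant n_2\leqslant M_1$. Moreover $M_1+j\leqslant H(n_2)=b_{q,i'}$. Lemma \ref{le5.9}(viii), applied to $i'$ and combined with \eqref{eq1.3-1} and \eqref{eq5.6}, should give $k_{q,M_1+j}-k_{q,i_2}\leqslant 2G-1$, and similarly for $\nu_q(L_{M_1+j})-k_{q,i_2}$. Transferring the bound from $i'$ to the relevant $i_2\in\mathfrak{B}$ also requires Lemma \ref{le5.9}(viii) for each $i\in\mathfrak{B}$, using $b_{q,i}>n_2$ and possibly Lemma \ref{le5.9}(v).

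With the gaps bounded, $\Vert u_j\Vert\geqslant\frac{1}{\widetilde q\,q^{2G}}$. Subtracting $|v_j|\leqslant q^{-3G}$ leaves a distance of at least $\frac{1}{\widetilde q\,q^{3G}}$, so by \eqref{eq6.3} each class-(b) factor is at least $\varepsilon(\frac1{\widetilde q q^{3G}})$.

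For the count, consider the $X_q$-indices of class (b). The values $k_{q,M_1+j}$ are pairwise distinct by \eqref{eq1.4} and lie in a window of length at most $2G$ above $k_{q,s_1}$, so there are at most $2G$ such indices. The $X_{\widetilde q}$-indices of class (b) are counted by $\#\{j:k_{q,s_1}<\nu_q(L_{M_1+j})\}$. Multiplying the class-(a) and class-(b) bounds gives the stated estimate.

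The main obstacle is the uniform gap bound in the previous step, namely controlling $\nu_q(p_{i+1}\cdots p_{M_1+j})$ for every $i\in\mathfrak{B}$ and not just for $i'$. Lemma \ref{le5.9}(viii) only controls this quantity up to $b_{q,i}$, while we need it up to $H(n_2)$. If comparing $b_{q,i}$ with $b_{q,i'}$ through Lemma \ref{le5.9}(v),(vi) fails, my fallback is to count through Lemma \ref{le5.9}(vii): bound the number of $X_q$-indices in $(b_{q,i},\,H(n_2)]$ using $\nu_q(p_{i'+1}\cdots p_{H(n_2)})\leqslant G-1$.
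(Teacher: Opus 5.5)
Your architecture matches the paper's: the pigeonhole choice of $i'$, the split $\lambda=\lambda_1+z$, Lemma \ref{le5.15} on the indices with $u_j\notin\mathbb{Z}$, and Lemma \ref{le2.9} on those with $u_j\in\mathbb{Z}$. You also send the integer-$u_j$ indices in $X_q$ to Lemma \ref{le2.9}, whereas the paper bounds them via $\Vert v_j\Vert_{\frac1q}\geqslant\frac1q-\frac1{q^{3G}}$; both are fine.

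The genuine gap is the step you yourself flag. The uniform bound on $\nu_q(p_{i+1}\cdots p_{M_1+j})$ for every $i\in\mathfrak{B}$ is never established, and the tools you propose do not give it. Lemma \ref{le5.9}(v),(vi) only compare $b$-values, and $b_{q,i}\leqslant H(n_2)=b_{q,i'}$ is automatic anyway. Your fallback counts indices instead of bounding a valuation. The missing observation is to cut the range at $n_2$:
\begin{itemize}
\item For $i\in\mathfrak{B}$ we have $i\leqslant n_2<b_{q,i}$, so Lemma \ref{le5.9}(viii) for $i$ gives $\nu_q(p_{i+1}\cdots p_{n_2})\leqslant G-1$.
\item Since $i'\leqslant n_2$ and $b_{q,i'}=H(n_2)$, Lemma \ref{le5.9}(viii) for $i'$ gives $\nu_q(p_{n_2+1}\cdots p_{H(n_2)})\leqslant G-1$.
\end{itemize}
Hence $\nu_q(p_{i+1}\cdots p_{H(n_2)})\leqslant 2G-2$ for every $i\in\mathfrak{B}$.

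With (viii) used as stated, this gives $k_{q,M_1+j}-k_{q,i}\leqslant 3G-3$ and $\nu_q(L_{M_1+j})-k_{q,i}\leqslant 3G-2$, not the $2G-1$ you assert. So your bound $\frac{1}{\widetilde q\,q^{2G}}$ becomes $\frac{1}{\widetilde q\,q^{3G-2}}$. That still suffices for the distance estimate: subtracting $|v_j|\leqslant q^{-3G}$ leaves at least $\frac{q^2-\widetilde q}{\widetilde q\,q^{3G}}\geqslant\frac{1}{\widetilde q\,q^{3G}}$.

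What breaks is your count of the $X_q$-indices. Pairwise distinct values $k_{q,M_1+j}$ in a window of width $3G-3$ above $k_{q,s_1}$ may number more than $2G$ once $G\geqslant4$. Count instead the $X_q$-indices in $(n_2,H(n_2)]$ directly: $\nu_q(p_{n_2+1}\cdots p_{H(n_2)})\leqslant G-1$ together with Lemma \ref{le5.9}(vii) shows there are fewer than $2G$ of them, which is the paper's \eqref{eq6.8}.

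A $2G$-type gap, and hence your window count, can be rescued only by reopening the proofs of Lemma \ref{le5.9}(ii),(iii). Those proofs actually yield $\nu_q(p_{n+1}\cdots p_{b_{q,n}})+\nu_q(d_n)\leqslant G-2$ whenever $n<b_{q,n}$.
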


\begin{pf}
	By {\bfseries (C6)}, we get $n_2<H(n_2)$, so $H(n_1)<H(n_2)$. Hence, we can find an integer $i'$ with $n_1<i'\leqslant n_2$ such that $b_{q_{i'},i'}=H(n_2)$. Since $n_2<H(n_2)$, we get $n_2<b_{q_{i'},i'}$, so $i'\in(\mathcal{B}[n_1;~n_2]\cap X_{q_{i'}})$. Therefore $\mathcal{B}[n_1;~n_2]\cap X_{q_{i'}}\neq\emptyset$. Let
	\begin{equation*}
		q:=q_{i'},~~\widetilde{q}:=\widetilde{q}_{i'}~~\mbox{and}~~H:=H(n_2).
	\end{equation*}
	Thus $\nu_{q}(p_{n_2+1}p_{n_2+2}\cdots p_H)\leqslant\nu_{q}(p_{i'+1}p_{i'+2}\cdots p_{b_{q,i'}})$. By Lemma \ref{le5.9} \rm(\romannumeral8), we have
	\begin{equation}\label{eq6.7}
		\nu_{q}(p_{n_2+1}p_{n_2+2}\cdots p_H)\leqslant G-1.
	\end{equation}
	Combining the above with Lemma \ref{le5.9} \rm(\romannumeral7), we have
	\begin{equation}\label{eq6.8}
		\#\{i\in X_{q}:\,n_2<i\leqslant H\}<2G.
	\end{equation}
	Moreover, the definition of $\mathcal{B}[n_1,\, n_2]$ shows $n_2<b_{q,\,i}$ for all $i\in\mathfrak{B}$. Hence, we get $\nu_{q}(p_{i+1}p_{i+2}\cdots p_{n_2})\leqslant\nu_{q}(p_{i+1}p_{i+2}\cdots p_{b_{q,i}})$ for all $i\in\mathfrak{B}$. Applying By Lemma \ref{le5.9} \rm(\romannumeral8) again, we get
	\begin{equation}\label{eq6.9-2}
		\nu_{q}(p_{i+1}p_{i+2}\cdots p_{n_2})\leqslant G-1,\qquad\forall\,i\in\mathfrak{B}.
	\end{equation}
	Combining the above with \eqref{eq6.7}, we get
	\begin{equation}\label{eq6.9-1}
		\nu_{q}(p_{i+1}p_{i+2}\cdots p_{H})\leqslant 2(G-1),\qquad\forall\,i\in\mathfrak{B}.
	\end{equation}

	For every $i\in\mathfrak{B}$ and $1\leqslant j\leqslant H-M_1$, if $M_1+j\in X_q$, then
	\begin{equation}\label{eq6.9}
		\begin{split}
			k_{q,M_1+j}-k_{q,i}=&\nu_{q}(p_{i+1}p_{i+2}\cdots p_{M_1+j})-1-\nu_{q}(d_{M_1+j})+\nu_{q}(q d_{i})\\
			\leqslant&\nu_{q}(p_{i+1}p_{i+2}\cdots p_{H})-1+G \\
			\leqslant&3(G-3),
		\end{split}
	\end{equation}
	where the first inequality holds since $\nu_{q}(q d_{i})\leqslant G$, and the last inequality follows from \eqref{eq6.9-1}. If $M_1+j\in X_{\widetilde{q}}$, similar to \eqref{eq6.9}, we have
	\begin{equation}\label{eq6.10}
		\begin{split}
			\nu_{q}(L_{M_1+j})-k_{q,i}=&\nu_{q}(p_{i+1}p_{i+2}\cdots p_{M_1+j})-\nu_{q}(d_{M_1+j})+\nu_{q}(q d_{s_1})\\
			\leqslant&3G-2.
		\end{split}
	\end{equation}

	Every element $\lambda\in\Lambda_1+\widetilde{\Lambda}\big[M_1;~\frac{1}{D\cdot q^{3G}}\big]$, can be decomposed as $\lambda=\lambda_1+\lambda_2$, with $\lambda_1\in\Lambda_1$ and  $\lambda_2\in\widetilde{\Lambda}\big[M_1;~\frac{1}{D\cdot q^{3G}}\big]$. Furthermore, $\lambda_1$ can be written as $\lambda_1=\sum_{i\in\mathfrak{B}}c_i$, where $c_i\in\Bigg\{\begin{array}{ll}
		\{0,\,2^{k_{2,i}}\theta_{2}(p_1p_2\cdots p_{\sigma_i})\},&\mbox{if}~q=2,\\
		\{0,\,3^{k_{3,i}}\theta_{3}(p_1p_2\cdot p_{\sigma_i}),\,-3^{k_{3,i}}\theta_{3}(p_1p_2\cdots p_{\sigma_i})\},&\mbox{if}~q=3.
	\end{array}$

	By the definition of $\widetilde{\Lambda}\big[M_1;~\frac{1}{D\cdot q^{3G}}\big]$, we have
	\begin{equation}\label{eq6.14}
		\Big|\frac{d_{M_1+j}\lambda_2}{p_1p_2\cdots p_{M_1+j}}\Big|\leqslant\Big|\frac{d_{M_1+1}\lambda_2}{p_1p_2\cdots p_{M_1+1}}\Big|\leqslant\frac{d_{M_1+j}}{D\cdot q^{3G}}\leqslant\frac{1}{q^{3G}},\qquad\forall\,j\geqslant1.
	\end{equation}	
	
	{\bf Claim I}. For any $1\leqslant j\leqslant M_2-M_1$ with $M_1+j\in X_{q}$, we have
	\begin{equation}\label{eq6.13}
		\Big\Vert\frac{d_{M_1+j}\lambda}{p_1p_2\cdots p_{M_1+j}}\Big\Vert_{\frac{1}{q}}\geqslant\frac{1}{q^{3G}}.
	\end{equation}
	
	{\bf Proof of Claim I}. Fix $1\leqslant j\leqslant M_2-M_1$ satisfying $M_1+j\in X_q$.
	
	{\bf Case I-I}. $\frac{d_{M_1+j}\lambda_1}{p_1p_2\cdots p_{M_1+j}}\in\mathbb{Z}$.

	The assumption of Case I-I shows
	\begin{equation*}
		\Big\Vert\frac{d_{M_1+j}\lambda}{p_1p_2\cdots p_{M_1+j}}\Big\Vert_{\frac{1}{q}}=\Big\Vert\frac{d_{M_1+j}\lambda_1}{p_1p_2\cdots p_{M_1+j}}+\frac{d_{M_1+j}\lambda_2}{p_1p_2\cdots p_{M_1+j}}\Big\Vert_{\frac{1}{q}}=\Big\Vert\frac{d_{M_1+j}\lambda_2}{p_1p_2\cdots p_{M_1+j}}\Big\Vert_{\frac{1}{q}}\geqslant\frac{1}{q}-\frac{1}{q^{3G}}.
	\end{equation*}
	Since $G\geqslant1$, we see that Claim I is therefore established for Case I-I.

	{\bf Case I-II}. $\frac{d_{M_1+j}\lambda_1}{p_1p_2\cdots p_{M_1+j}}\not\in\mathbb{Z}$.
	
	Since $\lambda_1=\sum_{i\in\mathfrak{B}}c_i$ and $\frac{d_{M_1+j}\lambda_1}{p_1p_2\cdots p_{M_1+j}}\not\in\mathbb{Z}$, there exists $s_2\in\mathfrak{B}$ such that
	\begin{equation*}
		k_{q,s_2}=\min\{k_{q,i}:\,c_i\ne0,\,i\in\mathfrak{B}\}.
	\end{equation*}
	By Lemma \eqref{le5.15} and the assumption $\sigma_i\geqslant M_2~(i\in\mathfrak{B})$, we have $k_{q,s_2}<k_{q,M_1+j}$ and
	\begin{equation}\label{eq6.15}
		\Big\Vert\frac{d_{M_1+j}\lambda_1}{p_1p_2\cdots p_{M_1+j}}\Big\Vert_{\frac{1}{q}}=\Big\Vert\mathop{\sum}\limits_{\substack{i\in\mathfrak{B}\\ c_i\ne0}}\frac{d_{M_1+j}c_i}{p_1\cdots p_{M_1+j}}\Big\Vert_{\frac{1}{q}}\geqslant q^{k_{q,s_2}-k_{q,M_1+j}-1}.
	\end{equation}	
	By \eqref{eq6.9} and \eqref{eq6.15}, we get
	\begin{equation*}
		\Big\Vert\frac{d_{M_1+j}\lambda_1}{p_1p_2\cdots p_{M_1+j}}\Big\Vert_{\frac{1}{q}}\geqslant\frac{1}{q^{3G-2}}.
	\end{equation*}
	Combining the above with \eqref{eq6.14}, we have $\big|\frac{d_{M_1+j}\lambda_2}{p_1p_2\cdots p_{M_1+j}}\big|<\big\Vert\frac{d_{M_1+j}\lambda_1}{p_1p_2\cdots p_{M_1+j}}\big\Vert_{\frac{1}{q}}$. Hence
	\begin{equation*}
		\Big\Vert\frac{d_{M_1+j}\lambda}{p_1p_2\cdots p_{M_1+j}}\Big\Vert_{\frac{1}{q}}\geqslant\Big\Vert\frac{d_{M_1+j}\lambda_1}{p_1p_2\cdots p_{M_1+j}}\Big\Vert_{\frac{1}{q}}-\Big|\frac{d_{M_1+j}\lambda_2}{p_1p_2\cdots p_{M_1+j}}\Big|\geqslant\frac{q^2-1}{q^{3G}}.
	\end{equation*}
	This proves the Claim I.

	By the definition of $\varepsilon(\cdot)$ and \eqref{eq6.13}, we have $\big|m_{d_{M_1+j}^{-1}D_{M_1+j}}\big(\frac{d_{M_1+j}\lambda}{p_1p_2\cdots p_{M_1+j}}\big)\big|\geqslant\varepsilon\big(\frac{1}{q^{3G}}\big)$ for all $1\leqslant j\leqslant M_2-M_1$ with $M_1+j\in X_q$. Hence, by \eqref{eq6.8}, we have
	\begin{equation}\label{eq6.16}
		\mathop{\prod}\limits_{\substack{1\leqslant j\leqslant M_2-M_1\\ M_1+j\in X_q}}\Big|m_{D_{M_1+j}}\Big(\frac{\lambda}{p_1p_2\cdots p_{M_1+j}}\Big)\Big|>\Big[\varepsilon\Big(\frac{1}{q^{3G}}\Big)\Big]^{2G}.
	\end{equation}

	{\bf Claim II}. For any $1\leqslant j\leqslant M_2-M_1$ with $M_1+j\in X_{\widetilde{q}}$, if $\frac{d_{M_1+j}\lambda_1}{p_1\cdots p_{M_1+j}}\not\in\mathbb{Z}$, then
	\begin{equation}\label{eq6.17}
		\Big\Vert\frac{d_{M_1+j}\lambda}{p_1\cdots p_{M_1+j}}\Big\Vert_{\frac{1}{{\widetilde{q}}}}\geqslant\frac{1}{{\widetilde{q}}\cdot q^{3G}}.
	\end{equation}
	
	{\bf Proof of Claim II}. Fix $1\leqslant j\leqslant M_2-M_1$ such that $M_1+j\in X_{\widetilde{q}}$ and $\frac{d_{M_1+j}\lambda_1}{p_1\cdots p_{M_1+j}}\not\in\mathbb{Z}$. Since $\lambda_1=\sum_{i\in\mathfrak{B}}c_i$ and $\frac{d_{M_1+j}\lambda_1}{p_1\cdots p_{M_1+j}}\not\in\mathbb{Z}$, there exists $s_2\in\mathfrak{B}$ such that
	\begin{equation*}
		k_{q,s_2}=\min\{k_{q,i}:\,c_i\ne0,\,i\in\mathfrak{B}\}.
	\end{equation*}	
	By \eqref{eq6.88-1}, we get
	\begin{equation}\label{eq6.18}
		\mathop{\sum}\limits_{\substack{i\in\mathfrak{B}\\ c_i\ne0}}\frac{d_{M_1+j}c_i}{p_1\cdots p_{M_1+j}}\in q^{k_{q,s_2}-\nu_{q}(L_{M_1+j})}(\mathbb{Z}\setminus q\mathbb{Z}).
	\end{equation}
	Since $\frac{d_{M_1+j}\lambda_1}{p_1\cdots p_{M_1+j}}\not\in\mathbb{Z}$,, we see that $k_{q,s_2}<\nu_{q}(L_{M_1+j})$ and
	\begin{equation*}
		\begin{split}
			\Big\Vert\frac{d_{M_1+j}\lambda_1}{p_1\cdots p_{M_1+j}}\Big\Vert_{\frac{1}{\widetilde{q}}}=\Big\Vert\mathop{\sum}\limits_{\substack{i\in\mathfrak{B}\\ c_i\ne0}}\frac{d_{M_1+j}c_i}{p_1\cdots p_{M_1+j}}\Big\Vert_{\frac{1}{\widetilde{q}}}\geqslant\frac{1}{\widetilde{q}}\cdot q^{k_{q,s_2}-\nu_{q}(L_{M_1+j})}.
		\end{split}
	\end{equation*}
	It follows from \eqref{eq6.10} that
	\begin{equation*}
		\Big\Vert\frac{d_{M_1+j}\lambda_1}{p_1\cdots p_{M_1+j}}\Big\Vert_{\frac{1}{\widetilde{q}}}\geqslant\frac{1}{\widetilde{q}\cdot q^{3G-2}}.
	\end{equation*}
	Note that $\{q,\,\widetilde{q}\}=\{2,\,3\}$, from \eqref{eq6.14} we have $\big|\frac{d_{M_1+j}\lambda_2}{p_1\cdots p_{M_1+j}}\big|<\big\Vert\frac{d_{M_1+j}\lambda_1}{p_1\cdots p_{M_1+j}}\big\Vert_{\frac{1}{\widetilde{q}}}$. Hence,
	\begin{equation*}
		\Big\Vert\frac{d_{M_1+j}\lambda}{p_1\cdots p_{M_1+j}}\Big\Vert_{\frac{1}{\widetilde{q}}}\geqslant\Big\Vert\frac{d_{M_1+j}\lambda_1}{p_1\cdots p_{M_1+j}}\Big\Vert_{\frac{1}{\widetilde{q}}}-\Big|\frac{d_{M_1+j}\lambda_2}{p_1\cdots p_{M_1+j}}\Big|\geqslant\frac{1}{\widetilde{q}\cdot q^{3G}}.
	\end{equation*}
	This proves the Claim II.

	For all $1\leqslant j\leqslant M_2-M_1$ with $M_1+j\in X_{\widetilde{q}}$, \eqref{eq6.18} shows $k_{q,s_2}<\nu_{q}(L_{M_1+j})$ if and only if $\frac{d_{M_1+j}\lambda_1}{p_1\cdots p_{M_1+j}}\not\in\mathbb{Z}$. Hence, the definition of $\varepsilon(\cdot)$ and \eqref{eq6.17} shows
	\begin{equation}\label{eq6.19}
		\big|m_{d_{M_1+j}^{-1}D_{M_1+j}}\big(\frac{d_{M_1+j}\lambda}{p_1p_2\cdots p_{M_1+j}}\big)\big|\geqslant\varepsilon\big(\frac{1}{\widetilde{q}\cdot q^{3G}}\big)
	\end{equation}
	for all integers $j$ satisfying $1\leqslant j\leqslant M_2-M_1$, $M_1+j\in X_{\widetilde{q}}$ and $k_{q,s_2}<\nu_{q}(L_{M_1+j})$. Moreover, the fact $k_{q,s_2}\geqslant k_{q,s_1}$ shows that $\left\{j:\,1\leqslant j\leqslant M_2-M_1,\,M_1+j\in X_{\widetilde{q}},\,k_{q,s_2}<\nu_{q}(L_{M_1+j})\right\}$ contained in $\left\{j:\,1\leqslant j\leqslant M_2-M_1,\,M_1+j\in X_{\widetilde{q}},\,k_{q,s_1}<\nu_{q}(L_{M_1+j})\right\}$. By \eqref{eq6.19}, we have
	\begin{equation}\label{eq6.20-1}
		\begin{split}
			&\mathop{\prod}\limits_{\substack{1\leqslant j\leqslant M_2-M_1\\ M_1+j\in X_{\widetilde{q}}\\ d_{M_1+j}\lambda_1\not\in p_1\cdots p_{M_1+j}\mathbb{Z}}}\Big|m_{D_{M_1+j}}\Big(\frac{\lambda}{p_1p_2\cdots p_{M_1+j}}\Big)\Big|\\
			\geqslant&\Big[\varepsilon\Big(\frac{1}{{\widetilde{q}}\cdot q^{3G}}\Big)\Big]^{\#\left\{j:\,1\leqslant j\leqslant M_2-M_1,\,M_1+j\in X_{\widetilde{q}},\,k_{q,s_2}<\nu_{q}(L_{M_1+j})\right\}}\\
			\geqslant&\Big[\varepsilon\Big(\frac{1}{{\widetilde{q}}\cdot q^{3G}}\Big)\Big]^{\#\left\{j:\,1\leqslant j\leqslant M_2-M_1,\,M_1+j\in X_{\widetilde{q}},\,k_{q,s_1}<\nu_{q}(L_{M_1+j})\right\}},
		\end{split}
	\end{equation}
	where the left-hand side of the above inequality is defined to be $1$  when $\{1\leqslant j\leqslant M_2-M_1:\,M_1+j\in X_{\widetilde{q}},\,\frac{d_{M_1+j}\lambda_1}{p_1\cdots p_{M_1+j}}\not\in\mathbb{Z}\}=\emptyset$.

	It remains to discuss the case $\frac{d_{M_1+j}\lambda_1}{p_1\cdots p_{M_1+j}}\in\mathbb{Z}$ with $1\leqslant j\leqslant M_2-M_1$ and $M_1+j\in X_{\widetilde{q}}$. Note that the function $m_{d_j^{-1}D_j}(x)(j\geqslant1)$ has period $1$. Hence, if $\frac{d_{M_1+j}\lambda_1}{p_1\cdots p_{M_1+j}}\in\mathbb{Z}$, then
	\begin{equation}\label{eq6.20}
		\begin{split}
			m_{d_{M_1+j}^{-1}D_{M_1+j}}\Big(\frac{d_{M_1+j}\lambda}{p_1p_2\cdots p_{M_1+j}}\Big)=&m_{d_{M_1+j}^{-1}D_{M_1+j}}\Big(\frac{d_{M_1+j}\lambda_1}{p_1p_2\cdots p_{M_1+j}}+\frac{d_{M_1+j}\lambda_2}{p_1p_2\cdots p_{M_1+j}}\Big)\\
			=&m_{d_{M_1+j}^{-1}D_{M_1+j}}\Big(\frac{d_{M_1+j}\lambda_2}{p_1p_2\cdots p_{M_1+j}}\Big).
		\end{split}
	\end{equation}
	Since $\lambda_2\in\widetilde{\Lambda}\big[M_1;~\frac{1}{D\cdot q^{3G}}\big]$, we have $\Big|\frac{d_{M_1+j}\lambda_2}{p_1\cdots p_{M_1+1}}\Big|\leqslant\frac{d_{M_1+j}}{D\cdot q^{3G}}\leqslant\frac{1}{q^{3}}\leqslant\frac{1}{8}$ for all $j\geqslant1$. It follows from \eqref{eq2.11} that
	\begin{equation*}
		\Big|\frac{d_{M_1+j}\lambda_2}{p_1p_2\cdots p_{M_1+j}}\Big|=\frac{1}{p_{M_1+2}\cdots p_{M_1+j}}\cdot\Big|\frac{d_{M_1+j}\lambda_2}{p_1\cdots p_{M_1+1}}\Big|\leqslant\frac{1}{2^{j-1}}\times\frac{1}{8}=\frac{1}{2^{j+2}},\qquad\forall \, j\geqslant2.
	\end{equation*}
	Hence, we get
	\begin{equation}\label{eq6.22}
		\sum_{j=1}^{\infty}\Big|\frac{d_{M_1+j}\lambda_2}{p_1p_2\cdots p_{M_1+j}}\Big|\leqslant1.
	\end{equation}
	By Lemma \ref{le2.9}, \eqref{eq6.20} and \eqref{eq6.22}, we have
	\begin{equation}\label{eq6.23}
		\mathop{\prod}\limits_{\substack{1\leqslant j\leqslant M_2-M_1\\ M_1+j\in X_{\widetilde{q}}\\ d_{M_1+j}\lambda_1\in p_1\cdots p_{M_1+j}\mathbb{Z}}}\Big|m_{d_{M_1+j}^{-1}D_{M_1+j}}\Big(\frac{d_{M_1+j}\lambda}{p_1p_2\cdots p_{M_1+j}}\Big)\Big|\geqslant\varepsilon_1.
	\end{equation}
	A combination of inequalities \eqref{eq6.16}, \eqref{eq6.19} and \eqref{eq6.23} shows that the desired result of Proposition \ref{prop6.3} is valid.
\end{pf}

\begin{prop}\label{prop6.6}
	Assume that the pair $(\mathbb{P},\,\mathbb{D})$ satisfies \eqref{eq1.4}, \eqref{eq1.5}, \eqref{eq2.12-3} and {\bfseries (C6)}. Let $n$ and $j$ be two integers satisfying $1\leqslant n<j$. If $j\in X_{\widetilde{q}_n}$ and $\#\{i\in X_{\widetilde{q}_n}:\,j+1\leqslant i\leqslant b_{q_n,n}\}\geqslant2G$, then $k_{q_n,n}\geqslant\nu_{q_{n}}(L_{j})$.
\end{prop}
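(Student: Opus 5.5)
The plan is to combine the counting estimate of Lemma \ref{le5.9} \rm(\romannumeral7) with condition \eqref{eq1.5}, splitting into the two cases in the definition \eqref{eq5.11} of $b_{q_n,n}$. Throughout write $q:=q_n$, $\widetilde q:=\widetilde q_n$ and $b:=b_{q_n,n}$.

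\emph{Step 1 (a $\widetilde q$-adic gain).} By hypothesis $\#\{i\in X_{\widetilde q}:\,j+1\leqslant i\leqslant b\}\geqslant 2G\geqslant G$. Lemma \ref{le5.9} \rm(\romannumeral7) with $z_1=j+1$ and $z_2=b$ then gives
\[
\nu_{\widetilde q}(p_{j+1}\cdots p_b)\geqslant 2G-G=G.
\]
By \eqref{eq5.6} we also have $\nu_{\widetilde q}(d_i)\leqslant G-1$ for every $i$. Hence, for any $i\geqslant b$, and in particular for $i=b$, the $\widetilde q$-adic valuation of $p_1\cdots p_i/d_i$ exceeds that of $p_1\cdots p_j$ by at least $G-(G-1)=1$.

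\emph{Step 2, Case $b=l_{q,n}$.} Here $b\in X_q$. By \eqref{eq1.4-1},
\[
\nu_{\widetilde q}(L_b)=\nu_{\widetilde q}(p_1\cdots p_b)-\nu_{\widetilde q}(d_b)\geqslant \nu_{\widetilde q}(p_1\cdots p_j)+1.
\]
On the other hand, \eqref{eq1.3-1} gives $k_{\widetilde q,j}=\nu_{\widetilde q}(p_1\cdots p_j)-\nu_{\widetilde q}(d_j)-1<\nu_{\widetilde q}(p_1\cdots p_j)$. Therefore $k_{\widetilde q,j}<\nu_{\widetilde q}(L_b)$. Condition \eqref{eq1.5}, applied to the pair $b\in X_q$ and $j\in X_{\widetilde q}$, then forces $k_{q,b}\geqslant\nu_q(L_j)$. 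Since $k_{q,n}\geqslant k_{q,l_{q,n}}=k_{q,b}$ by the definition \eqref{eq5.8}, we obtain $k_{q,n}\geqslant\nu_q(L_j)$.

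\emph{Step 3, Case $b=\beta_{q,n}$.} Here we may assume $b>j\geqslant 1$; if $\beta_{q,n}$ were $0$ the counting hypothesis would be vacuous. By \eqref{eq5.10} and \eqref{eq5.9}, $b\in X_{\widetilde q}$ and there is $n'\in X_q$ with
\[
k_{q,n'}\leqslant k_{q,n},\qquad \alpha_{q,n'}=b,\qquad \nu_{\widetilde q}(L_{n'})>k_{\widetilde q,b}.
\]
Both $j$ and $b$ lie in $X_{\widetilde q}$, so \eqref{eq1.3-1} gives
\[
k_{\widetilde q,b}-k_{\widetilde q,j}=\nu_{\widetilde q}(p_{j+1}\cdots p_b)-\nu_{\widetilde q}(d_b)+\nu_{\widetilde q}(d_j)\geqslant G-(G-1)=1,
\]
using Step 1. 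Hence $\nu_{\widetilde q}(L_{n'})>k_{\widetilde q,b}>k_{\widetilde q,j}$. Condition \eqref{eq1.5}, applied to the pair $n'\in X_q$ and $j\in X_{\widetilde q}$, yields $k_{q,n'}\geqslant\nu_q(L_j)$, and therefore $k_{q,n}\geqslant k_{q,n'}\geqslant\nu_q(L_j)$.

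The main point to check carefully is the valuation bookkeeping in Step 1. One has to confirm that the $\geqslant 2G$ count in $X_{\widetilde q}$ really converts, via Lemma \ref{le5.9} \rm(\romannumeral7), into a strict increase of at least $1$ after subtracting $\nu_{\widetilde q}(d_b)\leqslant G-1$. One also has to handle the degenerate subcase where $\beta_{q,n}$ is undefined or equal to $0$, which is excluded by the hypothesis. Everything else is a direct application of \eqref{eq1.5} together with the monotonicity already encoded in the definitions of $l_{q,n}$ and $\beta_{q,n}$.
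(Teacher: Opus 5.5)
Your proposal is correct and follows essentially the same route as the paper: Lemma \ref{le5.9} \rm(\romannumeral7) gives $\nu_{\widetilde q_n}(p_{j+1}\cdots p_{b_{q_n,n}})\geqslant G$, and then the proof splits according to whether $b_{q_n,n}=l_{q_n,n}$ or $b_{q_n,n}=\beta_{q_n,n}$. In each case \eqref{eq1.5} is applied to the pair $(b_{q_n,n},j)$, respectively $(n',j)$. The only difference from the paper is cosmetic: in the $\beta$-case you bound $k_{\widetilde q,b}-k_{\widetilde q,j}\geqslant1$ directly, whereas the paper passes through $\nu_{\widetilde q_n}(p_1\cdots p_j)$.
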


\begin{pf}
	By Lemma \ref{le5.9} \rm(\romannumeral7), we have
	\begin{equation}\label{eq6.51}
		\nu_{\widetilde{q}_n}(p_{j+1}p_{j+2}\cdots p_{b_{q_n,n}})\geqslant G.
	\end{equation}
	
	Since set $\{i\in X_{\widetilde{q}_n}:\,j\leqslant i\leqslant b_{q_n,n}\}$ is nonempty, we have $j<b_{q_n,n}$. Hence, the assumption $n<j$ shows that $n<b_{q_n,n}$.
	
	If $b_{q_n,n}=l_{q_n,n}$. According to the definition of $l_{q_n,n}$, we have $l_{q_n,n}\in X_{q_n}$ and $k_{q_n,n}>k_{q_n,l_{q_n,n}}$. Since $\nu_{\widetilde{q}_n}(d_{b_{q_n,n}})\leqslant G-1$. By \eqref{eq6.51}, we have
	\begin{equation*}
		\begin{split}
			k_{\widetilde{q}_n,j}<&\nu_{\widetilde{q}_n}(p_1p_2\cdots p_{j})\\
			=&\nu_{\widetilde{q}_n}(p_1p_2\cdots p_{b_{q_n,n}})-\nu_{\widetilde{q}_n}(d_{b_{q_n,n}})+\nu_{\widetilde{q}_n}(d_{b_{q_n,n}})-\nu_{\widetilde{q}_n}(p_{j+1}p_{j+2}\cdots p_{b_{q_n,n}})\\
			\leqslant&\nu_{\widetilde{q}_n}(L_{b_{q_n,n}}).
		\end{split}
	\end{equation*}
	Hence, by \eqref{eq1.5}, we have $\nu_{q_n}(L_{j})\leqslant k_{q_n,b_{q_n,n}}$. Note that $k_{q_n,l_{q_n,n}}<k_{q_n,n}$. Therefore, if $b_{q_n,n}=l_{q_n,n}$, we obtain $\nu_{q_n}(L_{j})\leqslant k_{q_n,n}$.
	
	Else $b_{q_n,n}=\beta_{q_n,n}$. Since $\beta_{q_n,m}\in X_{\widetilde{q}_n}$, we get $b_{q_n,m}\in X_{\widetilde{q}_n}$. It follows from \eqref{eq6.51} that
	\begin{equation}\label{eq6.52}
		\begin{split}
			k_{\widetilde{q}_n,j}<&\nu_{\widetilde{q}_n}(p_1p_2\cdots p_{j})\\
			=&\nu_{\widetilde{q}_n}(p_1p_2\cdots p_{b_{q_n,n}})-\nu_{\widetilde{q}_n}(\widetilde{q}_nd_{b_{q_n,n}})+\nu_{\widetilde{q}_n}(\widetilde{q}_nd_{b_{q_n,n}})-\nu_{\widetilde{q}_n}(p_{j+1}p_{j+2}\cdots p_{b_{q_n,n}})\\
			\leqslant&k_{\widetilde{q}_n,b_{q_n,n}}.
		\end{split}
	\end{equation}
	Moreover, according to the definition of $\beta_{q_{n},n}$, there exists $m\in X_{q_n}$ such that $k_{q_n,m}\leqslant k_{q_n,n}$, $Alpha_{q_n,m}=\beta_{q_n,n}$ and $k_{\widetilde{q}_n,\alpha_{q_n,m}}<\nu_{\widetilde{q}_n}(L_{m})$. Since $\alpha_{q_n,m}=\beta_{q_n,n}=b_{q_n,n}$ and $k_{\widetilde{q}_n,\alpha_{q_n,m}}<\nu_{\widetilde{q}_n}(L_{m})$. By \eqref{eq6.52}, we have $k_{\widetilde{q}_n,j}<\nu_{\widetilde{q}_n}(L_{m})$. Thus, by \eqref{eq1.5}, we have $\nu_{q_n}(L_{j})\leqslant k_{q_n,m}$. Note that $k_{q_n,m}\leqslant k_{q_n,n}$, we have $\nu_{q_n}(L_{j})\leqslant k_{q_n,n}$. This finishes the proof.		
\end{pf}

The result of Proposition \ref{prop6.3} concerns the set $\big\{j\in\mathbb{Z}:\,1\leqslant j\leqslant M_2-M_1,\,k_{q,s_1}<\nu_{q}(L_{M_1+j})\big\}$. For any integers $n_1$, $n_2$ and $M$ with $1\leqslant n_1<n_2\leqslant M\leqslant H(n_2)$, we aim to establish a strictly positive uniform lower bound for $\prod_{j=1}^{H(n_2)-n_2}\big|m_{D_{M+j}}\big(\frac{\lambda}{p_1p_2\cdots p_{M+j}}\big)\big|$ that is independent of $n_1$, $n_2$ and $M$. To this end, we partition the integers from $M$ to $H(n_2)$ into distinct segments according to the elements of $\mathcal{B}[n_1;~n_2]$. This allows us to control the behavior of the function on each segment.

\begin{prop}\label{prop6.4-1}
	Assume that the pair $(\mathbb{P},\,\mathbb{D})$ satisfies \eqref{eq1.4}, \eqref{eq1.5}, \eqref{eq2.12-3} and {\bfseries (C6)}. Suppose $n_1$, $n_2$ and $M$ are integers satisfying $1\leqslant H(n_1)<n_2\leqslant M<H(n_2)$. Let $i'$ be the integer given in Proposition \ref{prop6.3}. Since $\mathcal{B}[n_1;~n_2]\cap X_{q_{i'}}\neq\emptyset$, we can order the elements of
	$\mathcal{B}[n_1;~n_2]\cap X_{q_{i'}}$ as $\mathrm{a}_1$, $\mathrm{a}_2$, $\cdots$, $\mathrm{a}_u$ so that $k_{q_{i'},\mathrm{a}_1}<k_{q_{i'},\mathrm{a}_2}<\cdots<k_{q_{i'},\mathrm{a}_u}$, where $u:=\#(\mathcal{B}[n_1;~n_2]\cap X_{q_{i'}})\leqslant2G$. Then there exists an integer sequence $\{m_t\}_{t=1}^{u}$ with $M<m_1\leqslant m_2\leqslant\cdots\leqslant m_u\leqslant H$ such that the following statements remain valid.
	\begin{enumerate}	
		\item[\rm(\romannumeral1)] $m_t\geqslant b_{q_{i'},\mathrm{a}_t}$ for all $1\leqslant t\leqslant u$.
		\item[\rm(\romannumeral2)] For all $1\leqslant t\leqslant u-1$,  if $m_t<H$, then $m_t-m_{t-1}\geqslant 2c_3$ and $m_{t+1}-m_t\geqslant 2c_3$, where $m_0:=M$.
		\item[\rm(\romannumeral3)] $\#\{j\in X_{\widetilde{q}_{i'}}:\,M<j\leqslant m_t,\,k_{q_{i'},\mathrm{a}_t}<\nu_{q_{i'}}(L_{j})\}\leqslant8Gc_3$.
	\end{enumerate}	
\end{prop}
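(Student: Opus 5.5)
Throughout, write $q:=q_{i'}$, $\widetilde q:=\widetilde q_{i'}$ and $H:=H(n_2)$. The plan is to define the $m_t$ by a greedy rule with two ingredients. The first is the monotonicity of the $b$'s. The second is a cap at $H$, with a post-processing step that enforces the spacing in (ii).

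\emph{Monotonicity of the $b$'s.} Since $\mathrm a_1,\dots,\mathrm a_u\in X_q$ and $k_{q,\mathrm a_1}<\cdots<k_{q,\mathrm a_u}$, Lemma \ref{le5.9} (v) gives $b_{q,\mathrm a_1}\leqslant\cdots\leqslant b_{q,\mathrm a_u}$. Each $\mathrm a_t\in\mathcal B[n_1;n_2]$, so $n_2<b_{q,\mathrm a_t}\leqslant H$.

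\emph{Greedy construction.} Set $m_0:=M$ and, inductively,
\[
m_t':=\min\{H,\ \max\{b_{q,\mathrm a_t},\ m_{t-1}'+2c_3\}\},\qquad 1\leqslant t\leqslant u,
\]
with $m_0':=M$.

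\emph{Post-processing.} Put $m_t:=H$ whenever $H-m_t'<2c_3$, and $m_t:=m_t'$ otherwise.

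\emph{Checking (i) and (ii).} Because $M<H$, we get $m_1>M$, and monotonicity of the $m_t$ is clear; (i) holds since every candidate value is at least $b_{q,\mathrm a_t}$. For (ii), if $m_t<H$ then $m_t=m_t'$ and $H-m_t'\geqslant 2c_3$. The lower gap $m_t-m_{t-1}\geqslant 2c_3$ comes directly from the max. For the upper gap, either $m_{t+1}=H$, and then $H-m_t\geqslant 2c_3$, or $m_{t+1}=m'_{t+1}\geqslant m_t'+2c_3$. Some care is needed so that the post-processing of index $t-1$ cannot break the lower gap at $t$. If $m_{t-1}$ was raised to $H$, then $m_t=H$ as well, so the hypothesis $m_t<H$ excludes this case.

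\emph{Reducing (iii) to a padding bound.} Fix $t$. Every $j$ in the set of (iii) satisfies $j>M\geqslant n_2\geqslant\mathrm a_t$, $j\in X_{\widetilde q}$ and $\nu_q(L_j)>k_{q,\mathrm a_t}$. The contrapositive of Proposition \ref{prop6.6}, applied with $n=\mathrm a_t$, then gives
\[
\#\{i\in X_{\widetilde q}:\ j+1\leqslant i\leqslant b_{q,\mathrm a_t}\}<2G .
\]
Hence at most $2G$ such $j$ satisfy $j\leqslant b_{q,\mathrm a_t}$. The remaining $j$ lie in $(b_{q,\mathrm a_t},m_t]$, and there are at most $m_t-b_{q,\mathrm a_t}$ of them.

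\emph{Bounding the padding.} Trace the recursion back to the last index $s\leqslant t$ at which the max was attained by $b_{q,\mathrm a_s}$ (or $s=0$, where $m_0=M<b_{q,\mathrm a_t}$). Then
\[
m'_t\leqslant b_{q,\mathrm a_s}+(t-s)2c_3\leqslant b_{q,\mathrm a_t}+2u c_3\leqslant b_{q,\mathrm a_t}+4Gc_3 .
\]
The post-processing adds fewer than $2c_3$ more.

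\emph{Conclusion of (iii).} The count is at most $2G+4Gc_3+2c_3\leqslant 8Gc_3$, using $G,c_3\geqslant1$ (indeed $c_3>5G$).

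\emph{Main obstacle.} The hardest step is (iii), specifically correctly invoking Proposition \ref{prop6.6}: its hypotheses $n<j$ and $j\in X_{\widetilde q_n}$ must be verified, and the padding $m_t-b_{q,\mathrm a_t}$ must be controlled uniformly, using $u\leqslant 2G$. The interaction between the cap at $H$ and the spacing requirement in (ii) is the other delicate point, and the post-processing rule above is designed to resolve it.
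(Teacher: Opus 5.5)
Your sequence is the paper's. Clamping $\max\{b_{q,\mathrm a_t},m'_{t-1}+2c_3\}$ at $H$ and then raising every value above $H-2c_3$ to $H$ reproduces the four-case recursion \eqref{eq6.25-1} exactly; when $m_{t-1}=H\neq m'_{t-1}$, both rules give $m_t=H$. Your checks of (i) and (ii) are fine.

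For (iii) you take a different route. The paper first shows (its Claim I) that the bad $j$ in $(n_2,b_{q,\mathrm a_t}]$ all lie in $(b_{q,\mathrm a_t}-4G,\,b_{q,\mathrm a_t}]$, using \eqref{eq6.8} to turn interval length into a count of $X_{\widetilde q}$-indices. It then proves the bound $4tc_3$ by induction on $t$, using that the sets in (iii) shrink as $k_{q,\mathrm a_t}$ increases. You instead apply the contrapositive of Proposition \ref{prop6.6} to the smallest bad $j\leqslant b_{q,\mathrm a_t}$. All other bad $j\leqslant b_{q,\mathrm a_t}$ lie in $X_{\widetilde q}\cap[j+1,b_{q,\mathrm a_t}]$, which gives the sharper bound $2G$ without \eqref{eq6.8}. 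You then bound the excess of $m_t$ in closed form by unrolling the recursion, which is shorter and avoids the induction.

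One step is false as written: $m_0=M<b_{q,\mathrm a_t}$ need not hold. You only know $n_2<b_{q,\mathrm a_t}$ and $n_2\leqslant M<H$. For instance, Proposition \ref{prop6.9} applies this result with $M=n_2+2c_3$, while $b_{q,\mathrm a_1}$ may be $n_2+1$. If moreover $M+2c_3>H-2c_3$, then $m_1=H$ and $H-b_{q,\mathrm a_1}$ can be far larger than $2uc_3$, so your inequality $m'_t\leqslant b_{q,\mathrm a_t}+2uc_3$ fails.

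The repair is immediate. The set in (iii) only contains $j>M$, so the remaining $j$ lie in $(\max\{M,b_{q,\mathrm a_t}\},\,m_t]$. Unrolling the recursion, with starting value $M$ at index $0$, gives $m'_t\leqslant\max\{M,b_{q,\mathrm a_t}\}+2tc_3$. This yields the same total $2G+4Gc_3+2c_3\leqslant 8Gc_3$.

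Finally, you use $u\leqslant 2G$ without proving it. The paper obtains it by taking $i''=\min(\mathcal B[n_1;n_2]\cap X_q)$. It then uses $\nu_q(p_{i''+1}\cdots p_{n_2})\leqslant G-1$ (Lemma \ref{le5.9} (viii)) together with Lemma \ref{le5.9} (vii); you should add this step.
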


\begin{pf}
	By Proposition \ref{prop6.3}, we get $\mathcal{B}[n_1;~n_2]\cap X_{q_{i'}}\neq\emptyset$. Furthermore, it is easy to check that \eqref{eq6.8} and \eqref{eq6.9-2} still hold in Proposition \ref{prop6.4-1}. Let
	\begin{equation*}
		q:=q_{i'},~~\widetilde{q}:=\widetilde{q}_{i'}~~\mbox{and}~~H:=H(n_2).
	\end{equation*}
	
	Let $i'':=\min(\mathcal{B}[n_1;~n_2]\cap X_{q_{i'}})$. By \eqref{eq6.9-2}, we have $\nu_{q}(p_{i''+1}p_{i''+2}\cdots p_{n_2})\leqslant G-1$. It follows from Lemma \ref{le5.9} \rm(\romannumeral7) that $\#\{i\in X_q:\, i''+1\leqslant i\leqslant n_2\}\leqslant 2G-1$. Hence $\#\{i\in X_q:\, i''\leqslant i\leqslant n_2\}\leqslant 2G$. Moreover, the definition of $i''$ shows  $(\mathcal{B}[n_1;~n_2]\cap X_{q_{i'}})\subset\{i\in X_q:\, i''\leqslant i\leqslant n_2\}$. Thus, we obtain
	\begin{equation}\label{eq6.55}
		u\leqslant2G.
	\end{equation}

	According to the definition of the set $\mathcal{B}[n_1;~n_2]$, we get $n_1<\mathrm{a}_1,\,\mathrm{a}_2,\,\cdots,\,\mathrm{a}_u\leqslant n_2$. Moreover, by Lemma \ref{le5.9} \rm(\romannumeral5) and the definition of the integer sequence $\{\mathrm{a}_t\}_{t=1}^{u}$, we have $n_2<b_{q,\mathrm{a}_1}\leqslant b_{q,\mathrm{a}_2}\leqslant \cdots\leqslant b_{q,\mathrm{a}_u}=H$.

	Let $m_0:=M$, and define the integer sequence $\{m_t\}_{t=1}^{u}$ recursively by
	\begin{equation}\label{eq6.25-1}
		m_{t}:=\left\{\begin{array}{ll}
			H,&\mbox{if}~~H-2c_3<m_{t-1}+2c_3,\\
			m_{t-1}+2c_3,&\mbox{if}~~b_{q,\mathrm{a}_t}\leqslant m_{t-1}+2c_3\leqslant H-2c_3,\\
			b_{q,\mathrm{a}_t},&\mbox{if}~~m_{t-1}+2c_3<b_{q,\mathrm{a}_t}\leqslant H-2c_3,\\
			H,&\mbox{if}~~m_{t-1}+2c_3\leqslant H-2c_3<b_{q,\mathrm{a}_t}.
		\end{array}\right.
	\end{equation}
	Since $H>M$ and $H>b_{q,\mathrm{a}_t}$ for all $1\leqslant t\leqslant u$, substituting $m_{t-1}$ into the above recursive formula yields $m_{t}\geqslant m_{t-1}$ and $m_i\geqslant b_{q_{i'},\mathrm{a}_t}$ for all $1\leqslant t\leqslant u$. Therefore, the condition $M<m_1\leqslant m_2\leqslant\cdots\leqslant m_u\leqslant H$ and statement \rm(\romannumeral1) hold.
	
	If $m_t<H$, then the second and third relations in \eqref{eq6.25-1} imply that $m_t-m_{t-1}\geqslant 2c_3$. On the other hand, \eqref{eq6.25-1} shows
	\begin{equation}\label{eq6.25-2}
		m_t\leqslant H-2c_3
	\end{equation}
	if $m_t<H$. If $H-2c_3<m_{t}+2c_3$, then $m_{t+1}=H$. Combining this with \eqref{eq6.25-2}, we get $m_{t+1}-m_t=H-m_t\geqslant 2c_3$ when $H-2c_3<m_{t}+2c_3$. Else $m_{t}+2c_3\leqslant H-2c_3$, then the last three formulas of the recurrence relation \eqref{eq6.25-1} imply $m_{t+1}-m_t\geqslant 2c_3$. Hence, \rm(\romannumeral2) holds. Next, it suffices to prove \rm(\romannumeral3).
	
	{\bf Claim I}. $\#\{j\in X_{\widetilde{q}}:\,n_2<j\leqslant b_{q,\mathrm{a}_t},\,k_{q,\mathrm{a}_t}<\nu_{q}(L_{j})\}<c_3$ for all $1\leqslant t\leqslant u$.
	
	{\bf Proof of Claim I}. Fix an arbitrary $1\leqslant t\leqslant u$. Assume that $b_{q,\mathrm{a}_t}-n_2\geqslant c_3$, otherwise the conclusion holds.
	
	For any $n\in X_{\widetilde{q}}$ with $n_2<n\leqslant b_{q,\mathrm{a}_t}-4G$, we have $\#\{i\in X_{\widetilde{q}}:\,n<i\leqslant b_{q,\mathrm{a}_t}\}>2G$ by \eqref{eq6.8}. From Proposition \ref{prop6.6}, we get that for any $n\in X_{\widetilde{q}}$ with $n_2<n\leqslant b_{q,\mathrm{a}_t}-4G$,
	\begin{equation*}
		k_{q,\mathrm{a}_t}\geqslant\nu_{q}(L_n).
	\end{equation*}
	Thus $\{n\in X_{\widetilde{q}}:\,n_2<n\leqslant b_{q,\mathrm{a}_t},\,k_{q,\mathrm{a}_t}<\nu_{q}(L_{n})\}\subset\{n\in X_{\widetilde{q}}:\,b_{q,\mathrm{a}_t}-4G<n\leqslant b_{q,\mathrm{a}_t}\}$. Hence $\{j\in X_{\widetilde{q}}:\,n_2<n\leqslant b_{q,\mathrm{a}_t},\,k_{q,\mathrm{a}_t}<\nu_{q}(L_{j})\}\leqslant4G<c_3$. Thus, Claim I is established.

	In order to prove \rm(\romannumeral3), we establish the following claim by mathematical induction.
	
	{\bf Claim II}. For all integers $t$ with $1\leqslant t\leqslant u$, we have
	\begin{equation}\label{eq6.64-1}
		\#\{j\in X_{\widetilde{q}}:\,M<j\leqslant m_t,\,k_{q,\mathrm{a}_t}<\nu_{q}(L_{j})\}\leqslant4tc_3.
	\end{equation}

	{\bf Proof of Claim II}. We first establish the base case, $t=1$.
	
	If the relations between $b_{q,\mathrm{a}_1}$, $m_0$ and $H$ satisfy the first two conditions in \eqref{eq6.25-1}, then $m_1-m_0<4c_3$. Since $m_0=M$, we have
	\begin{equation*}
		\#\{j\in X_{\widetilde{q}}:\,M<j\leqslant m_1,\,k_{q,\mathrm{a}_1}<\nu_{q}(L_{j})\}\leqslant m_1-M<4c_3.
	\end{equation*}
	If the relations between $b_{q,\mathrm{a}_1}$, $m_0$ and $H$ satisfy the last two conditions in \eqref{eq6.25-1}, it follows that $0\leqslant m_1-b_{q,\mathrm{a}_1}<2c_3$. Moreover, by the assumption $n_2\leqslant M$ and Claim I, we get $\#\{j\in X_{\widetilde{q}}:\,M<j\leqslant b_{q,\mathrm{a}_1},\,k_{q,\mathrm{a}_1}<\nu_{q}(L_{j})\}\leqslant c_3$. It follows that
	\begin{equation*}
		\begin{split}
			&\#\{j\in X_{\widetilde{q}}:\,M<j\leqslant m_1,\,k_{q,\mathrm{a}_1}<\nu_{q}(L_{j})\}\\
			=&\#\{j\in X_{\widetilde{q}}:\,M<j\leqslant b_{q,\mathrm{a}_1},\,k_{q,\mathrm{a}_1}<\nu_{q}(L_{j})\}+\#\{j\in X_{\widetilde{q}}:\,b_{q,\mathrm{a}_1}<j\leqslant m_1,\,k_{q,\mathrm{a}_1}<\nu_{q}(L_{j})\}\\
			<&c_3+2c_3.
		\end{split}
	\end{equation*}
	Thus, \eqref{eq6.64-1} holds for the case $t=1$.

	Assuming that \eqref{eq6.64-1} holds for all $1\leqslant t\leqslant k$, we now prove that it holds for $t=k+1$.

	If the relations between $b_{q,\mathrm{a}_{k+1}}$, $m_k$ and $H$ satisfy the first two conditions in \eqref{eq6.25-1}, then $m_{k+1}-m_k<4c_3$. Moreover, the assumption $k_{q,\mathrm{a}_k}<k_{q,\mathrm{a}_{k+1}}$ shows
	\begin{equation*}
		\{j\in X_{\widetilde{q}}:\,M<j\leqslant m_k,\,k_{q,\mathrm{a}_{k+1}}<\nu_{q}(L_{j})\}\subset\{j\in X_{\widetilde{q}}:\,M<j\leqslant m_k,\,k_{q,\mathrm{a}_k}<\nu_{q}(L_{j})\}.
	\end{equation*}
	Given the inductive assumption that \eqref{eq6.64-1} holds for $n=k$, and based on the above inclusion relation, we obtain $\#\{j\in X_{\widetilde{q}}:\,M<j\leqslant m_k,\,k_{q,\mathrm{a}_{k+1}}<\nu_{q}(L_{j})\}<4kc_3$. Hence
	\begin{equation*}
		\begin{split}
			&\#\{j\in X_{\widetilde{q}}:\,M<j\leqslant m_{k+1},\,k_{q,\mathrm{a}_{k+1}}<\nu_{q}(L_{j})\}\\
			=&\#\{j\in X_{\widetilde{q}}:\,M<j\leqslant m_k,\,k_{q,\mathrm{a}_{k+1}}<\nu_{q}(L_{j})\}+\#\{j\in X_{\widetilde{q}}:\,m_k<j\leqslant m_{k+1},\,k_{q,\mathrm{a}_{k+1}}<\nu_{q}(L_{j})\}\\
			<&4kc_3+4c_3.
		\end{split}
	\end{equation*}
	If the relations between $b_{q,\mathrm{a}_{k+1}}$, $m_k$ and $H$ satisfy the last two conditions in \eqref{eq6.25-1}, then $0\leqslant m_{k+1}-b_{q,\mathrm{a}_{k+1}}<4c_3$. Moreover, by the assumption $n_2\leqslant M$ and Claim I, we obtain $\#\{j\in X_{\widetilde{q}}:\,M<j\leqslant b_{q,\mathrm{a}_{k+1}},\,k_{q,\mathrm{a}_{k+1}}<\nu_{q}(L_{j})\}\leqslant c_3$. It follows that
	\begin{equation*}
		\begin{split}
			&\#\{j\in X_{\widetilde{q}}:\,M<j\leqslant m_{k+1},\,k_{q,\mathrm{a}_{k+1}}<\nu_{q}(L_{j})\}\\
			=&\#\{j\in X_{\widetilde{q}}:\,M<j\leqslant b_{q,\mathrm{a}_{k+1}},\,k_{q,\mathrm{a}_{k+1}}<\nu_{q}(L_{j})\}+\#\{j\in X_{\widetilde{q}}:\,b_{q,\mathrm{a}_{k+1}}<j\leqslant m_{k+1},\,k_{q,\mathrm{a}_{k+1}}<\nu_{q}(L_{j})\}\\
			<&c_3+2c_3.
		\end{split}
	\end{equation*}
	Therefore, we see that \eqref{eq6.64-1} holds for $t=k+1$. By mathematical induction, we obtain that \eqref{eq6.64-1} holds for all integers $t$ with $1\leqslant t\leqslant u$. This completes the proof of Claim II.

	It follows from \eqref{eq6.55} that \rm(\romannumeral3) holds. This completes the proof.
\end{pf}

\begin{prop}\label{prop6.7}
	Assume that the pair $(\mathbb{P},\,\mathbb{D})$ satisfies \eqref{eq1.4}, \eqref{eq1.5}, \eqref{eq2.12-3} and {\bfseries (C6)}. Suppose $n_1$, $n_2$ and $M$ are integers satisfying $1\leqslant H(n_1)<n_2\leqslant M<H(n_2)$. Let $i'$ be the integer given in Proposition \ref{prop6.3}. Then there exists a spectrum $\Lambda:=\Lambda(n_1,\,n_2,\,M)$ of $\bigast\limits_{i\in(\mathcal{B}[n_1,\,n_2]\cap X_{q_{i'}})}\delta_{p_1^{-1}p_2^{-1}\cdots p_i^{-1}D_i}$ which is of the form described in Theorem \ref{th5.11} (\romannumeral4), such that
	\begin{equation}\label{eq6.53}
		\prod_{j=1}^{H(n_2)-M}\Big|m_{D_{M+j}}\Big(\frac{\lambda}{p_1p_2\cdots p_{M+j}}\Big)\Big|\geqslant\Big\{\varepsilon_1\cdot\Big[\varepsilon\Big(\frac{1}{2^{2c_3}}\Big)\Big]^{10Gc_3}\Big\}^{2G}
	\end{equation}
	for all  $\lambda\in \Lambda+\widetilde{\Lambda}\big[M;~\frac{1}{D\cdot q^{3G}}\big]$, where $k_{q_n,n}$, $l_{q_n,n}$, $\varepsilon_1$, $U_n$, $\mathcal{S}[n_1,\,n_2]$, $H(n_2)$, $c_3$ and $\varepsilon(\cdot)$ are defined in \eqref{eq1.2}, \eqref{eq5.8}, Lemma \ref{le2.9}, \eqref{eq5.28}, \eqref{eq5.39}, \eqref{eq5.40}, \eqref{eq6.24} and \eqref{eq6.3}, respectively.	
\end{prop}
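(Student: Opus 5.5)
The plan is to glue Proposition \ref{prop6.4-1} and Proposition \ref{prop6.3} together. Write $q:=q_{i'}$, $\widetilde q:=\widetilde q_{i'}$, $H:=H(n_2)$, and let $\mathrm{a}_1,\dots,\mathrm{a}_u$ (with $u\leqslant 2G$) and $M=m_0<m_1\leqslant\cdots\leqslant m_u\leqslant H$ be as in Proposition \ref{prop6.4-1}.

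\textbf{Construction of $\Lambda$.} Choose integers $\sigma_{\mathrm{a}_t}\geqslant H$ for every $t$; for instance $\sigma_{\mathrm{a}_t}:=H$, which is $\geqslant b_{q,\mathrm{a}_t}$. Then set
$$\Lambda:=\sum_{t=1}^{u}C_{\mathrm{a}_t},\qquad C_{\mathrm{a}_t}=\{0,\pm q^{k_{q,\mathrm{a}_t}}\theta_q(p_1\cdots p_{\sigma_{\mathrm{a}_t}})\},$$
where the sign $-$ is included only when $q=3$. This is exactly the form in \eqref{eq6.5}, and it lies in $\sum(\{0\}\cup U_{\mathrm{a}_t})$. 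Because $\sigma_{\mathrm{a}_t}\geqslant b_{q,\mathrm{a}_t}$, Theorem \ref{th5.11} (\romannumeral3)--(\romannumeral4) shows that $\Lambda$ is a spectrum of the required convolution.

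\textbf{Splitting the product.} Split the range $M<M+j\leqslant H$ into the $u$ consecutive blocks $(m_{t-1},m_t]$ for $t=1,\dots,u$; indices beyond $m_u=H$ do not occur. On the block $(m_{t-1},m_t]$, apply Proposition \ref{prop6.3} with $M_1=m_{t-1}$, $M_2=m_t$ and $\mathfrak{B}=\{\mathrm{a}_t,\dots,\mathrm{a}_u\}$. Then $s_1=\mathrm{a}_t$, and all $\sigma_i\geqslant H\geqslant M_2$.

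For $\lambda\in\Lambda+\widetilde\Lambda[M;\frac{1}{Dq^{3G}}]$, the summands $c_{\mathrm{a}_s}$ with $s<t$ have to be absorbed. These $c_{\mathrm{a}_s}$ are multiples of $q^{k_{q,\mathrm{a}_s}}\theta_q(p_1\cdots p_H)$, and $k_{q,\mathrm{a}_s}<k_{q,\mathrm{a}_t}$. The idea is that on the block they contribute either an integer to $d_{M_1+j}\lambda/(p_1\cdots p_{M_1+j})$, which is harmless since the mask has period $1$, or a small perturbation. To make this work, I would instead apply Proposition \ref{prop6.3} with $\mathfrak{B}=\{\mathrm{a}_1,\dots,\mathrm{a}_u\}$ on every block and use its count with $s_1$ replaced by the smallest active index. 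The cleaner route is to observe that the counting set in Proposition \ref{prop6.3} only needs the minimal $k$ among the nonzero $c_i$, as the proof via $s_2$ shows. So for a given $\lambda$, let $t_0$ be the least $t$ with $c_{\mathrm{a}_t}\neq0$. The number of exceptional factors on the block $(m_{t-1},m_t]$ is then at most $\#\{j\in X_{\widetilde q}: M<j\leqslant m_t,\ k_{q,\mathrm{a}_{t_0}}<\nu_q(L_j)\}$.

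\textbf{Main obstacle.} The hard step is controlling this count uniformly when $t>t_0$. Proposition \ref{prop6.4-1} (\romannumeral3) only bounds it on $(M,m_{t_0}]$. Beyond $m_{t_0}\geqslant b_{q,\mathrm{a}_{t_0}}$, the element $c_{\mathrm{a}_{t_0}}$ contributes an integer multiple once $j$ is large, because it is divisible by $p_1\cdots p_H$ up to the $q$-part, and the $q$-part can be handled with Lemma \ref{le5.14}. So I would refine the argument by letting the active minimal index increase as one passes $m_{t_0},m_{t_0+1},\dots$. Concretely, on $(m_{t-1},m_t]$ the contributions of $\mathrm{a}_s$ with $s<t$ become integers after passing $b_{q,\mathrm{a}_s}$, and the gap $m_t-m_{t-1}\geqslant 2c_3$ from (\romannumeral2) together with \eqref{eq6.24} makes any residual fractional part smaller than $1/(Dq^{3G})$ in the sense of Lemma \ref{le5.14}. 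Those residual parts are then absorbed into the $\widetilde\Lambda[m_{t-1};\cdot]$ term of Proposition \ref{prop6.3}.

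\textbf{Conclusion.} Each block therefore contributes at least $\varepsilon_1[\varepsilon(\frac{1}{\widetilde q q^{3G}})]^{2G+8Gc_3}$. Since $\varepsilon(\cdot)$ is increasing and $\widetilde q q^{3G}\leqslant 2^{2c_3}$ by \eqref{eq6.24}, this is at least $\varepsilon_1[\varepsilon(2^{-2c_3})]^{10Gc_3}$. Multiplying over the at most $2G$ blocks gives \eqref{eq6.53}.
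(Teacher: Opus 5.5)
There is a genuine gap, and it is exactly the step you flag as the main obstacle. With your choice $\sigma_{\mathrm{a}_t}=H$, the earlier summands can never be retired. For $M<m\leqslant H$,
\[
\frac{d_m c_{\mathrm{a}_s}}{p_1\cdots p_m}=\pm\, d_m\,q^{k_{q,\mathrm{a}_s}-\nu_q(p_1\cdots p_m)}\,\theta_q(p_{m+1}\cdots p_H).
\]
For $m\in X_{\widetilde q}$ this is an integer only when $k_{q,\mathrm{a}_s}\geqslant\nu_q(L_m)$, which is precisely the condition that fails at the indices you want to discard. It is also not small, because of the factor $\theta_q(p_{m+1}\cdots p_H)$. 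Lemma~\ref{le5.14} cannot absorb it, since its hypothesis $|c_i|\leqslant q^{k_{q,i}}\theta_q(p_1\cdots p_M)$ only holds for base points $M\geqslant\sigma_i=H$.

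Consequently, whenever $c_{\mathrm{a}_1}\neq0$, the index $\mathrm{a}_1$ stays the minimal active index on every block. The number of non-integral factors is then $\#\{m\in X_{\widetilde q}:\,M<m\leqslant H,\ k_{q,\mathrm{a}_1}<\nu_q(L_m)\}$. Proposition~\ref{prop6.4-1}~(\romannumeral3) controls this count only up to $m_1$. Beyond $b_{q,\mathrm{a}_1}$, every $m\in X_{\widetilde q}$ with $q\nmid d_m$ is counted, because $\nu_q(L_m)-k_{q,\mathrm{a}_1}=\nu_q(p_{\mathrm{a}_1+1}\cdots p_m)-\nu_q(d_m)+\nu_q(d_{\mathrm{a}_1})+1$. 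Each such factor is evaluated at a non-integral $q$-adic rational with denominator at most $q^{3G-2}$, so it is bounded away from $1$. For example, with $q=2$ and exponent gap $1$ the point is $\equiv\frac12$, and the three-point mask there has modulus $\frac13$. So already for $\lambda=c_{\mathrm{a}_1}$ your construction gives no bound independent of $n_1,n_2,M$.

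The missing idea is to take $\sigma_{\mathrm{a}_t}:=m_t$. This is why Proposition~\ref{prop6.4-1}~(\romannumeral1) requires $m_t\geqslant b_{q,\mathrm{a}_t}$. Then $|c_{\mathrm{a}_t}/(p_1\cdots p_{m_t})|\leqslant q^{k_{q,\mathrm{a}_t}-\nu_q(p_1\cdots p_{m_t})}\leqslant\frac1q$, so past $m_t$ this summand decays like $(p_{m_t+1}\cdots p_m)^{-1}$.

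Even then it cannot be put into $\widetilde\Lambda[m_t;\frac{1}{Dq^{3G}}]$ immediately, since it may be as large as $\frac{1}{q\,p_{m_t+1}}$. So your absorption into $\widetilde\Lambda[m_{t-1};\cdot]$ at the block boundary is also not available. The paper proceeds in three steps:
\begin{enumerate}
\item[(a)] It waits until $M_3$, the last index with $p_{m_t+1}\cdots p_{M_3}<2^{c_3}$, which is fewer than $c_3$ steps after $m_t$.
\item[(b)] It estimates the factors on $(m_t,M_3]$ directly (Claims I-II-I and I-II-II), using $\theta_q(p_{m_t+1}\cdots p_m)<2^{c_3}$. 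The $2c_3$-gaps of Proposition~\ref{prop6.4-1}~(\romannumeral2) make the already absorbed part smaller than $2^{-c_3}q^{-3G}$ there and guarantee $M_3<m_{t+1}$.
\item[(c)] Only then does it apply Proposition~\ref{prop6.3} on $(M_3,m_{t+1}]$ with $\mathfrak{B}=\{\mathrm{a}_{t+1},\dots,\mathrm{a}_u\}$ and $\sum_{s\leqslant t}c_{\mathrm{a}_s}+\lambda_2\in\widetilde\Lambda[M_3;\frac{1}{Dq^{3G}}]$.
\end{enumerate}
This is what makes $s_1=\mathrm{a}_{t+1}$ legitimate, so that Proposition~\ref{prop6.4-1}~(\romannumeral3) bounds the exceptional count. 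Apart from this, your block decomposition and the final product over at most $2G$ blocks match the paper.
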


\begin{pf}
	Proposition \ref{prop6.3} shows that the set $\mathcal{B}[n_1,\,n_2]\cap X_{q_{i'}}$ is nonempty. Let
	\begin{equation*}
		q:=q_{i'},~~\widetilde{q}:=\widetilde{q}_{i'}~~\mbox{and}~~H:=H(n_2).
	\end{equation*}
	Moreover, it is easy to check that \eqref{eq6.9-1} still hold in Proposition \ref{prop6.7}. Similar to \eqref{eq6.9} and \eqref{eq6.10}, we have
	\begin{equation}\label{eq6.30}
		k_{q,M+j}-k_{q,i}\leqslant3(G-1)
	\end{equation}
	for all $i\in(\mathcal{B}[n_1,\,n_2]\cap X_{q})$ and $1\leqslant j\leqslant H-M_1$ with $M_1+j\in X_q$, and
	\begin{equation}\label{eq6.31}
		\nu_{q}(L_{M+j})-k_{q,i}\leqslant3G-2
	\end{equation}
	for all $i\in(\mathcal{B}[n_1,\,n_2]\cap X_{q})$ and $1\leqslant j\leqslant H-M_1$ with $M_1+j\in X_{\widetilde{q}}$.

	Let $\{\mathrm{a}_t\}_{t=1}^{u}$ and $\{m_t\}_{t=1}^{u}$ be the integer sequence given in Proposition \ref{prop6.4-1}. It follows that $\mathcal{B}[n_1,\,n_2]\cap X_{q}=\{\mathrm{a}_1,\,\mathrm{a}_2,\,\cdots,\,\mathrm{a}_u\}$. We now begin the construction of a prespectrum $\Lambda$ of $\bigast\limits_{t=1}^u\delta_{p_1^{-1}p_2^{-1}\cdots p_{\mathrm{a}_t}^{-1}D_{\mathrm{a}_t}}$. For any integer $t$ with $1\leqslant t\leqslant u$, we take
	\begin{equation}\label{eq6.72}
		C_{\mathrm{a}_t}:=\begin{cases}
			\{0,\,q^{k_{q,\mathrm{a}_{t}}}\theta_{q}(p_1p_2\cdots p_{m_t})\},&\mbox{if}~~q=2,\\
			\{0,\,q^{k_{q,\mathrm{a}_{t}}}\theta_{q}(p_1p_2\cdots p_{m_t}),\,-q^{k_{q,\mathrm{a}_{t}}}\theta_{q}(p_1p_2\cdots p_{m_t})\},&\mbox{if}~~q=3.
		\end{cases}
	\end{equation}
	It follows from Proposition \ref{prop6.4-1} \rm(\romannumeral1) that $C_{\mathrm{a}_t}\subset(\{0\}\cup U_{\mathrm{a}_t})$,  By Theorem \ref{th5.11} \rm(\romannumeral4), we see that $\sum_{t=1}^{u}C_{\mathrm{a}_t}$ is a spectrum of $\bigast\limits_{t=1}^u\delta_{p_1^{-1}p_2^{-1}\cdots p_{\mathrm{a}_t}^{-1}D_{\mathrm{a}_t}}$. Let $\Lambda:=\sum_{t=1}^{u}C_{\mathrm{a}_t}$.

	Choose $\lambda\in\Lambda+\widetilde{\Lambda}\big[M;~\frac{1}{D\cdot q^{3G}}\big]$, it can be written as $\lambda=\sum_{t=1}^{u}c_{\mathrm{a}_t}+\lambda_2$, where $c_{\mathrm{a}_t}\in C_{\mathrm{a}_t}(1\leqslant t\leqslant u)$ and $\lambda_2\in\widetilde{\Lambda}\big[M;~\frac{1}{D\cdot q^{3G}}\big]$.

	By Proposition \ref{prop6.4-1}, we have $m_t\geqslant m_1$ for all $1\leqslant t\leqslant u$. Therefore, the set $\sum_{t=1}^{u}C_{\mathrm{a}_t}$ defined on $\{\mathrm{a}_t\}_{t=1}^{u}$ by \eqref{eq6.72} satisfies the requirements of the set $\Lambda_1$ defined on $M_1:=M$, $M_2:=m_1$ and $\mathfrak{B}:=\{\mathrm{a}_t\}_{t=1}^{u}$ by \eqref{eq6.5}. Hence, by Proposition \ref{prop6.3} and Proposition \ref{prop6.4-1} \rm(\romannumeral3), we have
	\begin{equation}\label{eq6.74}
		\prod_{j=1}^{m_1-M}\Big|m_{D_{M+j}}\Big(\frac{\lambda}{p_1p_2\cdots p_{M+j}}\Big)\Big|\geqslant\varepsilon_1\cdot\Big[\varepsilon\Big(\frac{1}{\widetilde{q}\cdot q^{3G}}\Big)\Big]^{9Gc_3}.
	\end{equation}

	If $m_1=H$, then by the arbitrariness of $\lambda$, the desired formula \eqref{eq6.53} holds. Else $m_1<H$, then $u\geqslant2$. Moreover, Proposition \ref{prop6.4-1} \rm(\romannumeral2) shows $m_1-M\geqslant2c_3$ if $m_1<H$. It follow from \eqref{eq2.12-3} that if $m_1<H$, then
	\begin{equation}\label{eq6.75-1}
		p_{M+1}p_{M+2}\cdots p_{m_1+j}>2^{c_3},\qquad\forall\, j\geqslant1.
	\end{equation}

	{\bf Claim I}. If $m_1<H$, then
	\begin{equation}\label{eq6.75}
		\prod_{j=m_1-M+1}^{m_2-M}\Big|m_{D_{M+j}}\Big(\frac{\lambda}{p_1p_2\cdots p_{M+j}}\Big)\Big|\geqslant\varepsilon_1\cdot\Big[\varepsilon\Big(\frac{1}{2^{2c_3}}\Big)\Big]^{10Gc_3}.
	\end{equation}
	
	{\bf Proof of Claim I}. By the definition of $m_1$, we get $M+1\leqslant m_1$. Hence, we have $\big|\frac{\lambda_2}{p_1p_2\cdots p_{m_1}}\big|\leqslant\big|\frac{\lambda_2}{p_1p_2\cdots p_{M+1}}\big|\leqslant\frac{1}{D\cdot q^{3G}}$ by noting $\lambda_2\in\widetilde{\Lambda}\big[M;~\frac{1}{D\cdot q^{3G}}\big]$. Moreover, by \eqref{eq6.72} and the fact $k_{q,\mathrm{a}_1}<\nu_{q}(p_1p_2\cdots p_{m_1})$, we get $\big|\frac{c_{\mathrm{a}_1}}{p_1p_2\cdots p_{m_1}}\big|\leqslant q^{k_{q,\mathrm{a}_1}-\nu_{q}(p_1p_2\cdots p_{m_1})}\leqslant\frac{1}{q}$.
	It follows that
	\begin{equation}\label{eq6.76}
		\Big|\frac{c_{\mathrm{a}_1}+\lambda_2}{p_1p_2\cdots p_{m_1}}\Big|\leqslant\frac{1}{q}+\frac{1}{D\cdot q^{3G}}<1.
	\end{equation}

	%Since $m_1<H$, it follows from Proposition \ref{prop6.4-1} \rm(\romannumeral2) that $m_1\leqslant H-2c_3$. Moreover,

	{\bf Case I-I}. $p_{m_1+1}\geqslant2^{c_3}$.
	
	By the assumption $p_{m_1+1}\geqslant2^{c_3}$ and \eqref{eq6.76}, we have $\big|\frac{c_{\mathrm{a}_1}+\lambda_2}{p_1p_2\cdots p_{m_1+1}}\big|\leqslant\frac{1}{p_{m_1+1}}<\frac{1}{D\cdot3^{5G}}$. This means that
	\begin{equation}\label{eq6.77}
		c_{\mathrm{a}_1}+\lambda_2\in\widetilde{\Lambda}\Big[m_1;~\frac{1}{D\cdot q^{3G}}\Big]
	\end{equation}
	when $m_1<H$ and $p_{m_1+1}\geqslant2^{c_3}$.

	Since $k_{q,\mathrm{a}_t}\geqslant k_{q,\mathrm{a}_2}$ and $m_t\geqslant m_2$ for each integer $t$ with $2\leqslant t\leqslant u$. Thus,  when we take $M_1=m_1$, $M_2=m_2$ and $\mathfrak{B}:=\{\mathrm{a}_t\}_{t=2}^{u}$, the set $\sum_{t=2}^{u}C_{\mathrm{a}_t}$ defined by \eqref{eq6.72} satisfies the condition of $\Lambda_1$ given in \eqref{eq6.5}. Hence, by Proposition \ref{prop6.3} and \eqref{eq6.77}, we have
	\begin{equation}\label{eq6.78-2}
		\prod_{j=m_1-M+1}^{m_2-M}\Big|m_{D_{M+j}}\Big(\frac{\sum_{t=2}^{u}c_{\mathrm{a}_t}+(c_{\mathrm{a}_1}+\lambda_2)}{p_1p_2\cdots p_{M+j}}\Big)\Big|\geqslant\varepsilon_1\cdot\Big[\varepsilon\Big(\frac{1}{\widetilde{q}\cdot q^{3G}}\Big)\Big]^{9Gc_3}.
	\end{equation}
	Since $\{q,\,\widetilde{q}\}=\{2,\,3\}$ and $2^{c_3}>3^{5G}D$, we get $\widetilde{q}\cdot q^{3G}<2^{2c_3}$. Note that the function $\varepsilon(\cdot)$ is non-decreasing, we get $\prod\limits_{j=m_1-M+1}^{m_2-M}\big|m_{D_{M+j}}\big(\frac{\lambda}{p_1p_2\cdots p_{M+j}}\big)\big|\geqslant\varepsilon_1\cdot\big[\varepsilon\big(\frac{1}{2^{2c_3}}\big)\big]^{10Gc_3}$.
	
	{\bf Case I-II}. $p_{m_1+1}<2^{c_3}$.
	
	Our assumption $p_{m_1+1}<2^{c_3}$ implies that the set $\{j\geqslant1:\,p_{m_1+1}p_{m_1+2}\cdots p_{m_1+j}<2^{c_3}\}$ is nonempty. Let
	\begin{equation*}
		M_3:=\max\{m_1+j:\,p_{m_1+1}p_{m_1+2}\cdots p_{m_1+j}<2^{c_3}\}.
	\end{equation*}
	By \eqref{eq2.11}, we have $M_3<m_1+c_3$. Moreover, Proposition \ref{prop6.4-1} \rm(\romannumeral2) shows $m_1+2c_3\leqslant m_2$. Hence,
	\begin{equation}\label{eq6.78-1}
		M_3<m_2.
	\end{equation}
	On the other hand, the definition of $M_3$ shows
	\begin{equation}\label{eq6.28}
		\theta_{q}(p_{m_1+1}p_{m_1+2}\cdots p_{m_1+j})<2^{c_3},\qquad 1\leqslant j\leqslant M_3-m_1.
	\end{equation}

	If $\sum_{t=2}^u\frac{d_{m_1+j}c_{\mathrm{a}_t}}{p_1\cdots p_{m_1+j}}\neq0$, there exists $i_2\in\{\mathrm{a}_t:\,t\in\mathbb{Z},\,2\leqslant t\leqslant u,\,c_{\mathrm{a}_t}\ne0\}$ such that
	\begin{equation*}
		k_{q,i_2}=\min\{k_{q,\mathrm{a}_t}:\,t\in\mathbb{Z},\,2\leqslant t\leqslant u,\,c_{\mathrm{a}_t}\ne0\}.
	\end{equation*}
	It follows from the definition of the sequence $\{\mathrm{a}_t\}_{t=1}^{u}$ that $k_{q,\mathrm{a}_1}<k_{q,\mathrm{a}_t}$ for all $2\leqslant t\leqslant u$. Thus
	\begin{equation}\label{eq6.37}
		k_{q,\mathrm{a}_1}<k_{q,i_2}.
	\end{equation}
	By \eqref{eq6.88-1}, \eqref{eq6.72} and \eqref{eq6.78-1}, we get
	\begin{equation}\label{eq6.38}
		\sum_{t=2}^u\frac{d_{m_1+j}c_{\mathrm{a}_t}}{p_1\cdots p_{m_1+j}}=\sum_{2\leqslant t\leqslant u,\,c_{\mathrm{a}_t}\ne0}\frac{d_{m_1+j}c_{\mathrm{a}_t}}{p_1\cdots p_{m_1+j}}\in q^{k_{q,i_2}-k_{q,m_1+j}-1}\theta_{q}(d_{m_1+j})(\mathbb{Z}\setminus q\mathbb{Z})
	\end{equation}
	for any $1\leqslant j\leqslant M_3-m_1$ with $m_1+j\in X_q$, and
	\begin{equation}\label{eq6.39}
		\sum_{t=2}^u\frac{d_{m_1+j}c_{\mathrm{a}_t}}{p_1\cdots p_{m_1+j}}=\sum_{2\leqslant t\leqslant u,\,c_{\mathrm{a}_t}\ne0}\frac{d_{m_1+j}c_{\mathrm{a}_t}}{p_1\cdots p_{m_1+j}}\in q^{k_{q,i_2}-\nu_{q}(L_{m_1+j})}\theta_{q}(d_{m_1+j})(\mathbb{Z}\setminus q\mathbb{Z})
	\end{equation}
	for any $1\leqslant j\leqslant M_3-m_1$ with $m_1+j\in X_{\widetilde{q}}$.

	{\bf Claim I-II-I}. For any $1\leqslant j\leqslant M_3-m_1$ with $m_1+j\in X_q$, we have
	\begin{equation}\label{eq6.40}
		\Big\Vert\frac{d_{m_1+j}c_{\mathrm{a}_1}}{p_1\cdots p_{m_1+j}}+\sum_{t=2}^u\frac{d_{m_1+j}c_{\mathrm{a}_t}}{p_1\cdots p_{m_1+j}}\Big\Vert_{\frac{1}{q}}\geqslant\frac{1}{q^{3G-2}}\cdot\frac{1}{2^{c_3}}.
	\end{equation}
	
	{\bf Proof of Claim I-II-I}. Fix $1\leqslant j\leqslant M_3-m_1$ satisfying $m_1+j\in X_q$.
	
	If $c_{\mathrm{a}_1}\neq0$, then \eqref{eq6.72} shows
	\begin{equation}\label{eq6.35}
		\Big|\frac{d_{m_1+j}c_{\mathrm{a}_1}}{p_1\cdots p_{m_1+j}}\Big|=\frac{q^{k_{q,i_1}-k_{q,m_1+j}-1}\theta_{q}(d_{m_1+j})}{\theta_{q}(p_{m_1+1}\cdots p_{m_1+j})}.
	\end{equation}
	By \eqref{eq6.37}, \eqref{eq6.38} and \eqref{eq6.35}, we see that regardless of whether $\sum_{t=2}^u\frac{d_{m_1+j}c_{\mathrm{a}_t}}{p_1\cdots p_{m_1+j}}=0$ or $\sum_{t=2}^u\frac{d_{m_1+j}c_{\mathrm{a}_t}}{p_1\cdots p_{m_1+j}}\neq0$, we have
	\begin{equation*}
		\frac{d_{m_1+j}c_{\mathrm{a}_1}}{p_1\cdots p_{m_1+j}}+\sum_{t=2}^u\frac{d_{m_1+j}c_{\mathrm{a}_t}}{p_1\cdots p_{m_1+j}}\in\frac{q^{k_{q,i_1}-k_{q,m_1+j}-1}(\mathbb{Z}\setminus q\mathbb{Z})}{\theta_{q}(p_{m_1+1}\cdots p_{m_1+j})}.
	\end{equation*}
	This means that
	\begin{equation*}
		\Big\Vert\frac{d_{m_1+j}c_{\mathrm{a}_1}}{p_1\cdots p_{m_1+j}}+\sum_{t=2}^u\frac{d_{m_1+j}c_{\mathrm{a}_t}}{p_1\cdots p_{m_1+j}}\Big\Vert_{\frac{1}{q}}\geqslant
		\begin{cases}
			\frac{1}{q}\cdot\frac{1}{\theta_{q}(p_{m_1+1}\cdots p_{m_1+j})},&\mbox{if } k_{q,i_1}>k_{q,m_1+j},\\
			\frac{1}{q^{k_{q,m_1+j}+1-k_{q,i_1}}\cdot\theta_{q}(p_{m_1+1}\cdots p_{m_1+j})},&\mbox{if } k_{q,i_1}<k_{q,m_1+j}.
		\end{cases}
	\end{equation*}
	By \eqref{eq6.28}, \eqref{eq6.30} and the above inequality, we have
	\begin{equation}\label{eq6.41}
		\Big\Vert\frac{d_{m_1+j}c_{\mathrm{a}_1}}{p_1\cdots p_{m_1+j}}+\sum_{t=2}^u\frac{d_{m_1+j}c_{\mathrm{a}_t}}{p_1\cdots p_{m_1+j}}\Big\Vert_{\frac{1}{q}}\geqslant\frac{1}{q^{3G-2}}\cdot\frac{1}{2^{c_3}}
	\end{equation}
	when $c_{\mathrm{a}_1}\neq0$.
	
	If $c_{\mathrm{a}_1}=0$, then
	\begin{equation}\label{eq6.41-1}
		\big\Vert\frac{d_{m_1+j}c_{\mathrm{a}_1}}{p_1\cdots p_{m_1+j}}+\sum_{t=2}^u\frac{d_{m_1+j}c_{\mathrm{a}_t}}{p_1\cdots p_{m_1+j}}\big\Vert_{\frac{1}{q}}=\big\Vert\sum_{t=2}^u\frac{d_{m_1+j}c_{\mathrm{a}_t}}{p_1\cdots p_{m_1+j}}\big\Vert_{\frac{1}{q}}.
	\end{equation}
	If $\big\Vert\sum_{t=2}^u\frac{d_{m_1+j}c_{\mathrm{a}_t}}{p_1\cdots p_{m_1+j}}\big\Vert_{\frac{1}{q}}\not\in\mathbb{Z}$, then \eqref{eq6.30} and \eqref{eq6.38} shows
	\begin{equation*}
		\big\Vert\frac{d_{m_1+j}c_{\mathrm{a}_1}}{p_1\cdots p_{m_1+j}}+\sum_{t=2}^u\frac{d_{m_1+j}c_{\mathrm{a}_t}}{p_1\cdots p_{m_1+j}}\big\Vert_{\frac{1}{q}}\geqslant\frac{1}{q^{k_{q,M_1+j}+1-k_{q,i_2}}}\geqslant\frac{1}{q^{3G-2}}.
	\end{equation*}
	Else $\big\Vert\sum_{t=2}^u\frac{d_{m_1+j}c_{\mathrm{a}_t}}{p_1\cdots p_{m_1+j}}\big\Vert_{\frac{1}{q}}\in\mathbb{Z}$, then \eqref{eq6.41-1} shows $\big\Vert\frac{d_{m_1+j}c_{\mathrm{a}_1}}{p_1\cdots p_{m_1+j}}+\sum_{t=2}^u\frac{d_{m_1+j}c_{\mathrm{a}_t}}{p_1\cdots p_{m_1+j}}\big\Vert_{\frac{1}{q}}\geqslant\frac{1}{q}$. Thus, we get $\big\Vert\frac{d_{m_1+j}c_{\mathrm{a}_1}}{p_1\cdots p_{m_1+j}}+\sum_{t=2}^u\frac{d_{m_1+j}c_{\mathrm{a}_t}}{p_1\cdots p_{m_1+j}}\big\Vert_{\frac{1}{q}}\geqslant\frac{1}{q^{3G-2}}$ when $c_{\mathrm{a}_1}=0$. Combining this with \eqref{eq6.41}, the desired inequality \eqref{eq6.40} holds. This proves the Claim I.

	By \eqref{eq6.40} and the assumption $\lambda_2\in\widetilde{\Lambda}\big[M;~\frac{1}{D\cdot q^{3G}}\big]$, we have $\big|\frac{d_{m_1+j}\lambda_2}{p_1p_2\cdots p_{M+1}}\big|\leqslant\frac{d_{m_1+j}}{D\cdot q^{3G}}\leqslant\frac{1}{q^{3G}}$ for all $j\geqslant1$. It follows from \eqref{eq6.75-1} that
	\begin{equation}\label{eq6.43}
		\Big|\frac{d_{m_1+j}\lambda_2}{p_1p_2\cdots p_{m_1+j}}\Big|\leqslant\frac{1}{p_{M+1}p_{M+2}\cdots p_{m_1+j}}\cdot\Big|\frac{d_{m_1+j}\lambda_2}{p_1p_2\cdots p_{M+1}}\Big|<\frac{1}{2^{c_3}}\cdot\frac{1}{q^{3G}},\qquad\forall\,j\geqslant1.
	\end{equation}
	By \eqref{eq6.40} and \eqref{eq6.43}, we get $\big|\frac{d_{m_1+j}\lambda_2}{p_1\cdots p_{m_1+j}}\big|<\big\Vert\frac{d_{m_1+j}c_{\mathrm{a}_1}}{p_1\cdots p_{m_1+j}}+\sum_{t=2}^u\frac{d_{m_1+j}c_{\mathrm{a}_t}}{p_1\cdots p_{m_1+j}}\big\Vert_{\frac{1}{q}}$ for any $1\leqslant j\leqslant M_3-m_1$ with $m_1+j\in X_q$. Since $q\in\{2,\,3\}$, it follows from \eqref{eq6.40} and \eqref{eq6.43} that
	\begin{equation*}
		\Big\Vert\frac{d_{m_1+j}\lambda}{p_1\cdots p_{m_1+j}}\Big\Vert_{\frac{1}{q}}=\Big\Vert\frac{d_{m_1+j}c_{\mathrm{a}_1}}{p_1\cdots p_{m_1+j}}+\sum_{t=2}^u\frac{d_{m_1+j}c_{\mathrm{a}_t}}{p_1\cdots p_{m_1+j}}\Big\Vert_{\frac{1}{q}}-\Big|\frac{d_{m_1+j}\lambda_3}{p_1\cdots p_{m_1+j}}\Big|\geqslant\frac{1}{2^{c_3}\cdot q^{3G}}\geqslant\frac{1}{\widetilde{q}\cdot2^{c_3}\cdot q^{3G}}.
	\end{equation*}
	for any $1\leqslant j\leqslant M_3-m_1$ with $m_1+j\in X_q$. Hence, we have
	\begin{equation}\label{eq6.44}
		m_{d_{m_1+j}^{-1}D_{m_1+j}}\Big(\frac{d_{m_1+j}\lambda}{p_1\cdots p_{m_1+j}}\Big)\geqslant\varepsilon\Big(\frac{1}{\widetilde{q}\cdot2^{c_3}\cdot q^{3G}}\Big).
	\end{equation}
	for any $1\leqslant j\leqslant M_3-m_1$ with $m_1+j\in X_q$.

	{\bf Claim I-II-II}. For any $1\leqslant j\leqslant M_3-m_1$ with $m_1+j\in X_{\widetilde{q}}$, we have
	\begin{equation}\label{eq6.45}
		\Big\Vert\frac{d_{m_1+j}c_{\mathrm{a}_1}}{p_1\cdots p_{m_1+j}}+\sum_{t=2}^u\frac{d_{m_1+j}c_{\mathrm{a}_t}}{p_1\cdots p_{m_1+j}}\Big\Vert_{\frac{1}{\widetilde{q}}}\geqslant\frac{1}{\widetilde{q}}\cdot\frac{1}{q^{3G-2}}\cdot\frac{1}{2^{c_3}}.
	\end{equation}
	
	{\bf Proof of Claim I-II-II}. Fix $1\leqslant j\leqslant M_3-m_1$ satisfying $m_1+j\in X_{\widetilde{q}}$.
	
	If $c_{\mathrm{a}_1}\neq0$. By \eqref{eq2.11}, \eqref{eq2.10}, \eqref{eq6.72} and the fact $k_{q,i_1}<\nu_{q}(p_1p_2\cdots p_{m_1})$, we get
	\begin{equation*}
		\Big|\frac{d_{m_1+j}c_{\mathrm{a}_1}}{p_1\cdots p_{m_1+j}}\Big|=\left\{\begin{array}{ll}
			\big|\frac{d_{m_1+j}\cdot2^{k_{2,\mathrm{a}_1}}\cdot\theta_{2}(p_1p_2\cdots p_{m_1})}{p_1p_2\cdots p_{m_1+j}}\big|\leqslant\frac{d_{m_1+j}}{p_{m_1+j}}\cdot2^{k_{2,\mathrm{a}_1}-\nu_{2}(p_1p_2\cdots p_{m_1})}<\frac{1}{4},&\mbox{if}~~q=2,\\
			\big|\frac{d_{m_1+j}3^{k_{3,\mathrm{a}_1}}\theta_{3}(p_1p_2\cdots p_{m_1})}{p_1p_2\cdots p_{m_1+j}}\big|<3^{k_{3,\mathrm{a}_1}-\nu_{3}(p_1p_2\cdots p_{m_1})}\leqslant\frac{1}{3},&\mbox{if}~~q=3.
		\end{array}\right.
	\end{equation*}
	It follows that if $c_{\mathrm{a}_1}\neq0$ and $\frac{d_{m_1+j}\lambda_2}{p_1\cdots p_{m_1+j}}\in\mathbb{Z}$, then
	\begin{equation*}
		\Big\Vert\frac{d_{m_1+j}c_{\mathrm{a}_1}}{p_1\cdots p_{m_1+j}}+\sum_{t=2}^u\frac{d_{m_1+j}c_{\mathrm{a}_t}}{p_1\cdots p_{m_1+j}}\Big\Vert_{\frac{1}{\widetilde{q}}}=\Big\Vert\frac{d_{m_1+j}c_{\mathrm{a}_1}}{p_1\cdots p_{m_1+j}}\Big\Vert_{\frac{1}{\widetilde{q}}}>\begin{cases}
			\frac{1}{3}-\frac{1}{4},&\mbox{if}~~q=2,\\
			\frac{1}{2}-\frac{1}{3},&\mbox{if}~~q=3.
		\end{cases}
	\end{equation*}
	Since $c_3>5$, we see that the desired inequality \eqref{eq6.45} holds whenever $c_{\mathrm{a}_1}\neq0$ and $\frac{d_{m_1+j}\lambda_2}{p_1\cdots p_{m_1+j}}\in\mathbb{Z}$.

	If $c_{\mathrm{a}_1}\neq0$, then \eqref{eq6.72} shows
	\begin{equation}\label{eq6.36}
		\Big|\frac{d_{m_1+j}c_{\mathrm{a}_1}}{p_1\cdots p_{m_1+j}}\Big|=\frac{q^{k_{q,i_1}-\nu_{q}(L_{m_1+j})}\theta_{q}(d_{m_1+j})}{\theta_{q}(p_{m_1+1}\cdots p_{m_1+j})}.
	\end{equation}
	It follows from \eqref{eq6.36} and \eqref{eq6.39} that if $c_{\mathrm{a}_1}\neq0$ and $\sum_{t=2}^u\frac{d_{m_1+j}c_{\mathrm{a}_t}}{p_1p_2\cdots p_{m_1+j}}\not\in\mathbb{Z}$, then
	\begin{equation}\label{eq6.46}
		\frac{d_{m_1+j}c_{\mathrm{a}_1}}{p_1p_2\cdots p_{m_1+j}}+\sum_{t=2}^u\frac{d_{m_1+j}c_{\mathrm{a}_t}}{p_1p_2\cdots p_{m_1+j}}\in\frac{q^{k_{q,\mathrm{a}_1}-\nu_{q}(L_{m_1+j})}\theta_{q}(d_{m_1+j})(\mathbb{Z}\setminus q\mathbb{Z})}{\theta_{q}(p_{m_1+1}p_{m_1+2}\cdots p_{m_1+j})}.
	\end{equation}
	Applying \eqref{eq6.39} again, we get $k_{q,i_2}<\nu_{q}(L_{m_1+j})$ when $\sum_{t=2}^u\frac{d_{m_1+j}c_{\mathrm{a}_t}}{p_1p_2\cdots p_{m_1+j}}\not\in\mathbb{Z}$. Furthermore, by \eqref{eq6.37}, we have
	\begin{equation}\label{eq6.47}
		k_{q,\mathrm{a}_1}<\nu_{q}(L_{m_1+j})
	\end{equation}
	if $\sum_{t=2}^u\frac{d_{m_1+j}c_{\mathrm{a}_t}}{p_1p_2\cdots p_{m_1+j}}\not\in\mathbb{Z}$. This means that the denominator of the fraction on the right-hand side of \eqref{eq6.46}, when reduced to its lowest terms, must contain a factor of $q$. Hence
	\begin{equation*}
		\frac{d_{m_1+j}c_{\mathrm{a}_1}}{p_1p_2\cdots p_{m_1+j}}+\sum_{t=2}^u\frac{d_{m_1+j}c_{\mathrm{a}_t}}{p_1p_2\cdots p_{m_1+j}}\not\in\frac{\mathbb{Z}\setminus\widetilde{q}\mathbb{Z}}{\widetilde{q}}.
	\end{equation*}
	Thus $\big\Vert\frac{d_{m_1+j}c_{\mathrm{a}_1}}{p_1p_2\cdots p_{m_1+j}}+\sum_{t=2}^u\frac{d_{m_1+j}c_{\mathrm{a}_t}}{p_1p_2\cdots p_{m_1+j}}\big\Vert_{\frac{1}{\widetilde{q}}}>0$ if $c_{\mathrm{a}_1}\neq0$ and $\sum_{t=2}^u\frac{d_{m_1+j}c_{\mathrm{a}_t}}{p_1p_2\cdots p_{m_1+j}}\not\in\mathbb{Z}$. By \eqref{eq6.31}, \eqref{eq6.28}, \eqref{eq6.46} and \eqref{eq6.47}, we obtain that if $c_{\mathrm{a}_1}\neq0$ and $\sum_{t=2}^u\frac{d_{m_1+j}c_{\mathrm{a}_t}}{p_1p_2\cdots p_{m_1+j}}\not\in\mathbb{Z}$, then
	\begin{equation*}
		\Big\Vert\frac{d_{m_1+j}c_{\mathrm{a}_1}}{p_1p_2\cdots p_{m_1+j}}+\sum_{t=2}^u\frac{d_{m_1+j}c_{\mathrm{a}_t}}{p_1p_2\cdots p_{m_1+j}}\Big\Vert_{\frac{1}{\widetilde{q}}}\geqslant\frac{1}{\widetilde{q}}\cdot\frac{1}{q^{\nu_{q}(L_{m_1+j})-k_{q,i_1}}\theta_{q}(p_{m_1+1}\cdots p_{m_1+j})}\geqslant\frac{1}{\widetilde{q}}\cdot\frac{1}{q^{3G-2}}\cdot\frac{1}{2^{c_3}}
	\end{equation*}
	Thus, whenever $c_{\mathrm{a}_1}\neq0$ and $\sum_{t=2}^u\frac{d_{m_1+j}c_{\mathrm{a}_t}}{p_1p_2\cdots p_{m_1+j}}\not\in\mathbb{Z}$ hold, \eqref{eq6.45} holds.

	If $c_{\mathrm{a}_1}=0$ and $\sum_{t=2}^u\frac{d_{m_1+j}c_{\mathrm{a}_t}}{p_1p_2\cdots p_{m_1+j}}\in\mathbb{Z}$, then $\Big\Vert\frac{d_{m_1+j}c_{\mathrm{a}_1}}{p_1p_2\cdots p_{m_1+j}}+\sum_{t=2}^u\frac{d_{m_1+j}c_{\mathrm{a}_t}}{p_1p_2\cdots p_{m_1+j}}\Big\Vert_{\frac{1}{\widetilde{q}}}\geqslant\frac{1}{\widetilde{q}}$. In this case, \eqref{eq6.45} holds.

	If $c_{\mathrm{a}_1}=0$ and $\sum_{t=2}^u\frac{d_{m_1+j}c_{\mathrm{a}_t}}{p_1p_2\cdots p_{m_1+j}}\not\in\mathbb{Z}$, then \eqref{eq6.39} shows
	\begin{equation*}
		\Big\Vert\frac{d_{m_1+j}c_{\mathrm{a}_1}}{p_1p_2\cdots p_{m_1+j}}+\sum_{t=2}^u\frac{d_{m_1+j}c_{\mathrm{a}_t}}{p_1p_2\cdots p_{m_1+j}}\Big\Vert_{\frac{1}{\widetilde{q}}}=\Big\Vert\sum_{t=2}^u\frac{d_{m_1+j}c_{\mathrm{a}_t}}{p_1p_2\cdots p_{m_1+j}}\Big\Vert_{\frac{1}{\widetilde{q}}}\geqslant\frac{1}{\widetilde{q}}\cdot\frac{1}{q^{\nu_{q}(L_{m_1+j})-k_{q,i_2}}}.
	\end{equation*}
	Combining the above with \eqref{eq6.31}, we get $\big\Vert\frac{d_{m_1+j}\lambda_0}{p_1\cdots p_{m_1+j}}+\frac{d_{m_1+j}\lambda_2}{p_1\cdots p_{m_1+j}}\big\Vert_{\frac{1}{\widetilde{q}}}\geqslant\frac{1}{\widetilde{q}}\cdot\frac{1}{q^{3G-2}}$. This proves the Claim I-II-II.

	By \eqref{eq6.43} and \eqref{eq6.45}, we have $\big|\frac{d_{m_1+j}\lambda_2}{p_1\cdots p_{m_1+j}}\big|<\big\Vert\frac{d_{m_1+j}c_{\mathrm{a}_1}}{p_1p_2\cdots p_{m_1+j}}+\sum_{t=2}^u\frac{d_{m_1+j}c_{\mathrm{a}_t}}{p_1p_2\cdots p_{m_1+j}}\big\Vert_{\frac{1}{\widetilde{q}}}$ for any $1\leqslant j\leqslant M_3-m_1$ with $m_1+j\in X_{\widetilde{q}}$. Note that $\{q,\,\widetilde{q}\}=\{2,\,3\}$, \eqref{eq6.43} and \eqref{eq6.45} yields
	\begin{equation*}
		\Big\Vert\frac{d_{m_1+j}\lambda}{p_1\cdots p_{m_1+j}}\Big\Vert_{\frac{1}{\widetilde{q}}}=\Big\Vert\frac{d_{m_1+j}c_{\mathrm{a}_1}}{p_1p_2\cdots p_{m_1+j}}+\sum_{t=2}^u\frac{d_{m_1+j}c_{\mathrm{a}_t}}{p_1p_2\cdots p_{m_1+j}}\Big\Vert_{\frac{1}{\widetilde{q}}}-\Big|\frac{d_{m_1+j}\lambda_2}{p_1\cdots p_{m_1+j}}\Big|\geqslant\frac{1}{\widetilde{q}\cdot2^{c_3}\cdot q^{3G}}.
	\end{equation*}
	for any $1\leqslant j\leqslant M_3-m_1$ with $m_1+j\in X_{\widetilde{q}}$. Hence, we have
	\begin{equation}\label{eq6.49}
		m_{d_{m_1+j}^{-1}D_{m_1+j}}\Big(\frac{d_{m_1+j}\lambda}{p_1\cdots p_{m_1+j}}\Big)\geqslant\varepsilon\Big(\frac{1}{\widetilde{q}\cdot2^{c_3}\cdot q^{3G}}\Big).
	\end{equation}
	for any $1\leqslant j\leqslant M_3-m_1$ with $m_1+j\in X_{\widetilde{q}}$. By \eqref{eq6.78-1}, \eqref{eq6.44} and \eqref{eq6.49}, we have
	\begin{equation}\label{eq6.79}
		\prod_{j=m_1-M+1}^{M_3-M}\Big|m_{D_{m_1+j}}\Big(\frac{\lambda}{p_1p_2\cdots p_{M+j}}\Big)\Big|\geqslant\Big[\varepsilon\Big(\frac{1}{\widetilde{q}\cdot2^{c_3}\cdot q^{3G}}\Big)\Big]^{c_3}.
	\end{equation}

	On the other hand, the definition of $M_3$ shows $p_{m_1+1}p_{m_1+2}\cdots p_{m_1+j}\geqslant2^{c_3}$ for every $j>M_3-m_1$. Hence, we have $\big|\frac{c_{\mathrm{a}_1}+\lambda_2}{p_1p_2\cdots p_{M_3+1}}\big|=\frac{1}{p_{m_1+1}p_{m_1+2}\cdots p_{M_3+1}}\cdot	\Big|\frac{c_{\mathrm{a}_1}+\lambda_2}{p_1p_2\cdots p_{m_1+1}}\Big|<\frac{1}{2^{c_3}}$ by
	\eqref{eq6.76}. Since $2^{c_3}>3^{5G}D$, we get
	\begin{equation}\label{eq6.80}
		c_{\mathrm{a}_1}+\lambda_2\in\widetilde{\Lambda}\Big[M_3;~\frac{1}{D\cdot q^{3G}}\Big].
	\end{equation}
	Hence, by Proposition \ref{prop6.3}, Proposition \ref{prop6.4-1} \rm(\romannumeral3), \eqref{eq6.72} and \eqref{eq6.80}, we have
	\begin{equation}\label{eq6.81-1}
		\prod_{j=M_3-M+1}^{m_2-M}\Big|m_{D_{M+j}}\Big(\frac{\sum_{t=2}^{u}c_{\mathrm{a}_t}+(c_{\mathrm{a}_1}+\lambda_2)}{p_1p_2\cdots p_{M+j}}\Big)\Big|\geqslant\varepsilon_1\cdot\Big[\varepsilon\Big(\frac{1}{\widetilde{q}\cdot q^{3G}}\Big)\Big]^{9Gc_3}.
	\end{equation}
	Since $\{q,\,\widetilde{q}\}=\{2,\,3\}$ and $2^{c_3}>3^{5G}D$, we get $\widetilde{q}\cdot q^{3G}<\widetilde{q}\cdot2^{c_3}\cdot q^{3G}<2^{2c_3}$. Note that the function $\varepsilon(\cdot)$ is non-decreasing, from \eqref{eq6.79} and \eqref{eq6.81-1}, we have
	\begin{equation*}
		\prod_{j=m_1-M+1}^{m_2-M}\Big|m_{D_{M+j}}\Big(\frac{\lambda}{p_1p_2\cdots p_{M+j}}\Big)\Big|\geqslant\varepsilon_1\cdot\Big[\varepsilon\Big(\frac{1}{2^{2c_3}}\Big)\Big]^{10Gc_3}.
	\end{equation*}
	This finishes the proof of the Claim I.

	If $m_2=H$, then the arbitrariness of $\lambda$ shows that the desired formula \eqref{eq6.53} holds. Else $m_2<H$, then $u\geqslant3$. Moreover, Proposition \ref{prop6.4-1} \rm(\romannumeral2) shows $m_2-m_1\geqslant2c_3$ if $m_2<H$. It follow from \eqref{eq2.12-3} that if $m_2<H$, then
	\begin{equation*}
		p_{m_1+1}p_{m_1+2}\cdots p_{m_2+j}>2^{c_3},\qquad\forall\, j\geqslant1.
	\end{equation*}
	
	{\bf Claim II}. If $m_2<H$, then
	\begin{equation}\label{eq6.81-7}
		\prod_{j=m_2-M+1}^{m_3-M}\Big|m_{D_{M+j}}\Big(\frac{\lambda}{p_1p_2\cdots p_{M+j}}\Big)\Big|\geqslant\varepsilon_1\cdot\Big[\varepsilon\Big(\frac{1}{2^{2c_3}}\Big)\Big]^{10Gc_3}.
	\end{equation}
	
	{\bf Proof of Claim II}.
	By Proposition \ref{prop6.4-1}, we obtain that
	\begin{equation}\label{eq6.81-3}
		k_{q,\mathrm{a}_1}<k_{q,\mathrm{a}_2}<\cdots<k_{q,\mathrm{a}_u}.
	\end{equation}
	Moreover, applying Proposition \ref{prop6.4-1} again, we get
	\begin{equation}\label{eq6.81-4}
		b_{q,\mathrm{a}_2}\leqslant m_2\leqslant m_3-2c_3
	\end{equation}
	by noting $m_2<H$.
	
	If $p_{m_2+1}\geqslant2^{c_3}$, then similarly to \eqref{eq6.78-2}, it follows from Proposition \ref{prop6.3} that
	\begin{equation}\label{eq6.81-2}
		\prod_{j=m_2-M+1}^{m_3-M}\Big|m_{D_{M+j}}\Big(\frac{\sum_{t=3}^{u}c_{\mathrm{a}_t}+(\sum_{t=1}^{2}c_{\mathrm{a}_t}+\lambda_2)}{p_1p_2\cdots p_{M+j}}\Big)\Big|\geqslant\varepsilon_1\cdot\Big[\varepsilon\Big(\frac{1}{\widetilde{q}\cdot q^{3G}}\Big)\Big]^{9Gc_3}.
	\end{equation}
	Thus, \eqref{eq6.81-7} holds for case $p_{m_2+1}\geqslant2^{c_3}$.

	If $p_{m_2+1}<2^{c_3}$. Let
	\begin{equation*}
		M_4:=\max\{m_2+j:\,p_{m_2+1}p_{m_2+2}\cdots p_{m_2+j}<2^{c_3}\}.
	\end{equation*}
	By analogy with \eqref{eq6.40} and \eqref{eq6.45}, from \eqref{eq6.81-3} and \eqref{eq6.81-4}, we get
	\begin{equation}\label{eq6.81-5}
		\Big\Vert\sum_{t=1}^2\frac{d_{m_2+j}c_{\mathrm{a}_t}}{p_1\cdots p_{m_2+j}}+\sum_{t=3}^u\frac{d_{m_2+j}c_{\mathrm{a}_t}}{p_1\cdots p_{m_2+j}}\Big\Vert_{\frac{1}{q}}\geqslant\frac{1}{q^{3G-2}}\cdot\frac{1}{2^{c_3}}.
	\end{equation}
	for all $1\leqslant j\leqslant M_4-m_2$ with $m_2+j\in X_q$, and
	\begin{equation}\label{eq6.81-6}
		\Big\Vert\sum_{t=1}^2\frac{d_{m_2+j}c_{\mathrm{a}_t}}{p_1\cdots p_{m_2+j}}+\sum_{t=3}^u\frac{d_{m_2+j}c_{\mathrm{a}_t}}{p_1\cdots p_{m_2+j}}\Big\Vert_{\frac{1}{\widetilde{q}}}\geqslant\frac{1}{\widetilde{q}}\cdot\frac{1}{q^{3G-2}}\cdot\frac{1}{2^{c_3}}.
	\end{equation}
	for any $1\leqslant j\leqslant M_4-m_2$ with $m_2+j\in X_{\widetilde{q}}$. Hence, similar to \eqref{eq6.79}, from \eqref{eq6.81-5} and \eqref{eq6.81-6}, we get
	\begin{equation}\label{eq6.81-8}
		\prod_{j=m_2-M+1}^{M_4-M}\Big|m_{D_{m_2+j}}\Big(\frac{\lambda}{p_1p_2\cdots p_{M+j}}\Big)\Big|\geqslant\Big[\varepsilon\Big(\frac{1}{\widetilde{q}\cdot2^{c_3}\cdot q^{3G}}\Big)\Big]^{c_3}.
	\end{equation}
	Moreover, similar to \eqref{eq6.81-1}, according to the definition of $M_4$ and Proposition \ref{prop6.3}, we get
	\begin{equation*}
		\prod_{j=M_4-M+1}^{m_3-M}\Big|m_{D_{M+j}}\Big(\frac{\sum_{t=3}^{u}c_{\mathrm{a}_t}+(\sum_{t=1}^{2}c_{\mathrm{a}_t}+\lambda_2)}{p_1p_2\cdots p_{M+j}}\Big)\Big|\geqslant\varepsilon_1\cdot\Big[\varepsilon\Big(\frac{1}{\widetilde{q}\cdot q^{3G}}\Big)\Big]^{9Gc_3}.
	\end{equation*}
	Combining the above with \eqref{eq6.81-8}, we get
	\begin{equation}\label{eq6.81-9}
		\prod_{j=m_2-M+1}^{m_3-M}\Big|m_{D_{m_2+j}}\Big(\frac{\lambda}{p_1p_2\cdots p_{M+j}}\Big)\Big|\geqslant\Big[\varepsilon\Big(\frac{1}{\widetilde{q}\cdot2^{c_3}\cdot q^{3G}}\Big)\Big]^{10Gc_3}.
	\end{equation}
	Since $\{q,\,\widetilde{q}\}=\{2,\,3\}$ and $2^{c_3}>3^{5G}D$, we get $\widetilde{q}\cdot q^{3G}<\widetilde{q}\cdot2^{c_3}\cdot q^{3G}<2^{2c_3}$. Note that the function $\varepsilon(\cdot)$ is non-decreasing, from \eqref{eq6.81-2} and \eqref{eq6.81-9}, we see that the desired inequality \eqref{eq6.81-7} holds.
	
	If $m_3=H$, then the arbitrariness of $\lambda$ shows \eqref{eq6.53} holds. Else $m_3<H$, then $u\geqslant4$. Iterating the argument used in \eqref{eq6.75} and \eqref{eq6.81-7}, we deduce that
	\begin{equation*}
		\prod_{j=m_t-M+1}^{m_{t+1}-M}\Big|m_{D_{M+j}}\Big(\frac{\lambda}{p_1p_2\cdots p_{M+j}}\Big)\Big|\geqslant\varepsilon_1\cdot\Big[\varepsilon\Big(\frac{1}{2^{2c_3}}\Big)\Big]^{10Gc_3},\qquad\forall\,3\leqslant t\leqslant u'-1,
	\end{equation*}
	where $u'$ is the smallest index such that $m_t=H$, i.e., $u':=\min\{t\geqslant3:\,m_t=H\}$. Hence, it follows from \eqref{eq6.75} and \eqref{eq6.81-7} that
	\begin{equation*}
		\prod_{j=1}^{H}\Big|m_{D_{M+j}}\Big(\frac{\lambda}{p_1p_2\cdots p_{M+j}}\Big)\Big|=	\prod_{t=0}^{u'-1}\prod_{j=m_t-M+1}^{m_{t+1}-M}\Big|m_{D_{M+j}}\Big(\frac{\lambda}{p_1p_2\cdots p_{M+j}}\Big)\Big|\geqslant\Big\{\varepsilon_1\cdot\Big[\varepsilon\Big(\frac{1}{2^{2c_3}}\Big)\Big]^{10Gc_3}\Big\}^{u'},
	\end{equation*}
	where $m_0:=M$. It is clear that $u'\leqslant u$. Thus, we get $u'\leqslant 2G$ by Proposition \ref{prop6.4-1}. Therefore, we have
	\begin{equation*}
		\prod_{j=1}^{H}\Big|m_{D_{M+j}}\Big(\frac{\lambda}{p_1p_2\cdots p_{M+j}}\Big)\Big|\geqslant\Big\{\varepsilon_1\cdot\Big[\varepsilon\Big(\frac{1}{2^{2c_3}}\Big)\Big]^{10Gc_3}\Big\}^{2G}.
	\end{equation*}
	By the arbitrariness of $\lambda$, the desired formula \eqref{eq6.53} holds.
\end{pf}

\begin{prop}\label{prop6.9}
	Assume that the pair $(\mathbb{P},\,\mathbb{D})$ satisfies \eqref{eq1.4}, \eqref{eq1.5}, \eqref{eq2.12-3} and {\bfseries (C6)}. There exist constants $\varepsilon_3>0$ and $\omega_3>0$ such that for any integers $n_0\geqslant1$ and $n_1\geqslant N_4$, one can find an integer $n_2>H(n_0+n_1)$ and a spectrum $\Lambda(n_1,\,n_2)$ of $\bigast\limits_{i\in\mathcal{B}[n_1,\,n_2]}\delta_{p_1^{-1}p_2^{-1}\cdots p_i^{-1}D_i}$ which is of the form described in Theorem \ref{th5.11} (\romannumeral4), such that
	\begin{equation*}
		\prod_{j=1}^{H(n_2)-n_2}\Big|m_{D_{n_2+j}}\Big(\frac{y}{p_{n+1}p_{n+2}\cdots p_{n+j}}+\frac{\lambda}{p_1p_2\cdots p_{n_2+j}}\Big)\Big|\geqslant\varepsilon_3,\qquad\forall  \, y\in[-\omega_3,\,\omega_3],\quad\lambda\in\Lambda(n_1,\,n_2),
	\end{equation*}
	where $k_{q_n,n}$, $l_{q_n,n}$, $U_n$, $H(n_2)$ and $c_3$ are defined in \eqref{eq1.2}, \eqref{eq5.8}, \eqref{eq5.28}, \eqref{eq5.40} and \eqref{eq6.24}, respectively.
\end{prop}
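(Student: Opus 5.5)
The plan is to reduce the statement to Propositions \ref{prop6.10}, \ref{prop6.3} and \ref{prop6.7}. We read the variable $y$-term as $\frac{y}{p_{n_2+1}\cdots p_{n_2+j}}$, i.e.\ as the real perturbation $y\,p_1\cdots p_{n_2}$ of $\lambda$. We then treat it exactly like the element $\lambda_2\in\widetilde{\Lambda}[M;\,\cdot\,]$ in Propositions \ref{prop6.3} and \ref{prop6.7}. Those proofs only use the smallness bound \eqref{eq6.14} and the summability \eqref{eq6.22}, never the integrality of $\lambda_2$. So if $\omega_3\leqslant \frac{1}{2D\cdot 3^{3G}}$, every estimate there survives with $\lambda_2$ replaced by $y\,p_1\cdots p_{n_2}$ plus a genuine integer perturbation.

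First fix $n_0,n_1$ with $n_1\geqslant N_4$. Set $n_1':=H(n_0+n_1)$ and choose $n_2>n_1'$; the choice of $n_2$ is specified below. By {\bfseries (C6)}, $H(n_2)>n_2$, so Proposition \ref{prop6.3} gives $i'$ with $b_{q_{i'},i'}=H(n_2)$, and $\mathcal{B}[n_1,n_2]\cap X_{q}\neq\emptyset$ with $q:=q_{i'}$. Then split $\mathcal{B}[n_1,n_2]=\mathcal{B}_q\cup\mathcal{B}_{\widetilde q}$ as in \eqref{eq6.88}. For $\mathcal{B}_q$ we take the spectrum $\Lambda$ of Proposition \ref{prop6.7} with $M=n_2$. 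For $i\in\mathcal{B}_{\widetilde q}$ we take $C_i$ built from $\widetilde q^{k_{\widetilde q,i}}\theta_{\widetilde q}(p_1\cdots p_{\eta_i})$ with $\eta_i$ very large, as in \eqref{eq6.98}. By Theorem \ref{th5.11} (\romannumeral4), the sum of all $C_i$ is a spectrum of the required form. Proposition \ref{prop6.7} then yields the lower bound
\[
\Big\{\varepsilon_1\big[\varepsilon(2^{-2c_3})\big]^{10Gc_3}\Big\}^{2G}
\]
for the full product over $1\leqslant j\leqslant H(n_2)-n_2$, provided that the $\mathcal{B}_{\widetilde q}$-contribution does not destroy the Claims there.

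The main obstacle is exactly this interaction with $\mathcal{B}_{\widetilde q}$. I would handle it by a dichotomy on $n_2$. If $\sup\{\nu_2(p_n),\nu_3(p_n):n>n_1'\}<c_3$, choose any $n_2>n_1'$ and argue via Proposition \ref{prop6.10}. Its four-case Claim already treats mixed $q/\widetilde q$ sums through the coprimality $\mathbb{Z}\setminus(q\mathbb{Z}\cup\widetilde q\mathbb{Z})$ and gives a bound $\varepsilon(3^{-2(j+1)c_3})$. The length $H(n_2)-n_2$ must then be controlled: I would use Lemma \ref{le5.9} (\romannumeral7),(\romannumeral8) together with \eqref{eq6.8}. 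These show that, when all valuations are below $c_3$, only $O(G)$ indices of each type lie in $\mathcal{S}[n_2,H(n_2)]$. Moreover, the $X_{\widetilde q}$-indices beyond $\max_{i\in\mathcal{B}_{\widetilde q}}b_{\widetilde q,i}$ are again bounded via Proposition \ref{prop6.6}. Together these should bound the relevant window by a constant $K(G,c_3)$.

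Otherwise there is some $n>n_1'$ with $\nu_q(p_n)\geqslant c_3$ or $\nu_{\widetilde q}(p_n)\geqslant c_3$. In that case take $n_2:=n-1$, or the last index before such a jump that keeps $H(n_1)<n_2$. A large prime-power factor of $p_{n_2+1}$ makes every $\widetilde q$-type element satisfy
\[
\Big|\frac{x_i}{p_1\cdots p_{n_2+1}}\Big|\leqslant 2^{-c_3}\,3^{O(G)}<\frac{1}{D\,3^{3G}}
\]
by Lemma \ref{le5.14} and \eqref{eq6.24}, once the $\eta_i$ are chosen with the correct $\widetilde q$-valuation. Hence these elements can be absorbed into the $\widetilde\Lambda$-perturbation, exactly as $c_{\mathrm a_1}+\lambda_2$ was absorbed in \eqref{eq6.77}. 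Proposition \ref{prop6.7} then applies verbatim. In both cases the constants depend only on $G,c_3,D$, which gives uniform $\varepsilon_3,\omega_3$. I expect the delicate step to be verifying that Case~2 can always be arranged with $n_2>H(n_0+n_1)$ while keeping the $\widetilde q$-elements small in the correct valuation.
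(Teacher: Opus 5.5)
\textbf{There is a genuine gap, in your first case.} This is the case where $\nu_2(p_n),\nu_3(p_n)<c_3$ for all $n>H(n_0+n_1)$. Proposition \ref{prop6.10} only gives $\big[\varepsilon\big(3^{-2(\mathcal{N}-n_2+1)c_3}\big)\big]^{\mathcal{N}-n_2}$. That is uniform only if $H(n_2)-n_2$ is bounded by a constant depending on $G$ and $c_3$, and it is not.

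With $q=q_{i'}$, Lemma \ref{le5.9} (\romannumeral7), (\romannumeral8) and \eqref{eq6.8} bound only the number of $q$-type indices in $(n_2,H(n_2)]$. For $\widetilde{q}$-type indices, (\romannumeral7) merely says that their \emph{cumulative} $\widetilde{q}$-valuation grows with their number, which is compatible with each single $\nu_{\widetilde{q}}(p_n)$ being small. Proposition \ref{prop6.6} gives the inequality $k_{q_n,n}\geqslant\nu_{q_n}(L_j)$ for such $j$, not a bound on how many of them there are.

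For instance, take $m\leqslant n_2$ with $D_m=\{0,1\}$ (so $q_m=2$), then a run of indices $j$ with $D_j=\{0,4,8\}$ and $p_j=9$, then an index $l$ with $D_l=\{0,2\}$ and $p_l=5$. One gets $k_{2,l}=k_{2,m}-1=\nu_2(L_j)$, so \eqref{eq1.4} and \eqref{eq1.5} hold among these indices. Yet $H(n_2)\geqslant b_{2,m}\geqslant l_{2,m}\geqslant l$ for any $n_2$ inside the run, and the run can be as long as you like with $\nu_3(p_j)=2$.

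Your choice of very large $\eta_i$ for $i\in\mathcal{B}_{\widetilde{q}}$ also blocks the natural repair. Those elements then never become small, so they can never be handed to Proposition \ref{prop6.7} as a $\widetilde{\Lambda}$-perturbation.

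\textbf{The missing idea is to cut the window at $n_2+2c_3$.} If $H(n_2)-n_2\leqslant 2c_3$, Proposition \ref{prop6.10} with $\mathcal{N}=H(n_2)$ suffices. Otherwise, fewer than $2G$ indices of $(n_2,n_2+2c_3]$ lie in $X_q$, so Lemma \ref{le5.9} (\romannumeral7) gives $\nu_{\widetilde{q}}(p_{n_2+1}\cdots p_{n_2+2c_3})>c_3$. Then (\romannumeral8) forces $b_{\widetilde{q},i}\leqslant n_2+2c_3$ for all $i\in\mathcal{B}_{\widetilde{q}}$. Hence you may take $\eta_i:=n_2+2c_3$ for these $i$, and Lemma \ref{le5.14} puts $\sum_{i\in\mathcal{B}_{\widetilde{q}}}C_i$ inside $\widetilde{\Lambda}\big[n_2+2c_3;\,\frac{1}{D\cdot q^{3G}}\big]$. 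Build the $q$-type elements from the $m_t$ of Proposition \ref{prop6.4-1} with $M:=n_2+2c_3$. Then Proposition \ref{prop6.10} (with $\mathcal{N}:=n_2+2c_3$) controls the first $2c_3$ factors, and Proposition \ref{prop6.7} (with $M:=n_2+2c_3$, the $\widetilde{q}$-part playing the role of $\lambda_2$) controls the rest. This is what the paper does.

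\textbf{Two smaller points.}
\begin{itemize}
\item In your second case nothing has to be absorbed. Lemma \ref{le5.9} (\romannumeral8) already gives $\mathcal{B}[n_1,n_2]\cap X_{r}=\emptyset$, where $r$ is the type of the large valuation at $p_{n_2+1}$. Necessarily $r=\widetilde{q}_{i'}$, since $b_{q_{i'},i'}>n_2$. So Proposition \ref{prop6.7} with $M=n_2$ applies directly.
\item Absorbing $y$ into $\lambda_2$ instead of using equicontinuity is legitimate. But on the stretch governed by Proposition \ref{prop6.10} the available margin is only $3^{-2(j+1)c_3}$, so $\omega_3$ must depend on $c_3$ as well, not just on $D$ and $G$.
\end{itemize}
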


\begin{pf}
	By {\bfseries (C6)}, we get $H>n_2$, so $\mathcal{B}[n_1,\,n_2]\neq\emptyset$.

	{\bf Claim}. One can find an integer $n_2>H(n_0+n_1)$ and a spectrum $\Lambda:=\Lambda(n_1,\,n_2)$ of $\bigast\limits_{i\in\mathcal{B}[n_1,\,n_2]}\delta_{p_1^{-1}p_2^{-1}\cdots p_i^{-1}D_i}$ which is of the form described in Theorem \ref{th5.11} (\romannumeral4), such that
	\begin{equation}\label{eq6.91-1}
		\prod_{j=1}^{H(n_2)-n_2}\Big|m_{D_{n_2+j}}\Big(\frac{\lambda}{p_1p_2\cdots p_{n_2+j}}\Big)\Big|\geqslant\varepsilon_1^{2G}\cdot\big[\varepsilon\big(\frac{1}{3^{2(2c_3+1)c_3}}\big)\big]^{21G^2c_3},\qquad\forall \,\lambda\in\Lambda.
	\end{equation}
	
	{\bf Proof of Claim}. We prove the assertion in two cases.
	
	{\bf Case I}. $\{n\geqslant H(n_0+n_1):\,\mbox{either}~~\nu_{2}(p_{n+1})\geqslant c_3~~\mbox{or}~~\nu_{3}(p_{n+1})\geqslant c_3\}\neq\emptyset$.
	
	Choose $n_2>H(n_0+n_1)$ and $\widetilde{q}\in\{2,\,3\}$ such that either $\nu_{\widetilde{q}}(p_{n_2+1})\geqslant c_3$. Let $\mathcal{S}:=\mathcal{S}[n_1,\,n_2]$, $H:=H(n_2)$ and
	\begin{equation*}
		q\in(\{2,\,3\}\setminus\{\widetilde{q}\}).
	\end{equation*}
	By Lemma \ref{le5.9} \rm(\romannumeral8) and the assumption $\nu_{\widetilde{q}}(p_{n_2+1})\geqslant c_3$, we have
	\begin{equation}\label{eq6.83}
		b_{\widetilde{q},i}\leqslant n_2,\qquad\forall \, i\in(\mathcal{S}\cap X_{\widetilde{q}}).
	\end{equation}
	By {\bfseries (C6)}, we get $H>n_2$, so $H>H(n_1)$. It follows from Proposition \ref{prop6.3} and \eqref{eq6.83} that there exists $i'\in [\mathcal{S}\cap X_{q}]$ such that
	\begin{equation}\label{eq6.84}
		b_{q,i'}=H.
	\end{equation}
	Moreover, we get
	\begin{equation*}
		\mathcal{B}[n_1,\,n_2]=\mathcal{B}[n_1,\,n_2]\cap X_{q}
	\end{equation*}
	by \eqref{eq6.83}.
	
	Note that $q_{i'}=q$, applying Proposition \ref{prop6.7} with $M:=n_2$, there exists a spectrum $\Lambda(n_1,\,n_2,\,n_2)$ of $\bigast\limits_{i\in\mathcal{B}[n_1,\,n_2]}\delta_{p_1^{-1}p_2^{-1}\cdots p_i^{-1}D_i}$ which is of the form described in Theorem \ref{th5.11} (\romannumeral4), such that
	\begin{equation*}
		\prod_{j=1}^{H-M}\Big|m_{D_{M+j}}\Big(\frac{\lambda}{p_1p_2\cdots p_{M+j}}\Big)\Big|\geqslant\Big\{\varepsilon_1\cdot\Big[\varepsilon\Big(\frac{1}{2^{2c_3}}\Big)\Big]^{10Gc_3}\Big\}^{2G},\qquad\forall\,\lambda\in \Lambda(n_1,\,n_2,\,n_2).
	\end{equation*}
	Taking $\Lambda(n_1,\,n_2):=\Lambda(n_1,\,n_2,\,n_2)$ completes the proof of the claim in this case.

	{\bf Case II}. $\{n\geqslant n_0+n_1:\,\mbox{either}~~\nu_{2}(p_{n+1})\geqslant c_3~~\mbox{or}~~\nu_{3}(p_{n+1})\geqslant c_3\}=\emptyset$.
	
	Let $n_2:=H(n_0+n_1)$, $\mathcal{S}:=\mathcal{S}[n_1,\,n_2]$ and $H:=H(n_2)$. Since $H(n_2)>n_2>H(n_1)$, Proposition \ref{prop6.3} shows that there exists $i'\in\mathcal{S}$ such that
	\begin{equation}\label{eq6.86-1}
		b_{q,i'}=H.
	\end{equation}
	Let
	\begin{equation*}
		q:=q_{i'}~~\mbox{and}~~\widetilde{q}:=\widetilde{q}_{i'}.
	\end{equation*}
	
	If $H-n_2\leqslant 2c_3$. Let
	\begin{equation}\label{eq6.81-10}
		x_i=\begin{cases}
			q^{k_{q,\mathrm{a}_t}}\theta_{q}(p_1p_2\cdots p_{n_2+2c_3}), & \mbox{if }i\in(\mathcal{B}[n_1,\,n_2]\cap X_{q}),\\
			\widetilde{q}^{k_{\widetilde{q},i}}\theta_{\widetilde{q}}(p_1p_2\cdots p_{n_2+2c_3}), & \mbox{if }i\in(\mathcal{B}[n_1,\,n_2]\cap X_{\widetilde{q}}),
		\end{cases}	
	\end{equation}
	and
	\begin{equation*}
		C_i:=\begin{cases}
			\{0,\,x_i\},  & \mbox{if } i\in X_2,\\
			\{0,\,x_i,\,-x_i\},  & \mbox{if } i\in X_3.\\
		\end{cases}
	\end{equation*}
	Since $b_{q_i,i}\leqslant H\leqslant n_2+2c_3$, we get $C_i\subset U_i$ for all $i\in\mathcal{B}[n_1,\,n_2]$. By Theorem \ref{th5.11} \rm(\romannumeral4), we see that $\sum_{i\in\mathcal{B}[n_1,\,n_2]}C_i$. is a spectrum of $\bigast\limits_{i\in\mathcal{B}[n_1,\,n_2]}\delta_{p_1^{-1}p_2^{-1}\cdots p_{i}^{-1}D_i}$. Furthermore, it is easy to verify that $x_i$, as defined by \eqref{eq6.81-10}, corresponds to setting $\mathcal{N}:=H$ and $\eta_i:=n_2+2c_3$ in \eqref{eq6.98}. It follows from Proposition \eqref{prop6.10} that
	\begin{equation*}
		\prod_{j=1}^{H-n_2}\Big|m_{D_{n_2+j}}\Big(\frac{\lambda}{p_1p_2\cdots p_{n_2+j}}\Big)\Big|\geqslant\Big[\varepsilon\Big(\frac{1}{3^{2(H-n_2+1)c_3}}\Big)\Big]^{H-n_2},\qquad\forall \,\lambda\in\Lambda.
	\end{equation*}
	Hence, when $H-n_2\leqslant 2c_3$, the proposition holds by setting $\Lambda(n_1,\,n_2):=\sum_{i\in\mathcal{B}[n_1,\,n_2]}C_i$.

	If $H-n_2>2c_3$. Define $\{\mathrm{a}_t\}_{t=1}^u$ be the integer sequence obtained from Proposition \ref{prop6.4-1} with $M:=n_2+2c_3$. Hence $\{\mathrm{a}_t\}_{t=1}^u=\mathcal{B}[n_1,\,n_2]\cap X_{q}$.
	
	We now begin the construction of the prespectrum $\Lambda$ of $\bigast\limits_{i\in\mathcal{B}[n_1,\,n_2]}\delta_{p_1^{-1}p_2^{-1}\cdots p_i^{-1}D_i}$. For any $i\in\mathcal{B}[n_1,\,n_2]$, we take
	\begin{equation}\label{eq6.85}
		x_i=\begin{cases}
			q^{k_{q,\mathrm{a}_t}}\theta_{q}(p_1p_2\cdots p_{m_t}), & \mbox{if }i=\mathrm{a}_t,\,1\leqslant t\leqslant u,\\
			\widetilde{q}^{k_{\widetilde{q},i}}\theta_{\widetilde{q}}(p_1p_2\cdots p_{n_2+2c_3}), & \mbox{if }i\in(\mathcal{B}[n_1,\,n_2]\cap X_{\widetilde{q}}),
		\end{cases}	
	\end{equation}
	and
	\begin{equation*}
		C_i:=\begin{cases}
			\{0,\,x_i\},  & \mbox{if } i\in X_2,\\
			\{0,\,x_i,\,-x_i\},  & \mbox{if } i\in X_3.\\
		\end{cases}
	\end{equation*}
	Clearly, \eqref{eq6.8} still holds under this proposition. Therefore, it follows from \eqref{eq6.8} that
	$\#\{i\in X_{\widetilde{q}}:\,n_2<i\leqslant n_2+2c_3\}>2c_3-2G$. Combining this with Lemma \ref{le5.9} \rm(\romannumeral7), we have
	\begin{equation}\label{eq6.81-11}
		\nu_{\widetilde{q}}(p_{n_2+1}p_{n_2+2}\cdots p_{n_2+2c_3})>c_3
	\end{equation}
	by noting $c_3>5G$. It follows from Lemma \ref{le5.9} \rm(\romannumeral8) that
	\begin{equation*}
		b_{\widetilde{q},i}\leqslant n_2+2c_3,\qquad\forall\,i\in(\mathcal{B}[n_1,\,n_2]\cap X_{\widetilde{q}}).
	\end{equation*}
	Moreover, by Proposition \ref{prop6.4-1} \rm(\romannumeral1), we get $m_t\geqslant b_{q,\mathrm{a}_t}$ for all $1\leqslant t\leqslant u$. Hence, $C_i\subset U_i$ for all $i\in\mathcal{B}[n_1,\,n_2]$. By Theorem \ref{th5.11} \rm(\romannumeral4), we see that $\sum_{i\in\mathcal{B}[n_1,\,n_2]}C_i$ is a spectrum of $\bigast\limits_{i\in\mathcal{B}[n_1,\,n_2]}\delta_{p_1^{-1}p_2^{-1}\cdots p_{i}^{-1}D_i}$.
	
	We can readily check that $x_i$ defined by \eqref{eq6.81-10} is obtained by letting $\mathcal{N}:=n_2+2c_3$, $\eta_{\mathrm{a}_t}:=m_t$ if $1\leqslant t\leqslant u$ and $\eta_i:=n_2+2c_3$ if $i\in(\mathcal{B}[n_1,\,n_2]\cap X_{\widetilde{q}})$ in \eqref{eq6.98}. It follows from Proposition \eqref{prop6.10} that
	\begin{equation}\label{eq6.81-12}
		\prod_{j=1}^{2c_3}\Big|m_{D_{n_2+j}}\Big(\frac{\lambda}{p_1p_2\cdots p_{n_2+j}}\Big)\Big|\geqslant\Big[\varepsilon\Big(\frac{1}{3^{2(2c_3+1)c_3}}\Big)\Big]^{2c_3},\qquad\forall \,\lambda\in\sum_{i\in\mathcal{B}[n_1,\,n_2]}C_i.
	\end{equation}

	On the other hand, we have
	\begin{equation*}
		\begin{split}
			\nu_{\widetilde{q}}(p_{1}p_{2}\cdots p_{n_2+2c_3})-k_{\widetilde{q},i}=\nu_{\widetilde{q}}(p_{i+1}p_{i+2}\cdots p_{n_2+2c_3})+\nu_{\widetilde{q}}(\widetilde{q}d_i)
			\geqslant\nu_{\widetilde{q}}(p_{n_2+1}p_{n_2+2}\cdots p_{n_2+2c_3})+1
		\end{split}
	\end{equation*}
	for all $i\leqslant n_2$ with $i\in X_{\widetilde{q}}$. By \eqref{eq6.81-11}, we have
	\begin{equation*}
		\nu_{\widetilde{q}}(p_{1}p_{2}\cdots p_{n_2+2c_3})-k_{\widetilde{q},i}>c_3+1,\qquad\forall\,i\in(\mathcal{B}[n_1,\,n_2]\cap X_{\widetilde{q}}).
	\end{equation*}
	From Lemma \ref{le5.14} and \eqref{eq6.85}, we get
	\begin{equation*}
		\Big|\sum_{i\in(\mathcal{B}[n_1,\,n_2]\cap X_{\widetilde{q}})}\frac{x_i}{p_1p_2\cdots p_{n_2+2c_3}}\Big|\leqslant
		\frac{\widetilde{q}}{\widetilde{q}-1}\cdot \widetilde{q}^{k_{\widetilde{q},i_1}-\nu_{\widetilde{q}}(p_1p_2\cdots p_{n_2+2c_3})},
	\end{equation*}
	where $i_1\in(\mathcal{B}[n_1,\,n_2]\cap X_{\widetilde{q}})$ satisfies $k_{\widetilde{q},i_1}=\max\{k_{\widetilde{q},i}:\,i\in(\mathcal{B}[n_1,\,n_2]\cap X_{\widetilde{q}})\}$. Combining the above two inequalities, we obtain $\big|\sum_{i\in(\mathcal{B}[n_1,\,n_2]\cap X_{\widetilde{q}})}\frac{x_i}{p_1p_2\cdots p_{n_2+2c_3}}\big|\leqslant\widetilde{q}^{-c_3}\leqslant\frac{1}{2^{c_3}}<\frac{1}{D\cdot3^{5G}}$. This means that
	\begin{equation*}
		\sum_{i\in(\mathcal{B}[n_1,\,n_2]\cap X_{\widetilde{q}})}C_i\subset\widetilde{\Lambda}\Big[n_2+2c_3;~\frac{1}{D\cdot q^{3G}}\Big].
	\end{equation*}
	Applying Proposition \ref{prop6.7} with $M:=n_2+2c_3$, we have
	\begin{equation*}
		\prod_{j=1}^{H-(n_2+2c_3)}\Big|m_{D_{n_2+2c_3+j}}\Big(\frac{\lambda}{p_1p_2\cdots p_{n_2+2c_3+j}}\Big)\Big|\geqslant\Big\{\varepsilon_1\cdot\Big[\varepsilon\Big(\frac{1}{2^{2c_3}}\Big)\Big]^{10Gc_3}\Big\}^{2G}
	\end{equation*}
	for all $\lambda\in\sum_{i\in(\mathcal{B}[n_1,\,n_2]\cap X_{q})}C_i+\sum_{i\in(\mathcal{B}[n_1,\,n_2]\cap X_{\widetilde{q}})}C_i$. Combining the above with \eqref{eq6.81-12}, we have
	\begin{equation*}
		\prod_{j=1}^{H-n_2}\Big|m_{D_{n_2+j}}\Big(\frac{\lambda}{p_1p_2\cdots p_{n_2+j}}\Big)\Big|\geqslant\Big[\varepsilon\Big(\frac{1}{3^{2(2c_3+1)c_3}}\Big)\Big]^{2c_3}\cdot\Big\{\varepsilon_1\cdot\Big[\varepsilon\Big(\frac{1}{2^{2c_3}}\Big)\Big]^{10Gc_3}\Big\}^{2G},\qquad\forall \,\lambda\in\sum_{i\in\mathcal{B}[n_1,\,n_2]}C_i.
	\end{equation*}
	Since $\varepsilon(\cdot)$ is non-decreasing, we have $\big\{\varepsilon_1\cdot\big[\varepsilon\big(\frac{1}{2^{2c_3}}\big)\big]^{10Gc_3}\big\}^{2G}\geqslant\varepsilon_1^{2G}\cdot\big[\varepsilon\big(\frac{1}{3^{2(2c_3+1)c_3}}\big)\big]^{20G^2c_3}$. It follows that
	\begin{equation*}
		\prod_{j=1}^{H-n_2}\Big|m_{D_{n_2+j}}\Big(\frac{\lambda}{p_1p_2\cdots p_{n_2+j}}\Big)\Big|\geqslant\varepsilon_1^{2G}\cdot\big[\varepsilon\big(\frac{1}{3^{2(2c_3+1)c_3}}\big)\big]^{21G^2c_3},\qquad\forall \,\lambda\in\sum_{i\in\mathcal{B}[n_1,\,n_2]}C_i.
	\end{equation*}
	Hence, the claim holds by setting $\Lambda(n_1,\,n_2):=\sum_{i\in\mathcal{B}[n_1,\,n_2]}C_i$.

	Note that
	\begin{equation*}
		\{\delta_{p_{n+1}^{-1}D_{n+1}}\bigast\delta_{p_{n+1}^{-1}p_{n+2}^{-1}D_{n+2}}\bigast\cdots\bigast\delta_{p_{n+1}^{-1}p_{n+2}^{-1}\cdots p_{n+m}^{-1}D_{n+m}}:\,n>0,\,m>0\}
	\end{equation*}
	is a family of probability measures supported in subsets of $[0,\,1]$. Hence, their Fourier transformations are equi-continuous. It follows from \eqref{eq6.91-1} that for  $\varepsilon_1^{2G}\cdot\big[\varepsilon\big(\frac{1}{3^{2(2c_3+1)c_3}}\big)\big]^{21G^2c_3}$, there exists $\omega_3>0$ such that for any $\lambda\in\Lambda(n_1,\,n_2)$ and $y\in[-\omega_3,\,\omega_3]$,
	\begin{equation*}
		\prod_{j=1}^{H-n_2}\Big|m_{p_{n+1}^{-1}p_{n+2}^{-1}\cdots p_{n+j}^{-1}D_{n_2+j}}\Big(y+\frac{\lambda}{p_1p_2\cdots p_{n_2}}\Big)\Big|\geqslant\frac{1}{2}\varepsilon_1^{2G}\cdot\big[\varepsilon\big(\frac{1}{3^{2(2c_3+1)c_3}}\big)\big]^{21G^2c_3}.
	\end{equation*}
	Finally, we complete the proof of the proposition by taking $\varepsilon_3:=\frac{1}{2}\varepsilon_1^{2G}\cdot\big[\varepsilon\big(\frac{1}{3^{2(2c_3+1)c_3}}\big)\big]^{21G^2c_3}$.
\end{pf}

\begin{prop}\label{prop7.13}
Assume that the pair $(\mathbb{P},\,\mathbb{D})$ satisfies \eqref{eq1.4}, \eqref{eq1.5} and \eqref{eq2.12-3}. There are small positive constants $\varepsilon_4$ and $\omega_4$ such that for any integers $n_0\geqslant1$ and $n_1\geqslant1$, there exist an integer $n_2>H(n_0+n_1)$ and a spectrum $\Lambda(n_1,\,n_2)$ of $\bigast\limits_{i=n_1+1}^{n_2}\delta_{p_1^{-1}p_2^{-1}\cdots p_i^{-1}D_i}$ which is of the form described in Theorem \ref{th5.11} (\romannumeral4), such that
\begin{equation*}
	\Big|\widehat{\mu}_{>n_2}\Big(y+\frac{\lambda}{p_1p_2\cdots p_{n_2}}\Big)\Big|\geqslant\varepsilon_4,\qquad\forall \, y\in[-\omega_4,\,\omega_4],\quad\lambda\in\Lambda.
\end{equation*}	
\end{prop}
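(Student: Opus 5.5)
We will split according to whether {\bfseries (C6)} holds, and in each case build the spectrum $\Lambda(n_1,n_2)$ in two layers. We first choose a spectrum of the block $\bigast_{i\in\mathcal{S}[n_1,n_2]}\delta_{p_1^{-1}\cdots p_i^{-1}D_i}$ of the form of Theorem \ref{th5.11} (\romannumeral4). The ``bad'' indices $\mathcal{B}[n_1,n_2]$ are those whose component must be controlled on the range $(n_2,H(n_2)]$. Every other index $i$ has $b_{q_i,i}\le n_2$. The target estimate comes from the factorization
\[
\widehat{\mu}_{>n_2}\Big(y+\tfrac{\lambda}{p_1\cdots p_{n_2}}\Big)=\prod_{j=1}^{H-n_2}m_{D_{n_2+j}}\Big(\tfrac{y}{p_{n_2+1}\cdots p_{n_2+j}}+\tfrac{\lambda}{p_1\cdots p_{n_2+j}}\Big)\cdot\widehat{\mu}_{>H}\Big(\tfrac{y}{p_{n_2+1}\cdots p_H}+\tfrac{\lambda}{p_1\cdots p_H}\Big),
\]
where $H=H(n_2)$. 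We bound the first factor by Proposition \ref{prop6.9} and the second by Propositions \ref{prop5.13}/\ref{prop5.16}, and take $\varepsilon_4=\varepsilon_3\cdot\frac12\varepsilon_2$ and $\omega_4=\min\{\omega_2,\omega_3\}$.

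\emph{Case 1: (C6) fails.} Then there are infinitely many $n$ with $H(n)=n$. We pick such an $n_2>H(n_0+n_1)$, so the finite product is empty. Let $\Lambda'$ be any spectrum of the block as in Theorem \ref{th5.11} (\romannumeral3). Proposition \ref{prop5.16} (with $H=n_2$) supplies integers $z_\lambda$ with $z_0=0$ such that $\{\lambda+p_1\cdots p_{n_2}z_\lambda\}$ is again a spectrum of the required form, and $|\widehat\mu_{>n_2}(y+\lambda/p_1\cdots p_{n_2}+z_\lambda)|>\frac12\varepsilon_2$ for $|y|\le\omega_2$. This is exactly the claim for the shifted set.

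\emph{Case 2: (C6) holds.} We may enlarge $n_1$ to be $\ge N_4$; a larger $n_1$ only requires more indices in the block, which are handled by the same construction. Proposition \ref{prop6.9} gives $n_2>H(n_0+n_1)$ and a spectrum $\Lambda(n_1,n_2)$ on $\mathcal{B}[n_1,n_2]$ whose product over $j\le H-n_2$ is at least $\varepsilon_3$ uniformly in $|y|\le\omega_3$. For the indices $i\in\mathcal{S}[n_1,n_2]\setminus\mathcal{B}[n_1,n_2]$ we take $C_i=\{0,x_i\}$ or $\{0,\pm x_i\}$ with $x_i=q_i^{k_{q_i,i}}\theta_{q_i}(p_1\cdots p_{\eta_i})$. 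We choose $\eta_i$ so large that $p_1\cdots p_{H}$ divides $x_i$ up to its $q_i$-part. These components then contribute integers at every level $n_2+j\le H$, and by the $1$-periodicity \eqref{eq2.2} they do not change the masks there. Theorem \ref{th5.11} (\romannumeral3) then gives that the sum over all $i$ is a spectrum of the full block. Finally, we apply Proposition \ref{prop5.16} to this sum with $H=H(n_2)$. The shifts $p_1\cdots p_Hz_\lambda$ are integers at every level $\le H$, so they leave the finite product untouched and preserve the form of Theorem \ref{th5.11} (\romannumeral4). They also make $|\widehat\mu_{>H}|\ge\frac12\varepsilon_2$ on a neighbourhood. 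The shift in the argument of $\widehat\mu_{>H}$ is $y/(p_{n_2+1}\cdots p_H)$, which has modulus at most $|y|\le\omega_2$.

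The main obstacle is the compatibility of the two layers. The first difficulty is justifying that choosing $x_i$ divisible by the large product for $i\notin\mathcal{B}$ keeps $x_i\in U_i$; this needs $\eta_i\ge b_{q_i,i}$, which holds since we take $\eta_i\ge H$. The second is that Proposition \ref{prop6.9}'s estimate was stated for $\lambda$ supported on $\mathcal{B}$ only. We must check that adding elements which are integers relative to $p_1\cdots p_{n_2+j}/d_{n_2+j}$ does not disturb it, and that the final $z_\lambda$-shift is consistent with $z_0=0$. If the periodicity argument is delicate for the $d_{n_2+j}$ factors, we would instead absorb those components into the $\widetilde{\Lambda}[\,\cdot\,;\delta]$ perturbation allowed in Proposition \ref{prop6.7}.
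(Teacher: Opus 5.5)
Your Case 1 is fine; getting $H(n_2)=n_2$ from the negation of \textbf{(C6)} is in fact cleaner than the paper's phrasing. Case 2, however, has a genuine gap at the claim that the components $x_i$ with $i\in\mathcal{S}[n_1,n_2]\setminus\mathcal{B}[n_1,n_2]$ ``contribute integers at every level $n_2+j\le H$''. No choice of $\eta_i$ achieves this. Every element of $U_i$ has $q$-adic valuation exactly $k_{q,i}$ (where $q=q_i$). Since $b_{q,i}\le n_2$ forces $l_{q,i}\le n_2$, you have $k_{q,n_2+j}>k_{q,i}$ for every $n_2+j\in X_q$, and hence
\[
\frac{d_{n_2+j}x_i}{p_1p_2\cdots p_{n_2+j}}\in q^{k_{q,i}-k_{q,n_2+j}-1}(\mathbb{Z}\setminus q\mathbb{Z})
\]
is never an integer. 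Moreover, Lemma \ref{le5.9} (\romannumeral5) gives $k_{q,i}<k_{q,i'}$ for every $i'\in\mathcal{B}[n_1,n_2]\cap X_q$. So these components dominate the $q$-adic denominator of $\frac{d_{n_2+j}\lambda}{p_1\cdots p_{n_2+j}}$, and the exponent $k_{q,n_2+j}-k_{q,i}$ is not bounded in terms of $G$ and $c_3$ (take $i$ far to the left of $n_2$). Nothing in your argument then keeps these points away from $\frac{\mathbb{Z}\setminus q\mathbb{Z}}{q}$. Consequently the bound of Proposition \ref{prop6.9}, which holds only for $\lambda\in\sum_{i\in\mathcal{B}[n_1,n_2]}C_i$, does not transfer to general $\lambda$. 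Your fallback via $\widetilde{\Lambda}[M;\delta]$ does not help either. The quantity $|x_i/(p_1\cdots p_{M+1})|$ need not be $\le\frac{1}{Dq^{3G}}$, and for large $\eta_i$ it is not even bounded. Also, in Case II of Proposition \ref{prop6.9} the first $2c_3$ levels are handled by Proposition \ref{prop6.10}, which allows no perturbation.

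The missing idea is that for such $\lambda$ you need no control over $(n_2,H]$ at all. Write $\lambda=\sum_i c_i$ and split into two cases. If $c_i\neq0$ for some $i\notin\mathcal{B}[n_1,n_2]$, choose $j_\lambda=i$ in Theorem \ref{th5.11} (\romannumeral4). Since $b_{q_i,i}\le n_2$, you get $n_\lambda=n_2$, so any shift $z_\lambda\in p_1\cdots p_{n_2}\mathbb{Z}$ is admissible (it is absorbed into $c_i+z_\lambda\in U_i$). Then Proposition \ref{prop5.13}, applied directly to $\widehat\mu_{>n_2}$ at $\lambda/(p_1\cdots p_{n_2})$, gives the bound $\frac12\varepsilon_2$ on $[-\omega_2,\omega_2]$. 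Only for $\lambda\in\sum_{i\in\mathcal{B}[n_1,n_2]}C_i$ do you use your factorization: Proposition \ref{prop6.9} bounds the finite product, and a shift in $p_1\cdots p_H\mathbb{Z}$ from Proposition \ref{prop5.13} bounds $\widehat\mu_{>H}$, giving $\frac12\varepsilon_2\varepsilon_3$. Theorem \ref{th5.11} (\romannumeral4) lets each $\lambda$ carry its own shift, so the mixed family is still a spectrum of the required form. This is how the paper argues; it simply takes $\eta_i=b_{q_i,i}$ for $i\notin\mathcal{B}[n_1,n_2]$.
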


\begin{pf}
If the set $\{n\geqslant1:\,b_{q_j,j}\leqslant b_{q_n,n}~\mbox{for~all}~j\leqslant b_{q_n,n}\}$ is infinite, there exists $n_2\in\{n\geqslant1:\,b_{q_j,j}\leqslant b_{q_n,n}~\mbox{for~all}~j\leqslant b_{q_n,n}\}$ such that $n_2>H(n_0+n_1)$. It is clear $H(n_2)=n_2$. Let $\Lambda$ be a spectrum of $\bigast\limits_{i=n_1+1}^{n_2}\delta_{p_{1}^{-1}p_{2}^{-1}\cdots p_{i}^{-1}D_{i}}$ as described in Theorem \ref{th5.11} \rm(\romannumeral3). By Proposition \ref{prop5.16}, we obtain that for all $\lambda\in\Lambda$, one can find $z_{\lambda}\in\mathbb{Z}$ with $z_{0}=0$ so that
\begin{equation*}
	\Big|\hat\mu_{>n_2}\Big(y+\frac{\lambda}{p_1p_2\cdots p_{n_2}}+z_{\lambda}\Big)\Big|>\frac{1}{2}\varepsilon_2,\qquad\forall \, y\in[-\omega_2,\,\omega_2],\quad\lambda\in\Lambda.
\end{equation*}
Moreover, Proposition \ref{prop5.16} shows that the set $\{\lambda+p_1p_2\cdots p_{n_2}z_{\lambda}:\,\lambda\in\Lambda\}\subset\sum_{i=n_1+1}^{n_2}U_i$ is also a spectrum of $\bigast\limits_{i=n_1+1}^{n_2}\delta_{p_1^{-1}p_2^{-1}\cdots p_i^{-1}D_i}$. In this case, we complete the proof of the proposition by taking $\varepsilon_4:=\frac{1}{2}\varepsilon_2$, $\omega_4:=\omega_2$ and $\Lambda(n_1,\,n_2):=\{\lambda+p_1p_2\cdots p_{n_2}z_{\lambda}:\,\lambda\in\Lambda\}$.

If the set $\{n\geqslant1:\,b_{q_j,j}\leqslant b_{q_n,n}~\mbox{for~all}~j\leqslant b_{q_n,n}\}$ is finite, then {\bfseries (C6)} holds. By Proposition \ref{prop6.9}, one can find an integer $n_2>H(n_0+n_1)$ and a sequence of integer subsets $\{C_i\}_{i\in\mathcal{B}[n_1,\,n_2]}$ with $C_i\subset(\{0\}\cup U_i)$ such that $\sum_{i\in\mathcal{B}[n_1,\,n_2]}C_i$ is a spectrum of $\bigast\limits_{i\in\mathcal{B}[n_1,\,n_2]}\delta_{p_1^{-1}p_2^{-1}\cdots p_i^{-1}D_i}$ and
\begin{equation}\label{eq5.91}
	\prod_{j=1}^{H-n_2}\Big|m_{D_{n_2+j}}\Big(y+\frac{\lambda}{p_1p_2\cdots p_{n_2+j}}\Big)\Big|\geqslant\varepsilon_3,\qquad\forall\,y\in[-\omega_3,\,\omega_3] ,\quad\,\lambda\in\sum_{i\in\mathcal{B}[n_1,\,n_2]}C_i,
\end{equation}
where $H:=H(n_2)$.

We now begin the construction of a prespectrum $\Lambda$ of $\bigast\limits_{i=n_1+1}^{n_2}\delta_{p_1^{-1}p_2^{-1}\cdots p_{\mathrm{a}_t}^{-1}D_{\mathrm{a}_t}}$. For any $i\in(\mathcal{S}[n_1;\,n_2]\setminus\mathcal{B}[n_1,\,n_2])$, we take
\begin{equation}
	x_i=\begin{cases}
		q^{k_{q,i}}\theta_{q}(p_1p_2\cdots p_{b_{q,i}}), & i\in[(\mathcal{S}[n_1;\,n_2]\setminus\mathcal{B}[n_1,\,n_2])\cap X_{q}],\\
		\widetilde{q}^{k_{\widetilde{q},i}}\theta_{\widetilde{q}}(p_1p_2\cdots p_{b_{\widetilde{q},i}}), & i\in[(\mathcal{S}[n_1;\,n_2]\setminus\mathcal{B}[n_1,\,n_2])\cap X_{\widetilde{q}}],
	\end{cases}	
\end{equation}
and
\begin{equation*}
	C_i:=\begin{cases}
		\{0,\,x_i\},  & \mbox{if } i\in X_2,\\
		\{0,\,x_i,\,-x_i\},  & \mbox{if } i\in X_3.\\
	\end{cases}
\end{equation*}
It follows from Theorem \ref{th5.11} \rm(\romannumeral3) that $\sum_{i\in\mathcal{S}[n_1;\,n_2]}C_i$ is a spectrum of $\bigast\limits_{i=n_1+1}^{n_2}\delta_{p_1^{-1}p_2^{-1}\cdots p_i^{-1}D_i}$.

Choose $\lambda\in\sum_{i\in\mathcal{S}[n_1;\,n_2]}C_i$, there exists an integer sequence $\{c_i\}_{i\in\mathcal{S}[n_1;\,n_2]}$ such that $\lambda=\sum_{i\in\mathcal{S}[n_1;\,n_2]}c_i$. If there exists $i\in(\mathcal{S}[n_1;\,n_2]\setminus\mathcal{B}[n_1,\,n_2])$ such that $c_i\neq0$, it follows from Proposition \ref{prop5.16} that there exists $h_{\frac{\lambda}{p_1p_2\cdots p_{n_2}},\,n_2}\in\mathbb{Z}$ with $h_{0,\,n_2}:=0$ so that
\begin{equation}\label{eq5.93}
	\Big|\hat\mu_{>n_2}\Big(y+\frac{\lambda}{p_1p_2\cdots p_{n_2}}+h_{\frac{\lambda}{p_1p_2\cdots p_{n_2}},\,n_2}\Big)\Big|\geqslant\frac{1}{2}\varepsilon_2,\qquad\forall \, y\in[-\omega_2,\,\omega_2].
\end{equation}
Else $c_i=0$ for all $i\in(\mathcal{S}[n_1;\,n_2]\setminus\mathcal{B}[n_1,\,n_2])$, applying Proposition \ref{prop5.16} again, we obtain that  there exists $h_{\frac{\lambda}{p_1p_2\cdots p_H},\,H}\in\mathbb{Z}$ with $h_{0,\,H}:=0$ such that
\begin{equation}\label{eq5.96}
	\Big|\hat\mu_{>H}\Big(y+\frac{\lambda}{p_1p_2\cdots p_H}+h_{\frac{\lambda}{p_1p_2\cdots p_H},\,H}\Big)\Big|\geqslant\frac{1}{2}\varepsilon_2,\qquad\forall \, y\in[-\omega_2,\,\omega_2].
\end{equation}
Let $\omega_4:=\min\{\omega_2,\,\omega_3\}$. Note that $c_i=0$ for all $i\in(\mathcal{S}[n_1;\,n_2]\setminus\mathcal{B}[n_1,\,n_2])$, we get $\lambda\in\sum_{i\in\mathcal{B}[n_1,\,n_2]}C_i$. Hence, by \eqref{eq5.91} and \eqref{eq5.96}, we get that for any $y\in[-\omega_4,\,\omega_4]$,
\begin{equation}\label{eq5.92}
	\begin{split}
		&\Big|\hat\mu_{>n_2}\Big(y+\frac{\lambda+p_1p_2\cdots p_Hh_{\frac{\lambda}{p_1p_2\cdots p_H},\,H}}{p_1p_2\cdots p_{n_2}}\Big)\Big|\\
		=&\prod_{j=1}^{H-n_2}\Big|m_{D_{n_2+j}}\Big(\frac{y}{p_{n_2+1}\cdots p_{n_2+j}}+\frac{\lambda}{p_1p_2\cdots p_{n_2+j}}\Big)\Big|\cdot\Big|\hat\mu_{>H}\Big(\frac{y}{p_{n_2+1}\cdots p_H}+\frac{\lambda}{p_1p_2\cdots p_H}+h_{\frac{\lambda}{p_1p_2\cdots p_H},\,H}\Big)\Big|\\
		\geqslant&\varepsilon_3\cdot\frac{1}{2}\varepsilon_2,\raisetag{-2em}
	\end{split}
\end{equation}
where the first equality holds because $m_{d_n^{-1}D_n}$ is an integral periodic function for all $n\geqslant1$.

For any $\lambda\in\sum_{i\in\mathcal{S}[n_1;\,n_2]}C_i$, let
\begin{equation*}
	z_{\lambda}:=\left\{\begin{array}{ll}
		p_1p_2\cdots p_{n_2}h_{\frac{\lambda}{p_1p_2\cdots p_{n_2}},\,n_2},&\mbox{if}~~\{i\in(\mathcal{S}[n_1;\,n_2]\setminus\mathcal{B}[n_1,\,n_2]):\,c_i\neq0\}\neq\emptyset,\\
		p_1p_2\cdots p_{H}h_{\frac{\lambda}{p_1p_2\cdots p_{H}},\,H},&\mbox{if}~~\{i\in(\mathcal{S}[n_1;\,n_2]\setminus\mathcal{B}[n_1,\,n_2]):\,c_i\neq0\}=\emptyset,
	\end{array}\right.
\end{equation*}
where $\{c_i\}_{i\in\mathcal{S}[n_1;\,n_2]}$ is a sequence of integers such that $\lambda=\sum_{i\in\mathcal{S}[n_1;\,n_2]}c_i$. By the arbitrariness of $\lambda$, it follows from \eqref{eq5.93} and \eqref{eq5.93} that
\begin{equation*}
	\Big|\hat\mu_{>n_2}\Big(y+\frac{\lambda+z_{\lambda}}{p_1p_2\cdots p_{n_2}}\Big)\Big|\geqslant\frac{1}{2}\varepsilon_2\varepsilon_3,\qquad\forall \, y\in[-\omega_4,\,\omega_4],\,\,\lambda\in\sum_{i\in\mathcal{S}[n_1;\,n_2]}C_i.
\end{equation*}
Moreover, define
$j_{\lambda}:=\min\{i\in\mathcal{S}[n_1;\,n_2]:\,c_i\neq0\}$ and $n_{\lambda}:=\max(\{b_{q_{j_{\lambda}},j_{\lambda}}\}\cup\mathcal{S}[n_1;\,n_2])$. For any $\lambda\in\sum_{i\in\mathcal{S}[n_1;\,n_2]}C_i$, it is clear that $n_{\lambda}=n_2$ if $\{i\in(\mathcal{S}[n_1;\,n_2]\setminus\mathcal{B}[n_1,\,n_2]):\,c_i\neq0\}\neq\emptyset$ and $n_{\lambda}\leqslant H$ if $\{i\in(\mathcal{S}[n_1;\,n_2]\setminus\mathcal{B}[n_1,\,n_2]):\,c_i\neq0\}=\emptyset$. Hence, we get
\begin{equation*}
	z_{\lambda}\in p_1p_2\cdots p_{n_{\lambda}}\mathbb{Z},\qquad\forall \,\lambda\in\sum_{i\in\mathcal{S}[n_1;\,n_2]}C_i.
\end{equation*}
By Theorem \ref{th5.11} \rm(\romannumeral4), we get $\{\lambda+z_{\lambda}:\,\lambda\in\sum_{i\in\mathcal{S}[n_1;\,n_2]}C_i\}\subset\sum_{i\in\mathcal{S}[n_1;\,n_2]}(\{0\}\cup U_i$ is also a spectrum of $\bigast\limits_{i=n_1+1}^{n_2}\delta_{p_1^{-1}p_2^{-1}\cdots p_i^{-1}D_i}$.

Finally, we complete the proof of the proposition by taking $\varepsilon_4:=\frac{1}{2}\varepsilon_2\varepsilon_3$, $\omega_4:=\min\{\omega_2,\,\omega_3\}$ and $\Lambda(n_1;\,n_2):=\{\lambda+z_{\lambda}:\,\lambda\in\sum_{i\in\mathcal{S}[n_1;\,n_2]}C_i\}$.	
\end{pf}

\hspace{-0.05em}{\bfseries\large Proof of the sufficiency of Theorem \ref{th1.3}}.

Take $n_1>1$, let $\Lambda_1$ be a spectrum of $\bigast\limits_{n=1}^{n_1}\chi_n$ as described in Theorem \ref{th5.11} \rm(\romannumeral3). It is clear that there exists an integer $m_1>1$ such that
\begin{equation*}
	(p_1p_2\cdots p_{n_1+m_1})^{-1}\Lambda_1\subset\big[-\frac{\omega_4}{2},\,\frac{\omega_4}{2}\big].
\end{equation*}
It follows from Proposition \ref{prop7.13} that for integers $n_1$ and $m_1$, there exists an integer $n_2>H(n_1+m_1)$ and a spectrum $\Lambda_{1,2}\subset\sum_{n=n_1+1}^{n_2}(\{0\}\cup U_i)$ of $\bigast\limits_{n=n_1+1}^{n_2}\chi_n$ such that
\begin{equation}\label{eq7.172}
	\Big|\widehat{\mu}_{>n_2}\Big(\frac{\lambda'}{p_1p_2\cdots p_{n_2}}+\frac{\lambda}{p_1p_2\cdots p_{n_2}}\Big)\Big|\geqslant\varepsilon_4,\qquad\forall \, \lambda'\in\Lambda_1,\quad\lambda\in\Lambda_{1,2}.
\end{equation}
By Theorem \ref{th5.11} \rm(\romannumeral3), we see that $\Lambda_2:=\Lambda_1+\Lambda_{1,2}$ is a spectrum of the probability measure $\bigast\limits_{n=1}^{n_2}\chi_n$. Since $0\in\Lambda_1$ and $0\in\Lambda_{1,2}$, the definitions of $\Lambda_1$ and $\Lambda_2$ shows $0
\in\Lambda_1\subset\Lambda_2\subset\sum_{i=1}^{n_2}(\{0\}\cup U_i\})$. By \eqref{eq7.172}, we have
\begin{equation}\label{eq5.423}
	\Big|\widehat{\mu}_{>n_2}\Big(\frac{\lambda}{p_1p_2\cdots p_{n_2}}\Big)\Big|\geqslant\varepsilon_4,\qquad\forall \, \lambda\in\Lambda_2.
\end{equation}

Continuing in this way, we can find a strictly increasing sequence $\{n_k\}_{k=1}^{\infty}\subset\mathbb{N}$ and $\Lambda_k$ such that the follows properties.
\begin{equation}\label{eq7.174}
	0\in\Lambda_k\subset\Lambda_{k+1}\subset\sum_{i=1}^{n_{k+1}}(\{0\}\cup U_i\}),\qquad k=1,\,2,\,\cdots,
\end{equation}
\begin{equation}\label{eq7.175}
	(p_1p_2\cdots p_{n_{k+1}})^{-1}\Lambda_k\subset\Big[-\frac{\omega_4}{2^{k+1}},\,\frac{\omega_4}{2^{k+1}}\Big],\qquad k=1,\,2,\,\cdots,
\end{equation}
\begin{equation}\label{eq7.176}
	\Big|\widehat{\mu}_{>n_k}\Big(\frac{\lambda}{p_1p_2\cdots p_{n_k}}\Big)\Big|\geqslant\varepsilon_4,\qquad\forall \, \lambda\in\Lambda_k,\qquad k=2,\,3,\,\cdots,
\end{equation}
and $\Lambda_k$ is a spectrum of $\bigast\limits_{n=1}^{n_k}\chi_n$ for all $k=1,\,2,\,\cdots$.

Let $\Gamma:=\cup_{k=1}^{\infty}\Lambda_k$. We shall prove $\Gamma$ is a spectrum of $\mu_{\mathbb{P},\,\mathbb{D}}$.

For any $a\ne b\in\Gamma$, from \eqref{eq7.174} it follows that $a\ne b\in\Lambda_k$ for some $k>0$.  Hence, $a-b$ is a zero point of the Fourier transform of $\delta_{p_1^{-1}D_1}\bigast\delta_{p_1^{-1}p_2^{-1}D_2}\bigast\cdots\bigast\delta_{p_1^{-1}p_2^{-1}\cdots p_{n_k}^{-1}D_{n_k}}$. Hence, $\widehat{\mu_{\mathbb{P},\,\mathbb{D}}}(a-b)=0$, which yields the exponential function set $E_{\Gamma}:=\{e^{2\pi i\gamma x}:\,\gamma\in\Gamma\}$ is an orthogonal family of $L^2(\mu_{\mathbb{P},\,\mathbb{D}})$.

Suppose on the contrary that $\Gamma$ is not a spectrum of $\mu_{\mathbb{P},\,\mathbb{D}}$. Then, Lemma \ref{le2.2} shows that $Q_{\Gamma}(x_0)<1$ for some $x_0\in\mathbb{R}$.

Recall that $\lim_{k\to\infty}(p_1p_2\cdots p_{n_{k+1}})^{-1}x_0=0$ and $\widehat{\Phi}:=\{\widehat{\mu}_{>n}:\,n\geqslant1\}$ is equi-continuous. By \eqref{eq7.175}, we get
\begin{equation}\label{eq7.177}
	\sigma_k:=\inf_{\lambda\in\Lambda_k}|\widehat{\mu}_{>n_{k+1}}((p_1p_2\cdots p_{n_{k+1}})^{-1}(\lambda+x_0))|\to1~\mbox{as}~k\to\infty.
\end{equation}
Moreover, \eqref{eq7.176} shows that there exists a positive integer $k_0>0$ such that for any $k>k_0$ and $\lambda\in\Lambda_k$, we have
\begin{equation}\label{eq7.178}
	|\widehat{\mu}_{>n_k}((p_1p_2\cdots p_{n_{k}})^{-1}(\lambda+x_0))|\geqslant\frac{1}{2}\varepsilon_4.
\end{equation}

Let
\begin{equation*}
	Q_k(x_0):=\sum_{\lambda\in\Lambda_k}|\widehat{\mu_{\mathbb{P},\,\mathbb{D}}}(\lambda+x_0)|^2,\qquad k=1,\,2,\,\cdots.
\end{equation*}
For $k>k_0$, it follows from \eqref{eq2.3} obtain 
\begin{equation*}
	\begin{split}
		&Q_{k+1}(x_0)-Q_k(x_0)\\
		=&\sum_{\lambda\in(\Lambda_{k+1}\setminus\Lambda_k)}\prod_{n=1}^{\infty}|m_{D_n}((p_1p_2\cdots p_n)^{-1}(\lambda+x_0))|^2\\
		=&\sum_{\lambda\in(\Lambda_{k+1}\setminus\Lambda_k)}\prod_{n=1}^{n_{k+1}}|m_{D_n}((p_1p_2\cdots p_n)^{-1}(\lambda+x_0))|^2|\widehat{\mu}_{>n_{k+1}}((p_1p_2\cdots p_{n_{k+1}})^{-1}(\lambda+x_0))|^2.
	\end{split}
\end{equation*}
Hence \eqref{eq7.178} shows that for any $k>k_0$, we have
\begin{equation}\label{eq7.179}
	Q_{k+1}(x_0)-Q_k(x_0)\geqslant\frac{1}{4}\varepsilon_4^2\sum_{\lambda\in(\Lambda_{k+1}\setminus\Lambda_k)}\prod_{n=1}^{n_{k+1}}|m_{D_n}((p_1p_2\cdots p_n)^{-1}(\lambda+x_0))|^2.
\end{equation}
Since $\Lambda_{k+1}$ is a spectrum of $\bigast\limits_{n=1}^{n_{k+1}}\chi_n$, it follows from Lemma \ref{le2.2} that
\begin{equation*}
	\sum_{\lambda\in\Lambda_{k+1}}\prod_{n=1}^{n_{k+1}}|m_{D_n}((p_1p_2\cdots p_n)^{-1}(\lambda+x_0))|^2=1,\qquad k=1,\,2,\,\cdots.
\end{equation*}
Combining the above with \eqref{eq7.179}, we have
\begin{equation*}
	Q_{k+1}(x_0)-Q_k(x_0)\geqslant\frac{1}{4}\varepsilon_4^2\Big(1-\sum_{\lambda\in\Lambda_k}\prod_{n=1}^{n_{k+1}}|m_{D_n}((p_1p_2\cdots p_n)^{-1}(\lambda+x_0))|^2\Big),\qquad\forall\,k>k_0.
\end{equation*}
On the other hand, the definition of $\sigma_k$ in \eqref{eq7.177} shows that for $k\geqslant1$, we have
\begin{equation*}
	Q_k(x_0)=\sum_{\lambda\in\Lambda_k}\prod_{n=1}^{\infty}|m_{D_n}((p_1p_2\cdots p_n)^{-1}(\lambda+x_0))|^2
	\geqslant\sigma_k^2\sum_{\lambda\in\Lambda_k}\prod_{n=1}^{n_{k+1}}|m_{D_n}((p_1p_2\cdots p_n)^{-1}(\lambda+x_0))|^2.
\end{equation*}
From the two inequalities above, it follows that 
\begin{equation*}
	Q_{k+1}(x_0)-Q_k(x_0)\geqslant\frac{1}{4}\varepsilon_4^2\Big(1-\sigma_k^{-2}Q_k(x_0)\Big),\qquad\forall \, k>k_0.
\end{equation*}
Thus, the limit behavior of $\sigma_k$ in \eqref{eq7.177}, together with the assumption $Q_{\Gamma}(x_0)<1$, gives 
\begin{equation*}
	\liminf_{k\to\infty}(Q_{k+1}(x_0)-Q_k(x_0))\geqslant\frac{1}{4}\varepsilon_4^2\big(1-\lim_{k\to\infty}\sigma_k^{-2}Q_k(x_0)\big)=\frac{1}{4}\varepsilon_4^2\big(1-Q_{\Gamma}(x_0)\big)>0.
\end{equation*}
Combining the above with \eqref{eq7.174}, we get
\begin{equation*}
	1>Q_{\Gamma}(x_0)=\lim_{k\to\infty}Q_k(x_0)\geqslant\sum_{k=1}^{\infty}(Q_{k+1}(x_0)-Q_k(x_0))=\infty,
\end{equation*}
a contradiction. Hence, $\Gamma$ is a spectrum of $\mu_{\mathbb{P},\,\mathbb{D}}$. We have thus proved the sufficiency of Theorem  \ref{th1.3}.

\small

\bigskip

\indent  School of Mathematics and Statistics \& Key Laboratory of Analytical Mathematics and Applications (Ministry of Education) \& Fujian Provincial Key Laboratory of Statistics and Artificial Intelligence \& Fujian Key Laboratory of Analytical Mathematics and Applications (FJKLAMA) \& Center for Applied Mathematics of Fujian Province (FJNU), Fujian Normal University, 350117 Fuzhou, P.R.China.

\medskip

E-mail: caoys8869@126.com (Y.-S. Cao), dengfractal@126.com (Q.-R. Deng), limtwd@fjnu.edu.cn (M.-T. Li).

\end{document}